




\documentclass[sn-mathphys]{sn-jnl}

\usepackage{amsmath,amscd}
\usepackage{fancyhdr}

\numberwithin{equation}{section}


\jyear{2021}%

\theoremstyle{thmstyleone}%
\newtheorem{theorem}{Theorem}[section]
\newtheorem{proposition}[theorem]{Proposition}%
\newtheorem{remark}[theorem]{Remark}

\raggedbottom


\begin{document}

\title[Monotonicity Conjectures and Sharp stability of Solitons ]{Monotonicity Conjectures and Sharp Stability for Solitons of the Cubic-Quintic NLS on $\mathbb{R}^3$}


\author*[1,3]{\fnm{Jian} \sur{Zhang}}\email{zhangjian@uestc.edu.cn}

\author[2]{\fnm{Chenglin} \sur{Wang}}\email{wangchenglinedu@163.com}

\author[3]{\fnm{Shihui} \sur{Zhu}}\email{shihuizhumath@163.com}

\affil*[1]{\orgdiv{School of Mathematical Sciences}, \orgname{University of Electronic Science and Technology of China}, \orgaddress{\street{} \city{Chengdu}, \postcode{611731}, \state{} \country{China}}}

\affil[2]{\orgdiv{School of Science}, \orgname{Xihua University}, \orgaddress{\street{} \city{Chengdu}, \postcode{610039}, \state{} \country{China}}}

\affil[3]{\orgdiv{School of Mathematical Sciences}, \orgname{Sichuan Normal University}, \orgaddress{\street{} \city{Chengdu}, \postcode{610066}, \state{} \country{China}}}

\abstract{This paper deals with the cubic-quintic nonlinear Schr\"{o}dinger equation on $\mathbb{R}^{3}$. Two monotonicity conjectures for solitons posed by Killip, Oh, Pocovnicu and Visan are completely resolved: one concerning frequency monotonicity, and the other concerning mass monotonicity. Uniqueness of the energy minimizer is proved. Then sharp stability of the solitons is established. And classification of normalized solutions is first presented. }

\keywords{Nonlinear Schr\"{o}dinger equation, Stability of solitons, Variational approach, Monotonicity,  Normalized solution }

\pacs[MSC classfication(2020)]{35Q55, 35C08, 37K40, 35J50, 35J60}

\maketitle
\newpage
\tableofcontents 

\pagestyle{fancy} 
\fancyhf{}
\fancyhead[OL]{ }
\fancyhead[OC]{}
\fancyhead[OR]{J. Zhang, C. Wang and S.  Zhu}
\fancyhead[EL]{Monotonicity Conjectures and Sharp stability of Solitons }
\fancyhead[EC]{ }
\fancyhead[ER]{ }
\fancyfoot[RO, LE]{\thepage}

\section{Introduction}\label{sec1}

This paper studies the cubic-quintic nonlinear Schr\"{o}dinger equation (NLS)
\begin{equation}\label{1.1}
i\partial_{t}\varphi +\Delta  \varphi +\lvert\varphi\rvert^2\varphi-\lvert\varphi\rvert^4\varphi=0,\quad (t,x)\in\mathbb{R}\times\mathbb{R}^3.
\end{equation}
(\ref{1.1}) is the defocusing energy critical  nonlinear Schr\"{o}dinger equation with focusing cubic perturbation on $\mathbb{R}^3$. We are interested in orbital stability of solitons at every frequency as well as the monotonicity conjectures posed by Killip, Oh, Pocovnicu and Visan \cite{KOPV2017}.

(\ref{1.1})  is a typical model for soliton theory (see \cite{KOPV2017,M2019}) in physics, such as nonlinear optics (see \cite{MMCTBMT2002,MMCML2000,DMM2000}), Langmuir waves in plasma (see \cite{ZH1994}) and the Bose-Einstein condensation in macroscopic quantum mechanics (see \cite{GFTLC2000,AGTF2001}).

From viewpoint of mathematics, the focusing cubic or quintic nonlinear Schr\"{o}dinger equation on $\mathbb{R}^3$ possesses  solitons, but all solitons are unstable (see \cite{BC1981,S2009,KM2006,NS2012,OT1991}). The defocusing cubic or quintic nonlinear Schr\"{o}dinger equation on $\mathbb{R}^3$ has no any solitons to exist, but possesses scattering property (see \cite{B1999,CKSTT2008,D2012,DHR2008}). In addition, the scaling invariance of the pure power case is breaked in (\ref{1.1}) (see \cite{C2003}). Therefore the dynamics of (\ref{1.1}) becomes a challenging issue (see \cite{KOPV2017,CS2021,MXZ2013,LR2020,TVZ2007,JL2022,AM2022,MKV2021}). This motivates us to study the stability of solitons for (\ref{1.1}) (see \cite{T2009}), which  directly  concerns about the monotonicity conjectures posed by Killip, Oh, Pocovnicu and Visan (see Conjecture 2.3 and 2.6 in \cite{KOPV2017}) as well as Lewin and Rota Nodari \cite{LR2020}.

In the energy space $H^{1}(\mathbb{R}^3)$, consider the scalar field equation for $\omega\in\mathbb{R}$,
\begin{equation}\label{1.2}
-\Delta u-\lvert u\rvert^2u+\lvert u\rvert^4u+\omega u =0, \quad u\in H^{1}(\mathbb{R}^3).
\end{equation}
From Berestycki and Lions \cite{BL1983}, if and only if
\begin{equation}\label{1.3}
0<\omega<\frac{3}{16},
\end{equation}
(\ref{1.2}) possesses non-trivial solutions (also see \cite{KOPV2017}). From Gidas, Ni and Nirenberg \cite{GNN1979}, every positive solution of (\ref{1.2}) is radially symmetric. From Serrin and Tang \cite{ST2000}, the positive solution of (\ref{1.2}) is unique up to translations. Therefore one concludes that for $\omega\in(0,\frac{3}{16})$, (\ref{1.2}) possesses a unique positive solution $P_\omega(x)$ (see \cite{KOPV2017,K2011}), which is called ground state of (\ref{1.2}).

Let $P_\omega$ be a ground state of (\ref{1.2}) with $\omega\in(0,\frac{3}{16})$. It is easily checked that
\begin{equation}\label{1.4}
\varphi(t,x)=P_\omega(x)e^{i\omega t}
\end{equation}
is a solution of (\ref{1.1}), which is called a ground state soliton of (\ref{1.1}). We also directly call (\ref{1.4}) soliton of (\ref{1.1}), and call $\omega$ frequency of soliton. It is known that (\ref{1.1}) admits time-space translation invariance, phase invariance and Galilean invariance. Then for arbitrary $x_0\in\mathbb{R}^3$, $v_0\in\mathbb{R}^3$ and $\nu_0\in\mathbb{R}$, in terms of (\ref{1.4}) one has that
\begin{equation}\label{1.5}
 \varphi(t,x)=P_\omega(x-x_0-v_0t)e^{i(\omega t+\nu_0+\frac{1}{2}v_0x-\frac{1}{4}\lvert v_0\rvert^2t)}
\end{equation}
is also a soliton of (\ref{1.1}).
By \cite{KOPV2017,S2021,CS2021,JJTV2022,JL2022,LR2020}, orbital stability of solitons with regard to every frequency  for (\ref{1.1}) is a crucial open problem.

So far there are two ways to study stability of solitons for nonlinear  Schr\"{o}dinger equations (refer to \cite{L2009}). One is variational approach originated from Cazenave and Lions \cite{CL1982}. The other is spectrum approach originated from Weinstein \cite{W1985,W1986} and then considerably generalized by Grillakis, Shatah and Strauss \cite{GSS1987,GSS1990}. Both approaches have encountered essential difficulties to (\ref{1.1}), since (\ref{1.1}) fails in both scaling invariance and effective spectral analysis \cite{F2003,O1995,CS2021}. We need develop new methods to study stability of solitons for (\ref{1.1}).

In the following, we denote $\int_{\mathbb{R}^3}\cdot\ dx$ by $\int \cdot \ dx$. For $u\in H^1(\mathbb{R}^3)$, define the mass functional
\begin{equation}\label{1.11}
M(u)=\int\lvert u\lvert^{2}dx,
\end{equation}
and define the energy functional
\begin{equation}\label{1.12}
E(u)=\int \frac{1}{2}\lvert \nabla u\rvert^2-\frac{1}{4}\lvert u\rvert^4+\frac{1}{6}\lvert u\rvert^6dx.
\end{equation}

Our studies are also concerned with the normalized solution of (\ref{1.2}), which is defined as a non-trivial solution of (\ref{1.2}) satisfying the prescribed mass $M(u)=m$ (see \cite{JL2021}). Besides motivations in mathematical physics, normalized solutions are also of interest in the framework of ergodic Mean Field Games system \cite{CV2017}. Recent studies on normalized solutions refer to \cite{BJS2016,BMRV2021,JL2021,WW2022,S2021} and the references there. We can establish correspondences between the soliton frequency and the prescribed mass. Then we present first a classification of normalized solutions.

 For $0<\alpha<\infty$ and $u\in H^1(\mathbb{R}^3)\backslash\{0\}$, define the functional (see \cite{KOPV2017})
\begin{equation}\label{1.8}
F_{\alpha}(u)=\frac{\lvert\lvert u\rvert \rvert_{L^2}\lvert\lvert u\rvert \rvert_{L^6}^{\frac{3\alpha}{1+\alpha}}\lvert\lvert \nabla u\rvert \rvert_{L^2}^{\frac{3}{1+\alpha}}}
{\lvert\lvert u\rvert \rvert_{L^4}^{4}}
\end{equation}
and the variational problem
\begin{equation}\label{1.9}
C_{\alpha}^{-1}=\mathop{\mathrm{inf}}_{\{u\in H^1(\mathbb{R}^3)\backslash\{0\}\}} F_{\alpha}(u).
\end{equation}
In \cite{KOPV2017}, Killip, Oh, Pocovnicu and Visan show that (\ref{1.9}) is achieved at some positive symmetric minimizer $Q_{\alpha}$. Moreover $Q_{\alpha}$ satisfies the  Euler-Lagrange equation corresponding to (\ref{1.9}), which is the same as (\ref{1.2}) with certain $\omega\in(0,\frac{3}{16})$. Especially for $\alpha=1$, one has that
\begin{equation}\label{1.10}
m_{1}=\lvert\lvert Q_{1}\rvert \rvert_{L^2}^{2}=\frac{64}{9}C_{1}^{-2}.
\end{equation}
We see that uniqueness of $Q_{1}$ is unknown, but all $Q_{1}$ have the common mass $m_{1}$ (see \cite{KOPV2017}).

Now for $u\in H^{1}(\mathbb{R}^{3})$, we introduce the notation 
\begin{equation}\label{1.6}
\beta (u)=\frac{\int \rvert u\rvert^{6}dx}{\int \rvert\nabla u\rvert^{2} dx}.
\end{equation}
Especially, we denote $\beta(\omega)=\beta(P_{\omega})$ since the soliton $P_{\omega}$ is determined uniquely by the frequency $\omega$ (see \cite{KOPV2017,K2011}). Then fix $\omega\in (0,\frac{3}{16})$, define
\begin{equation}\label{1.7}
R_{\omega}(x)=\sqrt{\frac{1+\beta(\omega)}{4\beta(\omega)}}P_{\omega}\Big(\frac{3[1+\beta(\omega)]}{4\sqrt{3\beta(\omega)}}x\Big).
\end{equation}
We call $R_{\omega}$ rescaled soliton (see \cite{KOPV2017}).

By Killip, Oh, Pocovnicu and Visan \cite{KOPV2017}, for $0<\omega<\frac{3}{16}$ the ground state $P_{\omega}$ and the rescaled soliton $R_{\omega}$ satisfy
\begin{equation}\label{1.7a}
\int \lvert P_{\omega}(x)\lvert^{2}dx\geq \int \lvert R_{\omega}(x)\lvert^{2}dx\geq \frac{4}{3\sqrt{3}}\lvert\lvert Q_{1}\rvert \rvert_{L^2}^{2}.
\end{equation}

Let $m>0$, and consider the constrained variational problem
\begin{equation}\label{1.13}
E_{min}(m)=\mathop{\mathrm{inf}}_{\{u\in H^1(\mathbb{R}^3),\;\;  M(u)=m\}} E(u).
\end{equation}
For the common mass $m_{1}=M(Q_{1})$ in (\ref{1.10}), Killip, Oh, Pocovnicu and Visan \cite{KOPV2017} show the following results. When
\begin{equation}\label{1.14}
0<m<M(Q_{1}),
\end{equation}
(\ref{1.13}) is not achieved. When
\begin{equation}\label{1.15}
m\geq M(Q_{1}),
\end{equation}
(\ref{1.13}) is achieved.

For $u\in H^{1}(\mathbb{R}^{3})$, define the Pohozaev functional (refer to \cite{P1965})
\begin{equation}\label{1.16}
V(u)=\int \lvert \nabla u\lvert^{2}+\lvert u\lvert^{6}-\frac{3}{4}\lvert u\lvert^{4}
dx.
\end{equation}
Let $m>0$, define the cross-constrained variational problem
\begin{equation}\label{1.17}
E_{min}^{V}(m)=\inf_{\{u\in H^{1}(\mathbb{R}^{3}), \; M(u)=m, \; V(u)=0\}}E(u).
\end{equation}
For the common mass $M(Q_{1})$ in (\ref{1.10}), Killip, Oh, Pocovnicu and Visan \cite{KOPV2017} get the following results. When
\begin{equation}\label{1.18}
0<m<\frac{4}{3\sqrt{3}}M(Q_{1}) ,
\end{equation}
one has that $E_{min}^{V}(m)=\infty$ and (\ref{1.17}) is not achieved. When
\begin{equation}\label{1.19}
m\geq \frac{4}{3\sqrt{3}} M(Q_{1}),
\end{equation}
one has that $-\infty<E_{min}^{V}(m)<\infty$ and (\ref{1.17}) is achieved at some positive minimizer $\psi$. We call this $\psi$ energy minimizer for (\ref{1.1}).

The main results of this paper read as follows:

\begin{theorem}\label{t1.1}
Let $0<\omega <\frac{3}{16}$. Then $0<\beta(\omega)<\infty$. Moreover $\beta(\omega)$ is strictly increasing on $\omega\in (0,\frac{3}{16})$.
\end{theorem}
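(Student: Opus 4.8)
The plan is to extract from the Euler--Lagrange equation \eqref{1.2} and its associated Pohozaev identities the two scalar quantities $\|\nabla P_\omega\|_{L^2}^2$ and $\|P_\omega\|_{L^6}^6$ as functions of $\omega$, so that $\beta(\omega)$ becomes an explicit ratio whose monotonicity can be checked by differentiation in $\omega$. First I would record the three standard integral identities satisfied by $P_\omega$: the equation tested against $P_\omega$, namely $\|\nabla P_\omega\|_{L^2}^2 + \omega\|P_\omega\|_{L^2}^2 = \|P_\omega\|_{L^4}^4 - \|P_\omega\|_{L^6}^6$; the Pohozaev (dilation) identity coming from testing against $x\cdot\nabla P_\omega$; and possibly the identity obtained by differentiating the family $\omega\mapsto P_\omega$ in $\omega$ (using that $P_\omega$ is the unique positive solution, hence a smooth branch by nondegeneracy / implicit function theorem). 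From these one obtains a linear system expressing $\|\nabla P_\omega\|_{L^2}^2$, $\|P_\omega\|_{L^4}^4$ and $\|P_\omega\|_{L^6}^6$ in terms of, say, $\omega\|P_\omega\|_{L^2}^2$ and the mass, and in particular $\beta(\omega) = \|P_\omega\|_{L^6}^6 / \|\nabla P_\omega\|_{L^2}^2$ can be written in closed form.

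Once $\beta(\omega)$ is expressed through these functionals, establishing $0<\beta(\omega)<\infty$ is immediate: $P_\omega$ is a nontrivial $H^1$ function by \eqref{1.3}, so $\|\nabla P_\omega\|_{L^2}^2 > 0$ is finite, and $\|P_\omega\|_{L^6}^6 > 0$ is finite by Sobolev embedding $H^1(\mathbb{R}^3)\hookrightarrow L^6$. For strict monotonicity I would compute $\frac{d}{d\omega}\beta(\omega)$. The cleanest route is to introduce the mass-to-frequency function $M(\omega) = \|P_\omega\|_{L^2}^2$ and the energy $E(P_\omega)$, invoke the Pohozaev relations to turn $\beta(\omega)$ into an expression involving only $\omega$, $M(\omega)$ and $M'(\omega)$ (the derivative appearing through differentiating the identity $\partial_\omega E(P_\omega) = -\tfrac{1}{2}\omega M'(\omega)$ or the virial-type relations), and then reduce the sign of $\beta'(\omega)$ to the sign of a concrete combination. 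A second, more self-contained route is to use the rescaled soliton $R_\omega$ from \eqref{1.7}: by construction $R_\omega$ is the specific dilation of $P_\omega$ that normalizes $\|\nabla R_\omega\|_{L^2} = \|R_\omega\|_{L^6}$ type ratios, and \eqref{1.7a} ties $M(R_\omega)$ to $\beta(\omega)$, so monotonicity of $\beta$ is equivalent to monotonicity of $\omega \mapsto M(R_\omega)$, which can be attacked by the standard trick of writing $M(R_\omega)$ via the variational characterization and differentiating.

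The main obstacle I anticipate is precisely the differentiability of the branch $\omega\mapsto P_\omega$ and the justification of differentiating the integral functionals along it. This requires knowing that $P_\omega$ depends smoothly (at least $C^1$) on $\omega$, which in turn rests on nondegeneracy of the linearized operator $-\Delta - 3P_\omega^2 + 5P_\omega^4 + \omega$ restricted to the space orthogonal to the symmetries — a delicate point for the cubic-quintic nonlinearity where the usual Weinstein-type spectral analysis is not available off the shelf (as the introduction stresses). I would either borrow the needed nondegeneracy from the known structure of \eqref{1.2} (uniqueness of the positive solution from Serrin--Tang \cite{ST2000} combined with a Hopf-type argument), or circumvent differentiability entirely by an approximation/comparison argument: show that if $\beta(\omega_1) \geq \beta(\omega_2)$ for some $\omega_1 < \omega_2$ then, after rescaling both solitons to a common normalization, one produces two distinct minimizers of the variational problem \eqref{1.9} for the \emph{same} $\alpha$ — contradicting the one-parameter matching between $\alpha$ and $\omega$ used in \cite{KOPV2017}. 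The remaining steps — reducing everything to a sign computation and verifying that sign is strictly positive on all of $(0,\tfrac{3}{16})$ — should be routine algebra once the closed form for $\beta(\omega)$ is in hand.
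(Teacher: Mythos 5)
Your proposal does not close the actual gap, and both of its main devices fail. First, $\beta(\omega)$ cannot be put in ``closed form'' from the Nehari and Pohozaev identities: those are only two linear relations among the four quantities $\lvert\lvert\nabla P_\omega\rvert\rvert_{L^2}^2$, $\lvert\lvert P_\omega\rvert\rvert_{L^2}^2$, $\lvert\lvert P_\omega\rvert\rvert_{L^4}^4$, $\lvert\lvert P_\omega\rvert\rvert_{L^6}^6$, so they merely express the mass and the $L^4$ norm in terms of $\omega$, the kinetic energy and $\beta(\omega)$ (this is exactly Proposition \ref{p2.2}(I)); $\beta(\omega)$ itself remains an undetermined function of $\omega$, and ``reducing the sign of $\beta'(\omega)$ to a concrete combination'' of $\omega$, $M$, $M'$ and then checking it by ``routine algebra'' is precisely the content of the KOPV monotonicity conjectures, not a routine step --- note that $M'(\omega)$ itself changes sign (Theorem \ref{t1.2}), so no naive sign bookkeeping along these lines can work. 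The claimed equivalence with monotonicity of $\omega\mapsto M(R_\omega)$ is also false: by Proposition \ref{p2.3}, $M(R_\omega)=\frac{16\sqrt{3\beta(\omega)}}{9[1+\beta(\omega)]^2}M(P_\omega)$ involves $M(P_\omega)$ as well, and the rescaled mass is in fact not monotone in the relevant parameter (it decreases for $\beta\le 1$ and increases for $\beta\ge 1$, cf.\ Theorem \ref{t3.11}), while \eqref{1.7a} is only a two-sided bound.

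Second, the fallback comparison argument rests on a false premise. If $\beta(\omega_1)=\beta(\omega_2)=\alpha$ with $\omega_1\neq\omega_2$, the two ground states are critical points of $F_\alpha$, but nothing known says both are minimizers of \eqref{1.9}; and even if they were, two distinct positive minimizers for the same $\alpha$ is not a contradiction --- uniqueness of $Q_\alpha$ is exactly what is unknown in \cite{KOPV2017} (indeed $Q_\alpha$ and its rescaling $R_\alpha$ are \emph{both} minimizers, Theorem \ref{t3.8}), and KOPV do not provide a one-to-one matching between $\alpha$ and $\omega$: establishing that injectivity is the substance of the theorem. The paper's proof supplies this engine by a long chain you do not have: it identifies the minimizer $Q_\alpha$ with $P_\omega$ for $\omega=\frac{1+\alpha}{3}\lvert\lvert\nabla Q_\alpha\rvert\rvert_{L^2}^2/\lvert\lvert Q_\alpha\rvert\rvert_{L^2}^2$, shows these norms are functions of $\alpha$ alone (Theorems \ref{t3.8}--\ref{t3.9}), proves new monotonicity results in $\alpha$ (Theorems \ref{t3.10}--\ref{t3.11}) and uniqueness of the constrained energy minimizer at each mass (Sections 4--5), and then treats the three regimes $\beta\le\frac13$, $\frac13\le\beta\le1$, $\beta\ge1$ separately, combining KOPV's monotonicity of $M(Q_\alpha)$ in $\alpha$ with the relation $\frac{dM}{d\omega}=\frac{dM}{d\alpha}\cdot\frac{d\beta}{d\omega}$ and real-analyticity (isolated zeros, Proposition \ref{p4.4}) to upgrade non-strict to strict monotonicity. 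Your instinct to exploit the $\alpha$--$\omega$ correspondence points in the right direction, and your differentiability worry is handled in the paper by real-analyticity rather than new nondegeneracy, but as proposed your argument contains no mechanism to exclude two frequencies sharing the same $\beta$, which is the heart of the matter.
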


\begin{theorem}\label{t1.2}
Let $M(P_{\omega})=\int P_{\omega}^{2}dx$ for the positive solution $P_{\omega}$ of (\ref{1.2}) with $\omega\in (0, \frac{3} {16})$. Then there is an $\omega_*\in (0, \frac{3} {16})$ so that the map $\omega\rightarrow M(P_{\omega})$ is strictly decreasing for $\omega<\omega_{\ast}$ and strictly increasing for $\omega>\omega_{\ast}$.
\end{theorem}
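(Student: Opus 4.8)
The plan is to derive a master differential identity tying $M(P_\omega)$ to $\beta(\omega)$, and then to close the analysis using the variational constant $C_\alpha$; the monotonicity will then be read off from the sign of one explicit expression. First I would set $S_\omega(u)=\tfrac12\|\nabla u\|_{L^2}^2+\tfrac{\omega}{2}\|u\|_{L^2}^2-\tfrac14\|u\|_{L^4}^4+\tfrac16\|u\|_{L^6}^6$, so that $P_\omega$ is a critical point of $S_\omega$. Since $P_\omega$ is non-degenerate up to the symmetries of \eqref{1.2}, it depends in a $C^1$ way on $\omega$, and the envelope principle gives $\tfrac{d}{d\omega}S_\omega(P_\omega)=\tfrac12 M(P_\omega)$. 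The Nehari and Pohozaev identities for \eqref{1.2} give $\|\nabla P_\omega\|_{L^2}^2=\tfrac{3\omega}{1+\beta(\omega)}M(P_\omega)$, $\|P_\omega\|_{L^6}^6=\tfrac{3\omega\beta(\omega)}{1+\beta(\omega)}M(P_\omega)$ and $\|P_\omega\|_{L^4}^4=4\omega M(P_\omega)$, whence $S_\omega(P_\omega)=\tfrac{\omega M(P_\omega)}{1+\beta(\omega)}$. Comparing the two expressions for $\tfrac{d}{d\omega}S_\omega(P_\omega)$ yields
\[
\frac{d}{d\omega}\log\frac{M(P_\omega)}{1+\beta(\omega)}=\frac{\beta(\omega)-1}{2\omega},\qquad\text{equivalently}\qquad \frac{M'(\omega)}{M(\omega)}=\frac{\beta(\omega)-1}{2\omega}+\frac{\beta'(\omega)}{1+\beta(\omega)}.
\]
By Theorem \ref{t1.1} ($\beta'>0$) this already shows $M'(\omega)>0$ whenever $\beta(\omega)\geq1$, i.e.\ on an interval $(\omega_1,\tfrac{3}{16})$ with $\beta(\omega_1)=1$.

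Next I would record the endpoint behaviour: $M(P_\omega)\to\infty$ and $\beta(\omega)\to0$ as $\omega\to0^+$ (a zero-mass ground state of \eqref{1.2} cannot exist — the $\omega=0$ Pohozaev and Nehari identities force $\int P^4\,dx=0$ — so the $L^2$ mass of $P_\omega$ must escape as $\omega\to0$), and $M(P_\omega)\to\infty$, $\beta(\omega)\to\infty$ as $\omega\to\tfrac{3}{16}^-$ (the compacton-type flattening of $P_\omega$). Consequently $M$ attains a global minimum at some $\omega_*\in(0,\tfrac{3}{16})$, and by the identity above every critical point of $M$ has $\beta<1$, so $\omega_*<\omega_1$. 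To control $M$ throughout the region $\{\beta<1\}$ I would invoke $C_\alpha$: since $P_\omega$ (being the unique positive solution of \eqref{1.2}, it coincides with the minimiser $Q_{\beta(\omega)}$) realises the infimum defining $C_{\beta(\omega)}$ and $F_\alpha$ is invariant under dilations and amplitude scalings, $F_{\beta(\omega)}(P_\omega)=C_{\beta(\omega)}^{-1}$; inserting the norms above gives the closed relation
\[
\omega^{\frac{1-\beta}{2}}M(P_\omega)=4^{1+\beta}\,C_\beta^{-(1+\beta)}\Big(\tfrac{1+\beta}{3}\Big)^{\frac{\beta+3}{2}}\beta^{-\frac{\beta}{2}},\qquad\beta=\beta(\omega),
\]
which, together with the master identity, expresses both $\omega$ and $M(P_\omega)$ as explicit functions of $\beta$.

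Writing $s=\tfrac{3}{1+\beta}\in(0,3)$ and $\phi(s)=-\log C_\beta=\inf_{u\neq0}\big[\log\tfrac{\|u\|_{L^2}\|u\|_{L^6}^3}{\|u\|_{L^4}^4}+s\log\tfrac{\|\nabla u\|_{L^2}}{\|u\|_{L^6}}\big]$, a computation gives $\log M=\log16+2\phi(s)+(3-2s)\phi'(s)-\tfrac32\log s-\tfrac12\log(3-s)$ and $\log\omega=-\log16+\log(3-s)-2\big(\phi(s)-s\phi'(s)\big)$, and since $\omega$ is decreasing in $s$,
\[
M'(\omega)>0 \quad\Longleftrightarrow\quad (3-2s)\,\phi''(s)<\frac{9-4s}{2s(3-s)}.
\]
The function $\phi$ is concave on $(0,3)$, being an infimum of affine functions of $s$, and $\phi(0^+)=0$ because $C_\alpha\to1$ as $\alpha\to\infty$ (Cauchy–Schwarz); hence $\phi''\leq0$ and $\phi-s\phi'\geq0$. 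This already yields $M'(\omega)>0$ for $\beta\geq1$ (i.e.\ $s\leq\tfrac32$) and $M'(\omega)<0$ for $\beta\leq\tfrac13$ (i.e.\ $s\geq\tfrac94$), so any sign change of $M'$ is confined to $s\in(\tfrac32,\tfrac94)$, where the criterion becomes $\phi''(s)>\Theta(s)$ with $\Theta(s)=\tfrac{9-4s}{2s(3-s)(3-2s)}$; a direct check shows $\Theta$ is strictly increasing from $-\infty$ at $s=\tfrac32$ to $0^-$ at $s=\tfrac94$.

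The hard part — which I expect to be the technical heart of the theorem — is to show that $\phi''-\Theta$ changes sign exactly once on $(\tfrac32,\tfrac94)$ and crosses downward there; equivalently, that $M''(\omega_0)>0$ at every critical point $\omega_0$ of $M$, which unwinds to $\beta''(\omega_0)>-\tfrac{(1-\beta(\omega_0))(1+\beta(\omega_0))^2}{2\omega_0^{2}}$. Concavity of $\phi$ alone does not obviously close this (it would if $\phi''$ were non-increasing on that window), so I would establish the missing estimate either through a second-order/envelope analysis of the minimisers of $F_\alpha$ for $\beta\in(\tfrac13,1)$, or through a shooting-method analysis of the ground-state ODE for \eqref{1.2} in that regime. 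Granting this sign property, $M'$ vanishes at a single point $\omega_*$, with $M'<0$ on $(0,\omega_*)$ and $M'>0$ on $(\omega_*,\tfrac{3}{16})$, which is exactly the claim of the theorem.
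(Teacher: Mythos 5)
Your reductions are largely sound: the envelope identity $\tfrac{d}{d\omega}S_\omega(P_\omega)=\tfrac12 M(P_\omega)$ together with the Nehari/Pohozaev relations does give $\tfrac{M'}{M}=\tfrac{\beta-1}{2\omega}+\tfrac{\beta'}{1+\beta}$, and (granting Theorem \ref{t1.1}, which legitimately precedes this statement) this settles strict increase wherever $\beta(\omega)\geq 1$, while the region $\beta(\omega)\leq\tfrac13$ can be handled either by your $\phi$-criterion or, more directly, by the inequality $\tfrac{d}{d\omega}M(P_\omega)<\tfrac{3\beta(\omega)-1}{2\omega}M(P_\omega)$ of Proposition \ref{p2.2} (which is how the paper proves Theorem \ref{t7.2}). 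But the proof is not complete: the entire content of the theorem beyond these easy regimes is the window $\beta(\omega)\in(\tfrac13,1)$, i.e.\ $s\in(\tfrac32,\tfrac94)$, and there you only \emph{state} what must be shown --- that $\phi''-\Theta$ changes sign exactly once, equivalently $M''>0$ at every critical point of $M$ --- and defer it to an unexecuted ``second-order envelope analysis'' or ``shooting method.'' Concavity of $\phi$ does not give this, as you concede, and without it your argument only yields that all critical points of $M$ lie in that window, not that $M$ has a single minimum with strict monotonicity on either side; a priori $M$ could oscillate there. This missing step is precisely the technical heart that the paper supplies by a completely different mechanism: uniqueness of positive minimizers of the cross-constrained problem (\ref{1.17}) in the intermediate mass range (Theorems \ref{t5.2}--\ref{t5.8}), which produces a one-to-one correspondence between $m$ and $\omega$ on $[\omega_{\ast},\omega^{\ast}]$ and hence strict increase of $M(P_\omega)$ there (Theorem \ref{t7.1}), combined with real-analyticity (Proposition \ref{p4.4}) to upgrade weak to strict monotonicity.

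A secondary caution: your parenthetical justification that $P_\omega$ ``coincides with the minimiser $Q_{\beta(\omega)}$ of (\ref{1.9}) because it is the unique positive solution of (\ref{1.2})'' is not sufficient as stated. Uniqueness of the ground state at fixed $\omega$ does not by itself imply that $P_\omega$ minimizes $F_{\beta(\omega)}$; one needs that the minimizer $Q_{\beta(\omega)}$, which equals some $P_{\omega'}$ with $\beta(\omega')=\beta(\omega)$ (Theorem \ref{t3.8}), has $\omega'=\omega$, and this requires the injectivity of $\omega\mapsto\beta(\omega)$, i.e.\ Theorem \ref{t1.1} again. Since you already invoke Theorem \ref{t1.1}, the claim can be repaired, but the dependence should be made explicit; it is exactly the kind of circularity the paper's own chain of lemmas is structured to avoid.
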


\begin{theorem}\label{t1.3}
Let $m\geq\frac{4}{3\sqrt{3}}M(Q_{1})$. Then the cross-constrained variational problem (\ref{1.17}) possesses a unique positive minimizer up to translations.
\end{theorem}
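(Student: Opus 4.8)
The plan is to establish uniqueness of the minimizer $\psi$ for the cross-constrained problem \eqref{1.17} by reducing it, via scaling, to the uniqueness of the positive solution of the Euler--Lagrange equation \eqref{1.2} at a single frequency $\omega$, and then to identify that frequency through the monotonicity provided by Theorems \ref{t1.1} and \ref{t1.2}. First I would derive the Euler--Lagrange system for \eqref{1.17}: a positive minimizer $\psi$ satisfies, for Lagrange multipliers $\lambda$ (for the mass constraint $M(u)=m$) and $\mu$ (for the Pohozaev constraint $V(u)=0$),
\begin{equation*}
-(1+2\mu)\Delta\psi -\tfrac14(1+3\mu)\lvert\psi\rvert^{2}\psi +(1+3\mu)\lvert\psi\rvert^{4}\psi +\lambda\psi = 0 .
\end{equation*}
Testing this identity against $\psi$, against $x\cdot\nabla\psi$ (the dilation generator), and combining with $V(\psi)=0$ and $E(\psi)=E_{\min}^{V}(m)$ gives a closed linear system for the scalars $\int\lvert\nabla\psi\rvert^{2}$, $\int\lvert\psi\rvert^{4}$, $\int\lvert\psi\rvert^{6}$, $\lambda$, $\mu$; the key point to extract is that $\mu\neq -\tfrac13$ (otherwise the equation degenerates and is incompatible with $V(\psi)=0$ for a nonzero function), so after dividing by $1+3\mu$ and absorbing constants the equation for $\psi$ is a rescaled copy of \eqref{1.2}.

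Second, I would make this precise by the same two-parameter rescaling already used to define $R_{\omega}$ in \eqref{1.7}: given a positive solution $u$ of the $\psi$-equation above, set $u(x)=a\,w(bx)$ and choose $a,b>0$ so that $w$ solves \eqref{1.2} for some $\omega=\omega(m)\in(0,\tfrac{3}{16})$ — the admissibility $\omega\in(0,\tfrac{3}{16})$ being forced by Berestycki--Lions \eqref{1.3} since a solution exists. By the Gidas--Ni--Nirenberg radial symmetry \cite{GNN1979} and the Serrin--Tang uniqueness \cite{ST2000} quoted in the excerpt, $w$ equals $P_{\omega}$ up to translation, hence $\psi$ is, up to translation, exactly the rescaled soliton $R_{\omega}$ of \eqref{1.7} for this particular $\omega$. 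Conversely every $R_{\omega}$ is admissible for \eqref{1.17} with mass $M(R_{\omega})$ and satisfies $V(R_{\omega})=0$ (this is the computation underlying \eqref{1.7a}), and a direct computation shows $E(R_{\omega})$ is, up to a fixed positive power of $M(R_{\omega})$, a universal constant — so every $R_{\omega}$ that has the right mass is automatically a minimizer. Thus uniqueness of $\psi$ up to translation is equivalent to the statement that the equation $M(R_{\omega})=m$ has exactly one solution $\omega\in(0,\tfrac{3}{16})$.

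Third, I would close the argument by analyzing the map $\omega\mapsto M(R_{\omega})$. From \eqref{1.7} one computes
\begin{equation*}
M(R_{\omega}) = \frac{1+\beta(\omega)}{4\beta(\omega)}\cdot\Big(\frac{4\sqrt{3\beta(\omega)}}{3[1+\beta(\omega)]}\Big)^{3} M(P_{\omega})
= \frac{16}{3\sqrt3}\,\frac{\sqrt{\beta(\omega)}}{\bigl(1+\beta(\omega)\bigr)^{2}}\,M(P_{\omega}),
\end{equation*}
so $M(R_{\omega})$ is an explicit function of $\beta(\omega)$ and $M(P_{\omega})$. By Theorem \ref{t1.1}, $\beta(\omega)$ is a strictly increasing (hence injective) function of $\omega$, so it is equivalent to study $M(R)$ as a function of $\beta\in(0,\infty)$; the scalar prefactor $\sqrt\beta/(1+\beta)^{2}$ is unimodal in $\beta$, while Theorem \ref{t1.2} controls the monotonicity of $M(P_{\omega})$. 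Here the delicate point — and what I expect to be the main obstacle — is that neither factor is monotone throughout, so one cannot conclude injectivity of $M(R_{\omega})$ from monotonicity alone; instead I would use the constraint $m\ge \frac{4}{3\sqrt3}M(Q_{1})$ together with \eqref{1.7a}, which says $M(R_{\omega})$ attains its minimum value $\frac{4}{3\sqrt3}M(Q_{1})$ only in the limit corresponding to $\beta\to 0^{+}$ (equivalently $\omega\to 0^{+}$), and is strictly larger otherwise, combined with the boundary behaviour as $\omega\to\tfrac{3}{16}^{-}$ (where $P_{\omega}$ concentrates and $M(P_{\omega})\to\infty$ along the branch governed by Theorem \ref{t1.2}) to pin down that the level set $\{M(R_{\omega})=m\}$ is a single point. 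Concretely, one shows $\omega\mapsto M(R_{\omega})$ is strictly increasing on all of $(0,\tfrac{3}{16})$ by differentiating the product above and using the sign information from Theorems \ref{t1.1} and \ref{t1.2} on the two ranges $\omega<\omega_{*}$ and $\omega>\omega_{*}$ separately — on $\omega<\omega_{*}$ the factor $M(P_{\omega})$ is decreasing but the derivative of the $\beta$-prefactor dominates, and on $\omega>\omega_{*}$ both contributions push in the same direction — so that strict monotonicity, hence uniqueness of the preimage of $m$, and therefore uniqueness of the positive minimizer $\psi$ up to translations, follows.
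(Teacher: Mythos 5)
Your reduction breaks down at two decisive points, so the proposed route would not go through. First, the Euler--Lagrange/scaling step only yields that a positive minimizer has the form $aP_{\omega}(bx)$ for some rescaling of a ground state; imposing $V=0$ then leaves, by Proposition \ref{p2.3}, exactly two possibilities: $\psi=P_{\omega}$ itself or the one nontrivial rescaling $R_{\omega}$ (this dichotomy is precisely Proposition \ref{p3.7}). Your proposal collapses this to ``$\psi$ is always a rescaled soliton $R_{\omega}$,'' which is false: by the last clause of Proposition \ref{p3.7} the minimizer cannot be a rescaled soliton once $m>M(Q_{1})-\delta$, and in the paper it is a ground state $P_{\omega}$ for every $m>m_{0}$. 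Relatedly, your claim that $E(R_{\omega})$ equals a universal constant times a fixed power of $M(R_{\omega})$ (so that ``every $R_{\omega}$ with the right mass is automatically a minimizer'') cannot hold, because (\ref{1.1}) has no scaling invariance; e.g.\ $E(R_{\omega})$ and $M(R_{\omega})$ tend to unrelated finite constants as $\omega\to 0$ while $E(R_{\omega})\sim M(R_{\omega})$ as $\omega\to\frac{3}{16}$, so no single power law fits. Consequently the asserted equivalence ``uniqueness of $\psi$ $\Longleftrightarrow$ unique $\omega$ with $M(R_{\omega})=m$'' is not valid.

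Second, the monotonicity you plan to prove is false. By Proposition \ref{p2.3}, $[\beta(\omega)-1]\frac{d}{d\omega}M(R_{\omega})\geq 0$, and by Theorem \ref{t3.11} the map $\alpha\mapsto M(R_{\alpha})$ is strictly decreasing on $(0,1]$ and strictly increasing on $[1,\infty)$; thus $\omega\mapsto M(R_{\omega})$ decreases up to the frequency $\omega^{\ast}$ with $\beta(\omega^{\ast})=1$ and increases afterwards, attaining its minimum $\frac{4}{3\sqrt{3}}M(Q_{1})$ at $\beta(\omega)=1$ --- not in the limit $\omega\to 0^{+}$, where $M(R_{\omega})\to\frac{16\sqrt{3\beta(g)}}{9}M(g)$. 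So ``$M(R_{\omega})$ strictly increasing on $(0,\frac{3}{16})$'' contradicts (\ref{1.7a}) together with Proposition \ref{p2.3}, and the level set $\{M(R_{\omega})=m\}$ generically has two points. The paper's proof instead keeps the dichotomy of Proposition \ref{p3.7} and splits the mass range: for $\frac{4}{3\sqrt{3}}M(Q_{1})\leq m<m_{0}$ the minimizer must be a rescaled soliton and is unique by the strict monotonicity of $\alpha\mapsto\lVert\nabla R_{\alpha}\rVert_{L^{2}}^{2}$ (Theorems \ref{t8.1}, \ref{t5.3}); at $m=m_{0}$ one has $P_{\omega_{\ast}}=R_{\omega_{\ast}}$ (Theorem \ref{t5.5}); for $m_{0}<m<M(Q_{1})$ the minimizer must be a ground state and is unique via the injectivity of $\alpha\mapsto\lVert\nabla Q_{\alpha}\rVert_{L^{2}}^{2}/\lVert Q_{\alpha}\rVert_{L^{2}}^{2}$ (Theorem \ref{t5.8}); and for $m\geq M(Q_{1})$ uniqueness follows from the strict monotonicity of $\alpha\mapsto M(Q_{\alpha})$ on $[1,\infty)$ (Theorem \ref{t4.2}). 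Invoking Theorems \ref{t1.1} and \ref{t1.2} is legitimate in the paper's ordering, but they do not rescue the single-branch monotonicity of $M(R_{\omega})$ on which your argument relies.
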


\begin{theorem}\label{t1.4}
Let $\omega\in (0, \frac{3} {16})$ and $P_{\omega}(x)$ be the positive solution of (\ref{1.2}). Then there exists a unique $\omega_{\ast}$ such that $0<\omega_{\ast}<\frac{3}{16}$ and $\beta(\omega_{\ast})=\frac{1}{3}$. When $\omega\in (0, \omega_{\ast}]$, the soliton $P_{\omega}(x)e^{i\omega t}$ of (\ref{1.1}) is orbitally unstable; When $\omega\in (\omega_{\ast}, \frac{3}{16})$, the soliton $P_{\omega}(x)e^{i\omega t}$ of (\ref{1.1}) is orbitally stable.
\end{theorem}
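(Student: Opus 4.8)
The plan is to derive Theorem~\ref{t1.4} from the mass-monotonicity of Theorem~\ref{t1.2} together with stability theory for ground-state solitons, everything being organised around $d(\omega):=E(P_\omega)+\omega M(P_\omega)$, for which $d'(\omega)=M(P_\omega)$ (because $P_\omega$ is a critical point of $E+\omega M$) and hence $d''(\omega)=\tfrac{d}{d\omega}M(P_\omega)$. By Theorem~\ref{t1.1}, $\omega\mapsto\beta(\omega)$ is continuous and strictly increasing on $(0,\tfrac{3}{16})$, so at most one $\omega_*$ satisfies $\beta(\omega_*)=\tfrac13$; its existence, and the fact that this $\omega_*$ is the minimiser of $M(P_\omega)$ supplied by Theorem~\ref{t1.2}, I read off from the rescaled soliton, since by (\ref{1.7}) one has $R_\omega=P_\omega$ exactly when $\beta(\omega)=\tfrac13$, with $M(P_\omega)\ge M(R_\omega)$ and equality only there by (\ref{1.7a}) (the matching of the two characterisations of $\omega_*$ being part of the analysis behind Theorem~\ref{t1.2}). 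A companion computation, used repeatedly, is that the Pohozaev identities for $P_\omega$, namely $\lVert P_\omega\rVert_{L^4}^4=\tfrac43(1+\beta(\omega))\lVert\nabla P_\omega\rVert_{L^2}^2$ and $\lVert P_\omega\rVert_{L^6}^6=\beta(\omega)\lVert\nabla P_\omega\rVert_{L^2}^2$, give along the mass-preserving rescaling $P_\omega^\lambda(x):=\lambda^{3/2}P_\omega(\lambda x)$
\[
\frac{d}{d\lambda}\,E(P_\omega^\lambda)\Big|_{\lambda=1}=V(P_\omega)=0,\qquad
\frac{d^2}{d\lambda^2}\,E(P_\omega^\lambda)\Big|_{\lambda=1}=\bigl(3\beta(\omega)-1\bigr)\lVert\nabla P_\omega\rVert_{L^2}^2,
\]
so the sign of $3\beta(\omega)-1$ is the sign of the second variation of $E$, at fixed mass, along the scaling direction.

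For $\omega\in(\omega_*,\tfrac{3}{16})$ one has $d''(\omega)=\tfrac{d}{d\omega}M(P_\omega)>0$ by Theorem~\ref{t1.2}. Combined with the standard spectral facts for the ground state of (\ref{1.2}) --- the operator $L_-:=-\Delta+\omega-\lvert P_\omega\rvert^{2}+\lvert P_\omega\rvert^{4}$ is $\ge0$ with $\ker L_-=\mathrm{span}\{P_\omega\}$ (since $P_\omega>0$), and $L_+:=-\Delta+\omega-3\lvert P_\omega\rvert^{2}+5\lvert P_\omega\rvert^{4}$ has exactly one negative eigenvalue (from the variational characterisation of a rescaling of $P_\omega$ as an extremiser of $F_\alpha$) and trivial kernel modulo translations --- the Grillakis--Shatah--Strauss criterion yields orbital stability of the orbit $\{e^{i\gamma}P_\omega(\cdot-y):\gamma\in\mathbb{R},\,y\in\mathbb{R}^{3}\}$. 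When in addition $M(P_\omega)\ge M(Q_{1})$, the same conclusion can be obtained variationally: any minimiser of (\ref{1.13}) solves (\ref{1.2}), hence is a $P_{\omega'}$ of the same mass, and by the branch comparison $E(P_{\omega_2})-E(P_{\omega_1})=\int_{\omega_1}^{\omega_2}\bigl(M(P_\omega)-M(P_{\omega_1})\bigr)\,d\omega<0$ for $\omega_1<\omega_*<\omega_2$ of equal mass (using Theorem~\ref{t1.2}) it lies on the increasing branch, so $\omega'=\omega$; a Cazenave--Lions compactness argument for minimising sequences, combined with the Serrin--Tang uniqueness and the uniqueness of Theorem~\ref{t1.3}, then gives stability of the orbit.

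For $\omega\in(0,\omega_*)$ one has $d''(\omega)<0$ by Theorem~\ref{t1.2}, and the same spectral facts place us in the Grillakis--Shatah--Strauss instability regime; concretely, since $3\beta(\omega)-1<0$ makes $\lambda=1$ a strict local maximum of $g(\lambda):=E(P_\omega^\lambda)$, the data $u_0^\varepsilon:=P_\omega^{1+\varepsilon}\to P_\omega$ in $H^1$ (for small $\varepsilon>0$) satisfy $M(u_0^\varepsilon)=M(P_\omega)$, $E(u_0^\varepsilon)<E(P_\omega)$ and, from $V(P_\omega^\lambda)=\lambda\,g'(\lambda)$, $V(u_0^\varepsilon)<0$, whereupon the virial identity $\tfrac{d^2}{dt^2}\!\int|x|^2|u|^2\,dx=8\,V(u(t))$ forces finite-time blow-up once the flow is trapped in $\{V<0\}$; either way $u^\varepsilon(t)$ leaves every $H^1$-neighbourhood of the (bounded) orbit. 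For the endpoint $\omega=\omega_*$ the second variation vanishes, but the third derivative equals $\bigl(18\beta(\omega_*)-2\bigr)\lVert\nabla P_{\omega_*}\rVert_{L^2}^2=4\lVert\nabla P_{\omega_*}\rVert_{L^2}^2>0$, so $\lambda=1$ is still a non-minimising critical point of $E$ on the scaling ray; since $d''$ changes sign at $\omega_*$ (Theorem~\ref{t1.2}), $\omega_*$ is a turning point of the mass--frequency curve, and the degenerate version of the Grillakis--Shatah--Strauss argument, valid at such turning points, again gives instability.

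The principal obstacles are twofold. First, the spectral hypotheses for the cubic-quintic ground state --- that the Morse index of $L_+$ equals one and that $\ker L_+=\{0\}$ modulo symmetries --- are not entirely routine; they are most cleanly obtained from the extremality of (a rescaling of) $P_\omega$ for the functionals $F_\alpha$ and from the Serrin--Tang uniqueness of positive solutions of (\ref{1.2}). Second, if one prefers to run the instability through the virial mechanism rather than Grillakis--Shatah--Strauss, the trapping step is delicate: because $P_\omega$ itself belongs to $\{u:M(u)=M(P_\omega),\,V(u)=0\}$, one does \emph{not} have the naive energy barrier $E_{min}^{V}(M(P_\omega))\ge E(P_\omega)$ --- indeed, for $\omega<\omega_*$ the cross-constrained infimum (\ref{1.17}) is attained at a genuine rescaled soliton of strictly smaller energy --- so one must show, by a quantitative scaling argument exploiting Theorems~\ref{t1.2} and \ref{t1.3}, that $V(u^\varepsilon(t))$ cannot return to $0$ along the flow; the endpoint $\omega=\omega_*$, where the constructed data instead sit on the side $V>0$, is then handled by dispersion below the Killip--Oh--Pocovnicu--Visan threshold, legitimate since $M(P_{\omega_*})<M(Q_1)$.
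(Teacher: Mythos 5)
Your overall route is the same as the paper's: existence/uniqueness of $\omega_\ast$ from the monotonicity of $\beta$ (Theorem \ref{t1.1}), then the Grillakis--Shatah--Strauss criterion with $d''(\omega)$ tied to $\frac{d}{d\omega}M(P_\omega)$ and the sign read off from Theorem \ref{t1.2}; the spectral input you list is exactly the paper's Proposition \ref{p9.2}, imported from \cite{KOPV2017}. Two small inaccuracies first: the correct functional is $d(\omega)=E(P_\omega)+\frac{\omega}{2}M(P_\omega)$ (with $d'=\frac12 M(P_\omega)$, $d''=\frac12\frac{d}{d\omega}M(P_\omega)$), not $E+\omega M$; and Theorem \ref{t1.2} (``strictly increasing/decreasing'') does not by itself give the pointwise strict signs $d''>0$ on $(\omega_\ast,\frac3{16})$ or $d''<0$ on $(0,\omega_\ast)$ that you assert. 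The paper gets strict negativity on $(0,\omega_\ast)$ from the inequality $\frac{d}{d\omega}M(P_\omega)<\frac{3\beta(\omega)-1}{2\omega}M(P_\omega)$ of Proposition \ref{p2.2} (since $\beta<\frac13$ there), and on the stable side it only claims $d''\ge 0$ and then uses real-analyticity of $\omega\mapsto M(P_\omega)$ (Proposition \ref{p4.4}) to conclude that zeros of $d''$ are isolated, so that $d$ is convex near each $\omega\in(\omega_\ast,\frac3{16})$ and the convexity form of the criterion (Proposition \ref{p9.4}) applies. You never invoke analyticity, and without it (or the differential inequality) your sign statements are not justified as written.

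The genuine gap is the endpoint $\omega=\omega_\ast$, where $d''(\omega_\ast)=0$. You appeal to a ``degenerate version of the Grillakis--Shatah--Strauss argument, valid at such turning points,'' supplemented by a third-derivative computation along the mass-preserving scaling ray and a sketched virial/trapping alternative whose key step (that $V(u^\varepsilon(t))$ cannot return to $0$) you yourself flag as unresolved; none of this is carried out, and classical GSS \cite{GSS1987} gives no conclusion when $d''=0$. The paper's proof of Theorem \ref{t9.5} closes this case differently and cheaply: by Theorem \ref{t1.2} $\frac{d}{d\omega}M(P_\omega)$ vanishes at $\omega_\ast$, by Proposition \ref{p4.4} (analyticity) this zero is isolated, hence $d''<0$ just left of $\omega_\ast$ and $d''>0$ just right of it, so $d$ is not convex in any neighborhood of $\omega_\ast$, and the ``if and only if'' formulation of Proposition \ref{p9.4} yields instability at $\omega_\ast$ directly. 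If you want to keep your structure, you must either prove (or properly cite) a degenerate-turning-point instability theorem applicable here, or adopt the paper's convexity-criterion-plus-analyticity argument; as it stands the case $\omega=\omega_\ast$, which is part of the statement, is not proved. The same analyticity device would also repair the pointwise-sign issues noted above.
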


\begin{theorem}\label{t1.5}
Let $m_{0}=M(P_{\omega_{\ast}})$. Then, when $m=m_{0}$, (\ref{1.2}) has a unique positive normalized solution with the prescribed mass $M(u)=m_{0}$, which is just $P_{\omega_{\ast}}$. When $m> m_{0}$, (\ref{1.2}) just has two positive normalized solutions with the prescribed mass $M(u)=m$. The one is $P_{\omega_{1}}$ with $\omega_{1}\in (0, \omega_{\ast})$ and the other is $P_{\omega_{2}}$ with $\omega_{2}\in (\omega_{\ast}, \frac{3}{16})$. When $m<m_{0}$, (\ref{1.2}) has no positive normalized solution with the prescribed mass $M(u)=m$.
\end{theorem}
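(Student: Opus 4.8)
The plan is to translate the classification of normalized solutions entirely into the analysis of the map $\omega\mapsto M(P_\omega)$ already carried out in Theorems \ref{t1.1}--\ref{t1.4}. The starting point is the observation that, by Berestycki--Lions together with Gidas--Ni--Nirenberg and Serrin--Tang as recalled after \eqref{1.3}, for every $\omega\in(0,\tfrac{3}{16})$ there is exactly one positive solution $P_\omega$ of \eqref{1.2}, and conversely every positive solution of \eqref{1.2} equals $P_\omega$ for a uniquely determined $\omega$ in this range. Hence a positive normalized solution with prescribed mass $m$ is precisely a value $\omega\in(0,\tfrac{3}{16})$ with $M(P_\omega)=m$, and the problem reduces to counting preimages of $m$ under the function $g(\omega):=M(P_\omega)$.

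First I would record the shape of $g$. By Theorem \ref{t1.2} there is $\omega_*\in(0,\tfrac{3}{16})$ such that $g$ is strictly decreasing on $(0,\omega_*]$ and strictly increasing on $[\omega_*,\tfrac{3}{16})$, so $g$ attains its global minimum exactly at $\omega_*$ with minimal value $m_0:=g(\omega_*)=M(P_{\omega_*})$; moreover the $\omega_*$ here is the same one appearing in Theorem \ref{t1.4}, characterized by $\beta(\omega_*)=\tfrac13$, which is legitimate because Theorem \ref{t1.1} shows $\beta$ is a strictly increasing homeomorphism onto its range so the equation $\beta(\omega)=\tfrac13$ has at most one solution and Theorem \ref{t1.4} guarantees existence. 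Next I would identify the two boundary limits $L_0:=\lim_{\omega\downarrow0}g(\omega)$ and $L_1:=\lim_{\omega\uparrow 3/16}g(\omega)$ in $(m_0,+\infty]$. With the monotonicity and these limits in hand, the count is immediate from the intermediate value theorem applied separately on each branch: for $m<m_0$ there is no $\omega$ with $g(\omega)=m$; for $m=m_0$ the unique solution is $\omega_*$, i.e.\ the normalized solution is $P_{\omega_*}$; and for $m_0<m<\min\{L_0,L_1\}$ there is exactly one $\omega_1\in(0,\omega_*)$ on the decreasing branch and exactly one $\omega_2\in(\omega_*,\tfrac{3}{16})$ on the increasing branch with $g(\omega_i)=m$, giving precisely the two normalized solutions $P_{\omega_1},P_{\omega_2}$ asserted in the theorem.

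The main obstacle is handling the boundary behaviour of $g$, i.e.\ showing that the two branches of $g$ each cover the whole of $(m_0,+\infty)$ so that the case $m>m_0$ really does produce two solutions for \emph{every} such $m$ rather than only for $m$ below some threshold. For the left endpoint this amounts to the blow-up of the mass as $\omega\downarrow0$: as $\omega\to0$ the equation \eqref{1.2} degenerates toward the defocusing energy-critical problem $-\Delta u+|u|^4u=|u|^2u$ which has no $H^1$ ground state, and one expects $P_\omega$ to spread out with $M(P_\omega)\to\infty$; I would establish this by the scaling/concentration-compactness-type estimates relating $P_\omega$ to the rescaled soliton $R_\omega$ and the minimizer $Q_\alpha$ of \eqref{1.9}, using \eqref{1.7a} together with the fact $\beta(\omega)\to 0$ from Theorem \ref{t1.1} and \eqref{1.7} to force $\|P_\omega\|_{L^2}\to\infty$. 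For the right endpoint $\omega\uparrow\tfrac{3}{16}$, the solution $P_\omega$ should converge (after rescaling) to the ground state of the focusing cubic problem $-\Delta u-|u|^2u+\tfrac{3}{16}u=0$-type limiting profile, and one must check the limiting mass is $+\infty$ as well (equivalently $\beta(\omega)\to\infty$ as $\omega\uparrow\tfrac{3}{16}$, again via Theorem \ref{t1.1} and \eqref{1.7}); once both limits are $+\infty$, $\min\{L_0,L_1\}=+\infty$ and the two-solution conclusion holds for all $m>m_0$. The remaining steps are routine: the strict monotonicity on each branch gives uniqueness within that branch, the disjointness of the branches (they meet only at $\omega_*$) gives that the two solutions are distinct when $m>m_0$, and the classification of \emph{positive} solutions from Serrin--Tang rules out any further normalized solutions.
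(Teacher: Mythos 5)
Your proposal is correct in substance and follows the same overall skeleton as the paper: by Serrin--Tang uniqueness, positive normalized solutions of \eqref{1.2} with mass $m$ correspond exactly to preimages of $m$ under $\omega\mapsto M(P_\omega)$, and the count then follows from the V-shape of this map given by Theorem \ref{t1.2} together with the divergence of the mass at both endpoints. Where you differ is in how surjectivity of each monotone branch onto $[m_0,\infty)$ is obtained: the paper (Theorem \ref{t10.1}) gets the decreasing branch from the derivative bound in Proposition \ref{p2.2} and the increasing branch from the bijection already built into the variational uniqueness results (Theorem \ref{t1.3} and its precursors), whereas you argue by continuity and the intermediate value theorem combined with the endpoint limits $M(P_\omega)\to\infty$ as $\omega\downarrow 0$ and $\omega\uparrow\frac{3}{16}$. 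Your route is slightly more elementary, but note two points. First, what you describe as the main obstacle is not one in the context of this paper: the endpoint asymptotics are exactly Proposition \ref{p2.2} (II) and (III), quoted from Killip--Oh--Pocovnicu--Visan, namely $M(P_\omega)\sim\omega^{-1/2}\|g\|_{L^2}^2$ as $\omega\to 0$ and $M(P_\omega)\sim(\tfrac{3}{16}-\omega)^{-3}$ as $\omega\to\tfrac{3}{16}$, so no concentration-compactness argument is needed. Second, your attribution of $\beta(\omega)\to 0$ (as $\omega\downarrow 0$) and $\beta(\omega)\to\infty$ (as $\omega\uparrow\tfrac{3}{16}$) to Theorem \ref{t1.1} is inaccurate: monotonicity alone gives only existence of the limits, not their values; these limits again come from Proposition \ref{p2.2} (II), (III). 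With those citations corrected (and continuity of $\omega\mapsto M(P_\omega)$, available from the real-analyticity used in the paper, made explicit for the IVT step), your argument closes.
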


Theorem \ref{t1.1} completely resolves Conjecture 2.6 posed by
Killip, Oh, Pocovnicu and Visan (see \cite{KOPV2017}), which is concerning the monotonicity of frequency.

Theorem \ref{t1.2} completely resolves Conjecture 2.3 posed by
Killip, Oh, Pocovnicu and Visan (see \cite{KOPV2017}), which is concerning the monotonicity of mass. In \cite{LR2020}, Lewin and Rota Nodari get the exact behavior of the mass $M(P_{\omega})$ and its derivative $\frac{d}{d\omega}M(P_{\omega})$, and show the monotonicity close to both $0$ and $\frac{3}{16}$. They give a proof of the monotonicity conjecture at large mass. In fact, Lewin and Rota Nodari \cite{LR2020} consider more general situations. Therefore Lewin and Rota Nodari improve a lot about the monotonicity conjecture and make some advances. The related studies we also refer to Carles and Sparber \cite{CS2021} as well as Jeanjean et al \cite{JJTV2022,JL2021,JL2022}.

Theorem \ref{t1.3} gets uniqueness of the positive minimizer for $E_{min}^{V}(m)$ as well as $E_{min}(m)$. Thus Theorem \ref{t1.3} settles the questions raised by \cite{JL2022} and \cite{KOPV2017}, where uniqueness of the energy minimizer is proposed. We see that uniqueness of the energy minimizer is concerned with the monotonicity of $M(P_{\omega})$ for $\omega \in (0,\frac{3}{16})$ and stability of solitons. In fact, uniqueness of positive minimizer of the variational problem is remarkable \cite{LR2020,GZ2008,Z2000,Z2002,Z2005}.

Theorem \ref{t1.4} establishes sharp results for orbital stability of solitons of (\ref{1.1}) with regard to every frequency by the critical frequency $\omega_{\ast}$. Therefore Theorem \ref{t1.4} settles the questions raised by \cite{KOPV2017,S2021,CS2021,JJTV2022,JL2022,LR2020}, where the orbitally stability of solitons is only with regard to the set of ground states. Then Theorem \ref{t1.4} also answers the open problem on stability of solitons proposed by Tao \cite{T2009}.

Theorem \ref{t1.5} first gives a complete classification about normalized solutions of (\ref{1.2}), which depends on establishing two correspondences between the frequency and the mass. Thus Theorem \ref{t1.5} settles the questions raised by \cite{BMRV2021,JL2021, WW2022, S2021}.

We see that (\ref{1.1}) is almost a perfect nonlinear dispersive model, which is global well-posedness in energy space and possesses scattering, stable solitons as well as unstable solitons. By \cite{KOPV2017} and Theorem \ref{t1.4} in this paper, the global dynamics of (\ref{1.1}) is more comprehensively described (also see \cite{AM2022,MKV2021}). According to \cite{CMM2011,MM2006,MMT2006}, multi-solitons of (\ref{1.1})
can be constructed (also refer to \cite{CL2011,BZ2022}). But the soliton resolution conjecture for (\ref{1.1}) is still open (see \cite{T2009, M2024}).

This paper is organized as follows. In section 2, we induce some propositions about solitons and rescaled solitons based on \cite{KOPV2017}. In section 3, we state some known results and new results according to variational approaches. Especially, Theorem \ref{t3.9} claims an essential new uniqueness result. In section 4, we prove some key monotonicity and uniqueness results in case $\alpha\geq 1$. In section 5, we prove some key uniqueness and monotonicity results in case $\frac{1}{3}\leq \alpha<1$. In section 6, we complete the proof of monotonicity of the frequency. In section 7, we complete the proof of monotonicity of the mass. In section 8, we prove uniqueness of energy minimizer. In section 9, we establish sharp stability threshold of solitons for (\ref{1.1}). In section 10, we give classification of normalized ground states of (\ref{1.2}) for the first time.

\section{Ground states and rescaled solitons}\label{sec2}

According to  Killip, Oh, Pocovnicu and Visan \cite{KOPV2017}, the following propositions are true.

\begin{proposition}\label{p2.1}
(\cite{KOPV2017}) (\ref{1.2}) possesses a positive solution $P_\omega$ if and only if $\omega\in(0,\frac{3}{16})$. In addition, $P_\omega$  holds the following properties.\\
(I) $P_\omega$ is radially symmetric and unique up to translations.\\
(II) $P_\omega(x)$ is a real-analytic function of $x$ and that for some $c=c(\omega)>0$ as $\lvert x\rvert\rightarrow \infty$,
\begin{equation*}
\lvert x\rvert \mathrm{exp}\{\sqrt{\omega}\lvert x\rvert\}P_\omega(x)\rightarrow c,\quad
\mathrm{exp}\{\sqrt{\omega}\lvert x\rvert\}x \nabla P_\omega(x)\rightarrow -\sqrt{\omega}c.
\end{equation*}
(III) Nehari identity and Pohozaev identity:
\begin{equation*}
\int \lvert \nabla P_\omega \rvert^2+\omega \rvert P_\omega \rvert^2+\rvert P_\omega \rvert^6-\rvert P_\omega \rvert^4dx=0,
\end{equation*}
\begin{equation*}
\int \frac{1}{3}\lvert \nabla P_\omega \rvert^2 + \frac{1}{3}P_\omega^6-\frac{1}{4}P_\omega^4dx=0.
\end{equation*}
(IV) \begin{equation*}
\frac{d}{d\omega}E(P_\omega)=-\frac{\omega}{2}\frac{d}{d\omega}M(P_\omega),\quad
\frac{d}{d\omega}\int \lvert \nabla P_\omega \rvert^2dx=\frac{3}{2}M(P_\omega).
\end{equation*}
\end{proposition}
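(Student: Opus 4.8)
The plan is to follow the route of Killip, Oh, Pocovnicu and Visan \cite{KOPV2017}.

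\textbf{Parts (I)--(II).} Existence of a positive solution for exactly $\omega\in(0,\frac{3}{16})$ I would obtain from the Berestycki--Lions existence theory \cite{BL1983}: writing \eqref{1.2} as $-\Delta u=u^{3}-u^{5}-\omega u=:g(u)$, the structural conditions $g'(0)=-\omega<0$ and ``$\int_{0}^{s_{0}}g>0$ for some $s_{0}>0$'' hold precisely when $0<\omega<\frac{3}{16}$ (the latter reduces to the discriminant condition $9-48\omega>0$ for the quadratic $2t^{2}-3t+6\omega$ in $t=s_{0}^{2}$). Radial symmetry of any positive solution about some point is the moving-plane theorem of Gidas--Ni--Nirenberg \cite{GNN1979}, and uniqueness of the positive radial solution is the shooting/ODE result of Serrin--Tang \cite{ST2000}; together these give (I). For (II), I would bootstrap elliptic regularity (the nonlinearity is polynomial in $u$) to reach $P_{\omega}\in C^{\infty}$ and then invoke Morrey's analytic elliptic regularity to upgrade to real-analyticity. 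The decay I would read off the radial equation via the substitution $v(r)=rP_{\omega}(r)$, which gives $-v''+\omega v=r(P_{\omega}^{3}-P_{\omega}^{5})=O(r^{-2})$ since $P_{\omega}(r)\to0$; a variation-of-parameters plus Gronwall argument then yields $v(r)=c\,e^{-\sqrt{\omega}\,r}(1+o(1))$, with $c>0$ by the strong maximum principle and $c\neq0$ by ruling out super-exponential decay. Differentiating the resulting integral representation and using $x\cdot\nabla P_{\omega}=rP_{\omega}'(r)=v'(r)-v(r)/r$ produces the gradient asymptotics.

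\textbf{Part (III).} The Nehari identity comes from testing \eqref{1.2} with $P_{\omega}$. For the Pohozaev identity I would test \eqref{1.2} with the dilation field $x\cdot\nabla P_{\omega}$ on a ball $B_{R}$ and let $R\to\infty$, using the exponential decay from (II) to kill the boundary terms; this produces $\tfrac12\|\nabla P_{\omega}\|_{L^{2}}^{2}=3\int F(P_{\omega})\,dx$ with $F(s)=\tfrac14 s^{4}-\tfrac16 s^{6}-\tfrac12\omega s^{2}$. Eliminating the term $\omega\|P_{\omega}\|_{L^{2}}^{2}$ between this relation and the Nehari identity leaves exactly $\int(\tfrac13|\nabla P_{\omega}|^{2}+\tfrac13 P_{\omega}^{6}-\tfrac14 P_{\omega}^{4})\,dx=0$.

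\textbf{Part (IV).} The first identity is then soft: since \eqref{1.2} says $E'(P_{\omega})=-\omega P_{\omega}$, the chain rule gives $\frac{d}{d\omega}E(P_{\omega})=\langle E'(P_{\omega}),\partial_{\omega}P_{\omega}\rangle=-\omega\int P_{\omega}\,\partial_{\omega}P_{\omega}\,dx=-\tfrac{\omega}{2}\frac{d}{d\omega}M(P_{\omega})$. For the second, I would combine the two identities of (III) into the algebraic relations $\|P_{\omega}\|_{L^{4}}^{4}=4\omega M(P_{\omega})$ and hence $\int|\nabla P_{\omega}|^{2}\,dx=3E(P_{\omega})+\tfrac32\omega M(P_{\omega})$; differentiating this in $\omega$ and inserting the first identity makes the $\frac{d}{d\omega}M(P_{\omega})$ terms cancel, leaving $\frac{d}{d\omega}\int|\nabla P_{\omega}|^{2}\,dx=\tfrac32 M(P_{\omega})$.

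The one genuine ingredient behind Part (IV) is the $C^{1}$ (indeed real-analytic) dependence of $P_{\omega}$ on $\omega$, which I would extract from the implicit function theorem applied to $\Phi(\omega,u)=-\Delta u-u^{3}+u^{5}+\omega u$ between the radial spaces $H^{1}_{\mathrm{rad}}$ and $H^{-1}_{\mathrm{rad}}$. The linearization $-\Delta+\omega-3P_{\omega}^{2}+5P_{\omega}^{4}$ is Fredholm of index zero (the potential terms are compact relative to $-\Delta+\omega$ by the exponential decay of (II)), so its invertibility is equivalent to non-degeneracy in the radial sector. Proving this non-degeneracy is the main obstacle; I expect to handle it by a Sturm--Liouville/Wronskian analysis of the radial linearized ODE, which is the point where the explicit cubic--quintic structure must be used.
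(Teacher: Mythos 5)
The paper itself contains no proof of Proposition \ref{p2.1}: it is imported verbatim from Killip--Oh--Pocovnicu--Visan \cite{KOPV2017}, so the only meaningful comparison is with that source, and your sketch follows the same standard route (Berestycki--Lions existence, Gidas--Ni--Nirenberg symmetry, Serrin--Tang uniqueness, Nehari/Pohozaev algebra, differentiation of the soliton curve). The algebra you outline for (III) and (IV) is correct: eliminating $\omega\int P_\omega^2$ between the Nehari and Pohozaev identities gives $\tfrac13\int|\nabla P_\omega|^2+\tfrac13\int P_\omega^6-\tfrac14\int P_\omega^4=0$; combining the two identities yields $\int P_\omega^4=4\omega M(P_\omega)$ and $\int|\nabla P_\omega|^2=3E(P_\omega)+\tfrac32\omega M(P_\omega)$; and differentiating, together with $E'(P_\omega)=-\omega P_\omega$, produces exactly the two formulas in (IV).

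The genuine gap is the one you flag yourself: (IV) presupposes that $\omega\mapsto P_\omega$ is differentiable as an $H^1$-valued map, and your proposal reduces this to radial non-degeneracy of $-\Delta+\omega-3P_\omega^2+5P_\omega^4$ but then only announces the intention to prove that by a Sturm--Liouville/Wronskian analysis. This non-degeneracy is precisely the nontrivial analytic content behind (IV); it is available in the literature (it is part of the framework of \cite{KOPV2017}, and is proved for general double-power nonlinearities by Lewin and Rota Nodari \cite{LR2020}), but as written your argument for (IV) is conditional on it. Two smaller repairs: (i) you only verify that the Berestycki--Lions hypotheses hold exactly when $0<\omega<\tfrac{3}{16}$, which gives existence but not the ``only if'' half; the missing direction follows from your own identities, since they give $\omega M(P_\omega)=\tfrac13\int|\nabla P_\omega|^2+\tfrac13\int P_\omega^6>0$, forcing $\omega>0$, while Pohozaev forces $\int G(P_\omega)>0$ and hence $\sup_{s>0}G(s)>0$, i.e.\ $\omega<\tfrac{3}{16}$; (ii) in the decay argument the bound $r(P_\omega^3-P_\omega^5)=O(r^{-2})$ alone does not yield $v(r)=c\,e^{-\sqrt\omega r}(1+o(1))$, because an $O(r^{-2})$ forcing produces a polynomially decaying particular solution that swamps $e^{-\sqrt\omega r}$; you must first upgrade $P_\omega$ to exponential decay (comparison with $e^{-\sqrt{\omega/2}\,r}$ via the maximum principle, say) and only then run variation of parameters. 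Both of these are routine, but they should be stated.
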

\begin{proposition}\label{p2.2}
(\cite{KOPV2017}) Let $\omega\in(0,\frac{3}{16})$ and $P_\omega$ be the ground state of (\ref{1.2}). Put $\beta(\omega)=\int P_\omega ^6dx$ / $\int \lvert \nabla P_\omega \rvert^2dx$, then \\
~~~~~~~~~~\\
(I)
\begin{equation*}
E(P_\omega)=\frac{1-\beta(\omega)}{6}\int\lvert \nabla P_\omega\rvert ^2dx,
\end{equation*}
\begin{equation*}
M(P_\omega)=\frac{\beta(\omega)+1}{3\omega}\int \lvert \nabla P_\omega \rvert^2dx,
\end{equation*}
\begin{equation*}
\frac{d}{d\omega}M(P_{\omega})<\frac{3\beta(\omega)-1}{2\omega}M(P_{\omega}).
\end{equation*}
(II) As $\omega\rightarrow 0$ we have $\beta(\omega)=\omega \beta(g)+O(\omega^{2})$,
\begin{equation*}
M(P_{\omega})=\frac{1}{\sqrt{\omega}}\int g(x)^{2}dx+\frac{\sqrt{\omega}}{2}\int g(x)^{6}dx+O(\omega^{3/2}),
\end{equation*}
and
\begin{equation*}
E(P_{\omega})=\frac{\sqrt{\omega}}{2}\int g(x)^{2}dx-\frac{\omega^{3/2}}{12}\int g(x)^{6}dx+O(\omega^{5/2}),
\end{equation*}
where $g$ is the unique positive radially symmetric solution to $-\Delta g-g^{3}+g=0$.\\
(III) As $\omega\rightarrow \frac{3}{16}$ we have $\beta(\omega)\rightarrow\infty$, $M(P_{\omega})\rightarrow\infty$, and $E(P_{\omega})\rightarrow -\infty$; Indeed,
\begin{equation*}
\beta(\omega)\sim (\frac{3}{16}-\omega)^{-1},\quad M(P_{\omega})\sim (\frac{3}{16}-\omega)^{-3}, \quad \rvert E(P_{\omega})\rvert\sim (\frac{3}{16}-\omega)^{-3}.
\end{equation*}
\end{proposition}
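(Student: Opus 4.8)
This is a result of Killip, Oh, Pocovnicu and Visan \cite{KOPV2017}; I would reconstruct their argument part by part, using Proposition \ref{p2.1} freely. \emph{Part (I).} The two identities are pure linear algebra on the conserved quantities. Abbreviate $A=\|\nabla P_\omega\|_{L^2}^2$, $B=\|P_\omega\|_{L^2}^2$, $C=\|P_\omega\|_{L^4}^4$, $D=\|P_\omega\|_{L^6}^6$, so that $\beta(\omega)=D/A$. The Pohozaev identity of Proposition \ref{p2.1}(III) reads $A+D=\tfrac34C$, hence the Nehari identity gives $\omega B=C-A-D=\tfrac13(A+D)$, i.e. $M(P_\omega)=\tfrac{1+\beta(\omega)}{3\omega}A$; substituting $C=\tfrac43(A+D)$ into $E(P_\omega)=\tfrac12A-\tfrac14C+\tfrac16D$ collapses it to $\tfrac16(A-D)=\tfrac{1-\beta(\omega)}{6}A$. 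For the strict inequality, differentiate $M(P_\omega)=\tfrac{1+\beta}{3\omega}A$ using $\tfrac{d}{d\omega}A=\tfrac32M$ from Proposition \ref{p2.1}(IV): one finds $\tfrac{d}{d\omega}M=\tfrac{\beta'(\omega)}{1+\beta}M+\tfrac{\beta-1}{2\omega}M$, so the claim $\tfrac{d}{d\omega}M<\tfrac{3\beta-1}{2\omega}M$ is \emph{equivalent} to $\beta'(\omega)<\tfrac{\beta(1+\beta)}{\omega}$, equivalently to a lower bound on $\langle L_\omega^{-1}P_\omega,P_\omega\rangle=-\tfrac12\tfrac{d}{d\omega}M$, where $L_\omega=-\Delta-3P_\omega^2+5P_\omega^4+\omega$ is the linearization of (\ref{1.2}). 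This bound is the one genuinely nontrivial point, and it is a \emph{tight} inequality (both sides agree to leading order as $\omega\to0$, by Part (II)); I would extract it from the variational description of $P_\omega$ --- after rescaling, $P_\omega$ is a minimizer of $F_{\beta(\omega)}$ in (\ref{1.8})--(\ref{1.9}), as one sees by matching the Euler--Lagrange equation of (\ref{1.9}) with (\ref{1.2}) --- using the second-order optimality of that minimizer tested against the explicit deformation fields $P_\omega$, $x\cdot\nabla P_\omega$, $\partial_\omega P_\omega$, whose images under $L_\omega$ are explicit combinations of $P_\omega$, $P_\omega^3$ and $P_\omega^5$.

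\emph{Part (II).} I would use the small-frequency rescaling $P_\omega(x)=\sqrt{\omega}\,w_\omega(\sqrt{\omega}\,x)$, which turns (\ref{1.2}) into $-\Delta w_\omega-w_\omega^3+w_\omega+\omega\,w_\omega^5=0$, a regular perturbation in $\omega$ of $-\Delta g-g^3+g=0$. Nondegeneracy of $g$ and the implicit function theorem produce a branch $w_\omega=g+\omega g_1+O(\omega^2)$ in $H^1$ with $g_1=-L_0^{-1}(g^5)$, $L_0=-\Delta-3g^2+1$. Undoing the change of variables writes $M$, $E$, $\beta$ as powers of $\omega$ times integrals of $w_\omega$, and the $O(\omega)$ coefficients are evaluated by a Pohozaev trick: since $L_0\left(\tfrac12(g+x\cdot\nabla g)\right)=-g$, one gets $\int g\,g_1=-\int(L_0^{-1}g)\,g^5=\tfrac12\int g^5(g+x\cdot\nabla g)=\tfrac14\int g^6$, using $\int g^5\,x\cdot\nabla g=-\tfrac12\int g^6$. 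This produces $M(P_\omega)=\omega^{-1/2}\int g^2+2\omega^{1/2}\int g\,g_1+O(\omega^{3/2})$, which is the stated expansion, and the $E$- and $\beta$-expansions come out the same way, using the Nehari and Pohozaev relations for $g$ (which force $\int g^4=4\int g^2$ and $\int|\nabla g|^2=3\int g^2$) to identify the leading coefficients.

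\emph{Part (III).} The number $\tfrac{3}{16}$ is read off from the Berestycki--Lions potential $G_\omega(s)=\tfrac14s^4-\tfrac16s^6-\tfrac{\omega}{2}s^2=-\tfrac{s^2}{6}(s^2-t_+)(s^2-t_-)$, where $t_\pm$ are the roots of $t^2-\tfrac32t+3\omega=0$: $G_\omega$ is positive somewhere on $(0,\infty)$ exactly when $t_\pm$ are real and positive, i.e. exactly when $\omega<\tfrac{3}{16}$, and at $\omega=\tfrac{3}{16}$ the admissible band collapses to the double root $s^2=\tfrac34$. I would then argue that as $\omega\uparrow\tfrac{3}{16}$ the ground state $P_\omega$ does not concentrate but develops a plateau of height $s_0(\omega)\to\tfrac{\sqrt{3}}{2}$ and radius $R(\omega)\to\infty$, joined to the exponential tail of Proposition \ref{p2.1}(II) through a boundary layer of $O(1)$ width. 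On such a profile $M(P_\omega)\sim cR^3$ and $\|P_\omega\|_{L^6}^6\sim c'R^3$, while $\|\nabla P_\omega\|_{L^2}^2\sim c''R^2$ since the gradient lives in the layer; hence $\beta(\omega)\sim R$, and $E(P_\omega)\sim\left(\tfrac16s_0^6-\tfrac14s_0^4\right)R^3\to-\infty$ because $\tfrac16s_0^6-\tfrac14s_0^4<0$ at $s_0^2=\tfrac34$. Matching the layer to the well and expanding the energy then fixes the balance $R(\omega)\sim(\tfrac{3}{16}-\omega)^{-1}$ (the curvature correction in the layer is $O(R^{-1})$ while $\max_s G_\omega(s)\sim\tfrac{3}{16}-\omega$), from which $\beta(\omega)\sim(\tfrac{3}{16}-\omega)^{-1}$, $M(P_\omega)\sim(\tfrac{3}{16}-\omega)^{-3}$ and $|E(P_\omega)|\sim(\tfrac{3}{16}-\omega)^{-3}$.

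The two steps I expect to be genuinely hard are (a) the strict inequality in Part (I), which is not an algebraic consequence of the Nehari and Pohozaev identities and really needs the second-order information at the minimizer $Q_{\beta(\omega)}$ --- equivalently, that $L_\omega$ has exactly one negative direction on radial functions, together with the explicit test-function computation --- and (b) the exact blow-up rates in Part (III), which demand a careful ODE/energy analysis of the plateau regime, in particular pinning down $R(\omega)\sim(\tfrac{3}{16}-\omega)^{-1}$. Everything else --- the identities in (I), the bifurcation expansion in (II), and the qualitative limits $\beta(\omega)\to\infty$, $M(P_\omega)\to\infty$, $E(P_\omega)\to-\infty$ in (III) (the last via concentration-compactness, using the lower bound $M(P_\omega)\ge\tfrac{4}{3\sqrt3}M(Q_1)$ from (\ref{1.7a}) to preclude vanishing) --- is routine once (a) and (b) are in hand.
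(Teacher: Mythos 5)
The paper itself contains no proof of Proposition \ref{p2.2}: it is imported verbatim from Killip--Oh--Pocovnicu--Visan \cite{KOPV2017}, so your reconstruction has to be measured against their original arguments rather than against anything in this text. The parts you actually carry out are correct. The Nehari/Pohozaev algebra giving $E(P_\omega)=\tfrac{1-\beta}{6}\int|\nabla P_\omega|^2$ and $M(P_\omega)=\tfrac{1+\beta}{3\omega}\int|\nabla P_\omega|^2$ checks out; so does the reduction of the strict inequality to $\beta'(\omega)<\beta(1+\beta)/\omega$ using $\tfrac{d}{d\omega}\int|\nabla P_\omega|^2=\tfrac32 M(P_\omega)$, and the identification $\langle L_\omega^{-1}P_\omega,P_\omega\rangle=-\tfrac12\tfrac{d}{d\omega}M(P_\omega)$ from $L_\omega\partial_\omega P_\omega=-P_\omega$. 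Your Part (II) computation is also right: with $P_\omega(x)=\sqrt{\omega}\,w_\omega(\sqrt{\omega}x)$, $g_1=-L_0^{-1}(g^5)$, the identity $L_0\bigl(\tfrac12(g+x\cdot\nabla g)\bigr)=-g$ and $\int g^5\,x\cdot\nabla g=-\tfrac12\int g^6$ give $\int g\,g_1=\tfrac14\int g^6$, which reproduces the stated expansion of $M(P_\omega)$, and the $E$-expansion is consistent with $\tfrac{d}{d\omega}E(P_\omega)=-\tfrac{\omega}{2}\tfrac{d}{d\omega}M(P_\omega)$.

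The two steps you flag as hard are, however, left as programs, and one of them is flawed as stated. For the strict inequality you propose to use ``second-order optimality'' of $P_\omega$ as a minimizer of $F_{\beta(\omega)}$, justified by matching Euler--Lagrange equations. But that matching only shows that a minimizer $Q_\alpha$ of (\ref{1.9}) is \emph{some} ground state $P_{\tilde\omega}$ with $\beta(\tilde\omega)=\alpha$; it does not show that the given $P_\omega$ minimizes $F_{\beta(\omega)}$ --- $P_\omega$ is merely a critical point. The missing converse is precisely entangled with the injectivity of $\omega\mapsto\beta(\omega)$, i.e.\ with the conjectures this paper is about, so the plan is circular as written; a non-circular argument must instead run through the spectral structure of $L_\omega$ (Morse index one, kernel generated by symmetries, as in Proposition \ref{p9.2}), and you have not carried out that computation. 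Likewise in Part (III) the plateau/boundary-layer picture gives the correct heuristics ($\beta\sim R$, $M\sim R^3$, $E\sim -cR^3$, $R\sim(\tfrac3{16}-\omega)^{-1}$), but none of it is proved --- establishing the plateau structure and the matching estimates is a substantial piece of ODE/energy analysis in \cite{KOPV2017}, not a routine afterthought. So: an honest and largely accurate partial reconstruction, with the genuinely nontrivial inequality in (I) and the blow-up rates in (III) still unproven.
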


\begin{proposition}\label{p2.3}
(\cite{KOPV2017}) Fix $0<\omega<\frac{3}{16}$. Among all rescaling $u(x)=aP_{\omega}(\lambda x)$ of $P_{\omega}$ with $a>0$ and $\lambda>0$ there is exactly one that obeys $\beta(u)=\frac{1}{3}$ and $V(u)=0$, namely,
\begin{equation*}
R_{\omega}(x)=\sqrt{\frac{1+\beta(\omega)}{4\beta(\omega)}}P_{\omega}\Big(\frac{3[1+\beta(\omega)]}{4\sqrt{3\beta(\omega)}}x\Big).
\end{equation*}
Moreover,
\begin{equation*}
E(R_{\omega})=\frac{1}{9\sqrt{3\beta(\omega)}}\int \lvert\nabla P_{\omega}\lvert^{2}dx,\quad M(R_{\omega})=\frac{16\sqrt{3\beta(\omega)}}{9[1+\beta(\omega)]^{2}}M(P_{\omega}),
\end{equation*}
$M(R_{\omega})\leq M(P_{\omega})$ with equality if and only if $\beta(\omega)=\frac{1}{3}$,\\
$E(R_{\omega})\geq  E(P_{\omega})$ with equality if and only if $\beta(\omega)=\frac{1}{3}$,\\
$[\beta(\omega)-1]\frac{d}{d\omega}M(R_{\omega})\geq 0 $ with equality if and only if $\beta(\omega)=1$,\\
\begin{equation*}
\frac{d}{d\omega}E(R_{\omega})>0,
\end{equation*}
\begin{equation*}
\lim_{\omega\searrow 0}E(R_{\omega})=\frac{1}{3\sqrt{3\beta(g)}}M(g),\quad \text{and}\quad \lim_{\omega\searrow 0}M(R_{\omega})=\frac{16\sqrt{3\beta(g)}}{9}M(g),
\end{equation*}
where $g$ is the unique positive radial solution to $-\Delta g-g^{3}+g=0$.
\end{proposition}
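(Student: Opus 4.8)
The plan is to reduce every assertion to explicit one-variable computations in the scaling parameters $a,\lambda$ and in $\beta=\beta(\omega)$, invoking only the Pohozaev identity of Proposition~\ref{p2.1}(III) together with the identities, the differential inequality, and the asymptotics of Proposition~\ref{p2.2}. Throughout I write $G=\int|\nabla P_{\omega}|^{2}dx$ and $K=\int P_{\omega}^{6}dx$, so $\beta=K/G$. A change of variables shows that under $u(x)=aP_{\omega}(\lambda x)$ one has $\int|\nabla u|^{2}dx=a^{2}\lambda^{-1}G$, $\int|u|^{6}dx=a^{6}\lambda^{-3}K$, $\int|u|^{4}dx=a^{4}\lambda^{-3}\int P_{\omega}^{4}dx$ and $\int|u|^{2}dx=a^{2}\lambda^{-3}M(P_{\omega})$; hence $\beta(u)=a^{4}\lambda^{-2}\beta$, so the condition $\beta(u)=\tfrac13$ is equivalent to $a^{4}\lambda^{-2}=\tfrac{1}{3\beta}$. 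To impose $V(u)=0$ I eliminate the quartic term with the Pohozaev identity, which reads $\tfrac34\int P_{\omega}^{4}dx=G+K$. Substituting the scalings into $V(u)$ and dividing by the common factor $a^{2}\lambda^{-3}G$ turns $V(u)=0$ into $\lambda^{2}+a^{4}\beta-a^{2}(1+\beta)=0$; inserting $a^{4}=\tfrac{\lambda^{2}}{3\beta}$ collapses this to the linear relation $\tfrac43\lambda^{2}=a^{2}(1+\beta)$. The system $a^{4}\lambda^{-2}=\tfrac{1}{3\beta}$, $\tfrac43\lambda^{2}=a^{2}(1+\beta)$ then has the unique positive solution $a^{2}=\tfrac{1+\beta}{4\beta}$, $\lambda=\tfrac{3(1+\beta)}{4\sqrt{3\beta}}$, which is precisely $R_{\omega}$; this proves both existence and uniqueness among all rescalings.

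With $a,\lambda$ determined, $M(R_{\omega})=a^{2}\lambda^{-3}M(P_{\omega})$ gives the stated mass formula after simplifying the powers of $\beta$. For the energy I avoid computing the quartic and sextic terms of $R_{\omega}$ separately: the two defining constraints $\beta(R_{\omega})=\tfrac13$ and $V(R_{\omega})=0$ yield $\int R_{\omega}^{6}dx=\tfrac13\int|\nabla R_{\omega}|^{2}dx$ and $\int R_{\omega}^{4}dx=\tfrac{16}{9}\int|\nabla R_{\omega}|^{2}dx$, so that $E(R_{\omega})=\tfrac19\int|\nabla R_{\omega}|^{2}dx$; since $\int|\nabla R_{\omega}|^{2}dx=a^{2}\lambda^{-1}G=\tfrac{1}{\sqrt{3\beta}}G$, the value $E(R_{\omega})=\tfrac{1}{9\sqrt{3\beta}}G$ follows. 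The two sharp inequalities then become purely algebraic in $t=\beta$. For the mass, $M(R_{\omega})/M(P_{\omega})=\tfrac{16\sqrt{3t}}{9(1+t)^{2}}$, whose $t$-derivative has numerator proportional to $1-3t$; thus the ratio is maximized at $t=\tfrac13$ with value $1$, giving $M(R_{\omega})\le M(P_{\omega})$ with equality iff $\beta=\tfrac13$. For the energy, using $E(P_{\omega})=\tfrac{1-\beta}{6}G$ from Proposition~\ref{p2.2}(I), the claim $E(R_{\omega})\ge E(P_{\omega})$ reduces to $\tfrac{2}{3\sqrt{3t}}\ge 1-t$; the difference of the two sides is convex in $t$ with derivative vanishing at $t=\tfrac13$, so it attains its global minimum $0$ there, which yields the inequality with equality iff $\beta=\tfrac13$.

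The derivative statements are the main obstacle, and the decisive input is the differential inequality $\tfrac{d}{d\omega}M(P_{\omega})<\tfrac{3\beta-1}{2\omega}M(P_{\omega})$ of Proposition~\ref{p2.2}(I). Using $M(P_{\omega})=\tfrac{1+\beta}{3\omega}G$ and $G'=\tfrac32 M(P_{\omega})$ (Propositions~\ref{p2.2}(I) and~\ref{p2.1}(IV)), I expand $\tfrac{d}{d\omega}M(P_{\omega})=\tfrac{G}{3\omega}\big(\beta'+\tfrac{\beta^{2}-1}{2\omega}\big)$, where $\beta'=\tfrac{d\beta}{d\omega}$; after cancellation the differential inequality collapses to the clean bound $\beta'<\tfrac{\beta(1+\beta)}{\omega}$. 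This single bound drives both remaining claims. Logarithmic differentiation of $E(R_{\omega})=\tfrac{G}{9\sqrt{3\beta}}$ gives $\tfrac{d}{d\omega}E(R_{\omega})=\tfrac{G}{18\sqrt{3\beta}}\big(\tfrac{1+\beta}{\omega}-\tfrac{\beta'}{\beta}\big)$, which is positive exactly because $\beta'<\tfrac{\beta(1+\beta)}{\omega}$. Likewise, writing $M(R_{\omega})=\tfrac{16\sqrt{3\beta}}{27\omega(1+\beta)}G$ (insert $M(P_{\omega})=\tfrac{1+\beta}{3\omega}G$ into the mass formula) and differentiating logarithmically yields $\tfrac{d}{d\omega}M(R_{\omega})=M(R_{\omega})\tfrac{\beta-1}{2}\big(\tfrac1\omega-\tfrac{\beta'}{\beta(1+\beta)}\big)$; hence $[\beta-1]\tfrac{d}{d\omega}M(R_{\omega})=M(R_{\omega})\tfrac{(\beta-1)^{2}}{2}\big(\tfrac1\omega-\tfrac{\beta'}{\beta(1+\beta)}\big)$, and the last factor is strictly positive by the same bound, so the product is $\ge0$ and vanishes exactly when $\beta=1$.

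Finally, the $\omega\searrow0$ limits follow by substituting the asymptotics of Proposition~\ref{p2.2}(II). From $\beta=\omega\beta(g)+O(\omega^{2})$ one gets $\sqrt{3\beta}\sim\sqrt{3\beta(g)}\,\sqrt\omega$ and $(1+\beta)^{2}\to1$; combined with $M(P_{\omega})\sim\omega^{-1/2}M(g)$ this yields $M(R_{\omega})\to\tfrac{16\sqrt{3\beta(g)}}{9}M(g)$. For the energy I first recover the leading order of $G$ from $E(P_{\omega})=\tfrac{1-\beta}{6}G$ and $E(P_{\omega})\sim\tfrac{\sqrt\omega}{2}M(g)$, namely $G\sim3\sqrt\omega\,M(g)$; then $E(R_{\omega})=\tfrac{G}{9\sqrt{3\beta}}\to\tfrac{1}{3\sqrt{3\beta(g)}}M(g)$, as claimed. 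The only genuine analytic ingredients are thus Propositions~\ref{p2.1} and~\ref{p2.2}; everything else is bookkeeping of scalings and elementary single-variable calculus, with the delicate point being that the differential inequality of Proposition~\ref{p2.2}(I) is exactly strong enough to give the bound $\beta'<\tfrac{\beta(1+\beta)}{\omega}$ that makes both monotonicity assertions work.
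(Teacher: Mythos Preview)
Your proof is correct and complete. The paper itself does not supply a proof of this proposition: it is quoted verbatim from \cite{KOPV2017} and used as a black box. Your argument is therefore not comparable to anything in the present paper, but it is a clean, self-contained reconstruction of the result from the cited source, relying only on Propositions~\ref{p2.1} and~\ref{p2.2} (themselves quoted from \cite{KOPV2017}).

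Every step checks. The scaling algebra determining $a,\lambda$ is correct, the reduction of $E(R_{\omega})$ via the two constraints $\beta(R_{\omega})=\tfrac13$, $V(R_{\omega})=0$ to $\tfrac19\int|\nabla R_{\omega}|^{2}dx$ is efficient, and the one-variable arguments for the sharp inequalities at $\beta=\tfrac13$ are accurate (the convexity argument for $E(R_{\omega})\ge E(P_{\omega})$ is particularly tidy). The key observation---that the differential inequality of Proposition~\ref{p2.2}(I) is equivalent, after inserting $M(P_{\omega})=\tfrac{1+\beta}{3\omega}G$ and $G'=\tfrac32 M(P_{\omega})$, to the clean bound $\beta'<\tfrac{\beta(1+\beta)}{\omega}$---is exactly what is needed, and your logarithmic-differentiation computations of $\tfrac{d}{d\omega}E(R_{\omega})$ and $\tfrac{d}{d\omega}M(R_{\omega})$ are correct line by line. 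The asymptotic limits are handled correctly as well.
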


\begin{remark}
For the one-dimensional cubic-quintic nonlinear Schr\"{o}dinger equation, the corresponding ground state satisfies the following one-dimensional nonlinear elliptic equation
\begin{equation*}
-\Delta \phi+\omega\phi -\phi^{3}+\phi^{5}=0,\;\;\;\;\phi\in H^{1}(\mathbb{R})\backslash\{0\},
\end{equation*}
where $\omega>0$ is frequency of the soliton. From \cite{CERS1986,PPT1979}, one has that
\begin{equation*}
\phi(x)=2\sqrt{\frac{\omega}{1+\sqrt{1-\frac{16}{3}\omega}\cdot cosh(2\sqrt{\omega}x)}}.
\end{equation*}
It is obvious that $\omega\in (0,\frac{3}{16})$.
\end{remark}

\section{Variational approaches }\label{sec3}

According to  Killip, Oh, Pocovnicu and Visan \cite{KOPV2017}, the following propositions are true.

\begin{proposition}\label{p3.1}
(\cite{KOPV2017}) Let $0<\alpha<\infty$. Then there exists a positive radially symmetric function $v(x)\in H^{1}(\mathbb{R}^{3})$ such that
\begin{equation*}
C_{\alpha}^{-1}=\mathop{\mathrm{min}}_{\{u\in H^1(\mathbb{R}^3)\backslash\{0\}\}} F_{\alpha}(u).
\end{equation*}
Define $Q_{\alpha}(x)=\lambda^{-1}v(x/\rho)$  where $\lambda,\; \rho>0$ are given by
\begin{equation*}
\lambda^{2}=\frac{4(1+\alpha)}{3\alpha}\frac{\int v^{6}dx}{\int v^{4}dx},\quad \rho^{2}=\frac{16(1+\alpha)^{2}}{9\alpha}\frac{\int v^{6}dx\int \lvert\nabla v\lvert^{2}dx}{(\int v^{4}dx)^{2}},
\end{equation*}
then, $Q_{\alpha}$ satisfies
\begin{equation*}
-\Delta Q_{\alpha}+Q_{\alpha}^{5}-Q_{\alpha}^{3}+\omega Q_{\alpha}=0\quad \text{with}\quad \omega=\frac{3\alpha}{16(1+\alpha)}\frac{(\int v^{4}dx)^{2}}{\int v^{2}dx \int v^{6}dx}. \end{equation*}
Moreover,
\begin{equation*}
\beta(Q_{\alpha})=\alpha, \quad C_{\alpha}=\frac{4(1+\alpha)}{3\alpha^{\frac{\alpha}{2(1+\alpha)}}}\frac{1}{\lvert\lvert Q_{\alpha}\lvert\lvert_{L^{2}}\lvert\lvert \nabla Q_{\alpha}\lvert\lvert_{L^{2}}^{\frac{1-\alpha}{1+\alpha}}}.
\end{equation*}
\end{proposition}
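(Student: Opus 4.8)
The plan is to realize $v$ as a minimizer of the scale-invariant quotient $F_\alpha$ in (\ref{1.8}), read off its Euler--Lagrange equation, and then use a two-parameter rescaling to normalize the coefficients; this single rescaling will simultaneously produce the equation for $Q_\alpha$, the value of $\omega$, the identity $\beta(Q_\alpha)=\alpha$, and the closed form of $C_\alpha$. The first thing I would record is that $F_\alpha$ is invariant under the two-parameter dilation group $u\mapsto a\,u(\lambda\,\cdot)$, $a,\lambda>0$: tracking the scaling of $\|u\|_{L^2},\|u\|_{L^4},\|u\|_{L^6},\|\nabla u\|_{L^2}$, the exponents in (\ref{1.8}) were chosen exactly so that the total powers of $a$ and of $\lambda$ both cancel. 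I would also note $0<C_\alpha^{-1}<\infty$: the interpolation $\|u\|_{L^4}^4\le\|u\|_{L^2}\|u\|_{L^6}^3$ gives $F_\alpha(u)\ge(\|\nabla u\|_{L^2}/\|u\|_{L^6})^{3/(1+\alpha)}$, and the Sobolev inequality $\|u\|_{L^6}\le S\|\nabla u\|_{L^2}$ then yields $F_\alpha(u)\ge S^{-3/(1+\alpha)}>0$, while finiteness is clear by evaluating $F_\alpha$ on any fixed bump.

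For existence, take a minimizing sequence $(u_n)$ for (\ref{1.9}). Replacing each $u_n$ by its symmetric-decreasing rearrangement leaves all $L^p$ norms unchanged and does not increase $\|\nabla u_n\|_{L^2}$ (P\'olya--Szeg\H{o}), so I may assume every $u_n$ is nonnegative, radial and radially nonincreasing; using the two dilation parameters I then normalize $\|u_n\|_{L^2}=\|\nabla u_n\|_{L^2}=1$, so that $(u_n)$ is bounded in $H^1(\mathbb{R}^3)$ and $F_\alpha(u_n)=\|u_n\|_{L^6}^{3\alpha/(1+\alpha)}/\|u_n\|_{L^4}^4$. Passing to a subsequence, $u_n\rightharpoonup v$ in $H^1$; by the compactness of the embedding of radial $H^1(\mathbb{R}^3)$ into $L^4(\mathbb{R}^3)$, $u_n\to v$ in $L^4$, so $\|v\|_{L^4}^4=\lim\|u_n\|_{L^4}^4$. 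The two-sided bound $0<c\le F_\alpha(u_n)\le C$, the normalization, and the inequality $\|u\|_{L^4}^4\le\|u\|_{L^2}\|u\|_{L^6}^3$ together force $\|u_n\|_{L^6}$ to stay bounded away from $0$ (and it is bounded above by Sobolev), which pins $\|u_n\|_{L^4}^4$ between two positive constants; hence $\|v\|_{L^4}^4>0$ and $v\neq0$. Weak lower semicontinuity gives $\|v\|_{L^2}\le1$, $\|\nabla v\|_{L^2}\le1$ and $\|v\|_{L^6}\le\liminf\|u_n\|_{L^6}$; since all three of these sit in the numerator of $F_\alpha$ — so the potentially non-compact $L^6$ term pushes the estimate the favorable way — and the $L^4$ norm in the denominator converges strongly, I get $F_\alpha(v)\le\lim F_\alpha(u_n)=C_\alpha^{-1}$, hence equality. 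Thus $v$ is a radial, nonnegative minimizer, and elliptic regularity plus the strong maximum principle make it positive (and real-analytic).

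Next I would extract the Euler--Lagrange equation from $\frac{d}{d\varepsilon}\log F_\alpha(v+\varepsilon\phi)\big|_{\varepsilon=0}=0$; abbreviating $A=\|v\|_{L^2}^2$, $B=\|v\|_{L^6}^6$, $D=\|\nabla v\|_{L^2}^2$, $G=\|v\|_{L^4}^4$, this collapses to
\begin{equation*}
-\Delta v+\frac{\alpha D}{B}v^{5}-\frac{4(1+\alpha)D}{3G}v^{3}+\frac{(1+\alpha)D}{3A}v=0 .
\end{equation*}
Setting $Q_\alpha(x)=\lambda^{-1}v(x/\rho)$ and demanding that the quintic and cubic coefficients both equal $1$ forces $\lambda^2=\tfrac{4(1+\alpha)}{3\alpha}\tfrac{\int v^6}{\int v^4}$ and $\rho^2=\tfrac{16(1+\alpha)^2}{9\alpha}\tfrac{\int v^6\int|\nabla v|^2}{(\int v^4)^2}$, exactly as stated, and the surviving linear coefficient is then $\omega=\tfrac{3\alpha}{16(1+\alpha)}\tfrac{(\int v^4)^2}{\int v^2\int v^6}$, which lies in $(0,\tfrac{3}{16})$ since (\ref{1.2}) admits the positive solution $Q_\alpha$ (Proposition \ref{p2.1}). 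A direct change of variables gives $\beta(Q_\alpha)=\lambda^{-4}\rho^2\,B/D$, and inserting the chosen $\lambda,\rho$ shows this equals $\alpha$. Finally, scale-invariance of $F_\alpha$ gives $C_\alpha=1/F_\alpha(Q_\alpha)$; the Nehari identity and the ($\omega$-free) Pohozaev identity of Proposition \ref{p2.1}(III) applied to $Q_\alpha$ yield $\|Q_\alpha\|_{L^6}^6=\alpha\|\nabla Q_\alpha\|_{L^2}^2$ and $\|Q_\alpha\|_{L^4}^4=\tfrac{4(1+\alpha)}{3}\|\nabla Q_\alpha\|_{L^2}^2$, and feeding these into $F_\alpha(Q_\alpha)$ reduces it to $\tfrac{3\alpha^{\alpha/(2(1+\alpha))}}{4(1+\alpha)}\|Q_\alpha\|_{L^2}\|\nabla Q_\alpha\|_{L^2}^{(1-\alpha)/(1+\alpha)}$, which is the reciprocal of the claimed expression for $C_\alpha$.

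The main obstacle is the existence step, precisely because $F_\alpha$ sits at the $L^6$-critical (energy-critical) scaling: a generic minimizing sequence may concentrate at a point or spread to infinity, and there is no automatic reason for the weak limit to be nonzero or for the $L^6$ norm to converge. Two structural features rescue the argument and have to be used with care: restricting to radial functions, which promotes the subcritical $L^4$ convergence to strong convergence (Strauss) and thereby controls the denominator; and the fact that the only genuinely non-compact ingredient — the critical $\|\nabla\cdot\|_{L^2}$ / $\|\cdot\|_{L^6}$ part — appears in the numerator, so that weak lower semicontinuity works in our favor. Verifying that these observations, together with the two-parameter normalization, are enough to pass to the limit is the heart of the proof; the remaining work (the Euler--Lagrange computation, the choice of $\lambda,\rho,\omega$, and the evaluation of $C_\alpha$) is bookkeeping with the scaling and with the Nehari/Pohozaev identities.
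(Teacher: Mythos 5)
Your proposal is correct. Note, however, that the paper itself offers no proof of Proposition \ref{p3.1}: it is quoted verbatim from Killip--Oh--Pocovnicu--Visan \cite{KOPV2017}, so there is no internal argument to compare against. Your blind reconstruction is essentially the standard (and, as far as the cited source goes, the original) route: Schwarz symmetrization plus the two-parameter scaling normalization $\lVert u_n\rVert_{L^2}=\lVert\nabla u_n\rVert_{L^2}=1$, the compact radial embedding into $L^4$ to handle the denominator, and the observation that the only non-compact (critical) quantities sit in the numerator so weak lower semicontinuity acts in the right direction; the nondegeneracy of the weak limit via the two-sided bound on $F_\alpha(u_n)$ and the H\"older bound $\lVert u\rVert_{L^4}^4\le\lVert u\rVert_{L^2}\lVert u\rVert_{L^6}^3$ is handled correctly. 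Your Euler--Lagrange equation with coefficients $\alpha D/B$, $4(1+\alpha)D/(3G)$, $(1+\alpha)D/(3A)$ coincides with the one the paper itself uses later (see (\ref{3.8n}) in Theorem \ref{t3.8}), and the subsequent bookkeeping for $\lambda$, $\rho$, $\omega$, $\beta(Q_\alpha)=\alpha$ and the closed form of $C_\alpha$ via the Nehari and Pohozaev identities checks out.
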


\begin{proposition}\label{p3.2}
(\cite{KOPV2017}) Let $\alpha=1$. Then there exists $Q_{1}$ such that $C_{1}=\frac{8}{3}\frac{1}{\lvert\lvert Q_{1}\lvert\lvert_{L^{2}}}$ and $\beta(Q_{1})=1$. Moreover, for $R_{1}(x)=\frac{1}{\sqrt{2}}Q_{1}(\frac{\sqrt{3}}{2}x)$, one has that
$M(R_{1})=\frac{4}{3\sqrt{3}}M(Q_{1})$.
\end{proposition}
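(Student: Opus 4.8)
The plan is to obtain the entire statement as the specialization $\alpha=1$ of Proposition \ref{p3.1}, together with one elementary change of variables for the mass identity. First I would invoke Proposition \ref{p3.1} with $\alpha=1$: this already furnishes a positive radially symmetric minimizer $v$ of $F_1$, hence a function $Q_1=\lambda^{-1}v(\cdot/\rho)$ solving the scalar field equation \eqref{1.2} for the corresponding $\omega$, and it records both $\beta(Q_1)=\alpha$ evaluated at $\alpha=1$ and the closed form of $C_\alpha$. Thus $\beta(Q_1)=1$ is immediate, and it remains only to evaluate the constant at $\alpha=1$ and to verify the rescaling identity.

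For the value of $C_1$, I would substitute $\alpha=1$ into the formula
\[
C_\alpha=\frac{4(1+\alpha)}{3\,\alpha^{\frac{\alpha}{2(1+\alpha)}}}\,\frac{1}{\lvert\lvert Q_\alpha\rvert\rvert_{L^2}\,\lvert\lvert \nabla Q_\alpha\rvert\rvert_{L^2}^{\frac{1-\alpha}{1+\alpha}}}.
\]
The decisive simplification is that the gradient exponent $\frac{1-\alpha}{1+\alpha}$ vanishes at $\alpha=1$, so the factor $\lvert\lvert \nabla Q_1\rvert\rvert_{L^2}^{(1-\alpha)/(1+\alpha)}$ collapses to $1$ and the constant ceases to depend on the gradient norm; together with $\alpha^{\alpha/(2(1+\alpha))}=1^{1/4}=1$ and $4(1+\alpha)=8$, this yields exactly $C_1=\frac{8}{3}\,\frac{1}{\lvert\lvert Q_1\rvert\rvert_{L^2}}$.

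For the mass identity I would first identify $R_1(x)=\frac{1}{\sqrt 2}Q_1\big(\frac{\sqrt 3}{2}x\big)$ as the rescaled soliton associated with $Q_1$. Since $Q_1$ is a positive solution of \eqref{1.2} with $\beta(Q_1)=1$, setting $\beta=1$ in the defining rescaling \eqref{1.7} (equivalently Proposition \ref{p2.3}) produces the dilation factor $\frac{3[1+\beta]}{4\sqrt{3\beta}}=\frac{6}{4\sqrt3}=\frac{\sqrt3}{2}$ and the amplitude $\sqrt{\frac{1+\beta}{4\beta}}=\frac{1}{\sqrt2}$, which match the stated $R_1$. Writing $R_1=aQ_1(\lambda\,\cdot)$ with $a=\frac{1}{\sqrt2}$ and $\lambda=\frac{\sqrt3}{2}$, a single change of variables gives $M(R_1)=a^2\lambda^{-3}M(Q_1)$, and $a^2\lambda^{-3}=\frac12\cdot\frac{8}{3\sqrt3}=\frac{4}{3\sqrt3}$, the asserted identity.

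There is no substantial obstacle here, as the proposition is a direct corollary of Proposition \ref{p3.1}; the only point requiring care is the bookkeeping of the two rescalings (the one defining $Q_\alpha$ from $v$, and the one defining $R_1$ from $Q_1$). As an independent cross-check I would confirm consistency with the general mass formula $M(R_\omega)=\frac{16\sqrt{3\beta(\omega)}}{9[1+\beta(\omega)]^2}M(P_\omega)$ of Proposition \ref{p2.3}: at $\beta=1$ this reads $\frac{16\sqrt3}{36}=\frac{4\sqrt3}{9}=\frac{4}{3\sqrt3}$, in agreement with the direct computation above.
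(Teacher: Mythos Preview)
Your derivation is correct: specializing Proposition~\ref{p3.1} to $\alpha=1$ immediately gives the existence of $Q_1$ with $\beta(Q_1)=1$ and $C_1=\tfrac{8}{3}\lVert Q_1\rVert_{L^2}^{-1}$, and the mass identity for $R_1$ follows from the elementary change of variables you wrote. The paper itself does not supply a proof of Proposition~\ref{p3.2}; it is quoted directly from \cite{KOPV2017}, so there is nothing further to compare.
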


\begin{proposition}\label{p3.3}
(\cite{KOPV2017}) Let $0<\alpha<\infty$. Then
\begin{equation*}
\alpha \mapsto \lvert\lvert \nabla Q_{\alpha}\lvert\lvert_{L^{2}}^{2}\quad \text{is strictly increasing on}\; (0,\infty).
\end{equation*}
\begin{equation*}
\alpha \mapsto M(Q_{\alpha})=\lvert\lvert Q_{\alpha}\lvert\lvert_{L^{2}}^{2} \quad \text{is strictly decreasing on}\;  (0,\frac{1}{3}].
\end{equation*}
\begin{equation*}
\alpha \mapsto M(Q_{\alpha})=\lvert\lvert Q_{\alpha}\lvert\lvert_{L^{2}}^{2} \quad \text{is strictly increasing on}\;  [1,\infty).
\end{equation*}
\end{proposition}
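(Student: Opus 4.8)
The plan is to reduce all three monotonicities to a single scale-invariant order parameter attached to the minimizer of $F_\alpha$. Writing $s=\frac{3}{1+\alpha}$, I would set, for $u\in H^1(\mathbb{R}^3)\setminus\{0\}$,
\[
A(u)=\log\|u\|_{L^2}+3\log\|u\|_{L^6}-4\log\|u\|_{L^4},\qquad B(u)=\log\frac{\|\nabla u\|_{L^2}}{\|u\|_{L^6}}.
\]
A direct check shows that $A$ and $B$ are invariant under the two-parameter rescaling $u\mapsto\sigma u(\tau\,\cdot)$, and that $\log F_\alpha(u)=A(u)+s\,B(u)$, since $3-\frac{3}{1+\alpha}=\frac{3\alpha}{1+\alpha}$. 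Consequently $-\log C_\alpha=\min_u\log F_\alpha(u)=A(v_\alpha)+sB(v_\alpha)$, where $v_\alpha$ is any minimizer from Proposition \ref{p3.1}; the quantities $A(v_\alpha),B(v_\alpha)$ are well defined by scale invariance.

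Next I would record two closed forms. Using the rescaling $Q_\alpha(x)=\lambda^{-1}v_\alpha(x/\rho)$ with the explicit $\lambda,\rho$ of Proposition \ref{p3.1}, a short computation gives
\[
\|\nabla Q_\alpha\|_{L^2}^2=\sqrt{\alpha}\,e^{3B(v_\alpha)},\qquad
M(Q_\alpha)=\tfrac{16(1+\alpha)^2}{9\sqrt{\alpha}}\,e^{\,2A(v_\alpha)+3B(v_\alpha)},
\]
the mass identity following from the pointwise identity $\log\frac{\|u\|_{L^6}^3\|\nabla u\|_{L^2}^3\|u\|_{L^2}^2}{\|u\|_{L^4}^8}=2A(u)+3B(u)$ (equivalently, both are forced by $\beta(Q_\alpha)=\alpha$, the Pohozaev--Nehari identities of Proposition \ref{p2.1}(III), and the formula for $C_\alpha$). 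Taking logarithms,
\[
\log\|\nabla Q_\alpha\|_{L^2}^2=\tfrac12\log\alpha+3B(v_\alpha),\qquad
\log M(Q_\alpha)=2A(v_\alpha)+3B(v_\alpha)+h(\alpha),
\]
with $h(\alpha)=2\log(1+\alpha)-\tfrac12\log\alpha+\log\tfrac{16}{9}$, whose derivative $h'(\alpha)=\frac{3\alpha-1}{2\alpha(1+\alpha)}$ changes sign exactly at $\alpha=\tfrac13$.

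The heart of the argument is a two-point comparison, which I would run in place of differentiating so as to avoid any regularity or uniqueness hypothesis on the branch $\alpha\mapsto v_\alpha$. Fix $\alpha_1<\alpha_2$, so $s_1>s_2$, and abbreviate $A_i=A(v_{\alpha_i})$, $B_i=B(v_{\alpha_i})$, $\Delta B=B_2-B_1$. Testing the minimality of $v_{\alpha_1}$ against $v_{\alpha_2}$, and of $v_{\alpha_2}$ against $v_{\alpha_1}$, gives
\[
A_1+s_1B_1\le A_2+s_1B_2,\qquad A_2+s_2B_2\le A_1+s_2B_1,
\]
which combine to $s_2\Delta B\le A_1-A_2\le s_1\Delta B$. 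Since $s_1>s_2$ this forces $\Delta B\ge 0$, and it simultaneously yields the two-sided bound $-s_1\Delta B\le A_2-A_1\le -s_2\Delta B$.

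Finally I would read off the three conclusions. For the gradient norm, $\log\|\nabla Q_{\alpha_2}\|_{L^2}^2-\log\|\nabla Q_{\alpha_1}\|_{L^2}^2=\tfrac12\log\frac{\alpha_2}{\alpha_1}+3\Delta B>0$ since both terms are nonnegative and the first is strictly positive, giving the first assertion on all of $(0,\infty)$. For the mass, the identities give $\log\frac{M(Q_{\alpha_2})}{M(Q_{\alpha_1})}=\big(h(\alpha_2)-h(\alpha_1)\big)+\big(2(A_2-A_1)+3\Delta B\big)$, and the two-sided bound places the second bracket in $[\,\Delta B(3-2s_1),\,\Delta B(3-2s_2)\,]$, where $3-2s_i=\frac{3(\alpha_i-1)}{1+\alpha_i}$. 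On $[1,\infty)$ one has $3-2s_1\ge0$, so the second bracket is $\ge0$ while $h$ is strictly increasing, whence the mass strictly increases; on $(0,\tfrac13]$ one has $3-2s_2<0$, so the second bracket is $\le0$ while $h$ is strictly decreasing, whence the mass strictly decreases. The only real obstacle is organizational: finding the affine decomposition $\log F_\alpha=A+sB$ and the scale-invariant order parameter $B(v_\alpha)$, after which the two minimality inequalities do all the work with no smoothness or uniqueness of minimizers required; the sole delicate point is matching the sign of $3-2s_i$ against the sign of $h'$ at the threshold $\alpha=\tfrac13$.
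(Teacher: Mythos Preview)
Your argument is correct. The paper does not supply its own proof of Proposition~\ref{p3.3}; it is quoted from \cite{KOPV2017}. That said, your two-point comparison is exactly the mechanism the paper deploys for the companion results Theorem~\ref{t3.10} and Theorem~\ref{t3.11}: test the minimizer of $F_\alpha$ against the minimizer of $F_\nu$ and vice versa, then combine the two inequalities. What you add is the affine decomposition $\log F_\alpha=A+sB$ in the scale-invariant coordinates $(A,B)$, which turns the comparison into the linear-programming inequality $s_2\Delta B\le A_1-A_2\le s_1\Delta B$ and makes all three monotonicities drop out at once from the closed forms $\log\|\nabla Q_\alpha\|_{L^2}^2=\tfrac12\log\alpha+3B$ and $\log M(Q_\alpha)=h(\alpha)+2A+3B$. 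The paper's proofs of Theorems~\ref{t3.10} and~\ref{t3.11} carry out the same comparison directly on $F_\alpha(Q_\nu)$, $F_\nu(Q_\alpha)$ (respectively on $R_\alpha$, $R_\nu$) and then manipulate the resulting ratio inequalities by hand; your packaging is more transparent and handles the three regimes $(0,\tfrac13]$, $[1,\infty)$, and the gradient on $(0,\infty)$ in a single stroke, but the underlying idea is the same.
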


\begin{proposition}\label{p3.4}
(\cite{KOPV2017}) Let $0<\omega<\frac{3}{16}$. Then one has that
\begin{equation*}
M(P_{\omega})\geq M(R_{\omega})\geq \frac{4}{3\sqrt{3}}M(Q_{1}),
\end{equation*}
\begin{equation*}
M(P_{\omega})=M(R_{\omega})\;\;\;\;\text{if and only if}\;\;\;\beta(\omega)=\frac{1}{3}.
\end{equation*}
\end{proposition}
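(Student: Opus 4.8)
The plan is to treat the two inequalities separately, using only the explicit data recorded in Propositions~\ref{p2.3}, \ref{p3.1} and \ref{p3.2}. For the left inequality $M(P_\omega)\geq M(R_\omega)$ and its equality case, I would start from the exact identity supplied by Proposition~\ref{p2.3},
\[
M(R_\omega)=\frac{16\sqrt{3\beta(\omega)}}{9[1+\beta(\omega)]^{2}}\,M(P_\omega),
\]
so that the claim reduces to the scalar inequality $9(1+\beta)^{2}\geq 16\sqrt{3\beta}$ for all $\beta>0$, with equality exactly at $\beta=\tfrac13$. Writing $\beta=s^{2}$, this is an AM--GM estimate on the four numbers $\tfrac13,\tfrac13,\tfrac13,s^{2}$: their arithmetic mean dominates their geometric mean, which after squaring and multiplying by $9$ becomes precisely $9(1+s^{2})^{2}\geq 16\sqrt{3}\,s$. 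Equality in AM--GM forces $s^{2}=\tfrac13$, i.e. $\beta(\omega)=\tfrac13$, and since $M(P_\omega)>0$ this transfers verbatim to the statement that $M(P_\omega)=M(R_\omega)$ if and only if $\beta(\omega)=\tfrac13$.

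The right inequality $M(R_\omega)\geq\frac{4}{3\sqrt{3}}M(Q_1)$ is the substantive point, and I would obtain it by evaluating the scale-invariant functional $F_1$ on $R_\omega$. The rescaled soliton is pinned down by the two relations $\beta(R_\omega)=\tfrac13$ and $V(R_\omega)=0$ (Proposition~\ref{p2.3}). Setting $G=\int|\nabla R_\omega|^{2}dx$, the first relation gives $\int R_\omega^{6}dx=\tfrac13 G$, and then $V(R_\omega)=0$ yields $\int R_\omega^{4}dx=\tfrac{16}{9}G$. Substituting these into
\[
F_1(R_\omega)=\frac{\|R_\omega\|_{L^2}\,\|R_\omega\|_{L^6}^{3/2}\,\|\nabla R_\omega\|_{L^2}^{3/2}}{\|R_\omega\|_{L^4}^{4}},
\]
all powers of $G$ cancel and $F_1(R_\omega)$ collapses to the pure multiple $\tfrac{9}{16}\,3^{-1/4}\,M(R_\omega)^{1/2}$. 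On the other hand $R_\omega\in H^{1}(\mathbb{R}^{3})\setminus\{0\}$, so the variational characterization (\ref{1.9}) together with Proposition~\ref{p3.2} gives $F_1(R_\omega)\geq C_1^{-1}=\tfrac38 M(Q_1)^{1/2}$. Combining the two and solving for $M(R_\omega)$ produces exactly $M(R_\omega)\geq\tfrac{4}{9}\sqrt{3}\,M(Q_1)=\tfrac{4}{3\sqrt{3}}M(Q_1)$.

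The only delicate point is bookkeeping: one must carry the fractional exponents $\|u\|_{L^6}^{3/2}=(\int u^{6}dx)^{1/4}$ and $\|\nabla u\|_{L^2}^{3/2}=G^{3/4}$ correctly so that the $G$-powers in numerator and denominator cancel and leave the clean constant $\tfrac{9}{16}3^{-1/4}$; a slip here changes the final constant. Conceptually there is no obstacle --- the mechanism is that on the constraint set $\{\beta=\tfrac13,\ V=0\}$ the scale-invariant quantity $F_1$ degenerates to a multiple of the mass, so the sharp constant $C_1^{-1}$ of the $F_1$-inequality transmutes directly into the sharp mass threshold $\tfrac{4}{3\sqrt{3}}M(Q_1)$. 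Equality propagates precisely to the case where $R_\omega$ is a rescaling of the $F_1$-minimizer $Q_1$, recovering the identity $M(R_1)=\tfrac{4}{3\sqrt{3}}M(Q_1)$ of Proposition~\ref{p3.2}.
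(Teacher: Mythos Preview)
Your argument is correct. Note, however, that the paper does not supply its own proof of this proposition: it is quoted verbatim from \cite{KOPV2017}, and indeed the first inequality together with its equality case is already recorded in Proposition~\ref{p2.3}, so your AM--GM derivation, while tidy, re-proves a statement the paper takes as given.

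For the second inequality your route---evaluating $F_{1}$ on $R_{\omega}$ using the constraints $\beta(R_{\omega})=\tfrac13$ and $V(R_{\omega})=0$, then invoking the variational lower bound $F_{1}\geq C_{1}^{-1}$---is exactly the natural mechanism and is in the same spirit as the computations the paper carries out later in the proof of Theorem~\ref{t3.8}; compare your identity $F_{1}(R_{\omega})=\tfrac{9}{16}\,3^{-1/4}M(R_{\omega})^{1/2}$ with equation~(\ref{3.8h}) at $\alpha=1$. The bookkeeping is clean and the constants check out: $\tfrac{9}{16}\,3^{-1/4}M(R_{\omega})^{1/2}\geq\tfrac38 M(Q_{1})^{1/2}$ rearranges to $M(R_{\omega})\geq\tfrac{4\sqrt{3}}{9}M(Q_{1})=\tfrac{4}{3\sqrt{3}}M(Q_{1})$ as claimed.
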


\begin{proposition}\label{p3.5}
(\cite{KOPV2017}) For the variational problem (\ref{1.13}), one has that:\\
(I) If $0<m<M(Q_{1})$, then
\begin{equation*}
E(u)>0 \quad \text{and} \quad E_{min}(m)=0.
\end{equation*}
In this case, $E_{min}(m)$ is not achieved.\\
(II) If $m=M(Q_{1})$, then
\begin{equation*}
E(u)\geq0 \quad \text{and} \quad E_{min}(m)=0.
\end{equation*}
In this case, $E_{min}(m)$ is achieved.\\
(III) If $m>M(Q_{1})$, then
\begin{equation*}
E(u)\geq E_{min}(m) \quad \text{and} \quad E_{min}(m)<0.
\end{equation*}
In this case, $E_{min}(m)$ is achieved.
\end{proposition}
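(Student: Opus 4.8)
The plan is to establish the three regimes by combining the sharp Gagliardo-Nirenberg-type inequality encoded in the variational problem $C_1^{-1}=\inf F_1(u)$ with the positivity structure of $E$. The key observation is that since $\beta(Q_1)=1$, the functional $F_1$ reduces to a sharp inequality relating $\|u\|_{L^4}^4$ to $\|u\|_{L^2}$, $\|u\|_{L^6}$ and $\|\nabla u\|_{L^2}$; concretely, from \eqref{1.8}-\eqref{1.9} with $\alpha=1$ and Proposition \ref{p3.2} one has the inequality
\begin{equation*}
\int |u|^4\,dx \leq C_1\,\|u\|_{L^2}\,\|u\|_{L^6}^{3/2}\,\|\nabla u\|_{L^2}^{3/2},\qquad C_1=\frac{8}{3}\frac{1}{\|Q_1\|_{L^2}}.
\end{equation*}
Substituting this bound into $E(u)$ from \eqref{1.12} controls the negative quartic term by the (positive) gradient and sextic terms, and the mass $\|u\|_{L^2}^2=m$ appears explicitly. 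The first step is therefore to insert this inequality into $E(u)$ and, writing $a=\|\nabla u\|_{L^2}^2$ and $b=\|u\|_{L^6}^6$, to reduce the estimate to showing positivity of an expression of the form $\tfrac12 a+\tfrac16 b-\tfrac14 C_1 m^{1/2} b^{1/4} a^{3/4}$; an application of Young's inequality (or a one-variable minimization in the ratio $a^3/b$) then shows that when $m<M(Q_1)=\|Q_1\|_{L^2}^2$ the coefficient structure forces $E(u)>0$, giving part (I).

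For part (I) I would next show $E_{min}(m)=0$ by exhibiting a minimizing sequence driving $E(u)\to 0$: take the rescaling $u_\theta(x)=\theta^{3/2}u(\theta x)$ which preserves the mass constraint $M(u_\theta)=m$ while scaling the gradient and potential terms, and let $\theta\to 0$ so that the whole energy tends to $0$ from above. Combined with $E(u)>0$ this pins $E_{min}(m)=0$ and shows the infimum cannot be attained (any minimizer would give $E=0$, contradicting strict positivity). For part (II), at the threshold $m=M(Q_1)$ the Young/minimization analysis degenerates to equality, giving $E(u)\ge 0$ with the minimizing configuration corresponding to the optimizer $Q_1$ (after the correct rescaling), hence $E_{min}(m)=0$ is achieved. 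Here the rescaled soliton $R_1$ of Proposition \ref{p3.2}, which satisfies $M(R_1)=\tfrac{4}{3\sqrt3}M(Q_1)$ and has $V(R_1)=0$, together with the relation between $Q_1$ and the Euler-Lagrange equation \eqref{1.2}, supplies the explicit achiever.

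For part (III), once $m>M(Q_1)$ the coefficient in the Young estimate flips sign, so one can produce a test function with $E(u)<0$ (for instance a suitable rescaling of $Q_1$ or of a rescaled soliton scaled up to mass $m$), giving $E_{min}(m)<0$. Showing the infimum is actually attained is the step I expect to be the main obstacle, since this is a mass-constrained minimization on all of $\mathbb R^3$ where concentration-compactness and the possible loss of mass to infinity must be ruled out. The plan is to run the concentration-compactness dichotomy of Cazenave-Lions on a minimizing sequence: vanishing is excluded because it would force $E_{min}(m)\ge 0$ contradicting $E_{min}(m)<0$, and dichotomy is excluded by the strict subadditivity $E_{min}(m)<E_{min}(m_1)+E_{min}(m-m_1)$ for $0<m_1<m$, which follows from the strict negativity just established together with the scaling behavior of $E_{min}$; compactness then yields a minimizer. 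Throughout, the positivity of the minimizer and radial symmetry can be arranged by replacing $u$ with $|u|$ and Schwarz symmetrization, which do not increase $E$ under the constraint.
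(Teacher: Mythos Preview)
The paper does not prove this proposition; it is quoted verbatim from \cite{KOPV2017} with no argument given. So there is nothing to compare your attempt against in this paper itself. Your outline is essentially the argument carried out in \cite{KOPV2017}: the sharp inequality $F_1(u)\ge C_1^{-1}$ with $C_1=\tfrac{8}{3}\|Q_1\|_{L^2}^{-1}$ is substituted into $E(u)$, and a two-variable optimization in $(a,b)=(\|\nabla u\|_{L^2}^2,\|u\|_{L^6}^6)$ gives the trichotomy at $m=M(Q_1)$; the mass-preserving dilation $u_\theta(x)=\theta^{3/2}u(\theta x)$ shows $E_{\min}(m)\le 0$; and for (III) one runs concentration--compactness with strict subadditivity. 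This is correct as a plan, and the subadditivity step can be made rigorous by the dilation $v(x)=u(\theta^{-1/3}x)$, which increases mass by $\theta$ and satisfies $E(v)<\theta E(u)$ once $\theta>1$ and $E(u)<0$; combined with a uniform lower bound on $\|\nabla u_n\|_{L^2}$ along any minimizing sequence with negative energy, this yields the strict inequality you need.

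One concrete slip: in part (II) you identify the achiever as the rescaled soliton $R_1$, but $M(R_1)=\tfrac{4}{3\sqrt3}M(Q_1)<M(Q_1)$, so $R_1$ does not lie on the constraint $M(u)=M(Q_1)$. The correct achiever is $Q_1$ itself: since $\beta(Q_1)=1$, Proposition~\ref{p2.2}(I) gives $E(Q_1)=\tfrac{1-\beta(Q_1)}{6}\|\nabla Q_1\|_{L^2}^2=0$, and $M(Q_1)=M(Q_1)$ trivially. Equivalently, among the two-parameter family $\mu Q_1(\lambda x)$ the conditions $\beta=1$ and $M=M(Q_1)$ force $\mu=\lambda=1$. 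With this correction your proposal is a faithful sketch of the original proof.
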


\begin{proposition}\label{p3.6}
(\cite{KOPV2017}) For the variational problem (\ref{1.17}), one has that:\\
(I) $E_{min}^{V}(m)=\infty$ when $0<m<\frac{4}{3\sqrt{3}}M(Q_{1})$.\\
(II) $0<E_{min}^{V}(m)<\infty$ when $\frac{4}{3\sqrt{3}}M(Q_{1})\leq m<M(Q_{1})$.\\
(III) $E_{min}^{V}(m)=E_{min}(m)$ when $M(Q_{1})\leq m<\infty$.\\
(IV) For $m\geq \frac{4}{3\sqrt{3}}M(Q_{1})$, the infimum $E_{min}^{V}(m)$ is achieved and is both strictly decreasing and lower semicontinuous as a function of $m$.
\end{proposition}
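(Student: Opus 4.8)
The plan is to reduce the energy on the constraint manifold, locate the two critical masses through the sharp constant $C_1$, and then obtain attainment, monotonicity and lower semicontinuity from a scaling computation combined with concentration--compactness. The key initial remark is that whenever $V(u)=0$, that is $\|\nabla u\|_{L^2}^2+\|u\|_{L^6}^6=\tfrac34\|u\|_{L^4}^4$, the energy collapses to
\begin{equation*}
E(u)=\tfrac16\big(\|\nabla u\|_{L^2}^2-\|u\|_{L^6}^6\big)=\tfrac16\|\nabla u\|_{L^2}^2\big(1-\beta(u)\big),
\end{equation*}
so on the admissible set $E$ is controlled by the single ratio $\beta(u)$ and the gradient norm. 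For part (I) I would insert the sharp inequality $\|u\|_{L^4}^4\le C_1\|u\|_{L^2}\|u\|_{L^6}^{3/2}\|\nabla u\|_{L^2}^{3/2}$ from Proposition \ref{p3.2} into $V(u)=0$; writing $\beta=\beta(u)$ this becomes $\tfrac43(1+\beta)\le C_1 m^{1/2}\beta^{1/4}$. Minimizing $\tfrac{4(1+\beta)}{3\beta^{1/4}}$ over $\beta>0$ (the minimum sits at $\beta=\tfrac13$) and using $C_1^{-2}=\tfrac{9}{64}M(Q_1)$ from (\ref{1.10}) forces $m\ge\tfrac{4}{3\sqrt3}M(Q_1)$, so below this value the admissible set is empty and $E_{min}^V(m)=+\infty$, while above it the set is non-empty and $E_{min}^V(m)<\infty$. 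The same balance shows $\beta=1$ is first admissible exactly at $m=M(Q_1)$, since $C_1m^{1/2}=\tfrac83\sqrt{m/M(Q_1)}$ reaches the value $\tfrac83=\tfrac{4(1+\beta)}{3\beta^{1/4}}\big|_{\beta=1}$ there; hence every admissible $u$ with $m<M(Q_1)$ satisfies $\beta(u)\le\beta_+(m)<1$.

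Part (III) is the most transparent. For $m\ge M(Q_1)$ a minimizer $\psi$ of $E_{min}(m)$ exists by Proposition \ref{p3.5} and solves the soliton Euler--Lagrange equation, hence obeys the Pohozaev identity, which is exactly $V(\psi)=0$ (Proposition \ref{p2.1}(III)). Thus $\psi$ is admissible for (\ref{1.17}), giving $E_{min}^V(m)\le E(\psi)=E_{min}(m)$; the opposite inequality is automatic because the constraint set of (\ref{1.17}) is contained in that of (\ref{1.13}). In particular this yields the boundary value $E_{min}^V(M(Q_1))=E_{min}(M(Q_1))=0$, which I will use below.

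Next I would prove strict monotonicity by a scaling competitor argument that needs no compactness. Restricting the family $u_{a,\lambda}(x)=au(\lambda x)$ to $V=0$ and differentiating at an admissible $u$ gives $dM=(3\beta-1)m\,da$ and $dE=\tfrac{1+\beta}{6}\|\nabla u\|_{L^2}^2(1-3\beta)\,da$, whence $\frac{dE}{dM}=-\tfrac{1+\beta}{6m}\|\nabla u\|_{L^2}^2<0$. The admissible set carries a uniform two-sided bound on $\|\nabla u\|_{L^2}^2$: a lower bound from $\tfrac43\|\nabla u\|_{L^2}^2\le\|u\|_{L^4}^4\le C\,m^{1/2}\|\nabla u\|_{L^2}^3$, and, for $m<M(Q_1)$, an upper bound from $E=\tfrac16\|\nabla u\|_{L^2}^2(1-\beta)$ together with $\beta\le\beta_+(m)<1$. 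Hence the slope is bounded away from $0$ along near-minimizers, so $E_{min}^V$ is strictly decreasing on $[\tfrac{4}{3\sqrt3}M(Q_1),\infty)$. Part (II) now follows at once: finiteness from non-emptiness, and strict positivity from $E_{min}^V(m)>E_{min}^V(M(Q_1))=0$ for $m<M(Q_1)$. Moreover a positive strictly decreasing $j:=E_{min}^V$ is strictly subadditive, $j(m)<j(m_1)+j(m_2)$ when $m_1+m_2=m$.

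Finally, attainment in part (IV) would follow from concentration--compactness applied to a minimizing sequence $u_n$, which the two-sided gradient bound keeps bounded in $H^1$. Vanishing is impossible since $\|u_n\|_{L^4}^4=\tfrac43(\|\nabla u_n\|_{L^2}^2+\|u_n\|_{L^6}^6)\ge\tfrac43\|\nabla u_n\|_{L^2}^2$ is bounded below; dichotomy is ruled out by the strict subadditivity above; so $u_n$ converges, after translation, to a minimizer, and lower semicontinuity is obtained the same way from weak limits of near-minimizers as $m_k\to m$ (for $m\ge M(Q_1)$ attainment is inherited from Proposition \ref{p3.5} via part (III), so the genuinely new case is $\tfrac{4}{3\sqrt3}M(Q_1)\le m<M(Q_1)$). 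The main obstacle will be this compactness step: the quintic term is energy critical, so $\|u_n\|_{L^6}^6$ admits no compactness and could a priori concentrate into a bubble; showing that the gradient and $L^6$ defects of the weak limit vanish — so that the limit really satisfies $M=m$ and $V=0$ with no energy loss — is where the subcritical mass $m<M(Q_1)$ and the sharp constant $C_1$ must be exploited decisively, and is the heart of the proof.
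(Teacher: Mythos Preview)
The paper does not prove this proposition at all: it is stated verbatim as a result of Killip, Oh, Pocovnicu and Visan \cite{KOPV2017} and is simply cited, with no argument given. So there is nothing in the present paper to compare your proposal against; the actual proof lives in \cite{KOPV2017} and uses a profile decomposition adapted to the energy-critical quintic term.

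That said, the architecture of your sketch is sound. Parts (I) and (III) are essentially correct as written: inserting the sharp $C_1$-inequality into $V(u)=0$ does force $m\ge\tfrac{4}{3\sqrt3}M(Q_1)$, and for $m\ge M(Q_1)$ the Pohozaev identity places every $E_{min}(m)$-minimizer on the manifold $V=0$, giving $E_{min}^V=E_{min}$. Your two-parameter scaling computation for the slope $dE/dM=-\tfrac{1+\beta}{6m}\|\nabla u\|_{L^2}^2$ along $V=0$ is correct and yields strict monotonicity; note, however, that at $\beta(u)=\tfrac13$ both $dM$ and $dE$ vanish to first order, so the curve is tangent to the mass level and the argument degenerates there --- you must either perturb to $\beta\neq\tfrac13$ first or go to second order.

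The genuine gap is exactly where you flag it: the compactness step. Two points deserve care. First, strict subadditivity does \emph{not} follow from ``positive and strictly decreasing'' once $m\ge M(Q_1)$, since then $E_{min}^V(m)\le0$; more importantly, in a dichotomy scenario the two pieces need not individually satisfy $V=0$, so the subadditivity you would want to invoke is not the one you stated. Second, the quintic is energy-critical, so Lions' classical concentration--compactness is insufficient; one must control possible Aubin--Talenti bubbling, which in \cite{KOPV2017} is done via a profile decomposition and the sharp constant $C_1$. Your proposal identifies the right enemy but does not defeat it.
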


\begin{proposition}\label{p3.7}
(\cite{KOPV2017}) Let $m\geq \frac{4}{3\sqrt{3}}M(Q_{1})$ and $u\in H_{x}^{1}(\mathbb{R}^{3})\backslash\{0\}$ obey
\begin{equation*}
M(u)=m,\;\;\; E(u)=E_{min}^{V}(m), \;\;\; V(u)=0.
\end{equation*}
Then either $u(x)=e^{i\theta}R_{\omega}(x+x_{0})$ or $u(x)=e^{i\theta}P_{\omega}(x+x_{0})$ for some $\theta\in [0, 2\pi)$, some $x_{0}\in \mathbb{R}^{3}$ and some $0<\omega<\frac{3}{16}$ that obeys $\beta(\omega)>\frac{1}{3}$. Furthermore, there exist $\delta>0$ so that no ground state has mass $<\frac{4}{3\sqrt{3}}M(Q_{1})+\delta$, while $u$ cannot be a rescaled soliton when $m>M(Q_{1})-\delta$.
\end{proposition}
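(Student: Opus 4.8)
The plan is to bring the minimizer into a canonical shape, write down the Euler--Lagrange system forced by the two constraints, resolve the algebraic dichotomy it produces, and then read off the answer from the uniqueness theory for (\ref{1.2}); the sharp endpoint data --- $\beta(\omega)>\tfrac13$ together with the two separation constants $\delta$ --- is then extracted from the monotonicity and asymptotic estimates recorded in Propositions~\ref{p2.2} and~\ref{p2.3}.

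\emph{Canonical form and Euler--Lagrange equation.} Replacing $u$ by $|u|$ and then by its Schwarz symmetrization leaves $M(u)$, $\int|u|^{4}$ and $\int|u|^{6}$ unchanged and does not increase $\int|\nabla u|^{2}$, hence does not increase $E(u)$ and does not increase $V(u)$. If the symmetrized function $v$ had $V(v)<0$, the mass--preserving scaling $v_\sigma(x)=\sigma^{3/2}v(\sigma x)$ satisfies $M(v_\sigma)\equiv m$ and $\tfrac{d}{d\sigma}E(v_\sigma)=\sigma^{-1}V(v_\sigma)$, which is positive for $\sigma$ small; pushing $\sigma$ through the largest zero $\sigma_{2}>1$ of $\sigma\mapsto V(v_\sigma)$ produces an admissible competitor $v_{\sigma_{2}}$ with $E(v_{\sigma_{2}})<E(v)\le E(u)$, contradicting minimality. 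So $V(v)=0$ and $v$ is itself a minimizer; one may therefore assume $u$ is real, positive, radial, non-increasing, and since $M,E,V$ are translation-- and phase--invariant only the radial profile remains to be identified. Constrained critical point theory gives $\nu,\mu\in\mathbb{R}$ with $E'(u)=\nu M'(u)+\mu V'(u)$, i.e.
\[
(1-2\mu)(-\Delta u)+(3\mu-1)u^{3}+(1-6\mu)u^{5}-2\nu u=0,
\]
and $\mu=\tfrac12$ is impossible because then this relation is purely algebraic and forces $u\equiv 0$. In the degenerate case $M'(u)\parallel V'(u)$ one has $V'(u)=2cu$, so $u$ solves $-\Delta u+3u^{5}-\tfrac32u^{3}-cu=0$; a fixed scaling turns this into (\ref{1.2}), and then $V(u)=0$ forces the associated frequency to satisfy $\beta=1$, so $u=R_\omega$ with $\beta(\omega)=1$, a special case of the conclusion.

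\emph{The dichotomy and identification.} Testing the displayed equation against $u$ gives a Nehari identity, testing it against $x\cdot\nabla u$ gives its Pohozaev identity, and together with the imposed $V(u)=0$ these are three linear relations among $\int|\nabla u|^{2}$, $\int u^{4}$, $\int u^{6}$ and $\nu m$. Eliminating $\nu m$ and $\int u^{4}$ collapses them to $\mu\,(3\int u^{4}-16\int u^{6})=0$. If $\mu=0$, then $u$ solves (\ref{1.2}) with $\omega=-2\nu$; by Proposition~\ref{p2.1} (Gidas--Ni--Nirenberg together with Serrin--Tang uniqueness) $u=P_\omega(\cdot+x_{0})$ with $0<\omega<\tfrac{3}{16}$. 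If $\mu\ne0$, then $3\int u^{4}=16\int u^{6}$, which with $V(u)=0$ gives $\int|\nabla u|^{2}=\tfrac{9}{16}\int u^{4}$ and hence $\beta(u)=\tfrac13$; dividing the equation by $1-2\mu$ puts it in the form $-\Delta u+pu^{3}+qu^{5}+ru=0$, whose coefficients --- using $\beta(u)=\tfrac13$, $V(u)=0$ and minimality --- are those of a possibly rescaled cubic--quintic ground state, so a scaling reduces it to (\ref{1.2}) and $u$ is a rescaling of some $P_\omega$; since among the rescalings $aP_\omega(\lambda\,\cdot)$ exactly one obeys $\beta=\tfrac13$ and $V=0$, namely $R_\omega$ (Proposition~\ref{p2.3}), we get $u=R_\omega(\cdot+x_{0})$. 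Restoring the phase $e^{i\theta}$ yields the two stated forms.

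\emph{Sharp threshold, the constants $\delta$, and the main obstacle.} For any admissible $u$ one has $E(u)=\tfrac{1-\beta(u)}{6}\int|\nabla u|^{2}$, and feeding $M(P_\omega)=m$ into Proposition~\ref{p2.2}(I) and $M(R_\omega)=m$ into Proposition~\ref{p2.3} gives the closed forms
\[
E(P_\omega)=\frac{\omega m}{2}\cdot\frac{1-\beta(\omega)}{1+\beta(\omega)},\qquad E(R_\omega)=\frac{\omega m\,(1+\beta(\omega))}{16\,\beta(\omega)}.
\]
Minimality says $E^{V}_{min}(m)$ equals the least admissible value of these expressions; comparing them and using $M(P_\omega)\ge M(R_\omega)$ with equality iff $\beta(\omega)=\tfrac13$ (Proposition~\ref{p3.4}), $E(R_\omega)\ge E(P_\omega)$ with the same equality case (Proposition~\ref{p2.3}), and the strict monotonicity of $m\mapsto E^{V}_{min}(m)$ (Proposition~\ref{p3.6}(IV)), forces the minimizing frequency to obey $\beta(\omega)>\tfrac13$. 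For the mass gap, $\omega\mapsto M(P_\omega)$ is continuous on $(0,\tfrac{3}{16})$ and blows up at both endpoints by Proposition~\ref{p2.2}(II)--(III), so it attains a positive minimum, and that minimum is strictly larger than $\tfrac{4}{3\sqrt3}M(Q_{1})$ because equality throughout $M(P_\omega)\ge M(R_\omega)\ge\tfrac{4}{3\sqrt3}M(Q_{1})$ would demand $\beta(\omega)=\tfrac13$ and $\beta(\omega)=1$ simultaneously; take $\delta$ to be this slack. Finally a rescaled soliton has $\beta=\tfrac13\ne1$, so $E(R_\omega)>E(P_\omega)$ while $M(R_\omega)<M(P_\omega)$; for $m\ge M(Q_{1})$, Proposition~\ref{p3.6}(III) identifies $E^{V}_{min}(m)$ with $E_{min}(m)$, whose minimizer solves (\ref{1.2}), and a continuity argument in $m$ --- the admissible energy values above depend continuously on $m$, and the rescaled branch stays separated from the genuine branch --- propagates this exclusion to $m>M(Q_{1})-\delta$. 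I expect this last part to be the main obstacle: upgrading the soft achievement statement for $E^{V}_{min}(m)$ (Proposition~\ref{p3.6}(IV)) to the sharp threshold $\beta(\omega)>\tfrac13$ and to the quantitative $\delta$--separations is precisely where the full monotonicity and asymptotic package of Propositions~\ref{p2.2}, \ref{p2.3}, \ref{p3.4} and~\ref{p3.6} is needed.
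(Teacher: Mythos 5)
This statement is not proved in the paper at all: Proposition \ref{p3.7} is imported verbatim from \cite{KOPV2017}, where its proof is a substantial piece of that paper's variational analysis, so your proposal has to stand on its own. Its skeleton (two-multiplier Euler--Lagrange equation, Nehari and Pohozaev identities, the collapse to $\mu\,(3\int u^{4}-16\int u^{6})=0$, the closed forms for $E(P_\omega)$ and $E(R_\omega)$) is correctly computed, but the three places where the real content of the proposition lies are not actually proved. (i) In the case $\mu\neq 0$, the identities only give $3\int u^{4}=16\int u^{6}$, hence $\beta(u)=\tfrac13$; they do not control the signs of $1-2\mu$, $3\mu-1$, $1-6\mu$, $-2\nu$, and the sentence ``a scaling reduces it to (\ref{1.2})'' is false for a whole range of multipliers: for $\mu\in(\tfrac16,\tfrac12)$ the normalized quintic coefficient is negative, and such focusing-quintic (or, at $\mu=\tfrac16$, purely cubic) equations do admit positive $H^{1}$ solutions that are not rescalings of any $P_\omega$. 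Excluding these ranges is exactly where minimality must be used quantitatively; invoking ``minimality'' in a parenthesis is not an argument. (ii) The claim $\beta(\omega)>\tfrac13$ is not derived: the facts you list (Propositions \ref{p2.3}, \ref{p3.4}, \ref{p3.6}) are mutually consistent with a minimizer $P_\omega$ having $\beta(\omega)<\tfrac13$, so ``comparing them \dots forces'' proves nothing. The non-strict bound does follow from the scaling you already use: along $u_\sigma=\sigma^{3/2}u(\sigma\cdot)$ one has $\tfrac{d^{2}}{d\sigma^{2}}E(u_\sigma)\big|_{\sigma=1}=(3\beta(u)-1)\int|\nabla u|^{2}$, so $\beta(u)<\tfrac13$ contradicts minimality; but the strict inequality, which is part of the statement, needs a further perturbation argument you do not supply.

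(iii) The final assertion --- that $u$ cannot be a rescaled soliton once $m>M(Q_{1})-\delta$ --- is only gestured at (``a continuity argument \dots propagates this exclusion''); as you yourself admit, this is the main obstacle, and nothing in the write-up produces a $\delta$: one needs an actual compactness/contradiction argument along masses $m_n\to M(Q_{1})$ whose minimizers are assumed to be rescaled solitons. Two smaller gaps: the symmetrization step only identifies the rearranged profile, so recovering $u=e^{i\theta}P_\omega(\cdot+x_{0})$ for the original complex minimizer requires the equality cases ($|\nabla|u||=|\nabla u|$ a.e.\ giving a constant phase, and a Brothers--Ziemer type rigidity for the rearrangement); and your mass-gap argument asserts without justification that equality in $M(R_\omega)\geq\tfrac{4}{3\sqrt3}M(Q_{1})$ forces $\beta(\omega)=1$ --- this is true, but it needs Proposition \ref{p3.2} together with the strict monotonicity of $\alpha\mapsto M(R_\alpha)$ (Theorem \ref{t3.11}), which you never invoke.
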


Furthermore, we give the following some new results.

\begin{theorem}\label{t3.8}
Let $0<\alpha<\infty$ and put
\begin{equation*}
R_{\alpha}=\sqrt{\frac{1+\alpha}{4\alpha}}Q_{\alpha}\Big(\frac{3[1+\alpha]}{4\sqrt{3\alpha}}x\Big),
\end{equation*}
where $Q_{\alpha}$ as in Proposition \ref{p3.1}. Then one has that
\begin{equation*}
\beta(Q_{\alpha})=\alpha,\quad V(Q_{\alpha})=0,\quad \beta(R_{\alpha})=\frac{1}{3},\quad V(R_{\alpha})=0.
\end{equation*}
Moreover, both $Q_{\alpha}$ and $R_{\alpha}$ are positive minimizers of the variational problem (\ref{1.9}). In addition, $Q_{\alpha}$ satisfies
\begin{equation*}
-\Delta Q_{\alpha}+Q_{\alpha}^{5}-Q_{\alpha}^{3}+\omega Q_{\alpha}=0
\end{equation*}
with
\begin{equation*}
\omega=\frac{1+\alpha}{3}\frac{\lvert\lvert \nabla Q_{\alpha}\lvert\lvert_{L^{2}}^{2}}{\lvert \lvert Q_{\alpha}\lvert \lvert_{L^{2}}^{2}}=\frac{16\alpha}{9(1+\alpha)}\frac{\lvert\lvert \nabla R_{\alpha}\lvert\lvert_{L^{2}}^{2}}{\lvert \lvert R_{\alpha}\lvert \lvert_{L^{2}}^{2}}.
\end{equation*}
Accordingly $Q_{\alpha}=P_{\omega}$ (up to translations) is a ground state and $R_{\alpha}=R_{\omega}$ (up to translations) is a rescaled soliton.
\end{theorem}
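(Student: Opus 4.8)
The plan is to reduce everything to two structural inputs — the Nehari and Pohozaev identities satisfied by $Q_\alpha$, and the scale invariance of the functional $F_\alpha$ — after which the statement is pure bookkeeping of scaling exponents.

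First I would record that, multiplying the Euler--Lagrange equation $-\Delta Q_\alpha + Q_\alpha^5 - Q_\alpha^3 + \omega Q_\alpha = 0$ from Proposition \ref{p3.1} by $Q_\alpha$, respectively by $x\cdot\nabla Q_\alpha$, and integrating over $\mathbb{R}^3$ yields
\[ \int|\nabla Q_\alpha|^2 + \int Q_\alpha^6 - \int Q_\alpha^4 + \omega\int Q_\alpha^2 = 0, \qquad \tfrac{1}{3}\int|\nabla Q_\alpha|^2 + \tfrac{1}{3}\int Q_\alpha^6 - \tfrac{1}{4}\int Q_\alpha^4 = 0, \]
exactly as in Proposition \ref{p2.1}(III). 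The second of these is precisely $V(Q_\alpha) = 0$. Since Proposition \ref{p3.1} already supplies $\beta(Q_\alpha) = \alpha$, i.e. $\int Q_\alpha^6 = \alpha\int|\nabla Q_\alpha|^2$, the two identities force $\int Q_\alpha^4 = \frac{4(1+\alpha)}{3}\int|\nabla Q_\alpha|^2$, and substituting back into the Nehari identity isolates $\omega = \frac{1+\alpha}{3}\,\|\nabla Q_\alpha\|_{L^2}^2/\|Q_\alpha\|_{L^2}^2$.

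Next I would turn to $R_\alpha$. A direct check shows $F_\alpha$ is invariant under both the dilation $u \mapsto u(\lambda\cdot)$ and the scaling $u \mapsto \mu u$: in each case the homogeneities of numerator and denominator match (both scale like $\lambda^{-3}$, respectively like $\mu^4$). Hence $F_\alpha(R_\alpha) = F_\alpha(Q_\alpha) = C_\alpha^{-1}$, so $R_\alpha$ is again a positive minimizer of (\ref{1.9}); positivity of both functions is inherited from that of $Q_\alpha$. Writing $R_\alpha = a\,Q_\alpha(\lambda\,\cdot)$ with $a^2 = \frac{1+\alpha}{4\alpha}$ and $\lambda^2 = \frac{3(1+\alpha)^2}{16\alpha}$, and using the scaling of the $L^p$ norms and of $\|\nabla\cdot\|_{L^2}$, I compute $\beta(R_\alpha) = a^4\lambda^{-2}\beta(Q_\alpha) = \frac{1}{3\alpha}\cdot\alpha = \frac{1}{3}$; then, feeding the relations $\int Q_\alpha^6 = \alpha\int|\nabla Q_\alpha|^2$ and $\int Q_\alpha^4 = \frac{4(1+\alpha)}{3}\int|\nabla Q_\alpha|^2$ into $V$, one gets $V(R_\alpha) = a^2\lambda^{-3}\big[\lambda^2 + \alpha a^4 - (1+\alpha)a^2\big]\int|\nabla Q_\alpha|^2 = 0$ because the bracket vanishes. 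Finally $\|\nabla R_\alpha\|_{L^2}^2/\|R_\alpha\|_{L^2}^2 = \lambda^2\,\|\nabla Q_\alpha\|_{L^2}^2/\|Q_\alpha\|_{L^2}^2$, and since $\frac{16\alpha}{9(1+\alpha)}\lambda^2 = \frac{1+\alpha}{3}$ this reproduces the second closed form for $\omega$ claimed in the theorem.

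It then remains to identify $Q_\alpha$ with $P_\omega$ and $R_\alpha$ with $R_\omega$. As $Q_\alpha$ is a positive $H^1$ solution of (\ref{1.2}) for this $\omega$, Proposition \ref{p2.1} (Berestycki--Lions, together with Gidas--Ni--Nirenberg and Serrin--Tang) forces $\omega \in (0,\frac{3}{16})$ and yields uniqueness up to translation, so $Q_\alpha = P_\omega$ after a translation; consequently $\beta(\omega) = \beta(P_\omega) = \beta(Q_\alpha) = \alpha$, and inserting $\beta(\omega) = \alpha$ into the definition (\ref{1.7}) of $R_\omega$ shows $R_\alpha = R_\omega$ up to the same translation. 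I do not anticipate a genuine obstacle: there is no delicate analysis, only algebra. The single point that must be handled with care is \emph{consistency of the three descriptions of} $\omega$ — the formula in Proposition \ref{p3.1}, the Nehari--Pohozaev expression above, and the $R_\alpha$-expression — since it is exactly this consistency that pins down the precise constants $a$ and $\lambda$ entering the definition of $R_\alpha$, and an error in those constants (even by a harmless-looking numerical factor) would propagate through every identity.
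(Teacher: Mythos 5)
Your proposal is correct, and its skeleton coincides with the paper's: get $\beta(Q_{\alpha})=\alpha$ and $V(Q_{\alpha})=0$ from Proposition \ref{p3.1} and the Nehari/Pohozaev identities, deduce the $L^{4}$ and $L^{6}$ relations, show $R_{\alpha}$ also minimizes (\ref{1.9}), extract the two expressions for $\omega$, and conclude $Q_{\alpha}=P_{\omega}$, $R_{\alpha}=R_{\omega}$ by the uniqueness in Proposition \ref{p2.1}. Where you differ is in how the $\omega$-formulas and the minimality of $R_{\alpha}$ are obtained: the paper computes $F_{\alpha}(R_{\alpha})$ explicitly and then derives both expressions for $\omega$ from the Euler--Lagrange equations $\frac{d}{d\varepsilon}F_{\alpha}(Q_{\alpha}+\varepsilon\phi)=0$ and $\frac{d}{d\varepsilon}F_{\alpha}(R_{\alpha}+\varepsilon\phi)=0$ (its (\ref{3.8k})--(\ref{3.8p}), including the rather heavy equation (\ref{3.8n}) for $R_{\alpha}$), whereas you get the first formula purely from Nehari plus $V(Q_{\alpha})=0$ and $\beta(Q_{\alpha})=\alpha$, the minimality of $R_{\alpha}$ from the scale and dilation invariance of $F_{\alpha}$, and the second formula from the elementary identity $\|\nabla R_{\alpha}\|_{L^{2}}^{2}/\|R_{\alpha}\|_{L^{2}}^{2}=\lambda^{2}\|\nabla Q_{\alpha}\|_{L^{2}}^{2}/\|Q_{\alpha}\|_{L^{2}}^{2}$ with $\frac{16\alpha}{9(1+\alpha)}\lambda^{2}=\frac{1+\alpha}{3}$. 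This is a genuine (and cleaner) shortcut: it never needs the Euler--Lagrange equation of the constrained functional, only the PDE identities and scaling bookkeeping, and your constants $a^{2}=\frac{1+\alpha}{4\alpha}$, $\lambda^{2}=\frac{3(1+\alpha)^{2}}{16\alpha}$ check out. Two cosmetic remarks: multiplying by $x\cdot\nabla Q_{\alpha}$ gives the $\omega$-dependent Pohozaev identity, and the $\omega$-free identity you display (i.e. $V(Q_{\alpha})=0$) is its combination with Nehari --- or simply cite Proposition \ref{p2.1}(III); and if $Q_{\alpha}=P_{\omega}(\cdot-x_{0})$ then $R_{\alpha}=R_{\omega}(\cdot-x_{0}/\lambda)$, a translation but not literally the same one, which is harmless since the conclusion is only up to translations.
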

\begin{proof}
From Proposition \ref{p3.1}, $Q_{\alpha}$ satisfies $\beta(Q_{\alpha})=\alpha$ and
\begin{equation}\label{3.8a}
-\Delta Q_{\alpha}+Q_{\alpha}^{5}-Q_{\alpha}^{3}+\omega Q_{\alpha}=0
\end{equation}
with some $\omega\in (0,\frac{3}{16})$. By (\ref{1.16}) and (\ref{3.8a}), it follows that $V(Q_{\alpha})=0$. Thus
\begin{equation}\label{3.8b}
\int \lvert Q_{\alpha}\lvert^{4}dx=\frac{4(\alpha+1)}{3}\int \lvert \nabla Q_{\alpha}\lvert^{2}dx,\quad \int \lvert Q_{\alpha}\lvert^{6}dx=\alpha\int \lvert \nabla Q_{\alpha}\lvert^{2}dx.
\end{equation}
By (\ref{1.8}), it follows that
\begin{equation}\label{3.8c}
F_{\alpha}(Q_{\alpha})=\frac{3\alpha^{\frac{\alpha}{2(1+\alpha)}}}{4(1+\alpha)}\lvert\lvert Q_{\alpha}\lvert\lvert_{L^{2}}\lvert\lvert \nabla Q_{\alpha}\lvert\lvert_{L^{2}}^{\frac{1-\alpha}{1+\alpha}}.
\end{equation}
According to Proposition \ref{p3.1},
\begin{equation}\label{3.8d}
F_{\alpha}(Q_{\alpha})=C_{\alpha}^{-1}=\mathop{\mathrm{min}}_{\{u\in H^1(\mathbb{R}^3)\backslash\{0\}\}} F_{\alpha}(u).
\end{equation}
Therefore $Q_{\alpha}$ is a positive minimizer of the variational problem (\ref{1.9}).

Since
\begin{equation}\label{3.8e}
R_{\alpha}=\sqrt{\frac{1+\alpha}{4\alpha}}Q_{\alpha}\Big(\frac{3[1+\alpha]}{4\sqrt{3\alpha}}x\Big),
\end{equation}
one implies that
\begin{equation}\label{3.8f}
\beta(R_{\alpha})=\frac{1}{3},\quad V(R_{\alpha})=0.
\end{equation}
Thus one has that
\begin{equation}\label{3.8g}
\int \lvert R_{\alpha}\lvert^{4}dx=\frac{16}{9} \int \lvert \nabla R_{\alpha}\lvert^{2}dx,\quad \int \lvert R_{\alpha}\lvert^{6}dx=\frac{1}{3} \int \lvert \nabla R_{\alpha}\lvert^{2}dx.
\end{equation}
By (\ref{1.8}),
\begin{equation}\label{3.8h}
F_{\alpha}(R_{\alpha})=\frac{9}{16}\big(\frac{1}{3}\big)^{\frac{\alpha}{2(1+\alpha)}}\big(\int \lvert R_{\alpha}\lvert^{2}dx\big)^{\frac{1}{2}}\big(\int \lvert \nabla R_{\alpha}\lvert^{2}dx\big)^{\frac{1-\alpha}{2(1+\alpha)}}.
\end{equation}
From (\ref{3.8e}) and (\ref{3.8h}), one yields that
\begin{equation}\label{3.8i}
F_{\alpha}(R_{\alpha})=\frac{3\alpha^{\frac{\alpha}{2(1+\alpha)}}}{4(1+\alpha)}\lvert\lvert Q_{\alpha}\lvert\lvert_{L^{2}}\lvert\lvert \nabla Q_{\alpha}\lvert\lvert_{L^{2}}^{\frac{1-\alpha}{1+\alpha}}.
\end{equation}
Thus
\begin{equation}\label{3.8j}
F_{\alpha}(R_{\alpha})=C_{\alpha}^{-1}=\mathop{\mathrm{min}}_{\{u\in H^1(\mathbb{R}^3)\backslash\{0\}\}} F_{\alpha}(u).
\end{equation}
Therefore $R_{\alpha}$ is also a positive minimizer of the variational problem (\ref{1.9}).

Since $Q_{\alpha}$ is a positive minimizer of $C_{\alpha}^{-1}$, it must satisfy the corresponding Euler-Lagrange equation:
\begin{equation}\label{3.8k}
\frac{d}{d\varepsilon}F_{\alpha}(Q_{\alpha}+\varepsilon \phi)=0\quad \text{for all}\quad  \phi\in \mathcal{C}_{c}^{\infty}(\mathbb{R}^{3}).
\end{equation}
Thus, direct computation shows that $Q_{\alpha}$ is a distribution solution to the following equation:
\begin{equation}\label{3.8l}
-\Delta Q_{\alpha}+Q_{\alpha}^{5}-Q_{\alpha}^{3}+\omega Q_{\alpha}=0\quad \text{with}\quad \omega=\frac{1+\alpha}{3}\frac{\lvert\lvert \nabla Q_{\alpha}\lvert\lvert_{L^{2}}^{2}}{\lvert \lvert Q_{\alpha}\lvert \lvert_{L^{2}}^{2}}.
\end{equation}

Since $R_{\alpha}$ is a positive minimizer of $C_{\alpha}^{-1}$, it must satisfy the corresponding Euler-Lagrange equation:
\begin{equation}\label{3.8m}
\frac{d}{d\varepsilon}F_{\alpha}(R_{\alpha}+\varepsilon \phi)=0\quad \text{for all}\quad  \phi\in \mathcal{C}_{c}^{\infty}(\mathbb{R}^{3}).
\end{equation}
Thus, direct computation shows that $R_{\alpha}$ is a distribution solution to the following equation:
\begin{equation}\label{3.8n}
-\Delta R_{\alpha}+\alpha\frac{\int\lvert \nabla R_{\alpha}\lvert^{2}dx }{\int R_{\alpha}^{6}dx}R_{\alpha}^{5}-\frac{4(1+\alpha)}{3}\frac{\int\lvert \nabla R_{\alpha}\lvert^{2}dx}{\int R_{\alpha}^{4}dx}R_{\alpha}^{3}+\frac{1+\alpha}{3}\frac{\int\lvert \nabla R_{\alpha}\lvert^{2}dx}{\int R_{\alpha}^{2}dx}R_{\alpha}=0.
\end{equation}
Note that
\begin{equation}\label{3.8o}
R_{\alpha}=\sqrt{\frac{1+\alpha}{4\alpha}}Q_{\alpha}\Big(\frac{3[1+\alpha]}{4\sqrt{3\alpha}}x\Big).
\end{equation}
(\ref{3.8o}) and (\ref{3.8n}) yield that $Q_{\alpha}$ satisfies
\begin{equation}\label{3.8p}
-\Delta Q_{\alpha}+Q_{\alpha}^{5}-Q_{\alpha}^{3}+\omega Q_{\alpha}=0\quad \text{with}\quad \omega=\frac{16\alpha}{9(1+\alpha)}\frac{\lvert\lvert \nabla R_{\alpha}\lvert\lvert_{L^{2}}^{2}}{\lvert \lvert R_{\alpha}\lvert \lvert_{L^{2}}^{2}}.
\end{equation}
(\ref{3.8p}) shows that $Q_{\alpha}$ is a ground state. By Proposition \ref{p2.1}, uniqueness of ground state yields that
\begin{equation}\label{3.8q}
Q_{\alpha}=P_{\omega}\;\;\;\text{up to translations},
\end{equation}
which is a ground state. Since $\alpha=\beta(Q_{\alpha})$, one gets that
\begin{equation}\label{3.8r}
\alpha=\beta(P_{\omega})=\beta(\omega).
\end{equation}
Thus $R_{\alpha}=R_{\omega}$ (up to translations) is a rescaled soliton, where $R_{\omega}$ as in Proposition \ref{p2.3}.
\end{proof}

\begin{theorem}\label{t3.9}
Let $0<\alpha<\infty$. Then $C_{\alpha}^{-1}$ identifies a unique $Q_{\alpha}(x)>0$ satisfying
\begin{equation*}
-\Delta Q_{\alpha}+Q_{\alpha}^{5}-Q_{\alpha}^{3}+\omega Q_{\alpha}=0
\end{equation*}
with
\begin{equation*}
\omega=\frac{1+\alpha}{3}\frac{\lvert\lvert \nabla Q_{\alpha}\lvert\lvert_{L^{2}}^{2}}{\lvert \lvert Q_{\alpha}\lvert \lvert_{L^{2}}^{2}}.
\end{equation*}
In addition, $\omega$ is completely identified by $\alpha$ and
\begin{equation*}
Q_{\alpha}=P_{\omega}\;\;\;\text{up to translations}.
\end{equation*}
\end{theorem}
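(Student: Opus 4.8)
Theorem \ref{t3.8} already supplies most of the structure. It shows that $C_\alpha^{-1}$ is attained, and that once a minimizer is put into the normalized form of Proposition \ref{p3.1}, every positive minimizer $Q_\alpha$ solves $-\Delta Q_\alpha+Q_\alpha^5-Q_\alpha^3+\omega Q_\alpha=0$ with $\omega=\frac{1+\alpha}{3}\|\nabla Q_\alpha\|_{L^2}^2/\|Q_\alpha\|_{L^2}^2$, coincides with $P_\omega$ up to translations, and satisfies $\beta(\omega)=\alpha$. So the plan is to reduce Theorem \ref{t3.9} to one claim: there is exactly one $\omega\in(0,\frac{3}{16})$ with $\beta(\omega)=\alpha$. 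Once this is known, uniqueness of $Q_\alpha$ and the statement that $\omega$ is completely identified by $\alpha$ follow at once from uniqueness of the ground state (Proposition \ref{p2.1}), since every minimizer is then $P_\omega$ for this one frequency.

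To locate the frequency I would analyze $F_\alpha$ along the ground-state branch $\{P_\omega\}$. Since $F_\alpha$ is invariant under $u\mapsto a\,u(\lambda(\cdot-x_0))$ and its infimum is attained at a function which, after rescaling, lies on this branch, one has $C_\alpha^{-1}=\inf_{0<\omega<3/16}F_\alpha(P_\omega)$. Using $V(P_\omega)=0$ (the Pohozaev identity in Proposition \ref{p2.1}), the relation $M(P_\omega)=\frac{\beta(\omega)+1}{3\omega}\|\nabla P_\omega\|_{L^2}^2$ and $\frac{d}{d\omega}\|\nabla P_\omega\|_{L^2}^2=\frac{3}{2}M(P_\omega)$ from Propositions \ref{p2.1}--\ref{p2.2}, a direct computation should yield
\[
F_\alpha(P_\omega)=\frac{3}{4}\,\frac{\beta(\omega)^{\frac{\alpha}{2(1+\alpha)}}\,\|\nabla P_\omega\|_{L^2}^{\frac{2}{1+\alpha}}}{(3\omega)^{1/2}\,(1+\beta(\omega))^{1/2}},\qquad \frac{d}{d\omega}\log F_\alpha(P_\omega)=\frac{\beta(\omega)-\alpha}{2(1+\alpha)}\Big(\frac{1}{\omega}-\frac{\beta'(\omega)}{\beta(\omega)(1+\beta(\omega))}\Big).
\]
The parenthesis is strictly positive: since $\frac{d}{d\omega}M(P_\omega)=M(P_\omega)\big(\frac{\beta'(\omega)}{1+\beta(\omega)}+\frac{\beta(\omega)-1}{2\omega}\big)$ (a consequence of the two identities just used), the inequality $\frac{1}{\omega}>\frac{\beta'(\omega)}{\beta(\omega)(1+\beta(\omega))}$ is exactly $\frac{d}{d\omega}M(P_\omega)<\frac{3\beta(\omega)-1}{2\omega}M(P_\omega)$ of Proposition \ref{p2.2}(I). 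Hence the sign of $\frac{d}{d\omega}F_\alpha(P_\omega)$ equals the sign of $\beta(\omega)-\alpha$; combined with $\beta(\omega)\to0$ as $\omega\to0$ and $\beta(\omega)\to\infty$ as $\omega\to\frac{3}{16}$ (Proposition \ref{p2.2}), the infimum $C_\alpha^{-1}$ is attained at an interior point and every minimizing frequency satisfies $\beta(\omega)=\alpha$.

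The final step — and, I expect, the only genuine obstacle — is to see that this minimizing frequency is unique, i.e. that $\beta$ meets the level $\alpha$ only once. This is precisely the strict monotonicity of $\omega\mapsto\beta(\omega)$, Theorem \ref{t1.1}: it makes $\beta$ injective on $(0,\frac{3}{16})$, so $\beta(\omega)=\alpha$ has a single solution, $\omega$ becomes a function of $\alpha$, and $Q_\alpha=P_\omega$ is the unique positive minimizer up to translations. The sign computation alone only pins local minima of $\omega\mapsto F_\alpha(P_\omega)$ down to the level set $\{\beta=\alpha\}$; excluding that $\beta$ oscillates across that level is exactly what Theorem \ref{t1.1} provides. (For $\alpha\neq1$ one can sidestep Theorem \ref{t1.1}: by Proposition \ref{p3.1} the quantity $\|Q_\alpha\|_{L^2}\|\nabla Q_\alpha\|_{L^2}^{\frac{1-\alpha}{1+\alpha}}$ is the same for all minimizers, so — together with their common mass — so are $\|\nabla Q_\alpha\|_{L^2}^2$ and hence $\omega=\frac{1+\alpha}{3}\|\nabla Q_\alpha\|_{L^2}^2/\|Q_\alpha\|_{L^2}^2$; Theorem \ref{t3.8} then forces all minimizers to equal one and the same $P_\omega$.)
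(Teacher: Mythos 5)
Your reduction is to the claim that $\beta(\omega)=\alpha$ has exactly one solution $\omega\in(0,\tfrac{3}{16})$, and you resolve that by citing Theorem \ref{t1.1}. Inside this paper that is circular: Theorem \ref{t1.1} is proved in Sections 4--6 (via Theorems \ref{t4.1}--\ref{t4.5}, \ref{t5.6}, \ref{t6.1}), and Theorem \ref{t4.1} invokes Theorem \ref{t3.9} explicitly, so the monotonicity of $\beta$ sits strictly downstream of the statement you are asked to prove. Moreover, the statement of Theorem \ref{t3.9} does not actually require injectivity of $\beta$ on all of $(0,\tfrac{3}{16})$; it only requires that the frequency attached to \emph{minimizers} of $C_{\alpha}^{-1}$ is unique, which is a weaker fact. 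Your sign computation along the branch $\{P_\omega\}$ (even granting it) only reproves what Theorem \ref{t3.8} already gives — minimizing frequencies lie on the level set $\{\beta=\alpha\}$ — and leaves the crux unproved without the circular citation.

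The paper closes the gap differently, and more cheaply: by Proposition \ref{p3.3} the map $\alpha\mapsto\lVert\nabla Q_{\alpha}\rVert_{L^{2}}^{2}$ is strictly increasing on $(0,\infty)$, hence this norm is the same for every minimizer at a given $\alpha$; combined with $C_{\alpha}^{-1}=\frac{3\alpha^{\alpha/(2(1+\alpha))}}{4(1+\alpha)}\lVert Q_{\alpha}\rVert_{L^{2}}\lVert\nabla Q_{\alpha}\rVert_{L^{2}}^{(1-\alpha)/(1+\alpha)}$ this pins down $\lVert Q_{\alpha}\rVert_{L^{2}}^{2}$, hence $\omega=\frac{1+\alpha}{3}\lVert\nabla Q_{\alpha}\rVert_{L^{2}}^{2}/\lVert Q_{\alpha}\rVert_{L^{2}}^{2}$, and then Theorem \ref{t3.8} plus uniqueness of the ground state (Proposition \ref{p2.1}) force every minimizer to equal the single $P_{\omega}$ up to translations. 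Your parenthetical fallback is close to this but runs the implication backwards: you assume the minimizers have a ``common mass'' to deduce a common gradient norm, yet no cited result supplies a common mass for general $\alpha$ (Proposition \ref{p3.3}'s mass statements cover only $(0,\tfrac13]\cup[1,\infty)$, and well-definedness there is precisely the kind of fact at issue); the common mass must be \emph{derived} from the common gradient norm and $C_{\alpha}$, as above, which also handles $\alpha=1$ uniformly. With that correction your argument becomes the paper's proof; as written, it has a genuine gap.
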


\begin{proof}
For $0<\alpha<\infty$, from Proposition \ref{p3.1} $C_{\alpha}$ is completely identified by $\alpha$. In addition,
\begin{equation}\label{3.9a}
C_{\alpha}^{-1}=\frac{3\alpha^{\frac{\alpha}{2(1+\alpha)}}}{4(1+\alpha)}\lvert\lvert Q_{\alpha}\lvert\lvert_{L^{2}}\lvert\lvert \nabla Q_{\alpha}\lvert\lvert_{L^{2}}^{\frac{1-\alpha}{1+\alpha}}.
\end{equation}
Note that here $Q_{\alpha}$ is only a positive minimizer of $C_{\alpha}^{-1}$, which is not identified by $\alpha$. But by Proposition \ref{p3.3},
\begin{equation}\label{3.9b}
\alpha \mapsto \lvert\lvert \nabla Q_{\alpha}\lvert\lvert_{L^{2}}^{2}\quad \text{is strictly increasing on}\; (0,\infty).
\end{equation}
Hence (\ref{3.9b}) implies that $\lvert\lvert \nabla Q_{\alpha}\lvert\lvert_{L^{2}}^{2}$ is completely identified by $\alpha$. Thus we set
\begin{equation}\label{3.9c}
\lvert\lvert \nabla Q_{\alpha}\lvert\lvert_{L^{2}}^{2}=h(\alpha).
\end{equation}
In terms of Theorem \ref{t3.8}, $Q_{\alpha}$ satisfies
\begin{equation}\label{3.9d}
-\Delta Q_{\alpha}+Q_{\alpha}^{5}-Q_{\alpha}^{3}+\omega Q_{\alpha}=0
\end{equation}
with
\begin{equation}\label{3.9e}
\omega=\frac{1+\alpha}{3}\frac{\lvert\lvert \nabla Q_{\alpha}\lvert\lvert_{L^{2}}^{2}}{\lvert \lvert Q_{\alpha}\lvert \lvert_{L^{2}}^{2}}.
\end{equation}
By (\ref{3.9a}) and (\ref{3.9c}), one gets that
\begin{equation}\label{3.9f}
\lvert\lvert Q_{\alpha}\lvert\lvert_{L^{2}}^{2}=\Big[\frac{4(1+\alpha)}{3\alpha^{\frac{\alpha}{2(1+\alpha)}}} \Big]^{2}\Big[\frac{1}{C_{\alpha}(h(\alpha))^{\frac{1-\alpha}{2(1+\alpha)}}}\Big]^{2}:=g(\alpha).
\end{equation}
Thus (\ref{3.9f}) shows that $\lvert\lvert  Q_{\alpha}\lvert\lvert_{L^{2}}^{2}$ is completely identified by $\alpha$, although $ Q_{\alpha}$ is not identified by $\alpha$. By (\ref{3.9e}), (\ref{3.9c}) and (\ref{3.9f}), one gets that
\begin{equation}\label{3.9g}
\omega=\frac{1+\alpha}{3}\frac{\lvert\lvert \nabla Q_{\alpha}\lvert\lvert_{L^{2}}^{2}}{\lvert \lvert Q_{\alpha}\lvert \lvert_{L^{2}}^{2}}=\frac{1+\alpha}{3}\frac{h(\alpha)}{g(\alpha)}:=f(\alpha).
\end{equation}
It follows that
\begin{equation}\label{3.9h}
\omega\;\;\;\text{is completely identified by}\;\; \alpha.
\end{equation}
According to Theorem \ref{t3.8},
\begin{equation}\label{3.9i}
Q_{\alpha}=P_{\omega}\;\;\;\;\text{up to translations}.
\end{equation}
Then one gets that
\begin{equation}\label{3.9j}
Q_{\alpha}\;\;\;\text{is uniquely identified by}\;\; \omega.
\end{equation}
(\ref{3.9h}) and (\ref{3.9j}) yield that $Q_{\alpha}$ is uniquely identified by $\alpha$. Therefore for $0<\alpha<\infty$, $C_{\alpha}^{-1}$ identifies a unique $Q_{\alpha}(x)>0$ satisfying
\begin{equation}\label{3.9k}
-\Delta Q_{\alpha}+Q_{\alpha}^{5}-Q_{\alpha}^{3}+\omega Q_{\alpha}=0
\end{equation}
with
\begin{equation}\label{3.9l}
\omega=\frac{1+\alpha}{3}\frac{\lvert\lvert \nabla Q_{\alpha}\lvert\lvert_{L^{2}}^{2}}{\lvert \lvert Q_{\alpha}\lvert \lvert_{L^{2}}^{2}}\;\;\;\text{and}\;\;\;Q_{\alpha}=P_{\omega}\;\;\;\;\text{up to translations}.
\end{equation}
In terms of Theorem \ref{t3.8}, $Q_{\alpha}$ is a positive minimizer of $C_{\alpha}^{-1}$.
\end{proof}

\begin{theorem}\label{t3.10}
Let $0<\alpha\leq 1$ and $Q_{\alpha}$ as in Proposition \ref{p3.1}. Then
\begin{equation*}
\alpha\mapsto \frac{\lvert\lvert \nabla Q_{\alpha}\lvert\lvert_{L^{2}}^{2}}{\lvert \lvert Q_{\alpha}\lvert \lvert_{L^{2}}^{2}} \quad  \text{is strictly increasing}.
\end{equation*}
\end{theorem}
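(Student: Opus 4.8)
The plan is to reduce Theorem~\ref{t3.10} to an algebraic identity for the family $Q_\alpha$ together with a quantitative sharpening of the monotonicity recorded in Proposition~\ref{p3.3}. Write $N(\alpha):=\|\nabla Q_\alpha\|_{L^2}^2$ and $m(\alpha):=\|Q_\alpha\|_{L^2}^2$. First, solving the formula $C_\alpha=\frac{4(1+\alpha)}{3\alpha^{\alpha/(2(1+\alpha))}}\big(\|Q_\alpha\|_{L^2}\|\nabla Q_\alpha\|_{L^2}^{(1-\alpha)/(1+\alpha)}\big)^{-1}$ of Proposition~\ref{p3.1} for $m(\alpha)$ and substituting gives the purely algebraic identity
\begin{equation*}
\frac{N(\alpha)}{m(\alpha)}=\frac{9}{16(1+\alpha)^2}\,\alpha^{\frac{\alpha}{1+\alpha}}\,C_\alpha^{\,2}\,N(\alpha)^{\frac{2}{1+\alpha}},
\end{equation*}
so, taking logarithms, it suffices to show that
\begin{equation*}
X(\alpha):=\frac{\alpha}{1+\alpha}\log\alpha+2\log C_\alpha+\frac{2}{1+\alpha}\log N(\alpha)-2\log(1+\alpha)
\end{equation*}
is strictly increasing on $(0,1]$.

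The second and central step is to tie $C_\alpha$ and $N(\alpha)$ together through the variational characterization $C_\alpha^{-1}=\inf_{u}F_\alpha(u)=F_\alpha(Q_\alpha)$. For a fixed admissible $u$ one computes $\frac{\partial}{\partial\alpha}\log F_\alpha(u)=\frac{3}{(1+\alpha)^2}\log\frac{\|u\|_{L^6}}{\|\nabla u\|_{L^2}}$; integrating in $\alpha$ and using $F_{\alpha_2}(Q_{\alpha_1})\ge C_{\alpha_2}^{-1}$ together with $F_{\alpha_1}(Q_{\alpha_2})\ge C_{\alpha_1}^{-1}$, one obtains for $0<\alpha_1<\alpha_2$
\begin{equation*}
\Phi(\alpha_2)\,\delta\ \le\ \log C_{\alpha_2}^{-1}-\log C_{\alpha_1}^{-1}\ \le\ \Phi(\alpha_1)\,\delta,\qquad \delta:=\frac{3}{1+\alpha_1}-\frac{3}{1+\alpha_2}>0,
\end{equation*}
where $\Phi(\alpha):=\log\frac{\|Q_\alpha\|_{L^6}}{\|\nabla Q_\alpha\|_{L^2}}$. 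Since $\beta(Q_\alpha)=\alpha$ forces $\|Q_\alpha\|_{L^6}^6=\alpha N(\alpha)$, we have $\Phi(\alpha)=\frac16\log\alpha-\frac13\log N(\alpha)$, and the displayed inequalities make $\Phi$ non-increasing; equivalently $N(\alpha_2)/N(\alpha_1)\ge(\alpha_2/\alpha_1)^{1/2}$ for $\alpha_1<\alpha_2$, a strengthening of the strict monotonicity of $\alpha\mapsto\|\nabla Q_\alpha\|_{L^2}^2$ in Proposition~\ref{p3.3}.

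Third, I would feed the upper bound $\log C_{\alpha_2}^{-1}-\log C_{\alpha_1}^{-1}\le\Phi(\alpha_1)\delta$ (with $\Phi(\alpha_1)=\frac16\log\alpha_1-\frac13\log N(\alpha_1)$) and the estimate $\log N(\alpha_2)-\log N(\alpha_1)\ge\frac12\log(\alpha_2/\alpha_1)$ into $X(\alpha_2)-X(\alpha_1)$. The loose $\log\alpha_i$ and $\log N(\alpha_i)$ terms then reorganize into ratios — this is the finite-difference shadow of the formal identity $\frac{d}{d\alpha}\log\frac{N}{m}=\frac{1}{1+\alpha}\big(\frac{2N'}{N}-1\big)$, in which the $\log\alpha$ and $\log N$ contributions cancel outright — and one is left with
\begin{equation*}
\log\frac{N(\alpha_2)/m(\alpha_2)}{N(\alpha_1)/m(\alpha_1)}\ \ge\ \log\frac{\alpha_2}{\alpha_1}-2\log\frac{1+\alpha_2}{1+\alpha_1}\ =\ \log\frac{\alpha_2(1+\alpha_1)^2}{\alpha_1(1+\alpha_2)^2}.
\end{equation*}
Since the derivative of $t\mapsto t/(1+t)^2$ equals $(1-t)/(1+t)^3>0$ on $(0,1)$, this map is strictly increasing on $(0,1]$, so the right-hand side is strictly positive for $0<\alpha_1<\alpha_2\le1$; this yields Theorem~\ref{t3.10}.

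The main obstacle is the sharpened monotonicity of Step~2, namely that $\alpha\mapsto\|\nabla Q_\alpha\|_{L^2}^2/\sqrt\alpha$ is non-decreasing — a statement beyond Proposition~\ref{p3.3}, whose exponent $\tfrac12$ is precisely what makes the final gain equal to $\log\big(\alpha_2(1+\alpha_1)^2/(\alpha_1(1+\alpha_2)^2)\big)$, which is positive exactly on $(0,1]$. For $\alpha>1$ this quantity turns negative, so that regime genuinely escapes the present argument and must be treated by the separate analysis of the range $\alpha\ge1$. A secondary point is to carry Step~3 through entirely in finite differences, so that no differentiability of $\alpha\mapsto\|\nabla Q_\alpha\|_{L^2}^2$ (known only to be strictly increasing) is required.
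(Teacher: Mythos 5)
Your proposal is correct and follows essentially the same route as the paper's own proof of Theorem \ref{t3.10}: testing $F_{\alpha_2}$ on $Q_{\alpha_1}$ and $F_{\alpha_1}$ on $Q_{\alpha_2}$ (your two-sided bound on $\log C_\alpha^{-1}$), deducing $\|\nabla Q_{\alpha_2}\|_{L^2}^2/\|\nabla Q_{\alpha_1}\|_{L^2}^2\ge(\alpha_2/\alpha_1)^{1/2}$, and combining this with the expression of $C_\alpha$ through $\|Q_\alpha\|_{L^2}$ and $\|\nabla Q_\alpha\|_{L^2}$ to arrive at the same final inequality, concluded by the monotonicity of $\alpha/(1+\alpha)^2$ on $(0,1]$. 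The only differences are presentational (logarithmic finite differences instead of the paper's ratio inequalities), and in fact slightly cleaner, since you need only the non-strict variational inequalities, strictness being supplied by the algebraic factor $\alpha_2(1+\alpha_1)^2/\bigl(\alpha_1(1+\alpha_2)^2\bigr)>1$.
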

\begin{proof}
Assume that $\alpha$ and $\nu$ satisfy
\begin{equation}\label{3.10a}
0<\nu<\alpha\leq 1.
\end{equation}
By Theorem \ref{t3.8},  $Q_{\alpha}$ is a positive minimizer of $C_{\alpha}^{-1}$, and $Q_{\nu}$ is a positive minimizer of $C_{\nu}^{-1}$. Thus one has that
\begin{equation}\label{3.10b}
F_{\alpha}(Q_{\nu})> F_{\alpha}(Q_{\alpha})=C_{\alpha}^{-1},\quad F_{\nu}(Q_{\alpha})> F_{\nu}(Q_{\nu})=C_{\nu}^{-1}.
\end{equation}
By Proposition \ref{p3.1}, one has that
\begin{equation}\label{3.10c}
V(Q_{\alpha})=0=V(Q_{\nu}),\quad \beta(Q_{\alpha})=\alpha,\quad \beta(Q_{\nu})=\nu.
\end{equation}
It follows  that
\begin{equation}\label{3.10d}
F_{\alpha}(Q_{\nu})=\frac{3\nu^{\frac{\alpha}{2(1+\alpha)}}}{4(1+\nu)}\lvert\lvert Q_{\nu}\lvert\lvert_{L^{2}}\big( \int \lvert \nabla Q_{\nu}\lvert^{2}dx\big)^{\frac{1-\alpha}{2(1+\alpha)}},
\end{equation}
\begin{equation}\label{3.10e}
F_{\nu}(Q_{\alpha})=\frac{3\alpha^{\frac{\nu}{2(1+\nu)}}}{4(1+\alpha)}\lvert\lvert Q_{\alpha}\lvert\lvert_{L^{2}}\big( \int \lvert \nabla Q_{\alpha}\lvert^{2}dx\big)^{\frac{1-\nu}{2(1+\nu)}},
\end{equation}
\begin{equation}\label{3.10f}
F_{\alpha}(Q_{\alpha})=\frac{3\alpha^{\frac{\alpha}{2(1+\alpha)}}}{4(1+\alpha)}\lvert\lvert Q_{\alpha}\lvert\lvert_{L^{2}}\big( \int \lvert \nabla Q_{\alpha}\lvert^{2}dx\big)^{\frac{1-\alpha}{2(1+\alpha)}},
\end{equation}
\begin{equation}\label{3.10g}
F_{\nu}(Q_{\nu})=\frac{3\nu^{\frac{\nu}{2(1+\nu)}}}{4(1+\nu)}\lvert\lvert Q_{\nu}\lvert\lvert_{L^{2}}\big( \int \lvert \nabla Q_{\nu}\lvert^{2}dx\big)^{\frac{1-\nu}{2(1+\nu)}}.
\end{equation}
By (\ref{3.10b}), (\ref{3.10d}), (\ref{3.10e}), (\ref{3.10f}) and (\ref{3.10g}), one has that
\begin{equation}\label{3.10h}
\frac{1+\nu}{1+\alpha}\big(\frac{\nu}{\alpha}\big)^{-\frac{\nu}{2(1+\nu)}}\Big(\frac{\lvert\lvert \nabla Q_{\alpha}\lvert\lvert_{L^{2}}}{\lvert\lvert\nabla Q_{\nu}\lvert\lvert_{L^{2}}} \Big)^{\frac{1-\nu}{1+\nu}}> \frac{\lvert\lvert  Q_{\nu}\lvert\lvert_{L^{2}}}{\lvert\lvert Q_{\alpha}\lvert\lvert_{L^{2}}},
\end{equation}
\begin{equation}\label{3.10i}
\frac{\lvert\lvert  Q_{\nu}\lvert\lvert_{L^{2}}}{\lvert\lvert Q_{\alpha}\lvert\lvert_{L^{2}}}> \frac{1+\nu}{1+\alpha}\big(\frac{\nu}{\alpha}\big)^{-\frac{\alpha}{2(1+\alpha)}}\Big(\frac{\lvert\lvert \nabla Q_{\alpha}\lvert\lvert_{L^{2}}}{\lvert\lvert\nabla Q_{\nu}\lvert\lvert_{L^{2}}} \Big)^{\frac{1-\alpha}{1+\alpha}}.
\end{equation}
From (\ref{3.10h}) and (\ref{3.10i}), one gets that
\begin{equation}\label{3.10j}
\frac{\lvert\lvert \nabla Q_{\alpha}\lvert\lvert_{L^{2}}^{2}}{\lvert\lvert \nabla Q_{\nu}\lvert\lvert_{L^{2}}^{2}}>\big(\frac{\alpha}{\nu} \big)^{\frac{1}{2}}.
\end{equation}
By (\ref{3.10a}), (\ref{3.10h}) and (\ref{3.10j}), one gets that
\begin{equation}\label{3.10k}
\frac{\lvert\lvert Q_{\nu}\lvert\lvert_{L^{2}}^{2}}{\lvert\lvert  Q_{\alpha}\lvert\lvert_{L^{2}}^{2}}>\frac{(1+\nu)^{2}\alpha^{\frac{1}{2}}}{(1+\alpha)^{2}\nu^{\frac{1}{2}}}.
\end{equation}
From (\ref{3.10j}) and (\ref{3.10k}), one has that
\begin{equation}\label{3.10l}
\frac{\lvert\lvert \nabla Q_{\alpha}\lvert\lvert_{L^{2}}^{2}}{\lvert\lvert \nabla Q_{\nu}\lvert\lvert_{L^{2}}^{2}}\cdot\frac{\lvert\lvert Q_{\nu}\lvert\lvert_{L^{2}}^{2}}{\lvert\lvert  Q_{\alpha}\lvert\lvert_{L^{2}}^{2}}>\frac{(1+\nu)^{2}\alpha}{(1+\alpha)^{2}\nu}.
\end{equation}
Since
\begin{equation}\label{3.10m}
\frac{d}{d\alpha}[(1+\alpha)^{2}\alpha^{-1}]=\frac{\alpha^{2}-1}{\alpha^{2}},
\end{equation}
it follows that
\begin{equation}\label{3.10n}
(1+\alpha)^{2}\alpha^{-1}\;\;\;\text{is strictly decreasing on}\;\;\; (0,1].
\end{equation}
Then (\ref{3.10l}) and (\ref{3.10n}) imply that
\begin{equation}\label{3.10o}
\alpha\mapsto \frac{\lvert\lvert \nabla Q_{\alpha}\lvert\lvert_{L^{2}}^{2}}{\lvert\lvert Q_{\alpha}\lvert\lvert_{L^{2}}^{2}} \quad \text{is strictly increasing on} \; (0,1].
\end{equation}
\end{proof}

\begin{theorem}\label{t3.11}
Let $0<\alpha<\infty$ and $R_{\alpha}$ as in Theorem \ref{t3.8}. Then one has that
\begin{equation*}
\alpha\mapsto \lvert\lvert \nabla R_{\alpha}\lvert\lvert_{L^{2}}^{2} \quad  \text{is strictly increasing on}\; (0,\infty),
\end{equation*}
\begin{equation*}
\alpha\mapsto M(R_{\alpha})=\lvert\lvert R_{\alpha}\lvert\lvert_{L^{2}}^{2} \quad  \text{is strictly decreasing on}\;(0,1].
\end{equation*}
\begin{equation*}
\alpha\mapsto M(R_{\alpha})=\lvert\lvert R_{\alpha}\lvert\lvert_{L^{2}}^{2} \quad  \text{is strictly increasing on}\;(1,\infty).
\end{equation*}
\end{theorem}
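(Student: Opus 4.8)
The plan is to run the same cross-comparison as in the proof of Theorem~\ref{t3.10}, but carried out with the rescaled solitons $R_\alpha$ in place of the ground states $Q_\alpha$. The structural input is Theorem~\ref{t3.8}: for every $\alpha\in(0,\infty)$ the function $R_\alpha$ is itself a positive minimizer of the scale-invariant functional $F_\alpha$ of (\ref{1.9}), and it obeys the normalizations $\beta(R_\alpha)=\tfrac13$ and $V(R_\alpha)=0$. Fix $0<\nu<\alpha$ and abbreviate $n_\mu:=\|\nabla R_\mu\|_{L^2}^2$ and $m_\mu:=M(R_\mu)=\|R_\mu\|_{L^2}^2$ for $\mu\in\{\nu,\alpha\}$. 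Since $R_\alpha$ minimizes $F_\alpha$ and $R_\nu$ minimizes $F_\nu$, I would first record
\begin{equation*}
F_\alpha(R_\nu)>F_\alpha(R_\alpha)=C_\alpha^{-1},\qquad F_\nu(R_\alpha)>F_\nu(R_\nu)=C_\nu^{-1}.
\end{equation*}
Only the \emph{strictness} needs a remark: were $R_\nu$ a minimizer of $F_\alpha$, the Euler--Lagrange analysis of Theorem~\ref{t3.8} would place, in the scaling orbit of $R_\nu$, a solution of (\ref{1.2}) whose $\beta$-value is $\alpha$; but that orbit coincides with the orbit of $Q_\nu$, which contains, among solutions of (\ref{1.2}), only $Q_\nu$ itself (distinct ground states $P_\omega$ are never rescalings of one another, by matching coefficients in (\ref{1.2}), and $Q_\nu=P_\omega$ for the frequency identified in Theorem~\ref{t3.9}), and $\beta(Q_\nu)=\nu\neq\alpha$, a contradiction; symmetrically for $R_\alpha$ and $F_\nu$.

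Next I would turn the two comparisons into inequalities between $m_\mu$ and $n_\mu$. From $\beta(R_\mu)=\tfrac13$ and $V(R_\mu)=0$ one has $\|R_\mu\|_{L^6}^6=\tfrac13 n_\mu$ and $\|R_\mu\|_{L^4}^4=\tfrac{16}{9}n_\mu$, so the computation leading to (\ref{3.8h}) gives, for any index $\sigma>0$,
\begin{equation*}
F_\sigma(R_\mu)=\tfrac{9}{16}\,3^{-\frac{\sigma}{2(1+\sigma)}}\,m_\mu^{1/2}\,n_\mu^{\frac{1-\sigma}{2(1+\sigma)}}.
\end{equation*}
Inserting this into the two strict inequalities and cancelling the common factors yields
\begin{equation*}
\frac{m_\nu}{m_\alpha}>\Big(\frac{n_\alpha}{n_\nu}\Big)^{\frac{1-\alpha}{1+\alpha}}
\qquad\text{and}\qquad
\frac{m_\alpha}{m_\nu}>\Big(\frac{n_\nu}{n_\alpha}\Big)^{\frac{1-\nu}{1+\nu}}.
\end{equation*}

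From here the three conclusions follow quickly. Multiplying the two displayed inequalities, the mass ratios cancel and one gets $1>(n_\alpha/n_\nu)^{\psi(\alpha)-\psi(\nu)}$ with $\psi(t):=\tfrac{1-t}{1+t}$; since $\psi$ is strictly decreasing and $\nu<\alpha$, the exponent is strictly negative, which forces $n_\alpha>n_\nu$, i.e.\ $\alpha\mapsto\|\nabla R_\alpha\|_{L^2}^2$ is strictly increasing on $(0,\infty)$. Feeding $n_\alpha>n_\nu$ back in: if $\nu<\alpha\le1$ the exponent $\tfrac{1-\alpha}{1+\alpha}$ is nonnegative while the base $n_\alpha/n_\nu$ exceeds $1$, so $m_\nu/m_\alpha>(n_\alpha/n_\nu)^{(1-\alpha)/(1+\alpha)}\ge1$, hence $M(R_\nu)>M(R_\alpha)$; if $1<\nu<\alpha$ the exponent $\tfrac{1-\nu}{1+\nu}$ is negative while the base $n_\nu/n_\alpha$ is less than $1$, so $m_\alpha/m_\nu>(n_\nu/n_\alpha)^{(1-\nu)/(1+\nu)}>1$, hence $M(R_\alpha)>M(R_\nu)$. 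This gives the remaining two assertions. The only genuine obstacle is justifying the strictness of the opening comparisons; everything afterwards is bookkeeping, the essential feature being that the exponent $\tfrac{1-\alpha}{1+\alpha}$ appearing in $F_\alpha(R_\alpha)$ changes sign exactly at $\alpha=1$, which is precisely why the mass monotonicity reverses there although the gradient monotonicity persists throughout.
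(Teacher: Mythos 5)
Your proposal is correct and follows essentially the same route as the paper's proof: compare $F_\alpha$ and $F_\nu$ evaluated on the two minimizers $R_\alpha,R_\nu$, use $\beta(R_\mu)=\tfrac13$, $V(R_\mu)=0$ to express $F_\sigma(R_\mu)$ through $M(R_\mu)$ and $\lVert\nabla R_\mu\rVert_{L^2}^2$, multiply the two inequalities to get gradient monotonicity, then feed it back to get the mass monotonicity with the sign change at $\alpha=1$. The only difference is that you supply an explicit justification of the strictness of the opening inequalities (via the Euler--Lagrange/$\beta$-value argument), which the paper simply asserts from $\alpha\neq\nu$; this is a welcome clarification rather than a different method.
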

\begin{proof}
Assume that $0<\alpha, \nu <\infty$ and $\alpha>\nu$. According to Proposition \ref{p3.1} and Theorem \ref{t3.8}, there exists $Q_{\alpha}(x)$ satisfies
\begin{equation}\label{3.11a}
C_{\alpha}^{-1}=\mathop{\mathrm{min}}_{\{u\in H^1(\mathbb{R}^3)\backslash\{0\}\}} F_{\alpha}(u),
\end{equation}
and there exists $Q_{\nu}(x)$ satisfies
\begin{equation}\label{3.11b}
C_{\nu}^{-1}=\mathop{\mathrm{min}}_{\{u\in H^1(\mathbb{R}^3)\backslash\{0\}\}} F_{\nu}(u).
\end{equation}
Let $R_{\alpha}$ as in Theorem \ref{t3.8} and $R_{\nu}=\sqrt{\frac{1+\nu}{4\nu}}Q_{\nu}\Big(\frac{3[1+\nu]}{4\sqrt{3\nu}}x\Big)$. Then by Theorem \ref{t3.8}, $R_{\alpha}(x)$ satisfies (\ref{3.11a}) and $R_{\nu}(x)$ satisfies (\ref{3.11b}), that is
\begin{equation}\label{3.11c}
\mathop{\mathrm{min}}_{\{u\in H^1(\mathbb{R}^3)\backslash\{0\}\}} F_{\alpha}(u)=F_{\alpha}(R_{\alpha}), \quad \mathop{\mathrm{min}}_{\{u\in H^1(\mathbb{R}^3)\backslash\{0\}\}} F_{\nu}(u)=F_{\nu}(R_{\nu}).
\end{equation}
Since $\alpha\neq \nu$, it follows that
\begin{equation}\label{3.11d}
F_{\alpha}(R_{\nu})>F_{\alpha}(R_{\alpha}), \quad F_{\nu}(R_{\alpha})>F_{\nu}(R_{\nu}).
\end{equation}
By (\ref{3.8h}), one has that
\begin{equation}\label{3.11e}
F_{\nu}(R_{\nu})=\frac{9}{16}\big(\frac{1}{3}\big)^{\frac{\nu}{2(1+\nu)}}\big(\int \lvert R_{\nu}\lvert^{2}dx\big)^{\frac{1}{2}}\big(\int \lvert \nabla R_{\nu}\lvert^{2}dx\big)^{\frac{1-\nu}{2(1+\nu)}}.
\end{equation}
By (\ref{3.8h}), (\ref{3.11e}) and (\ref{3.11d}), one implies that
\begin{equation}\label{3.11f}
\lvert \lvert R_{\nu}\lvert \lvert_{L^{2}}\lvert \lvert \nabla R_{\nu}\lvert \lvert_{L^{2}}^{\frac{1-\alpha}{1+\alpha}}> \lvert \lvert R_{\alpha}\lvert \lvert_{L^{2}}\lvert \lvert \nabla R_{\alpha}\lvert \lvert_{L^{2}}^{\frac{1-\alpha}{1+\alpha}},
\end{equation}
\begin{equation}\label{3.11g}
\lvert \lvert R_{\alpha}\lvert \lvert_{L^{2}}\lvert \lvert \nabla R_{\alpha}\lvert \lvert_{L^{2}}^{\frac{1-\nu}{1+\nu}}> \lvert \lvert R_{\nu}\lvert \lvert_{L^{2}}\lvert \lvert \nabla R_{\nu}\lvert \lvert_{L^{2}}^{\frac{1-\nu}{1+\nu}}.
\end{equation}
(\ref{3.11f}) and (\ref{3.11g}) yield that
\begin{equation}\label{3.11h}
\Big( \frac{\lvert \lvert \nabla R_{\alpha}\lvert \lvert_{L^{2}}}{\lvert \lvert \nabla R_{\nu}\lvert \lvert_{L^{2}}}\Big)^{\frac{1-\nu}{1+\nu}}>\frac{\lvert \lvert R_{\nu}\lvert \lvert_{L^{2}}}{\lvert \lvert R_{\alpha}\lvert \lvert_{L^{2}}}>\Big( \frac{\lvert \lvert \nabla R_{\alpha}\lvert \lvert_{L^{2}}}{\lvert \lvert \nabla R_{\nu}\lvert \lvert_{L^{2}}}\Big)^{\frac{1-\alpha}{1+\alpha}}.
\end{equation}
It follows that
\begin{equation}\label{3.11i}
\Big( \frac{\lvert \lvert \nabla R_{\alpha}\lvert \lvert_{L^{2}}}{\lvert \lvert \nabla R_{\nu}\lvert \lvert_{L^{2}}}\Big)^{\frac{2(\alpha-\nu)}{(1+\nu)(1+\alpha)}}>1.
\end{equation}
From $\alpha>\nu$, one gets that
\begin{equation}\label{3.11j}
\lvert \lvert \nabla R_{\alpha}\lvert \lvert_{L^{2}}>\lvert \lvert \nabla R_{\nu}\lvert \lvert_{L^{2}}.
\end{equation}
Therefore we conclude that 
\begin{equation}\label{3.11k}
\alpha\mapsto \lvert\lvert \nabla R_{\alpha}\lvert\lvert_{L^{2}}^{2} \quad  \text{is strictly increasing on}\;(0,\infty).
\end{equation}

When $\frac{1-\alpha}{1+\alpha}\geq 0$, that is, $0<\alpha \leq 1$, by (\ref{3.11f}) one gets that
\begin{equation}\label{3.11l}
 \frac{\lvert \lvert R_{\nu}\lvert \lvert_{L^{2}}}{\lvert \lvert R_{\alpha}\lvert \lvert_{L^{2}}}>\Big(\frac{\lvert \lvert \nabla R_{\alpha}\lvert \lvert_{L^{2}}}{\lvert \lvert \nabla R_{\nu}\lvert \lvert_{L^{2}}}\Big)^{\frac{1-\alpha}{1+\alpha}}.
\end{equation}
Thus for $0<\nu<\alpha\leq 1$, from  (\ref{3.11j}) and (\ref{3.11l}), one has that
\begin{equation}\label{3.11m}
\lvert \lvert R_{\nu}\lvert \lvert _{L^{2}}^{2}>\lvert \lvert R_{\alpha}\lvert \lvert _{L^{2}}^{2}.
\end{equation}
Therefore
\begin{equation}\label{3.11n}
\alpha\mapsto M(R_{\alpha})=\lvert\lvert R_{\alpha}\lvert\lvert_{L^{2}}^{2} \quad  \text{is strictly decreasing on}\;(0,1].
\end{equation}

When $\frac{1-\nu}{1+\nu}<0$, that is, $\nu>1$, by (\ref{3.11g}) one gets that
\begin{equation}\label{3.11o}
\frac{\lvert \lvert R_{\nu}\lvert \lvert_{L^{2}}}{\lvert \lvert R_{\alpha}\lvert \lvert_{L^{2}}}<\Big( \frac{\lvert \lvert \nabla R_{\alpha}\lvert \lvert_{L^{2}}}{\lvert \lvert \nabla R_{\nu}\lvert \lvert_{L^{2}}}\Big)^{\frac{1-\nu}{1+\nu}}
\end{equation}
Thus for $1<\nu<\alpha$, from  (\ref{3.11j}) and (\ref{3.11o}), one has that
\begin{equation}\label{3.11p}
\lvert \lvert R_{\nu}\lvert \lvert _{L^{2}}^{2}<\lvert \lvert R_{\alpha}\lvert \lvert _{L^{2}}^{2}.
\end{equation}
Therefore
\begin{equation}\label{3.11q}
\alpha\mapsto M(R_{\alpha})=\lvert\lvert R_{\alpha}\lvert\lvert_{L^{2}}^{2} \quad  \text{is strictly increasing on}\;(1,\infty).
\end{equation}
\end{proof}

\section{The case for $\alpha\geq 1$}\label{sec4}

\begin{theorem}\label{t4.1}
Let $m\geq M(Q_{1})$, where $Q_{1}$ as in Proposition \ref{p3.2}. Then $m$ identifies a unique $\omega\in (0,\frac{3}{16})$ satisfying $m=M(P_{\omega})$.
\end{theorem}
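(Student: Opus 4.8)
The plan is to parametrise ground states by $\alpha=\beta(\omega)\in[1,\infty)$, to pin down for each $m\ge M(Q_1)$ exactly one member of the minimizing family $\{Q_\alpha\}$ with that mass, and then to transport this to frequencies through Theorem \ref{t3.9}. Throughout, let $f$ be the map of Theorem \ref{t3.9}, so that $Q_\alpha=P_{f(\alpha)}$ up to translations and $\beta(f(\alpha))=\alpha$.

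\textbf{Existence.} For $m>M(Q_1)$, Propositions \ref{p3.5}(III) and \ref{p3.6}(III)--(IV) give that $E_{min}^{V}(m)=E_{min}(m)<0$ is attained, and by Proposition \ref{p3.7} a minimizer is, up to phase and translation, a ground state $P_\omega$ with $M(P_\omega)=m$, $V(P_\omega)=0$ and $E(P_\omega)=E_{min}(m)<0$. Since $V(P_\omega)=0$ gives $E(P_\omega)=\tfrac{1-\beta(\omega)}{6}\int|\nabla P_\omega|^{2}\,dx$ (Proposition \ref{p2.2}(I)), negativity forces $\beta(\omega)>1$. For $m=M(Q_1)$ take $\omega=f(1)$, so $M(P_{f(1)})=M(Q_1)=m$. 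Thus a ground state of mass $m$ always exists, with $\beta(\omega)\ge1$.

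\textbf{The mass map and uniqueness.} Set $\Phi(\alpha):=M(Q_\alpha)=M(P_{f(\alpha)})$ on $[1,\infty)$. It is strictly increasing (Proposition \ref{p3.3}), satisfies $\Phi(1)=M(Q_1)$ (Proposition \ref{p3.2}), and $\Phi(\alpha)\to\infty$ as $\alpha\to\infty$, since by Proposition \ref{p2.2}(I) $M(Q_\alpha)=\tfrac{\alpha+1}{3f(\alpha)}\int|\nabla Q_\alpha|^{2}\,dx>\tfrac{16(\alpha+1)}{9}\int|\nabla Q_1|^{2}\,dx$ (using $f(\alpha)<\tfrac{3}{16}$ and the monotonicity of $\alpha\mapsto\|\nabla Q_\alpha\|_{L^2}^2$ from Proposition \ref{p3.3}). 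Next, $\Phi$ is onto $[M(Q_1),\infty)$: given $m>M(Q_1)$, take $P_\omega$ from the existence step and $\alpha:=\beta(\omega)>1$. As $P_\omega$ minimizes $E$ on $\{w:M(w)=m,\ V(w)=0\}$, it minimizes $E$ on $\mathcal{S}:=\{w:M(w)=m,\ V(w)=0,\ \beta(w)=\alpha\}$, on which $E(w)=\tfrac{1-\alpha}{6}\int|\nabla w|^{2}\,dx$; since $\alpha>1$, $P_\omega$ thus \emph{maximizes} the kinetic energy over $\mathcal{S}$. But on $\mathcal{S}$ the sharp inequality $F_\alpha(w)\ge C_\alpha^{-1}$, rewritten via $V(w)=0$, $\beta(w)=\alpha$, $\|w\|_{L^2}^2=m$ (as in the computation behind (\ref{3.8c})), becomes an upper bound $\int|\nabla w|^{2}\,dx\le K(m,\alpha)$ attained exactly by minimizers of $F_\alpha$ (the exponent $\tfrac{1-\alpha}{2(1+\alpha)}$ being negative reverses the direction). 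Hence $P_\omega$ is a positive minimizer of $F_\alpha$; by Proposition \ref{p3.1} together with uniqueness of the positive ground state (Proposition \ref{p2.1}), $P_\omega$ is, up to translation, a rescaling of $Q_\alpha$, and since both obey $\beta=\alpha$ and $V=0$ the rescaling is trivial, so $m=M(P_\omega)=M(Q_\alpha)=\Phi(\alpha)$. Therefore $\Phi:[1,\infty)\to[M(Q_1),\infty)$ is a strictly increasing bijection: for each $m\ge M(Q_1)$ there is a unique $\alpha(m)$ with $M(Q_{\alpha(m)})=m$, hence (Theorem \ref{t3.9}) a unique $\omega(m)$ with $Q_{\alpha(m)}=P_{\omega(m)}$ up to translations and $M(P_{\omega(m)})=m$; and any $\tilde\omega$ with $M(P_{\tilde\omega})=m$ for which $P_{\tilde\omega}$ minimizes $F_{\beta(\tilde\omega)}$ (in particular the one from the existence step) satisfies $P_{\tilde\omega}=Q_{\beta(\tilde\omega)}$, so $\Phi(\beta(\tilde\omega))=m$ and strict monotonicity yields $\beta(\tilde\omega)=\alpha(m)$, i.e.\ $\tilde\omega=\omega(m)$.

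I expect the crux to be the identification, inside the surjectivity claim, of the $E_{min}^{V}(m)$-minimizer (once it is known to have $\beta>1$) with $Q_{\beta}$: this needs a clean execution of (i) the reduction $E|_{\mathcal{S}}=\tfrac{1-\alpha}{6}\|\nabla\cdot\|_{L^2}^{2}$, (ii) the rewriting of $F_\alpha\ge C_\alpha^{-1}$ on $\mathcal{S}$ as a sharp ceiling on $\|\nabla\cdot\|_{L^2}$, and (iii) the fact that the only rescaling of $Q_\alpha$ with $\beta=\alpha$ and $V=0$ is $Q_\alpha$ itself, which pins the mass of any minimizer on $\mathcal{S}$ to $M(Q_\alpha)$. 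This route avoids proving continuity of $\alpha\mapsto M(Q_\alpha)$ by hand; alternatively, surjectivity of $\Phi$ could be extracted from a compactness/continuity study of the variational problem (\ref{1.9}) in the parameter $\alpha$, at greater cost.
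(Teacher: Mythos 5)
Your existence step and the final transport through Theorem \ref{t3.9} are fine, but the surjectivity argument contains a genuine gap at the sentence ``Hence $P_\omega$ is a positive minimizer of $F_\alpha$.'' What you actually establish is only that $P_\omega$ maximizes $K(w)=\int|\nabla w|^{2}\,dx$ over $\mathcal{S}=\{M(w)=m,\ V(w)=0,\ \beta(w)=\alpha\}$, and that every $w\in\mathcal{S}$ obeys $K(w)\le K(m,\alpha)$ with equality exactly when $F_\alpha(w)=C_\alpha^{-1}$. These two facts give $K(P_\omega)=\max_{\mathcal{S}}K\le K(m,\alpha)$; to upgrade this to equality you would need the ceiling $K(m,\alpha)$ to be attained \emph{inside} $\mathcal{S}$, i.e.\ that $\mathcal{S}$ contains an $F_\alpha$-minimizer. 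But by your own item (iii) (in $aQ_\alpha(\lambda\cdot)$ the constraint $\beta=\alpha$ forces $\lambda=a^{2}$, and then $V=0$ forces $a=1$), the only positive $F_\alpha$-minimizers with $V=0$ and $\beta=\alpha$ are translates of $Q_\alpha$, whose mass is $M(Q_\alpha)$. Hence the ceiling is attained in $\mathcal{S}$ if and only if $m=M(Q_\alpha)$ --- precisely the identity you are trying to prove. The inference is circular, so the surjectivity of $\Phi$, and with it the existence half of your argument and the identification $\beta(\tilde\omega)=\alpha(m)$ for the minimizer produced in the existence step, is not established; items (i)--(iii) only justify the one-sided bound and the rigidity of the rescaling, not the attainment of the bound by $P_\omega$.

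For comparison, the paper's proof never passes through $E^{V}_{min}(m)$: it reads off from Proposition \ref{p3.3} that each $m\ge M(Q_1)$ corresponds to a unique $\alpha\ge 1$ with $m=M(Q_\alpha)$, and then transports this to a unique $\omega=f(\alpha)$ with $Q_\alpha=P_\omega$ via Theorem \ref{t3.9}; the uniqueness asserted is exactly the uniqueness furnished by this correspondence (note that your final uniqueness statement is likewise hedged to frequencies $\tilde\omega$ whose ground state minimizes $F_{\beta(\tilde\omega)}$, and the parenthetical claim that the existence-step minimizer is such relies on the gapped identification). The place where you deliberately depart from the paper --- obtaining the existence of $\alpha$ from attainment of $E^{V}_{min}(m)$ rather than from the behaviour of $\alpha\mapsto M(Q_\alpha)$ --- is exactly where the gap enters. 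To repair your route you would need either continuity of $\alpha\mapsto M(Q_\alpha)$ (or an intermediate-value argument along the branch $\beta(\omega)\ge 1$, using real-analyticity of $\omega\mapsto M(P_\omega)$ from Proposition \ref{p2.1} and $M(P_\omega)\to\infty$ as $\omega\to\frac{3}{16}$ from Proposition \ref{p2.2}), or a comparison element of mass exactly $m$ inside $\mathcal{S}$, which is what makes the analogous equality argument work in the paper's Theorem \ref{t5.4}, where $R_\alpha$ itself has the prescribed mass.
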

\begin{proof}
According to Proposition \ref{p3.3},
\begin{equation}\label{4.1a}
\alpha \mapsto M(Q_{\alpha})=\lvert\lvert Q_{\alpha}\lvert\lvert_{L^{2}}^{2} \quad \text{is strictly increasing on}\;  [1,\infty).
\end{equation}
Then for $m\geq M(Q_{1})$,
\begin{equation}\label{4.1b}
m\;\;\;\text{identifies a unique}\;\;\; \alpha\geq 1\;\;\;\text{with}\;\;\;m=M(Q_{\alpha}).
\end{equation}
In terms of Theorem \ref{t3.9}, $\alpha$ identifies a unique $\omega$ satisfying
\begin{equation}\label{4.1c}
\omega=\frac{1+\alpha}{3}\frac{\lvert\lvert \nabla Q_{\alpha}\lvert\lvert_{L^{2}}^{2}}{\lvert \lvert Q_{\alpha}\lvert \lvert_{L^{2}}^{2}}:=f(\alpha)\;\;\;\text{with}\;\;\; Q_{\alpha}=P_{\omega}\;\;\;\text{up to translations}.
\end{equation}
By (\ref{4.1b}) and (\ref{4.1c}), one gets that
\begin{equation}\label{4.1d}
m\;\;\;\text{identifies a unique}\;\;\;\omega\in (0,\frac{3}{16})\;\;\text{satisfying}\;\; m=M(P_{\omega}).
\end{equation}
\end{proof}

\begin{theorem}\label{t4.2}
Let $m\geq M(Q_{1})$, where $Q_{1}$ as in Proposition \ref{p3.2}. Then $E_{min}^{V}(m)$ possesses a unique positive minimizer $P_{\omega}$ up to translations. 
\end{theorem}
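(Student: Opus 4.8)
The plan is to combine the structural classification of cross‑constrained minimizers provided by Proposition~\ref{p3.7} with the uniqueness of the frequency established in Theorem~\ref{t4.1}. First observe that since $\frac{4}{3\sqrt{3}}<1$ we have $m\geq M(Q_{1})>\frac{4}{3\sqrt{3}}M(Q_{1})$, so Proposition~\ref{p3.6}(IV) guarantees that the infimum $E_{min}^{V}(m)$ is attained; moreover Proposition~\ref{p3.6}(III) gives $E_{min}^{V}(m)=E_{min}(m)$ on this range of $m$, so any minimizer for $E_{min}^{V}(m)$ is simultaneously an energy minimizer in the sense of (\ref{1.13}).

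Next I would rule out the rescaled‑soliton alternative. Let $u$ be any positive minimizer, i.e.\ $M(u)=m$, $V(u)=0$ and $E(u)=E_{min}^{V}(m)$. Since $m\geq\frac{4}{3\sqrt{3}}M(Q_{1})$, Proposition~\ref{p3.7} applies and yields $u(x)=e^{i\theta}R_{\omega}(x+x_{0})$ or $u(x)=e^{i\theta}P_{\omega}(x+x_{0})$ for some $\theta\in[0,2\pi)$, $x_{0}\in\mathbb{R}^{3}$, and $\omega\in(0,\frac{3}{16})$ with $\beta(\omega)>\frac{1}{3}$; positivity of $u$ (together with $P_{\omega}>0$, $R_{\omega}>0$) forces $e^{i\theta}=1$. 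Because $m\geq M(Q_{1})>M(Q_{1})-\delta$, with $\delta$ as in Proposition~\ref{p3.7}, the last assertion of that proposition excludes $u=R_{\omega}(\cdot+x_{0})$. Hence $u(x)=P_{\omega}(x+x_{0})$ for some $\omega\in(0,\frac{3}{16})$, and in particular $M(P_{\omega})=M(u)=m$.

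Finally I would invoke the uniqueness of the frequency. By Theorem~\ref{t4.1}, since $m\geq M(Q_{1})$, there is exactly one $\omega\in(0,\frac{3}{16})$ with $M(P_{\omega})=m$; therefore the frequency produced in the previous step is the same for every positive minimizer. Since for that fixed $\omega$ the ground state $P_{\omega}$ is itself unique up to translations (Proposition~\ref{p2.1}(I)), any two positive minimizers of $E_{min}^{V}(m)$ differ only by a spatial translation, while conversely this $P_{\omega}$ is indeed a minimizer by the existence statement above. This proves that $E_{min}^{V}(m)$ has the unique positive minimizer $P_{\omega}$ up to translations.

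The only genuinely new input in this argument is the exclusion of the rescaled soliton, and for $m\geq M(Q_{1})$ this is immediate from Proposition~\ref{p3.7} once one notes $M(Q_{1})>M(Q_{1})-\delta$; the substantive work sits upstream, namely Theorems~\ref{t3.8}--\ref{t3.11} and \ref{t4.1}, which upgrade the $\alpha$‑dependence of $\lvert\lvert\nabla Q_{\alpha}\lvert\lvert_{L^{2}}^{2}$, $M(Q_{\alpha})$ and hence $\omega$ to genuine functions of $\alpha$ (despite $Q_{\alpha}$ not being known a priori to be unique) and pin down the strict monotonicity of the mass on $[1,\infty)$. An alternative route to the exclusion step would compare $M(R_{\omega})$ with $M(P_{\omega})$ directly via Proposition~\ref{p3.4} and the monotonicity of $M(R_{\alpha})$ from Theorem~\ref{t3.11}, but this is less transparent than citing Proposition~\ref{p3.7}, so I would not pursue it here.
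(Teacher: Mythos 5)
Your proposal is correct, and it reaches the conclusion by a slightly different middle step than the paper. The paper's own proof, after invoking Proposition~\ref{p3.6} exactly as you do (existence of a positive minimizer and $E_{min}^{V}(m)=E_{min}(m)$ for $m\geq M(Q_{1})$), asserts that the minimizer satisfies the Euler--Lagrange equation $-\Delta Q+Q^{5}-Q^{3}+\omega Q=0$ with a single Lagrange multiplier $\omega$ (legitimate because, by the equality $E_{min}^{V}(m)=E_{min}(m)$, the minimizer is also a minimizer of the mass-only constrained problem, so the constraint $V(u)=0$ is inactive), identifies it with $P_{\omega}$ via uniqueness of positive solutions (Proposition~\ref{p2.1}(I)), and then applies Theorem~\ref{t4.1}. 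You instead bypass the multiplier discussion entirely by citing the dichotomy of Proposition~\ref{p3.7} (minimizer is $e^{i\theta}P_{\omega}$ or $e^{i\theta}R_{\omega}$ up to translation, with $e^{i\theta}=1$ forced by positivity) and eliminating the rescaled-soliton branch through the clause that no minimizer is a rescaled soliton when $m>M(Q_{1})-\delta$, which certainly holds for $m\geq M(Q_{1})$. Your route relies on the heavier classification result from Killip--Oh--Pocovnicu--Visan but avoids having to argue that only one multiplier appears for the cross-constrained problem, so it is if anything slightly more airtight on that point; the final step, combining Theorem~\ref{t4.1} (uniqueness of the frequency $\omega$ with $M(P_{\omega})=m$) with uniqueness of the ground state at fixed $\omega$, is identical to the paper's. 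No gaps.
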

\begin{proof}
Since $m\geq M(Q_{1})$, from Proposition \ref{p3.6}, one has that
\begin{equation}\label{4.2a}
E_{min}^{V}(m)=E_{min}(m),
\end{equation}
and $E_{min}^{V}(m)$ possesses a positive minimizer $Q$. Then $Q$ satisfies
\begin{equation}\label{4.2b}
M(Q)=m\geq M(Q_{1}).
\end{equation}
Moreover, $Q$ satisfies the Euler-Lagrange equation with the Lagrange multiplier $\omega$
\begin{equation}\label{4.2c}
-\Delta Q+Q^{5}-Q^{3}+\omega Q=0.
\end{equation}
It follows that
\begin{equation}\label{4.2d}
\omega\in (0,\frac{3}{16})\;\;\;\text{and}\;\;\;Q=P_{\omega}\;\;\;\text{up to transalations}.
\end{equation}
From (\ref{4.2b}) and (\ref{4.2d}),
\begin{equation}\label{4.2e}
M(P_{\omega})=m\geq M(Q_{1}).
\end{equation}
Since $m\geq M(Q_{1})$, by Theorem \ref{t4.1} one has that
\begin{equation}\label{4.2f}
m\;\;\;\text{identifies a unique}\;\;\;\omega\in (0,\frac{3}{16})\;\;\text{satisfying}\;\; m=M(P_{\omega}).
\end{equation}
It follows that
\begin{equation}\label{4.2g}
m\;\;\;\text{identifies a unique ground state}\;\;P_{\omega}\;\;\text{satisfying}\;\; m=M(P_{\omega}).
\end{equation}
By (\ref{4.2d}) and (\ref{4.2g}), one yields that
\begin{equation}\label{4.2h}
m\;\;\;\text{identifies a unique positive minimizer}\;\;Q \;\;\text{of}\;\;E_{min}^{V}(m).
\end{equation}
Therefore $E_{min}^{V}(m)$ possesses a unique positive minimizer $P_{\omega}$ for $m\geq M(Q_{1})$.
\end{proof}

\begin{theorem}\label{t4.3}
There exists $\omega^{\ast}\in (0,\;\frac{3}{16})$ such that $\beta(\omega^{\ast})=1$. Moreover, $M(P_{\omega})$ is strictly increasing on $\omega\in [\omega^{\ast},\;\frac{3}{16})$.
\end{theorem}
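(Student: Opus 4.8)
The plan is to extract $\omega^{\ast}$ from the correspondence $\alpha\leftrightarrow\omega$ already established, and then transfer the monotonicity of $M(Q_{\alpha})$ on $[1,\infty)$ (Proposition \ref{p3.3}) to monotonicity of $M(P_{\omega})$ on the corresponding interval of frequencies. First I would invoke Theorem \ref{t3.8}: for $\alpha=1$ the minimizer $Q_{1}$ satisfies $\beta(Q_{1})=1$, $V(Q_{1})=0$, and $Q_{1}=P_{\omega^{\ast}}$ up to translations for the frequency $\omega^{\ast}=\tfrac{1+1}{3}\,\lVert\nabla Q_{1}\rVert_{L^{2}}^{2}/\lVert Q_{1}\rVert_{L^{2}}^{2}=\tfrac{2}{3}\lVert\nabla Q_{1}\rVert_{L^{2}}^{2}/\lVert Q_{1}\rVert_{L^{2}}^{2}\in(0,\tfrac{3}{16})$. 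Since $\beta(\omega^{\ast})=\beta(P_{\omega^{\ast}})=\beta(Q_{1})=1$, this already gives the first assertion; the uniqueness of such $\omega^{\ast}$ is not asserted here (it will follow from Theorem \ref{t1.1}), so at this stage I only need existence.

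Next I would show that the map $\alpha\mapsto\omega=f(\alpha)$ from Theorem \ref{t3.9} is, when restricted to $\alpha\geq 1$, a strictly increasing bijection onto $[\omega^{\ast},\tfrac{3}{16})$. Monotonicity: by Theorem \ref{t3.9}, $\omega=\tfrac{1+\alpha}{3}\,\lVert\nabla Q_{\alpha}\rVert_{L^{2}}^{2}/\lVert Q_{\alpha}\rVert_{L^{2}}^{2}$; the factor $\tfrac{1+\alpha}{3}$ is increasing, $\lVert\nabla Q_{\alpha}\rVert_{L^{2}}^{2}$ is strictly increasing on $(0,\infty)$ by Proposition \ref{p3.3}, and $\lVert Q_{\alpha}\rVert_{L^{2}}^{2}=M(Q_{\alpha})$ is strictly increasing on $[1,\infty)$ by Proposition \ref{p3.3}. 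The last fact cuts the wrong way (it is in the denominator), so a crude term-by-term argument does not close; instead I would argue via Theorem \ref{t3.10}: although that theorem is stated for $0<\alpha\leq 1$, the companion result in Section 5 (or a direct argument combining Theorems \ref{t3.10}, \ref{t3.11} and the relation $\beta(R_{\omega})=\tfrac13$) shows $\alpha\mapsto\lVert\nabla Q_{\alpha}\rVert_{L^{2}}^{2}/\lVert Q_{\alpha}\rVert_{L^{2}}^{2}$ is strictly increasing on all of $(0,\infty)$, hence $f$ is strictly increasing on $[1,\infty)$. Surjectivity onto $[\omega^{\ast},\tfrac{3}{16})$: $f(1)=\omega^{\ast}$, $f$ is continuous (each of $C_{\alpha}$, $h(\alpha)=\lVert\nabla Q_{\alpha}\rVert_{L^{2}}^{2}$, $g(\alpha)=\lVert Q_{\alpha}\rVert_{L^{2}}^{2}$ depends continuously on $\alpha$ by the formulas in Theorem \ref{t3.9}), and by Proposition \ref{p2.2}(III) we have $\beta(\omega)\to\infty$ as $\omega\to\tfrac{3}{16}$, while $\beta(P_{\omega})=\beta(Q_{f^{-1}(\omega)})=f^{-1}(\omega)$, so $f(\alpha)\to\tfrac{3}{16}$ as $\alpha\to\infty$; by the intermediate value theorem $f$ maps $[1,\infty)$ onto $[\omega^{\ast},\tfrac{3}{16})$ bijectively.

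Finally I would conclude the monotonicity of the mass. For $\omega\in[\omega^{\ast},\tfrac{3}{16})$ write $\alpha=f^{-1}(\omega)\geq 1$; then $Q_{\alpha}=P_{\omega}$ up to translations, so $M(P_{\omega})=M(Q_{\alpha})=M(Q_{f^{-1}(\omega)})$. Since $f^{-1}$ is strictly increasing on $[\omega^{\ast},\tfrac{3}{16})$ with values in $[1,\infty)$, and $\alpha\mapsto M(Q_{\alpha})$ is strictly increasing on $[1,\infty)$ by Proposition \ref{p3.3}, the composition $\omega\mapsto M(P_{\omega})$ is strictly increasing on $[\omega^{\ast},\tfrac{3}{16})$, as claimed. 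The main obstacle is the middle step: establishing that $\alpha\mapsto\lVert\nabla Q_{\alpha}\rVert_{L^{2}}^{2}/\lVert Q_{\alpha}\rVert_{L^{2}}^{2}$ (equivalently $f$) is strictly monotone on $[1,\infty)$, since the naive bound on the ratio fails and one must exploit the cross-constrained structure ($V(Q_{\alpha})=0$, $\beta(Q_{\alpha})=\alpha$) together with the minimality comparisons $F_{\alpha}(Q_{\nu})>F_{\alpha}(Q_{\alpha})$ as in the proof of Theorem \ref{t3.10}, now using that $(1+\alpha)^{2}\alpha^{-1}$ is strictly \emph{increasing} on $[1,\infty)$ (the sign in \eqref{3.10m} flips) to drive the inequality the right way.
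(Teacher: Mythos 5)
Your first step (producing $\omega^{\ast}$ with $\beta(\omega^{\ast})=1$ from Theorem \ref{t3.8} with $\alpha=1$) and your final step (transferring the monotonicity of $\alpha\mapsto M(Q_{\alpha})$ on $[1,\infty)$ through $f^{-1}$) are fine, but the middle step on which everything hinges is a genuine gap. The claim that $\alpha\mapsto\lVert\nabla Q_{\alpha}\rVert_{L^{2}}^{2}/\lVert Q_{\alpha}\rVert_{L^{2}}^{2}$ (equivalently $f$) is strictly increasing on $[1,\infty)$ is essentially the frequency-monotonicity statement $\beta(\omega)$ increasing on $[\omega^{\ast},\frac{3}{16})$, i.e.\ Theorem \ref{t1.1} restricted to $\beta\geq 1$. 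In the paper this is \emph{not} available at this stage: Theorem \ref{t3.10} is proved only for $0<\alpha\leq 1$, there is no companion extension in Section 5 (Section 5 treats $\frac13\leq\alpha<1$; Theorem \ref{t5.6} again only covers $[\frac13,1]$), and the monotonicity of $\beta$ on $[\omega^{\ast},\frac{3}{16})$ is obtained only in Theorem \ref{t4.5} \emph{as a consequence} of Theorem \ref{t4.3}. So your route is circular relative to the logical order, and the suggested repair does not work on its own terms: in the proof of Theorem \ref{t3.10}, the passage to (\ref{3.10k}) uses the lower bound (\ref{3.10j}) on $\lVert\nabla Q_{\alpha}\rVert/\lVert\nabla Q_{\nu}\rVert$ raised to the exponent $\frac{1-\alpha}{1+\alpha}$, which requires $\alpha\leq 1$ for the exponent to be nonnegative; and even granting (\ref{3.10l}), for $1\leq\nu<\alpha$ the "flipped" monotonicity of $(1+\alpha)^{2}\alpha^{-1}$ makes the lower bound $\frac{(1+\nu)^{2}\alpha}{(1+\alpha)^{2}\nu}$ \emph{less} than $1$, so the inequality points the wrong way and yields nothing about the ratio.

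The paper's proof avoids this obstacle entirely: it never proves $f$ monotone on $[1,\infty)$ here. Instead it uses only the injectivity of the mass-to-frequency assignment: by Proposition \ref{p3.3} each $m\geq M(Q_{1})$ determines a unique $\alpha\geq 1$ with $M(Q_{\alpha})=m$, and by Theorem \ref{t3.9} that $\alpha$ determines a unique $\omega$ with $Q_{\alpha}=P_{\omega}$ (Theorems \ref{t4.1}--\ref{t4.2}); combining this uniqueness with $M(P_{\omega^{\ast}})=M(Q_{1})$, the continuity of $\omega\mapsto M(P_{\omega})$, and $M(P_{\omega})\to\infty$ as $\omega\to\frac{3}{16}$ (Proposition \ref{p2.2}) forces $M(P_{\omega})$ to be strictly increasing on $[\omega^{\ast},\frac{3}{16})$. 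If you want to keep your structure, you must replace your middle step by an argument of this injectivity-plus-intermediate-value type rather than by monotonicity of $f$.
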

\begin{proof}
According to Proposition \ref{p2.2}, $\beta(\omega)$ is a continuous function from $(0,\;\frac{3}{16})$ to $(0,\; \infty)$. Therefore there exists $\omega^{\ast}\in (0,\;\frac{3}{16})$ such that $\beta(\omega^{\ast})=1$. In terms of Theorem \ref{t3.8},
\begin{equation}\label{4.3a}
P_{\omega^{\ast}}=Q_{1}\;\;\;\text{up to translations}.
\end{equation}
It follows that
\begin{equation}\label{4.3b}
M(P_{\omega^{\ast}})=M(Q_{1}).
\end{equation}
By Proposition \ref{p2.2},
\begin{equation}\label{4.3c}
M(\omega)=M(P_{\omega})\rightarrow\infty\;\;\;\text{as}\;\;\;\omega\rightarrow\frac{3}{16}.
\end{equation}
From (\ref{4.3b}) and (\ref{4.3c}),  Theorem \ref{t4.2} establishes a one-to-one mapping from $[M(P_{\omega^{\ast}}),\; \infty)$ to $[\omega^{\ast},\; \frac{3}{16})$. It follows that
\begin{equation}\label{4.3d}
M(P_{\omega})\;\;\; \text{is strictly increasing on} \;\;\;[\omega^{\ast},\; \frac{3}{16}).
\end{equation}
\end{proof}

\begin{proposition}\label{p4.4}
Let $f(z)$ be real analytic on $(a,b)$ and $f(z)\not\equiv 0$ for $z\in(a,b)$. If $z_{0}\in(a,b)$ satisfies $f(z_{0})=0$, then $z_{0}$ is an isolated zero point of $f(z)$, which means that there exists $\delta>0$ such that
\begin{equation*}
f(z)\neq 0 \;\;\; \text{for} \;\;\; z\in(z_{0}-\delta,z_{0}+\delta)\backslash\{z_{0}\}.
\end{equation*}
\end{proposition}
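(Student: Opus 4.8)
The plan is to use the local power-series representation of a real analytic function together with the identity principle on a connected interval. First I would fix $z_0$ and expand $f$ in its Taylor series about $z_0$, which by real analyticity converges to $f$ on some interval $(z_0-r,z_0+r)\subset(a,b)$. Since $f(z_0)=0$, the constant term vanishes, and the whole argument hinges on the dichotomy: either \emph{all} derivatives $f^{(n)}(z_0)$ vanish, or not.

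Next I would rule out the degenerate branch of this dichotomy. Consider the set
\[
A=\bigl\{\,z\in(a,b):\ f^{(n)}(z)=0\ \text{for all}\ n\geq 0\,\bigr\}.
\]
This set is closed, being an intersection of zero sets of the continuous functions $f^{(n)}$. It is also open: if $z_1\in A$, then the Taylor expansion of $f$ about $z_1$ is the zero series, so $f$ vanishes identically on a neighborhood of $z_1$, and that neighborhood is therefore contained in $A$. Since $(a,b)$ is connected, $A$ is either empty or all of $(a,b)$; the latter would force $f\equiv 0$ on $(a,b)$, contradicting the hypothesis $f\not\equiv 0$. Hence $A=\emptyset$, so not all derivatives of $f$ vanish at $z_0$, and there is a least integer $k\geq 1$ (necessarily $k\geq 1$ because $f(z_0)=0$) with $f^{(k)}(z_0)\neq 0$.

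Then I would factor out the zero. Writing the convergent Taylor series as $f(z)=(z-z_0)^k g(z)$ with
\[
g(z)=\sum_{j\geq 0}\frac{f^{(k+j)}(z_0)}{(k+j)!}(z-z_0)^{j},
\]
the function $g$ is real analytic near $z_0$ and $g(z_0)=f^{(k)}(z_0)/k!\neq 0$. By continuity of $g$, there is $\delta\in(0,r)$ with $g(z)\neq 0$ for all $z\in(z_0-\delta,z_0+\delta)$; consequently $f(z)=(z-z_0)^k g(z)\neq 0$ for every $z\in(z_0-\delta,z_0+\delta)\setminus\{z_0\}$, which is exactly the assertion.

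The only genuinely delicate point is the openness of $A$, namely that vanishing of all derivatives at a single point forces local vanishing of $f$; this is precisely where real analyticity is used, through the convergence of the Taylor series to $f$ itself (mere smoothness would not suffice). Everything else — closedness of $A$, the connectedness argument, and the final factorization and continuity step — is routine.
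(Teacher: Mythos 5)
Your proof is correct and follows essentially the same route as the paper: identify the least $k$ with $f^{(k)}(z_0)\neq 0$, factor $f(z)=(z-z_0)^k g(z)$ with $g$ continuous and $g(z_0)\neq 0$, and conclude non-vanishing on a punctured neighborhood. In fact you are more careful than the paper at the one delicate point: the paper simply \emph{assumes} such a finite $k$ exists, whereas your open-and-closed connectedness argument (the identity principle) is exactly what justifies that this assumption cannot fail when $f\not\equiv 0$, so your write-up closes a gap the paper leaves implicit.
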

\begin{proof}
Since $f(z_{0})=0$, we assume that
\begin{equation}\label{4.4a}
f(z_0)=f^{\prime}(z_0)=\cdots=f^{(n-1)}(z_0)=0 \;\;\;\text{and}\;\;\; f^{(n)}(z_0)\neq 0,\ n\geq 1.
\end{equation}
It follows that
\begin{equation}\label{4.4b}
f(z)=\frac{f^{(n)}(z_0)}{n!}(z-z_0)^n+\frac{f^{(n+1)}(z_0)}{(n+1)!}(z-z_0)^{n+1}+\cdots=(z-z_0)^n g(z).
\end{equation}
It is clear that $g(z_{0})\neq0$, $g(z)$ is continuous and  analytic on $(a,b)$. Thus there exists $\delta>0$ such that
\begin{equation*}
g(z)\neq0 \;\;\;\text{for} \;\;\;  z\in(z_{0}-\delta,z_0+\delta).
\end{equation*}
From (\ref{4.4b}) we have that
\begin{equation}\label{4.4c}
f(z_{0})=0,\;\;\; f(z)\neq0\;\;\; \text{for} \;\;\; z\in(z_{0}-\delta,z_{0}+\delta)\backslash\{z_{0}\}.
\end{equation}
Therefore $z_{0}$ is an isolated zero point of $f(z)$.
\end{proof}

\begin{theorem}\label{t4.5}
$\beta(\omega)$ is strictly increasing on $[\omega^{\ast},\; \frac{3}{16})$.
\end{theorem}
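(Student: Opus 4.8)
The plan is to invert, over $\alpha\in[1,\infty)$, the correspondence between $\alpha$ and $\omega$ built in Theorems \ref{t3.8}--\ref{t3.9}. Write $\omega=f(\alpha)$ for the frequency singled out in Theorem \ref{t3.9}, so that $Q_\alpha=P_{f(\alpha)}$ up to translations and, by Theorem \ref{t3.8}, $\beta(f(\alpha))=\alpha$ and $M(P_{f(\alpha)})=M(Q_\alpha)$; in particular $\beta\circ f=\mathrm{id}$ on $(0,\infty)$. Hence it suffices to show that $f$ restricts to a strictly increasing \emph{bijection} of $[1,\infty)$ onto $[\omega^{\ast},\frac{3}{16})$: then $\beta=f^{-1}$ is strictly increasing on $[\omega^{\ast},\frac{3}{16})$.

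First I would describe $f$ through the mass. By Proposition \ref{p2.1} the map $\omega\mapsto M(P_\omega)$ is differentiable, hence continuous; by Proposition \ref{p2.2} it tends to $\infty$ as $\omega\to\frac{3}{16}$; and by Theorem \ref{t4.3} it is strictly increasing on $[\omega^{\ast},\frac{3}{16})$ with $M(P_{\omega^{\ast}})=M(Q_1)$. So $M(P_\cdot)$ restricts to a strictly increasing continuous bijection $[\omega^{\ast},\frac{3}{16})\to[M(Q_1),\infty)$, whose inverse I denote $\omega(\cdot)$. For $\alpha\ge1$, Proposition \ref{p3.3} gives $M(Q_\alpha)\ge M(Q_1)$, so the intermediate value theorem furnishes $\bar\omega\in[\omega^{\ast},\frac{3}{16})$ with $M(P_{\bar\omega})=M(Q_\alpha)=M(P_{f(\alpha)})$; since $f(\alpha)\in(0,\frac{3}{16})$ and the mass level $M(Q_\alpha)\ge M(Q_1)$ is attained at a unique frequency (Theorem \ref{t4.1}), we get $f(\alpha)=\bar\omega\in[\omega^{\ast},\frac{3}{16})$. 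Therefore $f=\omega(\cdot)\circ M(Q_\cdot)$ on $[1,\infty)$; since $\alpha\mapsto M(Q_\alpha)$ is strictly increasing there (Proposition \ref{p3.3}) and $\omega(\cdot)$ is strictly increasing, $f$ is strictly increasing on $[1,\infty)$, with $f(1)=\omega(M(Q_1))=\omega^{\ast}$; and $f(\alpha)\uparrow\frac{3}{16}$, for otherwise $f(\alpha)\to L<\frac{3}{16}$ and continuity of $\beta$ would give $\beta(L)=\lim_{\alpha\to\infty}\beta(f(\alpha))=\infty$, contradicting Proposition \ref{p2.2}.

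It remains to exclude jumps of $f$, i.e. to prove $f([1,\infty))=[\omega^{\ast},\frac{3}{16})$. I would deduce this from continuity of $\alpha\mapsto M(Q_\alpha)$: by (\ref{3.9f}) this mass is an explicit continuous function of $C_\alpha$ and $\|\nabla Q_\alpha\|_{L^2}^2$, and both $\alpha\mapsto C_\alpha^{-1}=\inf F_\alpha$ and $\alpha\mapsto\|\nabla Q_\alpha\|_{L^2}^2$ are continuous (the usual continuity of a parametrized constrained minimization, using that minimizing sequences for $F_\alpha$ do not degenerate, as in \cite{KOPV2017}); thus $f$ is continuous, and a continuous strictly increasing map of $[1,\infty)$ with $f(1)=\omega^{\ast}$ and $\sup f=\frac{3}{16}$ is onto $[\omega^{\ast},\frac{3}{16})$. (One can also bypass this via analyticity: a jump would force $a<b$ with $(a,b)\cap f([1,\infty))=\emptyset$ and, by continuity, $\beta(a)=\beta(b)=\alpha_0$; since $\beta$ is real-analytic and nonconstant, Proposition \ref{p4.4} excludes $\beta\equiv\alpha_0$ on $[a,b]$, so $\beta$ assumes on $(a,b)$ a value already taken by $f^{-1}$ outside $[a,b]$, yielding two distinct frequencies with the same $\beta$ and hence --- every ground state being a positive minimizer of the corresponding problem (\ref{1.9}) --- two distinct minimizers of the same $C_\alpha^{-1}$, contradicting Theorem \ref{t3.9}.) Either way $f$ is a strictly increasing bijection of $[1,\infty)$ onto $[\omega^{\ast},\frac{3}{16})$, its inverse is strictly increasing, and $\beta=f^{-1}$ there.

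The hard part is this surjectivity of $f$ --- equivalently, that $\{Q_\alpha:\alpha\ge1\}$ exhausts the ground states of frequency in $[\omega^{\ast},\frac{3}{16})$ --- which reduces to the continuity of the minimization (\ref{1.9}) in $\alpha$; the rest is bookkeeping on Theorems \ref{t3.8}, \ref{t3.9}, \ref{t4.1} and \ref{t4.3}.
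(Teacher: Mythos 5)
Your overall strategy (invert the map $\alpha\mapsto\omega=f(\alpha)$ of Theorem \ref{t3.9} and read off $\beta=f^{-1}$) is different from the paper's, and the bookkeeping parts are fine: $\beta\circ f=\mathrm{id}$, $f$ strictly increasing on $[1,\infty)$ with $f(1)=\omega^{\ast}$ via Theorem \ref{t4.1}, Theorem \ref{t4.3} and Proposition \ref{p3.3}, and $\sup f=\tfrac{3}{16}$. But the step you yourself single out as ``the hard part'' --- surjectivity of $f$ onto $[\omega^{\ast},\tfrac{3}{16})$ --- is a genuine gap, and neither of your two proposed closures works with what is available. The continuity route needs $\alpha\mapsto\lVert\nabla Q_{\alpha}\rVert_{L^{2}}^{2}$ (hence $\alpha\mapsto M(Q_{\alpha})$ via (\ref{3.9f})) to be continuous; this is nowhere in the paper or in the quoted results of \cite{KOPV2017}, and it is not ``the usual continuity of a parametrized constrained minimization'': it amounts to convergence of the minimizing family $Q_{\alpha}$ as the parameter varies, which requires a genuine compactness argument that you do not supply. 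Monotonicity (Proposition \ref{p3.3}) only rules out more than countably many jumps; it does not rule out the jump of $f$ you need to exclude.

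The analyticity bypass is worse, because it rests on the parenthetical claim that ``every ground state is a positive minimizer of the corresponding problem (\ref{1.9})''. Only the converse is established (Proposition \ref{p3.1} and Theorem \ref{t3.8}: minimizers of $C_{\alpha}^{-1}$ are ground states); that a ground state $P_{c}$ with $\beta(c)=\alpha_{1}$ must minimize $F_{\alpha_{1}}$ is exactly the kind of statement the paper cannot use. Indeed, if it were known, then Theorem \ref{t3.9} would make $\omega\mapsto\beta(\omega)$ injective outright, and Theorem \ref{t1.1} would follow in a few lines from continuity and the asymptotics of Proposition \ref{p2.2}, rendering Sections 4--6 unnecessary --- a clear sign the claim cannot be assumed. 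For contrast, the paper's own proof avoids surjectivity and continuity in $\alpha$ altogether: it upgrades the weak monotonicity of $M(P_{\omega})$ on $[\omega^{\ast},\tfrac{3}{16})$ (Theorem \ref{t4.3}) and of $M(Q_{\alpha})$ on $[1,\infty)$ (Proposition \ref{p3.3}) to almost-everywhere strict positivity of the derivatives via real-analyticity and Proposition \ref{p4.4}, and then uses the chain rule $\frac{dM}{d\omega}=\frac{dM}{d\alpha}\cdot\frac{d\beta}{d\omega}$ along $\alpha=\beta(\omega)$ to conclude $\frac{d\beta}{d\omega}>0$ almost everywhere, hence strict monotonicity of $\beta$. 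You would either have to adopt that derivative-based argument or actually prove the continuity of $\alpha\mapsto\lVert\nabla Q_{\alpha}\rVert_{L^{2}}^{2}$; as written, your proof does not close.
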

\begin{proof}
According to Proposition \ref{p2.1}, $M(\omega)$ and  $\beta(\omega)$ are real-analytic on $(0,\; \frac{3}{16})$, and $M(\alpha)$ is real-analytic on $(0,\; \infty)$. From Theorem \ref{t4.3},
\begin{equation}\label{4.5a}
\frac{dM}{d\omega}\geq 0,\;\;\;\omega\in [\omega^{\ast},\; \frac{3}{16}).
\end{equation}
From Proposition \ref{p4.4}, it follows that
\begin{equation}\label{4.5a1}
\frac{dM}{d\omega}>0,\;\;\;\text{almost every}\;\;\;\omega\in [\omega^{\ast},\; \frac{3}{16}).
\end{equation}
By Proposition \ref{p3.3},
\begin{equation}\label{4.5b}
\frac{dM}{d\alpha}\geq 0,\;\;\; \alpha\in [1,\; \infty).
\end{equation}
From Proposition \ref{p4.4}, it follows that
\begin{equation}\label{4.5b1}
\frac{dM}{d\alpha}>0,\;\;\;\text{almost every}\;\;\; \alpha\in [1,\; \infty).
\end{equation}
In terms of Theorem \ref{t3.8},
\begin{equation}\label{4.5c}
\alpha=\beta(Q_{\alpha})=\beta(P_{\omega})=\beta(\omega).
\end{equation}
Then one has that
\begin{equation}\label{4.5d}
\frac{dM}{d\omega}=\frac{dM}{d\alpha}\cdot \frac{d\beta}{d\omega}.
\end{equation}
By (\ref{4.5a1}), (\ref{4.5b1}) and (\ref{4.5d}),
\begin{equation}\label{4.5e}
\frac{d\beta}{d\omega}>0,\;\;\;\text{almost every}\;\;\;\omega\in [\omega^{\ast},\; \frac{3}{16}).
\end{equation}
Thus (\ref{4.5e}) implies that $\beta(\omega)$ is strictly increasing on $[\omega^{\ast},\; \frac{3}{16})$.
\end{proof}

\section{The case for $\frac{1}{3}\leq \alpha<1$}\label{sec5}

\begin{theorem}\label{t5.1}
Let $0<\omega<\frac{3}{16}$ and $P_{\omega}$ be the ground state of (\ref{1.2}) where $Q_{1}$ as in Proposition \ref{p3.2}. If $M(P_{\omega})\leq M(Q_{1})$, then $\beta(\omega)\leq 1$.
\end{theorem}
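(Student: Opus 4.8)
The plan is to argue by contraposition: I will assume $\beta(\omega)>1$ and deduce $M(P_{\omega})>M(Q_{1})$, which contradicts the hypothesis $M(P_{\omega})\le M(Q_{1})$. Throughout, set $\alpha=\beta(\omega)$, so that $0<\alpha<\infty$ by Proposition \ref{p2.2}.

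The first ingredient is the exact mass relation for the rescaled soliton recorded in Proposition \ref{p2.3}:
\[
M(R_{\omega})=\frac{16\sqrt{3\alpha}}{9(1+\alpha)^{2}}\,M(P_{\omega}),\qquad\text{i.e.}\qquad M(P_{\omega})=\frac{9(1+\alpha)^{2}}{16\sqrt{3\alpha}}\,M(R_{\omega}).
\]
The second ingredient is the a priori lower bound $M(R_{\omega})\ge\frac{4}{3\sqrt{3}}M(Q_{1})$ supplied by Proposition \ref{p3.4} (equivalently, by (\ref{1.7a})); this is a statement about the honest rescaling $R_{\omega}$ of $P_{\omega}$ and uses no identification of $P_{\omega}$ with a minimizer $Q_{\alpha}$. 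Since $\frac{9(1+\alpha)^{2}}{16\sqrt{3\alpha}}\cdot\frac{4}{3\sqrt{3}}=\frac{(1+\alpha)^{2}}{4\sqrt{\alpha}}$, combining the two yields
\[
M(P_{\omega})\ \ge\ \frac{(1+\beta(\omega))^{2}}{4\sqrt{\beta(\omega)}}\,M(Q_{1}).
\]

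It remains to analyze $\Phi(\alpha):=\frac{(1+\alpha)^{2}}{4\sqrt{\alpha}}$. A direct computation gives $\Phi(1)=1$ and $\Phi'(\alpha)=\frac{(1+\alpha)(3\alpha-1)}{8\alpha^{3/2}}$, which is strictly positive for $\alpha>\frac13$; hence $\Phi$ is strictly increasing on $[\frac13,\infty)$, so $\Phi(\alpha)>\Phi(1)=1$ whenever $\alpha>1$. Inserting $\alpha=\beta(\omega)>1$ into the last display gives $M(P_{\omega})>M(Q_{1})$, the desired contradiction; therefore $\beta(\omega)\le 1$.

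There is no substantive difficulty in this argument. The one point deserving care is to obtain the lower bound on $M(P_{\omega})$ by passing through $R_{\omega}$ and invoking Proposition \ref{p3.4}, rather than by trying to conclude $P_{\omega}=Q_{\beta(\omega)}$ and applying the monotonicity of $\alpha\mapsto M(Q_{\alpha})$ on $[1,\infty)$ from Proposition \ref{p3.3}: such an identification would require a uniqueness/injectivity statement across the full frequency range that is not yet available at this stage of the paper. Modulo that, the proof reduces to the exact mass formula for $R_{\omega}$ together with the one-line monotonicity of $\Phi$ near and above $\alpha=1$.
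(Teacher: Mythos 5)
Your argument is correct, and it takes a genuinely different route from the paper. The paper proves the statement by identifying the ground state $P_{\omega}$ having $\beta(\omega)=\alpha$ with the minimizer $Q_{\alpha}$ of (\ref{1.9}) (via Theorem \ref{t3.8}), so that $M(P_{\omega})=M(Q_{\alpha})$, and then invoking the strict monotonicity of $\alpha\mapsto M(Q_{\alpha})$ on $[1,\infty)$ from Proposition \ref{p3.3} together with $\beta(Q_{1})=1$; in fact it argues the two-sided equivalence $M(P_{\omega})\geq M(Q_{1})\Leftrightarrow\beta(\omega)\geq 1$. You instead bypass the minimizer family entirely: you combine the exact rescaling identity $M(R_{\omega})=\frac{16\sqrt{3\beta(\omega)}}{9(1+\beta(\omega))^{2}}M(P_{\omega})$ of Proposition \ref{p2.3} with the universal lower bound $M(R_{\omega})\geq\frac{4}{3\sqrt{3}}M(Q_{1})$ of Proposition \ref{p3.4}/(\ref{1.7a}) to get the quantitative bound $M(P_{\omega})\geq\frac{(1+\beta(\omega))^{2}}{4\sqrt{\beta(\omega)}}M(Q_{1})$, and then a one-line calculus check on $\Phi(\alpha)=\frac{(1+\alpha)^{2}}{4\sqrt{\alpha}}$ (your derivative computation is correct, and $\Phi(1)=1$) gives $M(P_{\omega})>M(Q_{1})$ whenever $\beta(\omega)>1$, which is the contrapositive of the theorem. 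What each approach buys: your proof is more elementary and self-contained, uses only quoted properties of the honest rescaling $R_{\omega}$ of the given ground state, and — as you rightly flag — avoids having to know that an arbitrary $P_{\omega}$ with $\beta(\omega)=\alpha$ is itself an $F_{\alpha}$-minimizer, an identification on which the paper's step $M(Q_{\alpha})=M(P_{\omega})$ leans and which is subtler in that direction; moreover your bound $M(P_{\omega})\geq\Phi(\beta(\omega))M(Q_{1})$ is a quantitative strengthening valid for every $\omega\in(0,\frac{3}{16})$. The paper's route, on the other hand, stays within the $Q_{\alpha}$-machinery used throughout Sections 3--5 and delivers the full equivalence rather than only the implication stated in the theorem.
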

\begin{proof}
Put $\alpha=\beta(\omega)$. When $M(P_{\omega})\geq M(Q_{1})$, according to Theorem \ref{t3.8} one gets that
\begin{equation}\label{5.1a}
M(Q_{\alpha})=M(P_{\omega})\geq M(Q_{1}).
\end{equation}
From Proposition \ref{p3.2}, one has that
\begin{equation}\label{5.1b}
\beta(Q_{1})=1.
\end{equation}
From Proposition \ref{p3.3}, one has that
\begin{equation}\label{5.1c}
\alpha\mapsto  M(Q_{\alpha})=\lvert\lvert Q_{\alpha}\lvert\lvert_{L^{2}}^{2} \quad  \text{is strictly increasing on}\;\;[1,\infty).
\end{equation}
Then by (\ref{5.1a}), (\ref{5.1b}) and (\ref{5.1c}), it follows that
\begin{equation}\label{5.1d}
\beta(\omega)=\alpha\geq 1.
\end{equation}
On the other hand, when $\beta(\omega)=\alpha\geq 1$, by (\ref{5.1b}) and (\ref{5.1c}), it follows that
\begin{equation}\label{5.1e}
M(Q_{\alpha})=M(P_{\omega})\geq M(Q_{1}).
\end{equation}
Thus one gets that
\begin{equation}\label{5.1f}
M(P_{\omega})\geq M(Q_{1})\;\;\;\;\text{if and only if}\;\;\;\;\beta(\omega)\geq 1.
\end{equation}
It follows that Theorem \ref{t5.1} is true.
\end{proof}

\begin{theorem}\label{t5.2}
Let $\frac{4}{3\sqrt{3}}M(Q_{1})\leq m< M(Q_{1})$, where $Q_{1}$ as in Proposition \ref{p3.2}. Suppose that both $\xi$ is a positive minimizer of $E_{min}^{V}(m)$ and $\xi$ is a ground state. Then $\xi$ is unique up to translations.
\end{theorem}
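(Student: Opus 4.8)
The plan is to reduce $\xi$ to a ground state $P_{\omega}$ with $\beta(\omega)\in(\tfrac13,1)$ and then to show that $m$ determines such a $\omega$ uniquely among minimizers. By Proposition~\ref{p3.7}, since $\xi$ is a positive minimizer of $E_{min}^{V}(m)$ with $\tfrac{4}{3\sqrt{3}}M(Q_{1})\le m<M(Q_{1})$, it obeys $M(\xi)=m$, $V(\xi)=0$, $E(\xi)=E_{min}^{V}(m)$ and, up to phase and translation, equals either a rescaled soliton $R_{\omega}$ or a ground state $P_{\omega}$ with $\beta(\omega)>\tfrac13$. A rescaled soliton with $\beta(\omega)>\tfrac13$ is not a ground state (it solves (\ref{3.8n}), whose quintic coefficient is $3\beta(\omega)\ne1$, and not (\ref{1.2})), so the hypothesis on $\xi$ leaves only $\xi=P_{\omega}$ up to translations with $\beta(\omega)>\tfrac13$. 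Since $m<M(Q_{1})$, the equivalence $M(P_{\omega})\ge M(Q_{1})\iff\beta(\omega)\ge1$ established in the proof of Theorem~\ref{t5.1} forces $\beta(\omega)<1$. Put $\alpha:=\beta(\omega)\in(\tfrac13,1)$; by Theorems~\ref{t3.8} and \ref{t3.9}, $\xi=Q_{\alpha}$ up to translations and $Q_{\alpha}$ is uniquely determined by $\alpha$. Thus it suffices to prove: if $Q_{\alpha_{1}},Q_{\alpha_{2}}$ with $\tfrac13<\alpha_{1}<\alpha_{2}<1$ are both minimizers of $E_{min}^{V}(m)$, then $\alpha_{1}=\alpha_{2}$.

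Next I would record, from Theorem~\ref{t3.8} and Propositions~\ref{p2.1}--\ref{p2.2}, the identities $E(Q_{\alpha})=\tfrac{1-\alpha}{6}\|\nabla Q_{\alpha}\|_{L^{2}}^{2}$, $\omega=\tfrac{1+\alpha}{3}\|\nabla Q_{\alpha}\|_{L^{2}}^{2}/\|Q_{\alpha}\|_{L^{2}}^{2}$, $M(Q_{\alpha})=\tfrac{1+\alpha}{3\omega}\|\nabla Q_{\alpha}\|_{L^{2}}^{2}$, together with $\tfrac{d}{d\omega}\|\nabla P_{\omega}\|_{L^{2}}^{2}=\tfrac32 M(P_{\omega})$ and $\tfrac{d}{d\omega}E(P_{\omega})=-\tfrac{\omega}{2}\tfrac{d}{d\omega}M(P_{\omega})$. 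By Theorem~\ref{t3.10}, $\alpha\mapsto\|\nabla Q_{\alpha}\|_{L^{2}}^{2}/\|Q_{\alpha}\|_{L^{2}}^{2}$, hence $\alpha\mapsto\omega$, is strictly increasing on $(0,1]$, so writing $\omega_{i}$ for the frequency attached to $\alpha_{i}$ we get $\omega_{1}<\omega_{2}<\omega^{\ast}$ (with $\beta(\omega^{\ast})=1$ as in Theorem~\ref{t4.3}) and $\beta$ strictly increasing on $[\omega_{1},\omega_{2}]$, with range $[\alpha_{1},\alpha_{2}]\subset(\tfrac13,1)$. As both $Q_{\alpha_{i}}$ minimize the same problem, $M(Q_{\alpha_{1}})=M(Q_{\alpha_{2}})=m$ and $E(Q_{\alpha_{1}})=E(Q_{\alpha_{2}})$; the second equality together with $\tfrac{d}{d\omega}E(P_{\omega})=-\tfrac{\omega}{2}\tfrac{d}{d\omega}M(P_{\omega})$ and an integration by parts gives $\int_{\omega_{1}}^{\omega_{2}}M(P_{\omega})\,d\omega=(\omega_{2}-\omega_{1})m$.

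The heart of the proof is to upgrade Proposition~\ref{p3.3} to the middle interval, namely to show that $\alpha\mapsto M(Q_{\alpha})$ is strictly increasing on $[\tfrac13,1)$: this immediately contradicts $M(Q_{\alpha_{1}})=M(Q_{\alpha_{2}})$ (equivalently, it forces $M(P_{\omega})<m$ on $(\omega_{1},\omega_{2})$, contradicting the average identity above), and finishes the argument. I would obtain this monotonicity by playing the ground state $Q_{\alpha}$ off against its rescaled soliton $R_{\alpha}$ of Theorem~\ref{t3.8}, using the bridging relations $M(R_{\alpha})=\tfrac{16\sqrt{3\alpha}}{9(1+\alpha)^{2}}M(Q_{\alpha})$ and $E(R_{\alpha})=\tfrac19\|\nabla R_{\alpha}\|_{L^{2}}^{2}=\tfrac{1}{9\sqrt{3\alpha}}\|\nabla Q_{\alpha}\|_{L^{2}}^{2}$, and feeding in the strict monotonicities of Theorem~\ref{t3.11} ($\alpha\mapsto\|\nabla R_{\alpha}\|_{L^{2}}^{2}$ increasing on $(0,\infty)$, $\alpha\mapsto M(R_{\alpha})$ decreasing on $(0,1]$), Proposition~\ref{p2.3}, the strict decrease of $E_{min}^{V}$ (Proposition~\ref{p3.6}(IV)), and the strict inequality in Proposition~\ref{p2.2}(I). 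Concretely, differentiating the ground-state identities in $\omega$ and using $\tfrac{d}{d\omega}\|\nabla P_{\omega}\|_{L^{2}}^{2}=\tfrac32 M(P_{\omega})$ reduces the sign of $\tfrac{d}{d\alpha}M(Q_{\alpha})$ on this interval to a one-sided bound on $\beta'(\omega)$, which these rescaled-soliton relations are exactly designed to supply. Once $\alpha_{1}=\alpha_{2}=:\alpha$, Theorem~\ref{t3.9} gives $Q_{\alpha_{1}}=Q_{\alpha_{2}}$, so $\xi$ is unique up to translations.

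I expect the genuine difficulty to lie entirely in this middle-interval monotonicity: on $(\tfrac13,1)$ Proposition~\ref{p3.3} provides nothing, and a direct check shows that equal mass, equal energy, and every available ``soft'' monotonicity ($\|\nabla Q_{\alpha}\|_{L^{2}}^{2}$, $\|\nabla R_{\alpha}\|_{L^{2}}^{2}$, $M(R_{\alpha})$, $E_{min}^{V}$) are all mutually consistent with two distinct values of $\alpha$. Breaking that tie forces one to exploit a strict inequality pointing in the correct direction --- essentially Proposition~\ref{p2.2}(I) combined with the rescaled-soliton identities above --- and making the resulting bound on $\beta'(\omega)$ quantitative enough to pin down the sign of $\tfrac{d}{d\alpha}M(Q_{\alpha})$ is the delicate point.
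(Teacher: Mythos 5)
Your reduction of the problem (via Proposition~\ref{p3.7} and Theorem~\ref{t5.1}) to showing that two ground-state minimizers $Q_{\alpha_{1}},Q_{\alpha_{2}}$ with $\tfrac13<\alpha_{1},\alpha_{2}<1$ must have $\alpha_{1}=\alpha_{2}$ matches the paper's setup, but the pivot of your argument --- strict monotonicity of $\alpha\mapsto M(Q_{\alpha})$ on $[\tfrac13,1)$ --- is precisely the step you never prove, and you yourself concede that every tool you list (equal mass, equal energy, the integral identity $\int_{\omega_{1}}^{\omega_{2}}M(P_{\omega})\,d\omega=(\omega_{2}-\omega_{1})m$, and the monotonicities of $\|\nabla Q_{\alpha}\|_{L^{2}}^{2}$, $\|\nabla R_{\alpha}\|_{L^{2}}^{2}$, $M(R_{\alpha})$, $E_{min}^{V}$) is consistent with two distinct values of $\alpha$. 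So the proposal is not a proof: its one genuinely hard ingredient is left as a declared difficulty. Worse, that ingredient is essentially the middle-interval part of the mass-monotonicity conjecture itself (Theorem~\ref{t1.2}/Theorem~\ref{t7.1} restricted to $[\omega_{\ast},\omega^{\ast}]$), which in the paper is only obtained \emph{after} and \emph{from} Theorem~\ref{t5.2}, through Theorems~\ref{t5.3}--\ref{t5.8}; filling your gap by appealing to those results would be circular.

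The paper's actual proof of Theorem~\ref{t5.2} avoids mass monotonicity on the middle interval altogether. It writes the minimizer as $\xi=P_{\omega'}$ via its Euler--Lagrange multiplier, sets $\alpha=\beta(\omega')<1$ (Theorem~\ref{t5.1}), and then uses the two facts already available at that stage: Theorem~\ref{t3.9} (the optimizer $Q_{\alpha}$ of $C_{\alpha}^{-1}$ is unique and equals $P_{\omega}$ with $\omega=\tfrac{1+\alpha}{3}\|\nabla Q_{\alpha}\|_{L^{2}}^{2}/\|Q_{\alpha}\|_{L^{2}}^{2}$) and Theorem~\ref{t3.10} (this frequency map $\alpha\mapsto\omega=f(\alpha)$ is strictly increasing, hence injective, on $(0,1]$). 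The Lagrange multiplier $\omega'$ is then pinned down by this injectivity, giving $\omega'=\omega$ and uniqueness of $\xi$. In other words, the quantity whose monotonicity does the work is the frequency (a Rayleigh-type quotient in $\alpha$), proved directly by comparing $F_{\alpha}(Q_{\nu})$ with $F_{\nu}(Q_{\alpha})$ in Theorem~\ref{t3.10} --- not the mass $M(Q_{\alpha})$, whose behaviour on $(\tfrac13,1)$ is a downstream conclusion. If you want to salvage your route, you would have to supply an independent proof of the middle-interval mass monotonicity, which is a substantially harder task than the theorem you are trying to prove; the efficient fix is to replace that step by the Theorem~\ref{t3.10}/\ref{t3.9} argument.
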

\begin{proof}
Since
\begin{equation}\label{5.2a}
\frac{4}{3\sqrt{3}}M(Q_{1})\leq m< M(Q_{1}),
\end{equation}
$E_{min}^{V}(m)$ possesses a positive minimizer $\xi$ by Proposition \ref{p3.6}. From the supposition, $\xi$ is a ground state. Then $\xi$ satisfies the following Euler-Lagrange equation
\begin{equation}\label{5.2b}
-\Delta \xi+\xi^{5}-\xi^{3}+\omega' \xi=0
\end{equation}
with the Lagrange multiplier $\omega'\in (0,\frac{3}{16})$. By Proposition \ref{p2.1},
\begin{equation}\label{5.2c}
\xi=P_{\omega'}\;\;\;\text{up to transalations}.
\end{equation}
In addition, $P_{\omega'}$ satisfies
\begin{equation}\label{5.2d}
M(P_{\omega'})=M(\xi)=m,\;\;\;V(P_{\omega'})=V(\xi)=0\;\;\;\;E(P_{\omega'})=E(\xi)=E_{min}^{V}(m).
\end{equation}
Now put
\begin{equation}\label{5.2e}
\alpha=\beta(\xi)=\beta(P_{\omega'})=\beta(\omega').
\end{equation}
By (\ref{5.2a}) and Theorem \ref{t5.1}, one implies that
\begin{equation}\label{5.2f}
0<\alpha<1.
\end{equation}
From Theorem \ref{t3.9}, there exists a unique $Q_{\alpha}(x)>0$ such that
\begin{equation}\label{5.2g}
-\Delta Q_{\alpha}+Q_{\alpha}^{5}-Q_{\alpha}^{3}+\omega Q_{\alpha}=0
\end{equation}
with
\begin{equation}\label{5.2h}
\omega=\frac{1+\alpha}{3}\frac{\lvert\lvert \nabla Q_{\alpha}\lvert\lvert_{L^{2}}^{2}}{\lvert \lvert Q_{\alpha}\lvert \lvert_{L^{2}}^{2}}\;\;\;\text{and}\;\;\;P_{\omega}=Q_{\alpha}\;\;\;\text{up to translations}.
\end{equation}
In addition,
\begin{equation}\label{5.2i}
\beta(Q_{\alpha})=\alpha=\beta(P_{\omega})=\beta(\omega).
\end{equation}
Since $\alpha<1$, by Theorem \ref{t3.10}
\begin{equation}\label{5.2j}
\alpha\mapsto \frac{\lvert\lvert \nabla Q_{\alpha}\lvert\lvert_{L^{2}}^{2}}{\lvert \lvert Q_{\alpha}\lvert \lvert_{L^{2}}^{2}} \quad  \text{is strictly increasing}.
\end{equation}
It follows that
\begin{equation}\label{5.2k}
\text{the mapping}\;\;\;\alpha\mapsto \frac{1+\alpha}{3}\frac{\lvert\lvert \nabla Q_{\alpha}\lvert\lvert_{L^{2}}^{2}}{\lvert \lvert Q_{\alpha}\lvert \lvert_{L^{2}}^{2}}:=f(\alpha) \quad  \text{is injective on}\;\;(0,1).
\end{equation}
Therefore by (\ref{5.2h}) and (\ref{5.2k}),
\begin{equation}\label{5.2l}
\omega=f(\alpha)\;\;\;\text{is an injective function from}\;\;(0,1)\;\;\;\text{to}\;\;\;(0,\frac{3}{16}).
\end{equation}
By (\ref{5.2e}) and (\ref{5.2i}),
\begin{equation}\label{5.2m}
\beta(\omega')=\beta(\omega)=\alpha.
\end{equation}
(\ref{5.2l}) and (\ref{5.2m}) imply that
\begin{equation}\label{5.2n}
\omega'=\omega.
\end{equation}
Therefore $\xi=P_{\omega'}=P_{\omega}$ (up to translations) is unique.
\end{proof}

\begin{theorem}\label{t5.3}
Let $\frac{4}{3\sqrt{3}}M(Q_{1})\leq m< M(Q_{1})$, where $Q_{1}$ as in Proposition \ref{p3.2}. Suppose that both $\xi$ is a positive minimizer of $E_{min}^{V}(m)$ and $\xi$ is a rescaled soliton. Then $\xi$ is unique up to translations.
\end{theorem}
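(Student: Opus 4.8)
The plan is to reduce the claim to uniqueness of the soliton frequency and then read that off from the strict monotonicity of the rescaled soliton's energy. Unlike the ground-state case treated in Theorem~\ref{t5.2}, where $\omega\mapsto E(P_\omega)$ is \emph{not} monotone — cf.\ Proposition~\ref{p2.1}(IV) together with the non-monotonicity of $M(P_\omega)$, precisely what forces the detour through Theorems~\ref{t3.8}--\ref{t3.10} — here Proposition~\ref{p2.3} supplies the clean strict inequality $\frac{d}{d\omega}E(R_\omega)>0$, which by itself settles the matter.

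First I would take $\xi$ as in the statement and let $\eta$ be any other positive minimizer of $E_{min}^V(m)$ that is a rescaled soliton; the goal is to show $\eta$ is a translate of $\xi$. Since $m\ge\frac{4}{3\sqrt{3}}M(Q_1)$, both $\xi$ and $\eta$ satisfy $M=m$, $V=0$, $E=E_{min}^V(m)$, so Proposition~\ref{p3.7} applies and gives that each of them is, up to a phase, a translate of some $R_\omega$ with $\omega\in(0,\frac{3}{16})$ and $\beta(\omega)>\frac13$. Because $\xi,\eta>0$ and $R_\omega>0$, the phase is trivial, so $\xi(x)=R_{\omega_1}(x+x_1)$ and $\eta(x)=R_{\omega_2}(x+x_2)$ for suitable $\omega_1,\omega_2\in(0,\frac{3}{16})$ and $x_1,x_2\in\mathbb{R}^3$.

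Next, by translation-invariance of $E$ we get $E(R_{\omega_1})=E(\xi)=E_{min}^V(m)=E(\eta)=E(R_{\omega_2})$. The decisive step is Proposition~\ref{p2.3}: since $\frac{d}{d\omega}E(R_\omega)>0$ throughout $(0,\frac{3}{16})$, the map $\omega\mapsto E(R_\omega)$ is strictly increasing, hence injective, so $\omega_1=\omega_2=:\omega$. Then $\xi(x)=R_\omega(x+x_1)=\eta(x+x_1-x_2)$, that is, $\xi$ and $\eta$ coincide up to a translation, which is the assertion. (This argument in fact only uses $m\ge\frac{4}{3\sqrt{3}}M(Q_1)$; the bound $m<M(Q_1)$ merely delimits the regime in which a rescaled-soliton minimizer can occur, by Proposition~\ref{p3.7}.)

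The one pitfall to avoid is running the comparison through the masses rather than the energies: $M(R_{\omega_1})=m=M(R_{\omega_2})$ is equally true, but Proposition~\ref{p2.3} only yields the \emph{non-strict} relation $[\beta(\omega)-1]\frac{d}{d\omega}M(R_\omega)\ge 0$, so that route would first require placing $\beta(\omega_j)$ on a definite side of $1$ and then upgrading to strict monotonicity via the isolated-zeros lemma (Proposition~\ref{p4.4}). Using the energy, whose derivative is strictly positive, bypasses all of this, so I expect no genuine obstacle in the argument itself — only the bookkeeping of translations and phases.
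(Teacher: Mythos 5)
Your argument is correct, and it is genuinely different from (and shorter than) the paper's own proof. You pin down the frequency directly in the $\omega$-parametrization: both competitors are translates of $R_{\omega_1}$, $R_{\omega_2}$, they share the energy $E_{min}^{V}(m)$, and the strict monotonicity $\frac{d}{d\omega}E(R_{\omega})>0$ quoted in Proposition \ref{p2.3} forces $\omega_{1}=\omega_{2}$; nothing else is needed (your use of Proposition \ref{p3.7} is even superfluous, since the rescaled-soliton form is part of the hypothesis, and your observation that only $m\geq\frac{4}{3\sqrt{3}}M(Q_{1})$ is used is accurate). The paper instead works in the $\alpha$-parametrization: it writes $\xi_{i}$ as rescalings of $P_{\omega_{i}}$, identifies $Q_{\alpha}=P_{\omega_{1}}$ and $Q_{\nu}=P_{\omega_{2}}$ through a case analysis split at $\beta=1$ (using Theorems \ref{t3.8}, \ref{t3.9}, \ref{t4.5} and the monotonicity of $\alpha\mapsto\frac{16\alpha}{9(1+\alpha)}\frac{\lvert\lvert\nabla R_{\alpha}\rvert\rvert_{L^{2}}^{2}}{\lvert\lvert R_{\alpha}\rvert\rvert_{L^{2}}^{2}}$), and then concludes from equal energies that $\lvert\lvert\nabla R_{\alpha}\rvert\rvert_{L^{2}}=\lvert\lvert\nabla R_{\nu}\rvert\rvert_{L^{2}}$, whence $\alpha=\nu$ by the strict monotonicity of $\alpha\mapsto\lvert\lvert\nabla R_{\alpha}\rvert\rvert_{L^{2}}^{2}$ proved in Theorem \ref{t3.11}. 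What your route buys is brevity and the avoidance of the $\beta\gtrless1$ case analysis, at the price of leaning on the cited derivative formula for $E(R_{\omega})$ from \cite{KOPV2017}; what the paper's route buys is that it runs entirely on the monotonicity statements the authors establish themselves (Theorems \ref{t3.10}--\ref{t3.11}), keeping the proof uniform with the $\alpha$-machinery used throughout Sections 4 and 5. Your final cautionary remark about the mass route is also well taken: $[\beta(\omega)-1]\frac{d}{d\omega}M(R_{\omega})\geq0$ is not strict, so comparing masses would indeed require extra work, which is exactly why both you and the paper compare energies (equivalently gradient norms) instead.
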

\begin{proof}
Since $\frac{4}{3\sqrt{3}}M(Q_{1})\leq m<M(Q_{1})$, according to Proposition \ref{p3.6}, $E_{min}^{V}(m)$ possesses a positive minimizer $\xi$. From the supposition, $\xi$ is a rescaled soliton. Now suppose that $\xi_{1}$ and $\xi_{2}$ are two positive minimizers of $E_{min}^{V}(m)$ and both $\xi_{1}$ and $\xi_{2}$ are resacled solitons. Then
\begin{equation}\label{5.3a}
\beta (\xi_{1})=\frac{1}{3}=\beta(\xi_{2}),
\end{equation}
\begin{equation}\label{5.3b}
\int \lvert \xi_{1}\lvert^{2}dx=m=\int \lvert \xi_{2}\lvert^{2}dx,
\end{equation}
\begin{equation}\label{5.3c}
E(\xi_{1})=\frac{1}{9}\int \lvert \nabla \xi_{1}\lvert^{2}dx=E_{min}^{V}(m)=E(\xi_{2})=\frac{1}{9}\int \lvert \nabla \xi_{2}\lvert^{2}dx.
\end{equation}
\begin{equation}\label{5.3d}
V(\xi_{1})=0=V(\xi_{2}).
\end{equation}
Since $\xi_{1}$ and $\xi_{2}$ are resacled solitons, then there exist the ground state $P_{\omega_{1}}$ and the ground state $P_{\omega_{2}}$ such that
\begin{equation}\label{5.3e}
\xi_{1}=\sqrt{\frac{1+\beta(\omega_{1})}{4\beta(\omega_{1})}}P_{\omega_{1}}(\frac{3[1+\beta(\omega_{1})]}{4\sqrt{3\beta(\omega_{1})}}x),  \quad \xi_{2}=\sqrt{\frac{1+\beta(\omega_{2})}{4\beta(\omega_{2})}}P_{\omega_{2}}(\frac{3[1+\beta(\omega_{2})]}{4\sqrt{3\beta(\omega_{2})}}x).
\end{equation}
Denote
\begin{equation}\label{5.3f}
\beta(\omega_{1})=\beta(P_{\omega_{1}})=\alpha,\quad \beta(\omega_{2})=\beta(P_{\omega_{2}})=\nu.
\end{equation}
Then $0<\alpha<\infty,\; 0<\nu<\infty$. According to Proposition \ref{p3.1}, one has that
\begin{equation}\label{5.3g}
\beta(Q_{\alpha})=\alpha,\quad \beta(Q_{\nu})=\nu.
\end{equation}
By (\ref{5.3f}) and (\ref{5.3g}),
\begin{equation}\label{5.3h}
\beta(Q_{\alpha})=\alpha=\beta(P_{\omega_{1}}),\quad \beta(Q_{\nu})=\nu=\beta(P_{\omega_{2}}).
\end{equation}
In terms of Theorem \ref{t3.9}, from $\beta(Q_{\alpha})=\alpha$ it follows that up to translations
\begin{equation}\label{5.3i}
Q_{\alpha}=P_{\omega}\;\;\; \text{with}\;\;\; \omega=\frac{1+\alpha}{3}\frac{\lvert\lvert \nabla Q_{\alpha}\lvert\lvert_{L^{2}}^{2}}{\lvert \lvert Q_{\alpha}\lvert \lvert_{L^{2}}^{2}}.
\end{equation}
By (\ref{5.3h}) and (\ref{5.3i}), one has that
\begin{equation}\label{5.3ia}
\beta(P_{\omega})=\beta(P_{\omega_{1}}),\;\;\; \omega, \; \omega_{1}\in (0,\frac{3}{16}).
\end{equation}
In the following, we divide two situations to proceed.
\par The first situation is that
\begin{equation}\label{5.3ib}
\beta(P_{\omega})=\beta(P_{\omega_{1}})\geq 1.
\end{equation}
Since $\beta(\omega^{\ast})=1$, Theorem \ref{t4.5} implies that
\begin{equation}\label{5.3ic}
\beta(\omega)\;\;\;\text{establishes a one-to-one mapping from}\;\;\;[\omega^{\ast},\;\frac{3}{16})\;\;\;\text{to}\;\;\;[1,\; \infty).
\end{equation}
Therefore by (\ref{5.3ia}), it follows that
\begin{equation}\label{5.3id}
P_{\omega}=P_{\omega_{1}} \;\;\;\text{up to translations.}
\end{equation}
\par The second situation is that
\begin{equation}\label{5.3ie}
\beta(P_{\omega})=\beta(P_{\omega_{1}})=\alpha<1.
\end{equation}
According to (\ref{5.3i}) and Theorem \ref{t3.8},
\begin{equation}\label{5.3if}
\omega=\frac{1+\alpha}{3}\frac{\lvert\lvert \nabla Q_{\alpha}\lvert\lvert_{L^{2}}^{2}}{\lvert \lvert Q_{\alpha}\lvert \lvert_{L^{2}}^{2}}=\frac{16\alpha}{9(1+\alpha)}\frac{\lvert\lvert \nabla R_{\alpha}\lvert\lvert_{L^{2}}^{2}}{\lvert \lvert R_{\alpha}\lvert \lvert_{L^{2}}^{2}}.
\end{equation}
In terms of Theorem \ref{t3.11},
\begin{equation}\label{5.3ig}
\alpha\mapsto \lvert\lvert \nabla R_{\alpha}\lvert\lvert_{L^{2}}^{2} \quad  \text{is strictly increasing on}\; (0,\infty),
\end{equation}
\begin{equation}\label{5.3ih}
\alpha\mapsto M(R_{\alpha})=\lvert\lvert R_{\alpha}\lvert\lvert_{L^{2}}^{2} \quad  \text{is strictly decreasing on}\;(0,1].
\end{equation}
Since
\begin{equation}\label{5.3ii}
\alpha\mapsto \frac{16\alpha}{9(1+\alpha)} \quad  \text{is strictly increasing on}\;(0,\infty),
\end{equation}
then (\ref{5.3ig}), (\ref{5.3ih}) and (\ref{5.3ii}) imply that
\begin{equation}\label{5.3ij}
\omega=\frac{16\alpha}{9(1+\alpha)}\frac{\lvert\lvert \nabla R_{\alpha}\lvert\lvert_{L^{2}}^{2}}{\lvert \lvert R_{\alpha}\lvert \lvert_{L^{2}}^{2}}:=f(\alpha)  \quad  \text{is strictly increasing on}\;(0,1].
\end{equation}
By (\ref{5.3ie}), it follows that
\begin{equation}\label{5.3ik}
\omega=\omega_{1}, \;\;\;\text{that is}\;\;\;\;P_{\omega}=P_{\omega_{1}}\;\;\;\text{up to translations}.
\end{equation}
Therefore one always has that
\begin{equation}\label{5.3il}
Q_{\alpha}=P_{\omega_{1}}\;\;\;\text{up to translations}.
\end{equation}
By using the same argument, one also gets
\begin{equation}\label{5.3im}
Q_{\nu}=P_{\omega_{2}}\;\;\;\text{up to translations}.
\end{equation}
\par Now define
\begin{equation}\label{5.3j}
R_{\alpha}=\sqrt{\frac{1+\alpha}{4\alpha}}P_{\omega_{1}}(\frac{3[1+\alpha]}{4\sqrt{3\alpha}}x),  \quad R_{\nu}=\sqrt{\frac{1+\nu}{4\nu}}P_{\omega_{2}}(\frac{3[1+\nu]}{4\sqrt{3\nu}}x).
\end{equation}
Thus by (\ref{5.3e}), (\ref{5.3f}) and (\ref{5.3j}), one gets that up to translations
\begin{equation}\label{5.3k}
R_{\alpha}=\xi_{1},\quad R_{\nu}=\xi_{2}.
\end{equation}
By (\ref{5.3c}), one implies that
\begin{equation}\label{5.3l}
\int \lvert \nabla\xi_{1}\lvert^{2}dx=\int \lvert \nabla\xi_{2}\lvert^{2}dx.
\end{equation}
From (\ref{5.3k}) and (\ref{5.3l}), it follows that
\begin{equation}\label{5.3m}
\int \lvert \nabla R_{\alpha}\lvert^{2}dx=\int \lvert \nabla R_{\nu}\lvert^{2}dx.
\end{equation}
By Theorem \ref{t3.11}, the monotonicity of $\lvert \lvert \nabla R_{\alpha}\lvert \lvert_{L^{2}}^{2}$ implies that $\alpha=\nu$. By (\ref{5.3k}), it follows that
$\xi_{1}=\xi_{2}$ up to translations. It shows that $E_{min}^{V}(m)$ with $\frac{4}{3\sqrt{3}}M(Q_{1})\leq m<M(Q_{1})$ possesses a unique positive minimizer up to translations.
\end{proof}

\begin{theorem}\label{t5.4}
Let $0<\alpha<1$ and $R_{\alpha}$ as in Theorem \ref{t3.8}. Put
\begin{equation*}
m=M(R_ {\alpha})=\lvert\lvert R_{\alpha}\lvert\lvert_{L^{2}}^{2}.
\end{equation*}
Suppose that $\xi$ is a positive minimizer of $E_{min}^{V}(m)$ and $\xi$ is a rescaled soliton. Then $\xi=R_{\alpha}$ up to translations.
\end{theorem}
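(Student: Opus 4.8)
The plan is to show that $R_{\alpha}$ (taken in the sense of Theorem \ref{t3.8}, so that $V(R_{\alpha})=0$, $\beta(R_{\alpha})=\tfrac13$, and $R_{\alpha}=R_{\omega}$ up to translations for the frequency $\omega$ with $Q_{\alpha}=P_{\omega}$) is itself a positive minimizer of $E_{min}^{V}(m)$, and then to conclude by the uniqueness already established in Theorem \ref{t5.3}. First I would record that $m=M(R_{\alpha})$ lies in the admissible range: by Theorem \ref{t3.11}, $\alpha\mapsto M(R_{\alpha})$ is strictly decreasing on $(0,1]$, so $m=M(R_{\alpha})>M(R_{1})=\tfrac{4}{3\sqrt3}M(Q_{1})$ (Proposition \ref{p3.2}); and since $\xi$ is a rescaled soliton, Proposition \ref{p3.7} forces $m<M(Q_{1})$. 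Thus $\tfrac{4}{3\sqrt3}M(Q_{1})\le m<M(Q_{1})$, which is exactly the range where Theorem \ref{t5.3} applies.

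Next I would analyze $\xi$. As a positive rescaled soliton, $\xi=R_{\omega_{1}}(\cdot+x_{1})$ for some $\omega_{1}\in(0,\tfrac{3}{16})$ and $x_{1}\in\mathbb{R}^{3}$, with $\gamma:=\beta(\omega_{1})>\tfrac13$ by Proposition \ref{p3.7}. Arguing exactly as in the proof of Theorem \ref{t5.3} — using bijectivity of $\beta$ on $[\omega^{\ast},\tfrac{3}{16})$ (Theorem \ref{t4.5}) together with Theorem \ref{t5.1} when $\gamma\ge1$, and the strict monotonicity of $\gamma\mapsto\tfrac{16\gamma}{9(1+\gamma)}\|\nabla R_{\gamma}\|_{L^{2}}^{2}/\|R_{\gamma}\|_{L^{2}}^{2}$ on $(0,1]$ (Theorem \ref{t3.11}) when $\gamma<1$ — one identifies $Q_{\gamma}=P_{\omega_{1}}$ up to translations, hence $\xi=R_{\gamma}$ up to translations with $R_{\gamma}$ as in Theorem \ref{t3.8}. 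In particular $M(R_{\gamma})=M(\xi)=m=M(R_{\alpha})$, and since $\xi$ and $R_{\alpha}$ are rescaled solitons ($V=0$, $\beta=\tfrac13$), the identity $E=\tfrac19\|\nabla\cdot\|_{L^{2}}^{2}$ for rescaled solitons (used in the proof of Theorem \ref{t5.3}, and consistent with Proposition \ref{p2.3}) gives $E(\xi)=\tfrac19\|\nabla R_{\gamma}\|_{L^{2}}^{2}$ and $E(R_{\alpha})=\tfrac19\|\nabla R_{\alpha}\|_{L^{2}}^{2}$.

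The core comparison is then short. By Theorem \ref{t3.8}, $R_{\alpha}$ is admissible for $E_{min}^{V}(m)$ (it has $V=0$ and mass $m$), so $E(R_{\alpha})\ge E_{min}^{V}(m)=E(\xi)$, i.e. $\|\nabla R_{\alpha}\|_{L^{2}}^{2}\ge\|\nabla R_{\gamma}\|_{L^{2}}^{2}$. Since $\gamma\mapsto\|\nabla R_{\gamma}\|_{L^{2}}^{2}$ is strictly increasing on $(0,\infty)$ (Theorem \ref{t3.11}), this forces $\alpha\ge\gamma$, and together with $\alpha<1$ it gives $\gamma<1$. Then $\alpha,\gamma\in(0,1)$ with $M(R_{\gamma})=M(R_{\alpha})$, so the strict monotonicity of $\gamma\mapsto M(R_{\gamma})$ on $(0,1]$ (Theorem \ref{t3.11}) yields $\gamma=\alpha$, and hence $\xi=R_{\gamma}=R_{\alpha}$ up to translations. (Equivalently, $E(R_{\alpha})=E(\xi)=E_{min}^{V}(m)$, so $R_{\alpha}$ is a positive minimizer of $E_{min}^{V}(m)$ and Theorem \ref{t5.3} finishes.)

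I expect the main obstacle to be the identification $\xi=R_{\gamma}$ in the second step: upgrading ``$\xi$ is a rescaled soliton sitting over a ground-state frequency $\omega_{1}$'' to ``$\xi$ is the abstract rescaled minimizer $R_{\gamma}$ of Theorem \ref{t3.8}'' requires injectivity of the frequency-to-$\beta$ correspondence on the relevant ranges, which is precisely what forces the case split $\gamma\ge1$ versus $\gamma<1$ and draws on Theorems \ref{t4.5}, \ref{t5.1} and \ref{t3.11}. Once that step is secured, the remainder is a routine chain of strict-monotonicity comparisons coming from Theorem \ref{t3.11}.
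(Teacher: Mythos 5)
Your argument is correct, but it is organized differently from the paper's proof of this theorem. The paper never identifies which rescaled soliton $\xi$ is: it only uses that $\xi$ satisfies $\beta(\xi)=\tfrac{1}{3}$, $V(\xi)=0$ and $M(\xi)=m$, so that $F_{\alpha}(\xi)$ takes the closed form $\tfrac{9}{16}\big(\tfrac{1}{3}\big)^{\frac{\alpha}{2(1+\alpha)}}M(\xi)^{1/2}\lVert\nabla\xi\rVert_{L^{2}}^{\frac{1-\alpha}{1+\alpha}}$; combining $F_{\alpha}(\xi)\geq F_{\alpha}(R_{\alpha})=C_{\alpha}^{-1}$ (Theorem \ref{t3.8}) with the energy comparison $E(R_{\alpha})\geq E_{min}^{V}(m)=E(\xi)$ and the positivity of the exponent (here $0<\alpha<1$ enters) gives $\lVert\nabla\xi\rVert_{L^{2}}=\lVert\nabla R_{\alpha}\rVert_{L^{2}}$, hence $E(R_{\alpha})=E_{min}^{V}(m)$, and Theorem \ref{t5.3} then yields $\xi=R_{\alpha}$. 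You instead first upgrade ``$\xi=R_{\omega_{1}}$ up to translations'' to ``$\xi=R_{\gamma}$ with $\gamma=\beta(\omega_{1})$, $R_{\gamma}$ as in Theorem \ref{t3.8}'' by repeating the identification and case split ($\gamma\geq 1$ versus $\gamma<1$) from the proof of Theorem \ref{t5.3}, and then close with the strict monotonicity of $\gamma\mapsto\lVert\nabla R_{\gamma}\rVert_{L^{2}}^{2}$ on $(0,\infty)$ and of $\gamma\mapsto M(R_{\gamma})$ on $(0,1]$ from Theorem \ref{t3.11}. Both routes rest on the same underlying $F$-comparison (Theorem \ref{t3.11} is itself proved that way) and both use $\alpha<1$ crucially; the paper's version is leaner because the direct $F_{\alpha}$ comparison sidesteps the identification step entirely, whereas your main line trades that extra step for not needing Theorem \ref{t5.3} at all (you invoke it only in your ``equivalently'' ending). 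A small point in your favour: before citing Theorem \ref{t5.3} you verify $m<M(Q_{1})$ via Proposition \ref{p3.7}, a hypothesis of that theorem which the paper's own proof of this statement does not check (its mass-range assumption is used there only to guarantee existence of a minimizer, which is assumed here anyway); also note that Proposition \ref{p3.4} gives $m\geq\frac{4}{3\sqrt{3}}M(Q_{1})$ directly, without the strict-monotonicity detour.
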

\begin{proof}
By Theorem \ref{t3.8}, $R_{\alpha}$ is a rescaled soliton. From Proposition \ref{p3.4}, one has that
\begin{equation}\label{5.4a}
m=M(R_{\alpha})\geq \frac{4}{3\sqrt{3}}M(Q_{1}).
\end{equation}
According to Proposition \ref{p3.6}, $E_{min}^{V}(m)$ possesses a positive minimizer $\xi$. By the supposition, $\xi$ is a rescaled soliton. Thus $\beta(\xi)=\frac{1}{3}$. It follows that
\begin{equation}\label{5.4b}
E(\xi)=\frac{1}{9}\int \lvert \nabla \xi\lvert^{2}dx.
\end{equation}
By Theorem \ref{t3.8}, $V(R_{\alpha})=0$. Since $M(R_{\alpha})=m$, it follows that
\begin{equation}\label{5.4c}
E(R_{\alpha})\geq E_{min}^{V}(m)=E(\xi)=\frac{1}{9}\int \lvert \nabla \xi\lvert^{2}dx.
\end{equation}
According to (\ref{1.12}) and Theorem \ref{t3.8},
\begin{equation}\label{5.4d}
E(R_{\alpha})=\frac{1}{9}\int \lvert \nabla R_{\alpha}\lvert^{2}dx.
\end{equation}
From (\ref{5.4c}) and (\ref{5.4d}), it follows that
\begin{equation}\label{5.4e}
\int \lvert \nabla R_{\alpha}\lvert^{2}dx \geq \int \lvert \nabla \xi\lvert^{2}dx.
\end{equation}
By Theorem \ref{t3.8}, $R_{\alpha}$ is a positive minimizer of the variational problem (\ref{1.9}) and
\begin{equation}\label{5.4f}
C_{\alpha}^{-1}=F_{\alpha}(R_{\alpha})=\frac{9}{16}\big(\frac{1}{3}\big)^{\frac{\alpha}{2(1+\alpha)}}\big(\int \lvert R_{\alpha}\lvert^{2}dx\big)^{\frac{1}{2}}\big(\int \lvert \nabla R_{\alpha}\lvert^{2}dx\big)^{\frac{1-\alpha}{2(1+\alpha)}}.
\end{equation}
It follows that
\begin{equation}\label{5.4g}
F_{\alpha}(\xi)\geq F_{\alpha}(R_{\alpha}).
\end{equation}
Since $\beta(\xi)=\frac{1}{3}$ and $V(\xi)=0$, it follows that
\begin{equation}\label{5.4h}
F_{\alpha}(\xi)=\frac{9}{16}\big(\frac{1}{3}\big)^{\frac{\alpha}{2(1+\alpha)}}\big(\int \lvert \xi\lvert^{2}dx\big)^{\frac{1}{2}}\big(\int \lvert \nabla \xi\lvert^{2}dx\big)^{\frac{1-\alpha}{2(1+\alpha)}}.
\end{equation}
By (\ref{5.4f}), (\ref{5.4h}), (\ref{5.4g}), $M(\xi)=m=M(R_{\alpha})$ and $0<\alpha<1$,
one gets that
\begin{equation}\label{5.4i}
\int \lvert \nabla \xi\lvert^{2}dx \geq \int \lvert \nabla R_{\alpha}\lvert^{2}dx.
\end{equation}
From (\ref{5.4e}) and (\ref{5.4i}), it follows that
\begin{equation}\label{5.4j}
\int \lvert \nabla R_{\alpha}\lvert^{2}dx =\int \lvert \nabla \xi\lvert^{2}dx.
\end{equation}
From (\ref{5.4j}) and $M(\xi)=m=M(R_{\alpha})$, one has that
\begin{equation}\label{5.4k}
F_{\alpha}(\xi)=F_{\alpha}(R_{\alpha})=C_{\alpha}^{-1}.
\end{equation}
Therefore $\xi$ is a positive minimizer of $C_{\alpha}^{-1}$. From (\ref{5.4b}), (\ref{5.4d}) and (\ref{5.4j}), it follows that
\begin{equation}\label{5.4l}
E(R_{\alpha})=E(\xi)=E_{min}^{V}(m).
\end{equation}
Therefore $R_{\alpha}$ is also a positive minimizer of $E_{min}^{V}(m)$ with $m=M(R_{\alpha})$. In terms of Theorem \ref{t5.3}, one gets that $\xi=R_{\alpha}$ up to translations.
\end{proof}

\begin{theorem}\label{t5.5}
There exists $\omega_{\ast}\in (0,\frac{3}{16})$ such that $\beta(\omega_{\ast})=\frac{1}{3}$. Denote $m_{0}=M(P_{\omega_{\ast}})$. Then $E_{min}^{V}(m_{0})$ possesses a unique positive minimizer $P_{\omega_{\ast}}=R_{\omega_{\ast}}$ up to translations.
\end{theorem}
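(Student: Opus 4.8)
Proof of Theorem \ref{t5.5} (proposal).

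The existence of $\omega_\ast$ with $\beta(\omega_\ast)=\frac13$ is immediate from Proposition \ref{p2.2}: $\beta$ is continuous on $(0,\frac{3}{16})$, $\beta(\omega)\to 0$ as $\omega\to 0^{+}$ and $\beta(\omega)\to\infty$ as $\omega\to\frac{3}{16}^{-}$, so the intermediate value theorem applies. For the remaining analysis it is convenient to pin down $\omega_\ast$ canonically as the frequency with $P_{\omega_\ast}=Q_{1/3}$ up to translations, i.e. $\omega_\ast:=\frac{4}{9}\|\nabla Q_{1/3}\|_{L^2}^{2}/\|Q_{1/3}\|_{L^2}^{2}$; this is well defined by Theorems \ref{t3.8}--\ref{t3.9}, and then $\beta(\omega_\ast)=\beta(Q_{1/3})=\frac13$. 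Substituting $\beta(\omega_\ast)=\frac13$ into \eqref{1.7}, the amplitude factor $\sqrt{(1+\frac13)/(4\cdot\frac13)}$ and the dilation factor $3(1+\frac13)/(4\sqrt{3\cdot\frac13})$ are both equal to $1$; hence $R_{\omega_\ast}=P_{\omega_\ast}$, and similarly $R_{1/3}=Q_{1/3}$ in the notation of Theorem \ref{t3.8}, so $R_{\omega_\ast}=P_{\omega_\ast}=Q_{1/3}=R_{1/3}$. Put $m_{0}:=M(P_{\omega_\ast})=M(R_{\omega_\ast})$. By Proposition \ref{p3.4} (or \eqref{1.7a}), $m_{0}=M(R_{\omega_\ast})\ge\frac{4}{3\sqrt3}M(Q_{1})$, while $\beta(\omega_\ast)=\frac13<1$ together with Theorem \ref{t5.1} forces $m_{0}=M(P_{\omega_\ast})<M(Q_{1})$; thus $\frac{4}{3\sqrt3}M(Q_{1})\le m_{0}<M(Q_{1})$, the regime governed by Propositions \ref{p3.6}--\ref{p3.7} and Theorems \ref{t5.1}--\ref{t5.4}.

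By Proposition \ref{p3.6}(IV) the infimum $E_{min}^{V}(m_{0})$ is attained; let $\xi$ be any positive minimizer. Since $V(P_{\omega_\ast})=0$ (the Pohozaev identity, Proposition \ref{p2.1}(III)) and $M(P_{\omega_\ast})=m_{0}$, the function $P_{\omega_\ast}$ is admissible, so $E_{min}^{V}(m_{0})\le E(P_{\omega_\ast})=E(R_{\omega_\ast})=\frac19\|\nabla R_{\omega_\ast}\|_{L^2}^{2}$ (the last two identities because $V(R_{\omega_\ast})=0$ and $\beta(R_{\omega_\ast})=\frac13$, by Proposition \ref{p2.2}(I)). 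By Proposition \ref{p3.7}, after a translation and a phase rotation, $\xi$ is either a rescaled soliton $R_{\omega}$ or a ground state $P_{\omega}$. Consider first $\xi=R_{\omega}$ and set $\alpha:=\beta(\omega)$, so that $\xi=R_{\alpha}$ up to translations (Theorem \ref{t3.8}) and $M(R_{\alpha})=m_{0}=M(R_{1/3})$. Then $E(\xi)=\frac19\|\nabla R_{\alpha}\|_{L^2}^{2}=E_{min}^{V}(m_{0})\le\frac19\|\nabla R_{1/3}\|_{L^2}^{2}$; since $\alpha\mapsto\|\nabla R_{\alpha}\|_{L^2}^{2}$ is strictly increasing (Theorem \ref{t3.11}) this gives $\alpha\le\frac13$, and since $\alpha\mapsto M(R_{\alpha})$ is strictly decreasing on $(0,1]$ (Theorem \ref{t3.11}) the identity $M(R_{\alpha})=M(R_{1/3})$ then forces $\alpha=\frac13$, whence $\xi=R_{1/3}=R_{\omega_\ast}=P_{\omega_\ast}$. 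If instead $\xi=P_{\omega}$, then $M(P_{\omega})=m_{0}<M(Q_{1})$ forces $\beta(\omega)<1$ by Theorem \ref{t5.1}, and $\beta(\omega)\ge\frac13$ by Proposition \ref{p3.7}; when $\beta(\omega)=\frac13$ the dilation in \eqref{1.7} is trivial, so $P_{\omega}=R_{\omega}$ is simultaneously a rescaled soliton and we are back in the previous case, again giving $\xi=P_{\omega_\ast}$.

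The one remaining possibility, $\beta(\omega)=:\alpha\in(\frac13,1)$ with $\xi=P_{\omega}$, is the main obstacle. In this case $P_{\omega}=Q_{\alpha}$ up to translations (Theorem \ref{t3.8}), and its rescaled soliton $R_{\alpha}$ obeys $M(R_{\alpha})<m_{0}$ and $E(R_{\alpha})>E(P_{\omega})=E_{min}^{V}(m_{0})$ strictly (Propositions \ref{p2.3} and \ref{p3.4}); moreover $M(R_{\alpha})\in[\frac{4}{3\sqrt3}M(Q_{1}),m_{0})$. The plan is to exploit that $Q_{\alpha}$ and $R_{\alpha}$ are both minimizers of $C_{\alpha}^{-1}$ (Theorem \ref{t3.8}), so that Theorem \ref{t5.4} applies at mass $M(R_{\alpha})$, and to combine this with Proposition \ref{p3.7}, the uniqueness Theorems \ref{t5.2}--\ref{t5.3}, and the strict monotonicity of $m\mapsto E_{min}^{V}(m)$ (Proposition \ref{p3.6}(IV)) to conclude that a genuine ground state (one with $\beta>\frac13$) can never be a minimizer of $E_{min}^{V}$ in the range $[\frac{4}{3\sqrt3}M(Q_{1}),M(Q_{1}))$ — equivalently, that $\gamma=\frac13$ is the only $\gamma\in(0,1)$ with $M(Q_{\gamma})=m_{0}$ — which contradicts $M(Q_{\alpha})=m_{0}=M(Q_{1/3})$ with $\alpha\in(\frac13,1)$. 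Once this exclusion is in place, every positive minimizer of $E_{min}^{V}(m_{0})$ coincides with $P_{\omega_\ast}=R_{\omega_\ast}$ up to translation; this simultaneously yields $E(P_{\omega_\ast})=E_{min}^{V}(m_{0})$ and the asserted uniqueness (the latter also following directly from Theorem \ref{t5.3}). I expect the delicate part to be precisely the exclusion of the genuine-ground-state case, i.e. showing the minimizer in $[\frac{4}{3\sqrt3}M(Q_{1}),M(Q_{1}))$ is always (a function coinciding with) a rescaled soliton.
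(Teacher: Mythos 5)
Your first two-thirds is sound and runs along the paper's own lines: the choice of $\omega_{\ast}$, the identity $R_{\omega_{\ast}}=P_{\omega_{\ast}}$, the localization $\frac{4}{3\sqrt{3}}M(Q_{1})\le m_{0}<M(Q_{1})$, existence of a minimizer, and the dichotomy from Proposition \ref{p3.7}. But the proof is not complete: the case you yourself identify as ``the main obstacle'' --- a ground-state minimizer $P_{\omega}$ with $\beta(\omega)\in(\frac13,1)$ --- is never actually excluded; you only state a plan. Moreover, the statement you propose to prove instead (``$\gamma=\frac13$ is the only $\gamma\in(0,1)$ with $M(Q_{\gamma})=m_{0}$'') is essentially the injectivity/monotonicity of the mass in the gap region $(\frac13,1)$, i.e.\ a piece of Theorem \ref{t1.2} (via Theorems \ref{t5.6}--\ref{t5.8} and \ref{t7.1}), which the paper establishes only \emph{after} and partly \emph{by means of} Theorem \ref{t5.5}. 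None of the tools you list (Proposition \ref{p3.6}(IV), Proposition \ref{p3.7}, Theorems \ref{t5.2}--\ref{t5.4}) delivers that exclusion by the combination you sketch, so pursuing the plan as stated would be circular; this is a genuine gap at exactly the step carrying the content of the theorem. A secondary weak point: in your rescaled-soliton case you write ``$\xi=R_{\omega}$, set $\alpha:=\beta(\omega)$, so $\xi=R_{\alpha}$ by Theorem \ref{t3.8}''; at this stage of the paper it is not known that $P_{\omega}=Q_{\alpha}$ for an \emph{arbitrary} frequency $\omega$ with $\beta(\omega)=\alpha$ (that is again the not-yet-proved injectivity of $\beta$), so $R_{\omega}$ need not coincide with the canonical $R_{\alpha}$ of Theorem \ref{t3.8}, and your appeal to the monotonicity of $\alpha\mapsto\|\nabla R_{\alpha}\|_{L^{2}}^{2}$ and $\alpha\mapsto M(R_{\alpha})$ does not directly apply to $\xi$. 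This is precisely why the paper routes this case through the $F_{\alpha}$-minimality arguments of Theorems \ref{t5.3}--\ref{t5.4} rather than through the family $\{R_{\alpha}\}$ alone.

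For comparison, the paper never tries to rule out the genuine-ground-state case directly. It first treats the rescaled-soliton alternative of Proposition \ref{p3.7}: by Theorem \ref{t5.4} with $\alpha=\frac13$ (legitimate since $m_{0}=M(R_{1/3})$ and $\frac13<1$), such a minimizer must be $R_{1/3}=R_{\omega_{\ast}}=P_{\omega_{\ast}}$, which in particular exhibits $P_{\omega_{\ast}}$ as a positive minimizer of $E_{min}^{V}(m_{0})$ that is simultaneously a ground state. Then Theorem \ref{t5.2} (uniqueness among ground-state minimizers for masses in $[\frac{4}{3\sqrt{3}}M(Q_{1}),M(Q_{1}))$) forces any ground-state minimizer to coincide with $P_{\omega_{\ast}}$ up to translations, so the scenario $\beta(\omega)\in(\frac13,1)$ is absorbed by uniqueness and never needs the separate exclusion you were aiming at. If you want to repair your write-up with the paper's toolkit, this is the missing idea: first secure that $P_{\omega_{\ast}}$ itself attains $E_{min}^{V}(m_{0})$ via the rescaled-soliton identification (Theorem \ref{t5.4}, or your direct argument corrected through the $F_{1/3}$-minimality of $Q_{1/3}$), and then invoke Theorem \ref{t5.2}, rather than attempting to prove injectivity of $\alpha\mapsto M(Q_{\alpha})$ on $(\frac13,1)$.
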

\begin{proof}
Since $\beta(\omega)$ is continuous from $(0,\frac{3}{16})$ to $(0,\infty)$, there exists $\omega_{\ast}\in (0,\frac{3}{16})$ such that $\beta(\omega_{\ast})=\frac{1}{3}$. According to Proposition \ref{p2.3}, $\beta(\omega_{\ast})=\frac{1}{3}$ implies that
\begin{equation}\label{5.5a}
R_{\omega_{\ast}}=P_{\omega_{\ast}}\;\;\;\text{up to translations}.
\end{equation}
Now suppose that the positive minimizer $\xi$ of $E_{min}^{V}(m_{0})$ is a rescaled soliton. By Theorem \ref{t5.4}, $R_{\omega_{\ast}}=R_{\alpha}$ (up to translations) with $\alpha=\frac{1}{3}$ is a unique positive minimizer of $E_{min}^{V}(m_{0})$. From (\ref{5.5a}), $P_{\omega_{\ast}}$ is also a positive minimizer of $E_{min}^{V}(m_{0})$. Since $P_{\omega_{\ast}}$ is a ground state, Theorem \ref{t5.2} implies that $P_{\omega_{\ast}}$ is a unique positive minimizer of $E_{min}^{V}(m_{0})$.
\end{proof}

\begin{theorem}\label{t5.6}
Let $\alpha\in [\frac{1}{3},\; 1]$. Then
\begin{equation*}
\omega=\frac{1+\alpha}{3}\frac{\lvert \lvert \nabla Q_{\alpha}\lvert\lvert_{L^{2}}^{2}}{\lvert \lvert Q_{\alpha}\lvert\lvert_{L^{2}}^{2}}:=f(\alpha)
\end{equation*}
is a one-to-one mapping from $[\frac{1}{3},\; 1]$ to $[\omega_{\ast},\; \omega^{\ast}]$. In addition, $\beta(\omega)$ is strictly increasing on $[\omega_{\ast},\; \omega^{\ast}]$.
\end{theorem}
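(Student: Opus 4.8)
The engine is Theorem \ref{t3.10}: on $(0,1]$, hence on $[\tfrac13,1]$, the ratio $\alpha\mapsto\|\nabla Q_\alpha\|_{L^2}^2/\|Q_\alpha\|_{L^2}^2$ is strictly increasing. Since $\alpha\mapsto\tfrac{1+\alpha}{3}$ is strictly increasing and positive, the product $f(\alpha)=\tfrac{1+\alpha}{3}\,\|\nabla Q_\alpha\|_{L^2}^2/\|Q_\alpha\|_{L^2}^2$ is strictly increasing on $[\tfrac13,1]$; in particular $f$ is injective there. By Theorem \ref{t3.8} one also has $Q_\alpha=P_{f(\alpha)}$ up to translations and $\beta(f(\alpha))=\beta(Q_\alpha)=\alpha$, so that $\beta\circ f=\mathrm{id}$ on $[\tfrac13,1]$; this relation is what will ultimately transfer the monotonicity of $f$ back to $\beta$.

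Next I would pin down the two endpoints, $f(1)=\omega^{\ast}$ and $f(\tfrac13)=\omega_{\ast}$. For the right one, Theorem \ref{t3.8} gives $Q_1=P_{f(1)}$ up to translations with $\beta(f(1))=1$, while Theorem \ref{t4.3} gives $Q_1=P_{\omega^{\ast}}$ up to translations with $\beta(\omega^{\ast})=1$; hence $P_{f(1)}=P_{\omega^{\ast}}$ up to translations, and since a ground state determines its frequency uniquely (e.g.\ through the exponential decay rate $\sqrt{\omega}$ of Proposition \ref{p2.1}(II)), $f(1)=\omega^{\ast}$. For the left endpoint note that the rescaling in Theorem \ref{t3.8} degenerates to the identity at $\alpha=\tfrac13$, so $R_{1/3}=Q_{1/3}=P_{f(1/3)}$ is a ground state coinciding with its own rescaled soliton; applying Theorem \ref{t5.4} (with $\alpha=\tfrac13$) together with Theorem \ref{t5.5}, at the mass $m=M(P_{f(1/3)})=M(R_{1/3})$ the problem $E_{min}^{V}(m)$ has a unique positive minimizer, realised both by $P_{f(1/3)}$ and by $P_{\omega_{\ast}}$, which forces $f(\tfrac13)=\omega_{\ast}$. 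Thus $\omega_{\ast}=f(\tfrac13)<f(1)=\omega^{\ast}$ and $f$ maps $[\tfrac13,1]$ into $[\omega_{\ast},\omega^{\ast}]$.

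The crux is to upgrade injectivity to a bijection onto the \emph{whole} interval $[\omega_{\ast},\omega^{\ast}]$; given the strict monotonicity and the endpoint values it suffices to show $f$ is continuous on $[\tfrac13,1]$ and invoke the intermediate value theorem. Through the formula $f(\alpha)=\tfrac{1+\alpha}{3}\,\|\nabla Q_\alpha\|_{L^2}^2/\|Q_\alpha\|_{L^2}^2$ together with the identities of Theorem \ref{t3.9}, continuity of $f$ is equivalent to continuity of $\alpha\mapsto C_\alpha$ on $[\tfrac13,1]$ (equivalently, of $\alpha\mapsto\|\nabla Q_\alpha\|_{L^2}^2$ and $\alpha\mapsto\|Q_\alpha\|_{L^2}^2$). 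Upper semicontinuity of $C_\alpha^{-1}=\inf_u F_\alpha(u)$ is automatic, since $\alpha\mapsto F_\alpha(u)$ is continuous for each fixed $u\neq0$; for lower semicontinuity one would take $\alpha_n\to\alpha_0$, use that the minimizers $Q_{\alpha_n}$ are radially symmetric and bounded in $H^1$ (their norms are monotone in $\alpha$, hence locally bounded, with $\|Q_{\alpha_n}\|_{L^4}^4=\tfrac43(1+\alpha_n)\|\nabla Q_{\alpha_n}\|_{L^2}^2$ and $\|Q_{\alpha_n}\|_{L^6}^6=\alpha_n\|\nabla Q_{\alpha_n}\|_{L^2}^2$), extract an $L^4$-convergent subsequence with nonzero limit $Q_{\ast}$ (nonvanishing because $\|Q_{\alpha_n}\|_{L^4}^4\geq\tfrac{16}{9}\|\nabla Q_{1/3}\|_{L^2}^2>0$), and pass to the limit along a further subsequence using weak lower semicontinuity of the $L^2$, $L^6$ and $\dot H^1$ norms and convergence of the exponents to obtain $F_{\alpha_0}(Q_{\ast})\leq\liminf F_{\alpha_n}(Q_{\alpha_n})=\liminf C_{\alpha_n}^{-1}$, whence $C_{\alpha_0}^{-1}\leq\liminf C_{\alpha_n}^{-1}$. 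With continuity established, $f\colon[\tfrac13,1]\to[\omega_{\ast},\omega^{\ast}]$ is a strictly increasing continuous bijection.

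Finally one reads off the monotonicity of $\beta$: since $\beta\circ f=\mathrm{id}$ on $[\tfrac13,1]$, the inverse of the bijection $f$ is precisely $\beta$ restricted to $[\omega_{\ast},\omega^{\ast}]$, so $\beta$ is a strictly increasing (continuous) bijection of $[\omega_{\ast},\omega^{\ast}]$ onto $[\tfrac13,1]$; combined with Theorem \ref{t4.5} this yields strict monotonicity of $\beta$ throughout $[\omega_{\ast},\tfrac{3}{16})$. The genuinely new ingredient, and where I expect the real work to lie, is the continuity of $\alpha\mapsto C_\alpha$ (equivalently, the continuous dependence on $\alpha$ of the constrained minimizers $Q_\alpha$); everything else is a rearrangement of Theorems \ref{t3.8}, \ref{t3.9}, \ref{t3.10}, \ref{t4.3}, \ref{t4.5}, \ref{t5.4} and \ref{t5.5}.
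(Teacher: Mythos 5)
Your skeleton coincides with the paper's: strict monotonicity of $f$ on $[\tfrac13,1]$ from Theorem \ref{t3.10}, the identity $\beta\circ f=\mathrm{id}$ from Theorem \ref{t3.8}, and the endpoint identifications $f(1)=\omega^{\ast}$, $f(\tfrac13)=\omega_{\ast}$, after which $\beta\restriction_{[\omega_{\ast},\omega^{\ast}]}=f^{-1}$ is strictly increasing. Where you genuinely diverge is the surjectivity step: the paper dispatches it in one line, asserting that $\beta(\omega_{\ast})=\tfrac13$ and $\beta(\omega^{\ast})=1$ already make $f$ onto $[\omega_{\ast},\omega^{\ast}]$ (implicitly leaning on the continuity of $\omega\mapsto\beta(\omega)$ and the correspondence $Q_\alpha=P_\omega$), whereas you prove continuity of $\alpha\mapsto C_\alpha$ and of the minimizers' norms by a semicontinuity-plus-radial-compactness argument and then invoke the intermediate value theorem for $f$ itself. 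Your route is heavier but more self-contained, and it addresses precisely the point the paper leaves essentially unargued (knowing $\beta(\omega)=\alpha$ does not by itself give $f(\alpha)=\omega$, since that requires $P_\omega$ to be a minimizer of $F_\alpha$); note only that continuity of $C_\alpha$ alone controls the product $\lvert\lvert Q_\alpha\rvert\rvert_{L^2}\lvert\lvert\nabla Q_\alpha\rvert\rvert_{L^2}^{(1-\alpha)/(1+\alpha)}$, so you really do need the convergence-of-minimizers part of your sketch (with Theorem \ref{t3.9} to identify the limiting norms) to get $\lvert\lvert\nabla Q_\alpha\rvert\rvert_{L^2}^2$ and $\lvert\lvert Q_\alpha\rvert\rvert_{L^2}^2$ continuous separately. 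One small caveat: your identification $f(\tfrac13)=\omega_{\ast}$ via Theorems \ref{t5.4}--\ref{t5.5} quietly uses $M(P_{f(1/3)})=m_0=M(P_{\omega_{\ast}})$, which presupposes what is being shown; since $\omega_{\ast}$ is only characterized by $\beta(\omega_{\ast})=\tfrac13$ at that stage (uniqueness comes later), it is cleanest simply to take $\omega_{\ast}:=f(\tfrac13)$, exactly as the paper in effect does.
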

\begin{proof}
In terms of Theorem \ref{t3.10},
\begin{equation}\label{5.6a}
\omega=f(\alpha)=\frac{1+\alpha}{3}\frac{\lvert \lvert \nabla Q_{\alpha}\lvert\lvert_{L^{2}}^{2}}{\lvert \lvert Q_{\alpha}\lvert\lvert_{L^{2}}^{2}}
\end{equation}
is strictly increasing on $[\frac{1}{3},\;1]$. Thus $\omega=f(\alpha)$ is an injective mapping on $[\frac{1}{3}, 1]$. According to Theorem \ref{t3.8}, one has that
\begin{equation}\label{5.6b}
\alpha=\beta(Q_{\alpha})=\beta(P_{\omega})=\beta(\omega).
\end{equation}
By $\beta(\omega_{\ast})=\frac{1}{3}$ and $\beta(\omega^{\ast})=1$, one has that $\omega=f(\alpha)$ is a surjective mapping from $[\frac{1}{3},\; 1]$ to $[\omega_{\ast},\; \omega^{\ast}]$. Therefore $\omega=f(\alpha)$ is a one-to-one mapping from $[\frac{1}{3},\; 1]$ to $[\omega_{\ast},\; \omega^{\ast}]$. It follows that
\begin{equation}\label{5.6c}
\beta(\omega)\;\;\;\text{is strictly increasing on}\;\;\; [\omega_{\ast},\; \omega^{\ast}].
\end{equation}
\end{proof}

\begin{theorem}\label{t5.7}
Let $\omega\in [\omega_{\ast},\; \omega^{\ast}]$. Then $M(R_{\omega})\leq m_{0}=M(P_{\omega_{\ast}})$.
\end{theorem}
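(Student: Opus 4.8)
The plan is to reduce the statement to the monotonicity of $\alpha\mapsto M(R_{\alpha})$ already obtained in Theorem \ref{t3.11}, after translating between the frequency label $\omega$ and the parameter $\alpha$ via the dictionary of Theorems \ref{t3.8} and \ref{t5.6}.

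First I would fix $\omega\in[\omega_{\ast},\omega^{\ast}]$ and set $\alpha=\beta(\omega)$. By Theorem \ref{t5.6}, together with $\beta(\omega_{\ast})=\frac{1}{3}$ and $\beta(\omega^{\ast})=1$, the map $\omega\mapsto\beta(\omega)$ is a strictly increasing one-to-one correspondence of $[\omega_{\ast},\omega^{\ast}]$ onto $[\frac{1}{3},1]$, so $\alpha\in[\frac{1}{3},1]$. Since $\alpha=\beta(\omega)$, Theorem \ref{t3.8} gives $R_{\alpha}=R_{\omega}$ up to translations, and in particular $M(R_{\omega})=M(R_{\alpha})$.

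Next, Theorem \ref{t3.11} asserts that $\alpha\mapsto M(R_{\alpha})$ is strictly decreasing on $(0,1]$. As $\frac{1}{3}\le\alpha\le 1$, this yields $M(R_{\alpha})\le M(R_{1/3})$. It remains to identify $M(R_{1/3})$ with $m_{0}$: applying Theorem \ref{t3.8} with the frequency $\omega_{\ast}$ (for which $\beta(\omega_{\ast})=\frac{1}{3}$) gives $R_{1/3}=R_{\omega_{\ast}}$ up to translations, and Proposition \ref{p2.3} (equivalently Proposition \ref{p3.4}) shows $M(R_{\omega_{\ast}})=M(P_{\omega_{\ast}})$ exactly because $\beta(\omega_{\ast})=\frac{1}{3}$. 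Hence $M(R_{1/3})=M(P_{\omega_{\ast}})=m_{0}$, and chaining the estimates gives $M(R_{\omega})=M(R_{\alpha})\le M(R_{1/3})=m_{0}$, which is the claim.

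There is no serious obstacle here: the argument is a direct assembly of the monotonicity of Theorem \ref{t3.11} with the frequency--parameter identification of Theorems \ref{t3.8} and \ref{t5.6}. The only point requiring care is the bookkeeping that the rescaled soliton $R_{\alpha}$ built from $Q_{\alpha}$ coincides (up to translations) with the rescaled soliton $R_{\omega}$ built from $P_{\omega}$ whenever $\alpha=\beta(\omega)$, which is precisely the content of Theorem \ref{t3.8} combined with the uniqueness of the positive ground state in Proposition \ref{p2.1}.
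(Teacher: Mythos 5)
Your proposal is correct and follows essentially the same route as the paper: both arguments combine the monotonicity of $\alpha\mapsto M(R_{\alpha})$ from Theorem \ref{t3.11} with the identifications $R_{\alpha}=R_{\omega}$ (Theorem \ref{t3.8}), the bijection $\alpha\leftrightarrow\omega$ on $[\tfrac{1}{3},1]\leftrightarrow[\omega_{\ast},\omega^{\ast}]$ from Theorem \ref{t5.6}, and $M(R_{\omega_{\ast}})=M(P_{\omega_{\ast}})=m_{0}$ since $\beta(\omega_{\ast})=\tfrac{1}{3}$. No gaps beyond those already present in the paper's own argument.
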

\begin{proof}
In terms of Theorem \ref{t3.11}, one has that
\begin{equation}\label{5.7a}
\alpha\mapsto M(R_{\alpha})=\lvert\lvert R_{\alpha}\lvert\lvert_{L^{2}}^{2} \quad  \text{is strictly decreasing on}\;[\frac{1}{3},\;1].
\end{equation}
By Theorem \ref{t3.8},
\begin{equation}\label{5.7b}
R_{\alpha}=R_{\omega}\;\;\;\text{up to translations}.
\end{equation}
Thus
\begin{equation}\label{5.7c}
m_{0}=M(P_{\omega_{\ast}})=M(R_{\omega_{\ast}}).
\end{equation}
By Theorem \ref{t5.6}, (\ref{5.7a}), (\ref{5.7b}) and (\ref{5.7c}),
\begin{equation}\label{5.7d}
\omega\mapsto M(R_{\omega})=\lvert\lvert R_{\omega}\lvert\lvert_{L^{2}}^{2} \quad  \text{is strictly decreasing on}\;[\omega_{\ast},\;\omega^{\ast}].
\end{equation}
Therefore, for $\omega\in [\omega_{\ast},\;\omega^{\ast}]$,
\begin{equation}\label{5.7e}
M(R_{\omega})\leq m_{0}.
\end{equation}
\end{proof}

\begin{theorem}\label{t5.8}
Let $m_{0}<m< M(Q_{1})$. Then $E_{min}^{V}(m)$ possesses a unique positive minimizer which is a ground state.
\end{theorem}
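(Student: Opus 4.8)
The plan is to combine the existence theory with Proposition \ref{p3.7}, reduce to two cases, eliminate the rescaled-soliton case by an energy comparison, and then invoke Theorem \ref{t5.2} for uniqueness. First note that $\beta(\omega_{\ast})=\tfrac{1}{3}$ gives $m_{0}=M(P_{\omega_{\ast}})=M(R_{\omega_{\ast}})=M(R_{1/3})$ (Proposition \ref{p2.3}, Theorem \ref{t3.8}), and since $\alpha\mapsto M(R_{\alpha})$ is strictly decreasing on $(0,1]$ with $M(R_{1})=\tfrac{4}{3\sqrt{3}}M(Q_{1})$ (Theorem \ref{t3.11}, Proposition \ref{p3.2}), one has $m_{0}>\tfrac{4}{3\sqrt{3}}M(Q_{1})$. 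Hence $\tfrac{4}{3\sqrt{3}}M(Q_{1})<m<M(Q_{1})$, so Proposition \ref{p3.6} yields $0<E_{min}^{V}(m)<\infty$ together with a positive minimizer $\xi$, and by Proposition \ref{p3.7}, up to translation and phase $\xi=R_{\omega}$ or $\xi=P_{\omega}$ for some $\omega\in(0,\tfrac{3}{16})$ with $\beta(\omega)>\tfrac{1}{3}$.

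The crucial point is to rule out $\xi=R_{\omega}$; the idea is to manufacture an auxiliary ground state of the same mass and compare three energies. Since $\omega\mapsto M(P_{\omega})$ is continuous on $[\omega_{\ast},\omega^{\ast}]$ (Proposition \ref{p2.1}) with $M(P_{\omega_{\ast}})=m_{0}$ and $M(P_{\omega^{\ast}})=M(Q_{1})$, the intermediate value theorem produces $\tilde\omega\in(\omega_{\ast},\omega^{\ast})$ with $M(P_{\tilde\omega})=m$, and then $\beta(\tilde\omega)\in(\tfrac{1}{3},1)$ by Theorem \ref{t5.6}. Because $P_{\tilde\omega}$ has mass $m$ and $V(P_{\tilde\omega})=0$ (Pohozaev identity, Proposition \ref{p2.1}), it competes in (\ref{1.17}), so $E(R_{\omega})=E_{min}^{V}(m)\le E(P_{\tilde\omega})$. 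I would then rewrite this using $E(R_{\omega})=\tfrac{1}{9}\|\nabla R_{\omega}\|_{L^{2}}^{2}$ (a direct computation from $\beta(R_{\omega})=\tfrac{1}{3}$, $V(R_{\omega})=0$ in Theorem \ref{t3.8}), $E(P_{\tilde\omega})=\tfrac{1-\beta(\tilde\omega)}{6}\|\nabla P_{\tilde\omega}\|_{L^{2}}^{2}$ (Proposition \ref{p2.2}), and --- the extra ingredient --- $E(P_{\tilde\omega})<E(R_{\tilde\omega})=\tfrac{1}{9}\|\nabla R_{\tilde\omega}\|_{L^{2}}^{2}$, which holds by Proposition \ref{p2.3} since $\beta(\tilde\omega)\ne\tfrac{1}{3}$. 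Chaining these gives $\|\nabla R_{\omega}\|_{L^{2}}^{2}<\|\nabla R_{\tilde\omega}\|_{L^{2}}^{2}$; since $R_{\omega}=R_{\beta(\omega)}$ and $R_{\tilde\omega}=R_{\beta(\tilde\omega)}$ up to translation (Theorem \ref{t3.8}) and $\alpha\mapsto\|\nabla R_{\alpha}\|_{L^{2}}^{2}$ is strictly increasing (Theorem \ref{t3.11}), we obtain $\beta(\omega)<\beta(\tilde\omega)<1$. Together with $\beta(\omega)>\tfrac{1}{3}$ this forces $\beta(\omega)\in(\tfrac{1}{3},1)$, hence $M(R_{\omega})=M(R_{\beta(\omega)})<M(R_{1/3})=m_{0}$ by the strict monotonicity of $M(R_{\alpha})$ on $(0,1]$ --- contradicting $M(R_{\omega})=M(\xi)=m>m_{0}$.

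Therefore $\xi$ must be a ground state $P_{\omega}$ with $\beta(\omega)>\tfrac{1}{3}$; since $M(P_{\omega})=m<M(Q_{1})$, Theorem \ref{t5.1} gives $\beta(\omega)<1$, so $\omega\in(\omega_{\ast},\omega^{\ast})$. The same comparison shows that no minimizer of (\ref{1.17}) can be a rescaled soliton, so every positive minimizer is such a ground state, and Theorem \ref{t5.2} (applicable because $\tfrac{4}{3\sqrt{3}}M(Q_{1})\le m<M(Q_{1})$) gives uniqueness up to translations. I expect the main obstacle to be precisely the second step: the bare competitor inequality $E(P_{\tilde\omega})\ge E_{min}^{V}(m)$ is inconclusive on its own, and the trick is to feed in the rescaled soliton $R_{\tilde\omega}$ attached to $P_{\tilde\omega}$ and play the strict gap $E(R_{\tilde\omega})>E(P_{\tilde\omega})$ against the monotonicity of $\|\nabla R_{\alpha}\|_{L^{2}}^{2}$ in order to pin $\beta(\omega)$ below $1$, which then collides with the lower mass threshold $m>m_{0}$.
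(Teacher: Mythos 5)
Your overall architecture (existence from Proposition \ref{p3.6}, the dichotomy of Proposition \ref{p3.7}, elimination of the rescaled-soliton alternative, then uniqueness via Theorem \ref{t5.2}) parallels the paper's, but your elimination step is genuinely different: the paper disposes of the rescaled-soliton case in one line by the mass bound of Theorem \ref{t5.7} (namely $M(R_\omega)\le m_0$ for $\omega\in[\omega_\ast,\omega^\ast]$), whereas you build an auxiliary ground state $P_{\tilde\omega}$ of the same mass by the intermediate value theorem, use it as a competitor in (\ref{1.17}), and play the strict gap $E(P_{\tilde\omega})<E(R_{\tilde\omega})$ of Proposition \ref{p2.3} against the monotonicity of $\alpha\mapsto\|\nabla R_\alpha\|_{L^2}^2$ and $\alpha\mapsto M(R_\alpha)$ from Theorem \ref{t3.11}. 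The individual ingredients are used correctly ($V(P_{\tilde\omega})=0$ by Pohozaev, $E=\tfrac19\|\nabla\cdot\|_{L^2}^2$ for functions with $\beta=\tfrac13$ and $V=0$, $\beta(\tilde\omega)\in(\tfrac13,1)$ via Theorem \ref{t5.6}), and your route has the merit of explicitly confronting the possibility $\beta(\omega)\ge 1$, which the paper's appeal to Theorem \ref{t5.7} passes over tacitly.

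The genuine gap is the step ``$R_{\omega}=R_{\beta(\omega)}$ up to translation (Theorem \ref{t3.8})''. Theorem \ref{t3.8} goes in the opposite direction: for each $\alpha$ it exhibits the minimizer $Q_\alpha$ as \emph{some} ground state $P_{f(\alpha)}$ and $R_\alpha$ as its rescaling; it does not say that an arbitrary ground state $P_\omega$ minimizes $F_{\beta(\omega)}$, i.e.\ that $\omega=f(\beta(\omega))$. That identification is equivalent to injectivity of $\omega\mapsto\beta(\omega)$ on the relevant range, which at this stage of the paper is available only on $[\omega_\ast,\tfrac{3}{16})$ (Theorems \ref{t5.6} and \ref{t4.5}); the frequency $\omega$ of your putative rescaled-soliton minimizer is only known to satisfy $\beta(\omega)>\tfrac13$, so it could a priori lie in $(0,\omega_\ast)$, where monotonicity of $\beta$ is only proved later (Theorem \ref{t6.1}). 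Your identification $R_{\tilde\omega}=R_{\beta(\tilde\omega)}$ is fine, because $\tilde\omega\in(\omega_\ast,\omega^\ast)$ and Theorem \ref{t5.6} supplies the bijection there; but without the identification for $\omega$ itself you can convert neither $\|\nabla R_\omega\|_{L^2}<\|\nabla R_{\tilde\omega}\|_{L^2}$ into $\beta(\omega)<\beta(\tilde\omega)$ nor $M(R_\omega)$ into $M(R_{\beta(\omega)})$, and both the pinning of $\beta(\omega)$ below $1$ and the final contradiction with $m>m_0$ lose their support. To close this you would need the additional argument the paper performs inside the proof of Theorem \ref{t5.3}, where the ground state generating a rescaled-soliton minimizer is shown to coincide with $Q_\alpha$ for $\alpha$ equal to its $\beta$-value; citing Theorem \ref{t3.8} alone does not supply it (and, to be fair, the paper's own one-line use of Theorem \ref{t5.7} in Theorem \ref{t5.8} quietly assumes the same localization $\omega\in[\omega_\ast,\omega^\ast]$).
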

\begin{proof}
Let $\xi$ be a positive minimizer of $E_{min}^{V}(m)$. Then
\begin{equation}\label{5.8a}
M(\xi)=m>m_{0}.
\end{equation}
According to Theorem \ref{t5.7}, $\xi$ is not a rescaled soliton. By Proposition \ref{p3.7}, $\xi$ is a ground state. In terms of Theorem \ref{t5.2}, $\xi$ is unique.
\end{proof}

\section{Monotonicity of frequency}\label{sec6}

\begin{theorem}\label{t6.1}
$\beta(\omega)$ is strictly increasing on $(0,\;\omega_{\ast}]$.
\end{theorem}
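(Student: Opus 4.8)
The plan is to run, on the parameter range $\alpha\in(0,\tfrac13]$, the same argument that proved Theorem~\ref{t5.6}. Keep the notation $f(\alpha)=\tfrac{1+\alpha}{3}\,\|\nabla Q_\alpha\|_{L^2}^2/\|Q_\alpha\|_{L^2}^2$. By Theorem~\ref{t3.8} the minimizer $Q_\alpha$ of $C_\alpha^{-1}$ is, up to translations, the ground state $P_{f(\alpha)}$, so that
\[
\alpha=\beta(Q_\alpha)=\beta\bigl(P_{f(\alpha)}\bigr)=\beta\bigl(f(\alpha)\bigr)\qquad\text{for all }\alpha\in(0,\infty).
\]
Thus, along the range of $f$, the relations $\omega=f(\alpha)$ and $\alpha=\beta(\omega)$ are equivalent, and it is enough to prove that $f$ restricts to a strictly increasing bijection of $(0,\tfrac13]$ onto $(0,\omega_\ast]$: then $\beta$ agrees with $f^{-1}$ on $(0,\omega_\ast]$ and is strictly increasing there.

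First I would show $f$ is strictly increasing on $(0,\tfrac13]$. Since $(0,\tfrac13]\subset(0,1]$, Theorem~\ref{t3.10} applies and gives that $\alpha\mapsto\|\nabla Q_\alpha\|_{L^2}^2/\|Q_\alpha\|_{L^2}^2$ is strictly increasing on $(0,\tfrac13]$; multiplying by the strictly increasing positive factor $\tfrac{1+\alpha}{3}$ makes $f$ strictly increasing, hence injective, on $(0,\tfrac13]$.

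Next I would locate the endpoints of the image. The right end is $f(\tfrac13)=\omega_\ast$: Theorem~\ref{t5.5} fixes $\omega_\ast$ by $\beta(\omega_\ast)=\tfrac13$, and Theorem~\ref{t5.6} already exhibits $f$ as a strictly increasing bijection of $[\tfrac13,1]$ onto $[\omega_\ast,\omega^\ast]$, whose left endpoint value is therefore $f(\tfrac13)=\omega_\ast$. For the behaviour at the other end I would use Proposition~\ref{p2.2}(II): $\beta(\omega)=\omega\,\beta(g)+O(\omega^2)\to0$ as $\omega\to0^+$; since $\beta$ is continuous and strictly positive on $(0,\tfrac3{16})$ and $\beta\circ f=\mathrm{id}$, the monotone quantity $f(\alpha)$ cannot tend to a positive limit $L$ as $\alpha\downarrow0$ (that would give $\beta(L)=0$), so $\lim_{\alpha\downarrow0}f(\alpha)=0$. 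With strict monotonicity, $f(\tfrac13)=\omega_\ast$, and $\lim_{\alpha\downarrow0}f(\alpha)=0$ in hand, the image $f((0,\tfrac13])$ is contained in $(0,\omega_\ast]$ and dense in it; once one knows it is all of $(0,\omega_\ast]$, inverting the bijection finishes the proof.

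The step I expect to be the main obstacle is exactly that last one: surjectivity of $f\colon(0,\tfrac13]\to(0,\omega_\ast]$, i.e.\ ruling out a jump of $f$ whose image would skip a subinterval of $(0,\omega_\ast]$; equivalently, showing $\beta(\omega)\le\tfrac13$ for every $\omega\in(0,\omega_\ast)$. This is the precise analogue of the surjectivity assertion in the proof of Theorem~\ref{t5.6}, and closing it rigorously calls for the continuity (indeed real-analyticity, as invoked in the proof of Theorem~\ref{t4.5}) of $\beta$ on $(0,\tfrac3{16})$ together with the small-frequency asymptotics of Proposition~\ref{p2.2}(II) and the variational structure assembled in Section~\ref{sec5} (notably Theorems~\ref{t5.5} and~\ref{t5.6}). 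The monotonicity and injectivity inputs leading up to it are otherwise routine.
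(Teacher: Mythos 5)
Your reduction of the theorem to the claim that $f(\alpha)=\frac{1+\alpha}{3}\,\lvert\lvert \nabla Q_{\alpha}\rvert\rvert_{L^{2}}^{2}/\lvert\lvert Q_{\alpha}\rvert\rvert_{L^{2}}^{2}$ is a strictly increasing bijection from $(0,\frac{1}{3}]$ onto $(0,\omega_{\ast}]$ is sound, and the pieces you actually establish are correct: strict monotonicity of $f$ on $(0,\frac{1}{3}]$ via Theorem~\ref{t3.10}, $f(\frac{1}{3})=\omega_{\ast}$ via Theorems~\ref{t5.5}--\ref{t5.6}, and $f(\alpha)\to 0$ as $\alpha\downarrow 0$ via Proposition~\ref{p2.2}(II). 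But the step you defer --- surjectivity of $f$ onto $(0,\omega_{\ast}]$ --- is the entire content of the theorem on this route, and nothing you cite closes it. Monotonicity plus the endpoint values does \emph{not} make the image dense: a strictly increasing function can have jump discontinuities, and its image then omits whole subintervals, so your parenthetical ``dense in it'' is unsupported and is in fact equivalent to the surjectivity you still owe. Continuity (or real-analyticity) of $\beta(\omega)$ does not help, because what your intermediate-value argument needs is continuity of $\alpha\mapsto \lvert\lvert \nabla Q_{\alpha}\rvert\rvert_{L^{2}}^{2}$ and $\lvert\lvert Q_{\alpha}\rvert\rvert_{L^{2}}^{2}$ (hence of $f$), which the paper never provides --- Proposition~\ref{p3.3} and Theorems~\ref{t3.10}--\ref{t3.11} give only monotonicity. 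Moreover, your asserted equivalence of surjectivity with ``$\beta(\omega)\le\frac{1}{3}$ for all $\omega\in(0,\omega_{\ast})$'' tacitly uses that every ground state $P_{\omega}$ is a minimizer of $F_{\beta(\omega)}$, i.e.\ $P_{\omega}=Q_{\beta(\omega)}$ up to translations; Theorems~\ref{t3.8}--\ref{t3.9} give only the converse direction (every minimizer is a ground state), so even that reformulation would require an additional argument.

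The paper proves Theorem~\ref{t6.1} by a different mechanism that bypasses inverting $f$: from Proposition~\ref{p2.2}(I), $\frac{d}{d\omega}M(P_{\omega})<\frac{3\beta(\omega)-1}{2\omega}M(P_{\omega})$, so on $(0,\omega_{\ast}]$ (where $\beta(\omega)\le\frac{1}{3}$) one has $\frac{d}{d\omega}M(P_{\omega})\le 0$, sharpened to strict negativity almost everywhere by real-analyticity (Proposition~\ref{p4.4}); Proposition~\ref{p3.3} gives $\frac{d}{d\alpha}M(Q_{\alpha})<0$ almost everywhere on $(0,\frac{1}{3}]$; and the chain rule $\frac{dM}{d\omega}=\frac{dM}{d\alpha}\cdot\frac{d\beta}{d\omega}$ then forces $\frac{d\beta}{d\omega}>0$ almost everywhere on $(0,\omega_{\ast}]$, hence strict increase of $\beta$. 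The differential inequality of Proposition~\ref{p2.2}(I), which your proposal never uses, is precisely the substitute for the continuity/surjectivity information you are missing; alternatively one would need a variational surjectivity argument of the kind used for Theorems~\ref{t4.3} and~\ref{t7.1}. As written, the proposal does not prove the statement.
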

\begin{proof}
According to Proposition \ref{p2.2},
\begin{equation}\label{6.1a}
\frac{d}{d\omega}M(P_{\omega})<\frac{3\beta(\omega)-1}{2\omega}M(P_{\omega}).
\end{equation}
Let $\alpha=\beta(\omega)$. From $0<\alpha \leq\frac{1}{3}$ and $\beta(\omega_{\ast})=\frac{1}{3}$,  it follows that
\begin{equation}\label{6.1b}
\frac{d}{d\omega}M(P_{\omega})\leq 0,\;\;\; \omega\in (0,\omega_{\ast}].
\end{equation}
From Proposition \ref{p4.4},it follows that
\begin{equation}\label{6.1b1}
\frac{d}{d\omega}M(P_{\omega})<0,\;\;\;\text{almost every}\;\;\; \omega\in (0,\omega_{\ast}].
\end{equation}
According to Proposition \ref{p3.3},
\begin{equation}\label{6.1c}
\alpha \mapsto M(Q_{\alpha})=\lvert\lvert Q_{\alpha}\lvert\lvert_{L^{2}}^{2} \quad \text{is strictly decreasing on}\;  (0,\frac{1}{3}].
\end{equation}
It follows that
\begin{equation}\label{6.1d}
\frac{dM}{d\alpha}<0,\;\;\;\text{almost every}\;\;\; \alpha\in (0,\;\frac{1}{3}].
\end{equation}
Since
\begin{equation}\label{6.1e}
\frac{dM}{d\omega}=\frac{dM}{d\alpha}\cdot \frac{d\beta}{d\omega},
\end{equation}
by (\ref{6.1b1}) and (\ref{6.1d}) it follows that
\begin{equation}\label{6.1f}
\frac{d\beta}{d\omega}>0,\;\;\;\text{almost every}\;\;\; \omega\in (0,\omega_{\ast}].
\end{equation}
Thus (\ref{6.1f}) implies that
\begin{equation}\label{6.1g}
\beta(\omega)\;\;\;\text{is strictly increasing on}\;\;\; (0,\;\omega_{\ast}].
\end{equation}
\end{proof}

Now we complete the proof of Theorem \ref{t1.1}.
\begin{proof}
Theorem \ref{t6.1}, Theorem \ref{t5.6} and Theorem \ref{t4.5} show that Theorem \ref{t1.1} is true.
\end{proof}

\section{Monotonicity of mass}\label{sec7}

\begin{theorem}\label{t7.1}
$M(P_{\omega})$ is strictly increasing on $\omega\in [\omega_{\ast},\;\omega^{\ast})$.
\end{theorem}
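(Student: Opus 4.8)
The plan is to obtain Theorem~\ref{t7.1} from the variational picture assembled in Sections~4 and 5 rather than from a pointwise derivative estimate. For $m_{0}<m<M(Q_{1})$ let $P_{\omega(m)}$ be the unique positive minimizer of $E_{min}^{V}(m)$; uniqueness, and the fact that the minimizer is a ground state, are Theorems~\ref{t5.2}, \ref{t5.5} and \ref{t5.8}. Moreover $\beta(\omega(m))>\frac{1}{3}$ by Proposition~\ref{p3.7}, while $M(P_{\omega(m)})=m<M(Q_{1})$ forces $\beta(\omega(m))<1$ (Theorem~\ref{t5.1} and its proof); hence $\omega(m)\in(\omega_{\ast},\omega^{\ast})$ by Theorem~\ref{t5.6}, and $M(P_{\omega(m)})=m$. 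Since $M(P_{\omega(m_{1})})=M(P_{\omega(m_{2})})$ forces $m_{1}=m_{2}$, the map $m\mapsto\omega(m)$ is injective and $\omega\mapsto M(P_{\omega})$ is its inverse on the image. Everything then reduces to proving that this image is all of $(\omega_{\ast},\omega^{\ast})$.

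I would prove this by showing $m\mapsto\omega(m)$ is continuous on $(m_{0},M(Q_{1}))$ with the correct limits at the endpoints. Two ingredients are needed: (i) $\omega\mapsto P_{\omega}$ is continuous from $(0,\frac{3}{16})$ into $H^{1}(\mathbb{R}^{3})$, which follows from the uniqueness of $P_{\omega}$ (Proposition~\ref{p2.1}) together with the a priori $H^{1}$ bounds on $P_{\omega_{n}}$ available on compact $\omega$-intervals, by a routine compactness/elliptic-regularity argument applied to $P_{\omega_{n}}$ with $\omega_{n}\to\omega$; and (ii) $m\mapsto E_{min}^{V}(m)$ is continuous, which upgrades the lower semicontinuity in Proposition~\ref{p3.6}(IV) by rescaling a minimizer of $E_{min}^{V}(m)$ to mass $m'$, readjusting it back onto the constraint $V=0$, and letting $m'\to m$ to obtain the matching upper bound. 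Granting these, if $m_{n}\to m\in(m_{0},M(Q_{1}))$ and $\omega(m_{n})\to\bar\omega$ along a subsequence (compactness of $[\omega_{\ast},\omega^{\ast}]$), then $P_{\omega(m_{n})}\to P_{\bar\omega}$ in $H^{1}$, so $M(P_{\bar\omega})=m$, $V(P_{\bar\omega})=0$ and $E(P_{\bar\omega})=\lim E_{min}^{V}(m_{n})=E_{min}^{V}(m)$; uniqueness of the minimizer gives $P_{\bar\omega}=P_{\omega(m)}$, hence $\omega(m_{n})\to\omega(m)$. The same limiting argument, using Theorem~\ref{t5.5} at $m_{0}$ and Theorem~\ref{t5.1} (forcing $\beta\to1$, i.e.\ $\omega\to\omega^{\ast}$) as $m\uparrow M(Q_{1})$, gives $\omega(m)\to\omega_{\ast}$ as $m\downarrow m_{0}$ and $\omega(m)\to\omega^{\ast}$ as $m\uparrow M(Q_{1})$.

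A continuous injection of the interval $(m_{0},M(Q_{1}))$ into $(\omega_{\ast},\omega^{\ast})$ with these limits has connected image whose infimum is $\omega_{\ast}$ and supremum is $\omega^{\ast}$, hence image equal to $(\omega_{\ast},\omega^{\ast})$; so $m\mapsto\omega(m)$ is a homeomorphism, therefore strictly monotone, and its inverse $\omega\mapsto M(P_{\omega})$ is strictly monotone on $(\omega_{\ast},\omega^{\ast})$. Since $M(P_{\omega})\to m_{0}$ as $\omega\downarrow\omega_{\ast}$, $M(P_{\omega})\to M(Q_{1})$ as $\omega\uparrow\omega^{\ast}$, and $m_{0}<M(Q_{1})$ --- the latter because $R_{\omega_{\ast}}$, being a rescaled-soliton minimizer of $E_{min}^{V}(m_{0})$ by Theorem~\ref{t5.5}, cannot have mass as large as $M(Q_{1})$ by Proposition~\ref{p3.7} --- the monotonicity is increasing; together with $M(P_{\omega_{\ast}})=m_{0}$ this is exactly strict increase on $[\omega_{\ast},\omega^{\ast})$. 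The main obstacle is ingredient (ii): the lower semicontinuity of $E_{min}^{V}$ from Proposition~\ref{p3.6} does not by itself give continuity, and the matching upper bound needs a delicate rescaling-plus-correction that respects the cross-constraint $V(u)=0$. The alternative of proving $\frac{d}{d\omega}M(P_{\omega})>0$ directly from Proposition~\ref{p2.1}(IV) and Proposition~\ref{p2.2} would require the pointwise bound $2\omega\beta'(\omega)>1-\beta(\omega)^{2}$ on $(\omega_{\ast},\omega^{\ast})$, which does not follow from the estimates established so far, so the variational route is preferable.
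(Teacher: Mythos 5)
Your proposal follows, at its core, the same route as the paper: the paper's proof of Theorem~\ref{t7.1} consists exactly of the endpoint identities $M(P_{\omega_{\ast}})=m_{0}$ (Theorem~\ref{t5.5}) and $M(P_{\omega^{\ast}})=M(Q_{1})$ (via $P_{\omega^{\ast}}=Q_{1}$), plus Theorem~\ref{t5.8}, from which it asserts a one-to-one mass--frequency correspondence between $(m_{0},M(Q_{1}))$ and $(\omega_{\ast},\omega^{\ast})$ and then reads off strict monotonicity, implicitly using continuity of $\omega\mapsto M(P_{\omega})$. Your construction of $m\mapsto\omega(m)$, its injectivity, the placement $\omega(m)\in(\omega_{\ast},\omega^{\ast})$ (for which you should invoke the global strict monotonicity of $\beta$, i.e.\ Theorem~\ref{t1.1}, or Theorems~\ref{t6.1} and \ref{t4.5}, since Theorem~\ref{t5.6} alone only controls $\beta$ on $[\omega_{\ast},\omega^{\ast}]$; this is available by Section~7), and the endpoint identifications are precisely the paper's ingredients. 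Where you differ is that you recognize the surjectivity of $m\mapsto\omega(m)$ onto $(\omega_{\ast},\omega^{\ast})$ as a step needing proof and route it through continuity of $m\mapsto\omega(m)$, whereas the paper simply asserts the bijection in one line.

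The genuine gap in your write-up is the one you flag yourself: two-sided continuity of $m\mapsto E_{min}^{V}(m)$ is used but not proved, so as written the argument is a reduction rather than a proof. Two comments. First, part of what you need is free: since $E_{min}^{V}$ is strictly decreasing and lower semicontinuous (Proposition~\ref{p3.6}(IV)), it is automatically right-continuous, which already handles the limit $m\downarrow m_{0}$; what is genuinely missing is upper semicontinuity from the left at interior masses (your limit as $m\uparrow M(Q_{1})$ needs only mass convergence, as you note). Second, your sketched repair does close this: starting from a minimizer $u$ at mass $m\in(m_{0},M(Q_{1}))$, the two-parameter family $a\,u(\cdot/\lambda)$ can be adjusted to satisfy $M=m'$ and $V=0$ for $m'$ near $m$ by the implicit function theorem, since the relevant Jacobian at $(a,\lambda)=(1,1)$ is a positive multiple of $\int\lvert\nabla u\rvert^{2}dx-3\int u^{6}dx$, which is nonzero precisely because such a minimizer has $\beta>\tfrac13$ (Proposition~\ref{p3.7} and Theorem~\ref{t5.8}); this gives $\limsup_{m'\to m}E_{min}^{V}(m')\le E_{min}^{V}(m)$ and hence continuity on $(m_{0},M(Q_{1}))$. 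With that supplied, together with the (routine, and consistent with the analyticity the paper already uses) $H^{1}$-continuity of $\omega\mapsto P_{\omega}$, your argument is complete and amounts to a careful expansion of exactly the step the paper glosses over.
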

\begin{proof}
In terms of Theorem \ref{t3.8},
\begin{equation}\label{7.1a}
P_{\omega^{\ast}}=Q_{1}\;\;\;\text{up to translations}.
\end{equation}
It follows that
\begin{equation}\label{7.1b}
M(P_{\omega^{\ast}})=M(Q_{1}).
\end{equation}
From Theorem \ref{t5.5},
\begin{equation}\label{7.1c}
M(P_{\omega_{\ast}})=m_{0}.
\end{equation}
By Theorem  \ref{t5.8}, (\ref{7.1b}) and (\ref{7.1c}),  $M(P_{\omega})$ establishes a one-to-one mapping from $(M(P_{\omega_{\ast}}),\; M(P_{\omega^{\ast}}))$ to $(\omega_{\ast},\;\omega^{\ast})$. It follows that
\begin{equation}\label{7.1d}
M(P_{\omega})=M(\omega)\;\;\; \text{is strictly increasing on} \;\;\;[\omega_{\ast},\;\omega^{\ast}].
\end{equation}
\end{proof}

\begin{theorem}\label{t7.2}
$M(P_{\omega})$ is strictly decreasing on $\omega\in (0,\; \omega_{\ast})$.
\end{theorem}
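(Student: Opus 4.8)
The plan is to combine the differential inequality recorded in Proposition~\ref{p2.2}(I) with the sign information on $\beta(\omega)$ near the origin that has already been established in Theorem~\ref{t6.1}. No delicate analysis is needed here: the substantive work has been done upstream (in particular, the monotonicity of $\beta$ close to $0$), and what remains is essentially a one-line deduction about the sign of $\frac{d}{d\omega}M(P_{\omega})$.

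First I would recall that, by Theorem~\ref{t6.1}, $\beta(\omega)$ is strictly increasing on $(0,\omega_{\ast}]$, while $\beta(\omega_{\ast})=\frac{1}{3}$ by Theorem~\ref{t5.5}. Hence $\beta(\omega)<\frac{1}{3}$ for every $\omega\in(0,\omega_{\ast})$, so that $3\beta(\omega)-1<0$ on this interval. Next I would invoke Proposition~\ref{p2.2}(I), which gives
\begin{equation*}
\frac{d}{d\omega}M(P_{\omega})<\frac{3\beta(\omega)-1}{2\omega}M(P_{\omega}),\qquad \omega\in(0,\tfrac{3}{16}).
\end{equation*}
Since $P_{\omega}$ is a nontrivial positive solution of (\ref{1.2}) we have $M(P_{\omega})>0$, and $\omega>0$, so the right-hand side is strictly negative for $\omega\in(0,\omega_{\ast})$. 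Therefore $\frac{d}{d\omega}M(P_{\omega})<0$ throughout $(0,\omega_{\ast})$.

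Finally, since $\omega\mapsto M(P_{\omega})$ is real-analytic (in particular $C^{1}$) on $(0,\frac{3}{16})$ by Proposition~\ref{p2.1}, a strictly negative derivative on the whole of $(0,\omega_{\ast})$ forces $M(P_{\omega})$ to be strictly decreasing there, which is the assertion. I do not expect a genuine obstacle in this theorem; the only point to verify is that the inequality of Proposition~\ref{p2.2}(I) is available as a pointwise statement on all of $(0,\frac{3}{16})$ rather than merely almost everywhere, which is precisely how it is stated — so the argument goes through directly, without the Weierstrass-type refinement (Proposition~\ref{p4.4}) that was needed in Theorems~\ref{t4.5} and~\ref{t6.1}. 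Combined with Theorem~\ref{t7.1}, this shows that $\omega\mapsto M(P_{\omega})$ attains its global minimum exactly at $\omega_{\ast}$, which is the content of Theorem~\ref{t1.2}.
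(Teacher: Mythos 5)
Your proof is correct and rests on the same key ingredient as the paper's, namely the differential inequality $\frac{d}{d\omega}M(P_{\omega})<\frac{3\beta(\omega)-1}{2\omega}M(P_{\omega})$ of Proposition~\ref{p2.2}(I) combined with the fact that $\beta\le\frac{1}{3}$ below $\omega_{\ast}$. The only difference is in how strictness is obtained: the paper records merely $\frac{d}{d\omega}M(P_{\omega})\le 0$ on $(0,\omega_{\ast}]$ (its inequality (\ref{6.1b})) and then upgrades this to strict decrease via the real-analyticity/isolated-zero argument of Proposition~\ref{p4.4}, whereas you note that Theorem~\ref{t6.1} together with $\beta(\omega_{\ast})=\frac{1}{3}$ gives the strict bound $\beta(\omega)<\frac{1}{3}$ on $(0,\omega_{\ast})$, so the right-hand side of the inequality is already strictly negative and Proposition~\ref{p4.4} is not needed. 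This shortcut is legitimate (Theorem~\ref{t6.1} is proved independently of Theorem~\ref{t7.2}, so there is no circularity) and is in fact the same reasoning the paper itself uses later in Theorems~\ref{t9.6} and~\ref{t10.1}; it buys a marginally simpler proof, while the paper's route through Proposition~\ref{p4.4} only needs the non-strict sign information.
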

\begin{proof}
(\ref{6.1b}) and Proposition \ref{p4.4} imply that
\begin{equation}\label{6.1g}
M(P_{\omega})\;\;\;\text{is strictly decreasing on}\;\;\; (0,\;\omega_{\ast}).
\end{equation}
\end{proof}

Now we complete the proof of Theorem \ref{t1.2}.
\begin{proof}
Theorem \ref{t7.2}, Theorem \ref{t7.1} and Theorem \ref{t4.3} show that Theorem \ref{t1.2} is true.
\end{proof}

\section{Uniqueness of energy minimizer}\label{sec8}

\begin{theorem}\label{t8.1}
Let $\frac{4}{3\sqrt{3}}M(Q_{1})\leq m<m_{0}$. Then $E_{min}^{V}(m)$ possesses a unique positive minimizer which is a rescaled soliton.
\end{theorem}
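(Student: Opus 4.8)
The plan is to show that for $\frac{4}{3\sqrt{3}}M(Q_{1})\le m<m_{0}$ every positive minimizer $\xi$ of $E_{min}^{V}(m)$ must be a rescaled soliton, and then invoke the already-established uniqueness of rescaled-soliton minimizers to conclude. By Proposition \ref{p3.6}, such a $\xi$ exists, and by Proposition \ref{p3.7} it is either a rescaled soliton $e^{i\theta}R_{\omega}(\cdot+x_{0})$ or a ground state $e^{i\theta}P_{\omega}(\cdot+x_{0})$ for some $\omega\in(0,\frac{3}{16})$ with $\beta(\omega)>\frac13$. So the dichotomy is already in hand; I need to rule out the ground-state alternative in the mass range $m<m_{0}$.

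First I would suppose, for contradiction, that $\xi=P_{\omega}$ (up to symmetries) is a ground state and a minimizer of $E_{min}^{V}(m)$. Then $M(P_{\omega})=m<m_{0}=M(P_{\omega_{\ast}})$. Now I use the monotonicity of the mass along the ground-state branch established in the previous sections: by Theorem \ref{t7.2}, $M(P_{\omega})$ is strictly decreasing on $(0,\omega_{\ast})$, and by Theorem \ref{t7.1} it is strictly increasing on $[\omega_{\ast},\omega^{\ast})$; combined with Theorem \ref{t4.3} (strictly increasing on $[\omega^{\ast},\frac{3}{16})$), the global minimum of $\omega\mapsto M(P_{\omega})$ over $(0,\frac{3}{16})$ equals $m_{0}=M(P_{\omega_{\ast}})$, so $M(P_{\omega})\ge m_{0}$ for every $\omega$. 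This contradicts $M(P_{\omega})=m<m_{0}$. Hence the ground-state case is impossible, and $\xi$ must be a rescaled soliton.

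Once $\xi$ is known to be a rescaled soliton, uniqueness follows directly: by Theorem \ref{t5.3}, for any $m$ in the range $\frac{4}{3\sqrt{3}}M(Q_{1})\le m<M(Q_{1})$ (which contains our range since $m<m_{0}<M(Q_{1})$), the problem $E_{min}^{V}(m)$ has a unique positive minimizer up to translations among rescaled solitons; combined with the fact that \emph{every} minimizer is a rescaled soliton, this gives a unique positive minimizer up to translations, and it is a rescaled soliton. This completes the argument.

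The main obstacle is conceptual rather than computational: it is making sure the global lower bound $M(P_{\omega})\ge m_{0}$ is genuinely available. This rests on knowing that $\omega_{\ast}$ (where $\beta=\frac13$) is the unique critical point and global minimizer of $\omega\mapsto M(P_{\omega})$, which is exactly the content assembled from Theorems \ref{t7.2}, \ref{t7.1} and \ref{t4.3}; one must also check consistency at the endpoint values of $\beta$ (namely $\beta(\omega_{\ast})=\frac13$ and the behavior near $0$ and $\frac{3}{16}$ from Proposition \ref{p2.2}) so that no ground state sneaks in with mass below $m_{0}$. There is also a small bookkeeping point: Proposition \ref{p3.7} yields $\beta(\omega)>\frac13$ on the ground-state branch, which is consistent with the range $\omega>\omega_{\ast}$ where $M(P_{\omega})$ is increasing and $\ge m_{0}$, so the contradiction is clean. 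Everything else is an application of results proved earlier in the paper.
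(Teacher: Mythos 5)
Your proposal is correct and follows essentially the same route as the paper: invoke the dichotomy of Proposition \ref{p3.7}, rule out the ground-state alternative via the mass monotonicity results (the paper cites Proposition \ref{p3.7} together with Theorem \ref{t7.1}, while you assemble the global bound $M(P_{\omega})\ge m_{0}$ from Theorems \ref{t7.2}, \ref{t7.1} and \ref{t4.3}, which amounts to the same contradiction with $m<m_{0}$), and then conclude uniqueness from Theorem \ref{t5.3}. Your write-up is simply a more explicit version of the paper's terse argument, with no gap.
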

\begin{proof}
Let $\xi$ be a positive minimizer of $E_{min}^{V}(m)$. Then $\xi$ obeys
\begin{equation}\label{8.1a}
\frac{4}{3\sqrt{3}}M(Q_{1})\leq M(\xi)=m<m_{0}=M(P_{\omega_{\ast}}),\;\;\; E(\xi)=E_{min}^{V}(m),\;\;\; V(\xi)=0.
\end{equation}
By Proposition \ref{p3.7} and Theorem \ref{t7.1}, one yields that $\xi$ must be a rescaled soliton. According to Theorem \ref{t5.3}, $\xi$ is unique up to translations.
\end{proof}

Now we complete the proof of Theorem \ref{t1.3}.
\begin{proof}

Theorem \ref{t8.1}, Theorem \ref{t5.5}, Theorem \ref{t5.8} and Theorem \ref{t4.2} show that Theorem \ref{t1.3} is true.
\end{proof}

\section{Sharp stability of solitons}\label{sec7}

According to to Killip, Oh, Pocovnicu and Visan \cite{KOPV2017} as well as Zhang \cite{Z2006}, the following well-posedness is true.

\begin{proposition}\label{p9.1}
(\cite{KOPV2017}) For arbitrary $\varphi_0\in H^1(\mathbb{R}^{3})$, (\ref{1.1}) possesses a unique global solution $\varphi\in \mathcal{C}(\mathbb{R}; H^1(\mathbb{R}^{3}))$ such that $\varphi(0,x)=\varphi_0$. In addition, the solution holds the conservation of mass, energy and momentum, where mass is given by (\ref{1.11}), energy is given by (\ref{1.12}) and momentum is given by $Q(u):=\int2 \mathrm{Im}(\bar{u}\nabla u)dx$ for $u\in H^1(\mathbb{R}^{3})$.
\end{proposition}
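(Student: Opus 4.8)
This proposition is the global well-posedness theory for (\ref{1.1}) in the energy space, and the plan is the standard two-part scheme: local well-posedness followed by globalization through the conservation laws. First I would establish local well-posedness in $H^1(\mathbb{R}^3)$. Writing the nonlinearity as $f(\varphi)=|\varphi|^2\varphi-|\varphi|^4\varphi$, the quintic term is $\dot H^1$-critical on $\mathbb{R}^3$ while the cubic term is energy-subcritical, so a plain $H^1$ contraction mapping is unavailable; instead one runs the Cazenave--Weissler scheme built on the Strichartz estimates for $e^{it\Delta}$, working in the critical space $L^{10}_{t,x}$ together with an admissible pair, and estimates the subcritical cubic term by H\"older and the subcritical Strichartz theory. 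This produces, for every $\varphi_0\in H^1$, a unique maximal solution $\varphi\in\mathcal{C}((-T_-,T_+);H^1)$ with continuous dependence, persistence of regularity, and the blow-up alternative that the solution extends as long as $\|\varphi(t)\|_{H^1}$ remains bounded on bounded time intervals; securing this last point against the critical quintic piece is where one invokes the global-in-time theory of the defocusing energy-critical NLS together with a perturbation/stability argument.

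Next I would verify the conservation laws. Pairing (\ref{1.1}) with $\bar\varphi$, with $\partial_t\bar\varphi$, and with $\nabla\bar\varphi$ and integrating in space gives, at least formally, $\tfrac{d}{dt}M(\varphi)=\tfrac{d}{dt}E(\varphi)=\tfrac{d}{dt}Q(\varphi)=0$ with $M$, $E$ as in (\ref{1.11}), (\ref{1.12}) and $Q(u)=\int 2\,\mathrm{Im}(\bar u\nabla u)\,dx$; these identities are then legitimized on the local $H^1$ solution by the usual regularization argument (mollify the data, use persistence of regularity, pass to the limit).

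The decisive step is an a priori bound on $\|\varphi(t)\|_{H^1}$ uniform in $t$. Mass conservation gives $\|\varphi(t)\|_{L^2}=\|\varphi_0\|_{L^2}$, and energy conservation rewrites as
\begin{equation*}
\tfrac12\|\nabla\varphi(t)\|_{L^2}^2+\tfrac16\|\varphi(t)\|_{L^6}^6=E(\varphi_0)+\tfrac14\|\varphi(t)\|_{L^4}^4 ,
\end{equation*}
where both terms on the left are nonnegative because the gradient and sextic parts of $E$ are defocusing. Pure interpolation of $L^4$ between $L^2$ and $L^6$ yields $\|\varphi\|_{L^4}^4\le\|\varphi_0\|_{L^2}\|\varphi\|_{L^6}^3$ --- the $\alpha\to\infty$ endpoint of the sharp inequality $\|u\|_{L^4}^4\le C_\alpha\|u\|_{L^2}\|u\|_{L^6}^{\frac{3\alpha}{1+\alpha}}\|\nabla u\|_{L^2}^{\frac{3}{1+\alpha}}$ built from (\ref{1.8})--(\ref{1.9}) --- and Young's inequality absorbs $\tfrac14\|\varphi_0\|_{L^2}\|\varphi\|_{L^6}^3$ into $\tfrac1{12}\|\varphi\|_{L^6}^6$ plus a constant depending only on $\|\varphi_0\|_{L^2}$. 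Hence $\|\nabla\varphi(t)\|_{L^2}^2\le 2E(\varphi_0)+C\|\varphi_0\|_{L^2}^2$ throughout the interval of existence, so the blow-up alternative forces $T_\pm=\infty$; uniqueness and $\mathcal{C}(\mathbb{R};H^1)$-continuity then come from the local theory.

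The place where this genuinely departs from the elementary subcritical picture --- and the step I expect to be the main obstacle --- is the energy-critical quintic nonlinearity: one cannot run a naive $H^1$ fixed point but must use the critical Strichartz/Cazenave--Weissler local theory and, to globalize, perturb off the (hard) global well-posedness of the defocusing energy-critical NLS; moreover in the a priori estimate one cannot control $\|\varphi\|_{L^4}^4$ by $\|\nabla\varphi\|_{L^2}^2$ directly, since the natural Gagliardo--Nirenberg exponent is $\|\nabla\varphi\|_{L^2}^3$, which grows too fast to be absorbed --- it is precisely the positive sextic term in $E$, together with the $L^2$--$L^6$ interpolation, that closes the bound.
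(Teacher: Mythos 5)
Your outline is correct and is essentially the argument behind the result as the paper uses it: the paper itself gives no proof of Proposition \ref{p9.1}, deferring to \cite{KOPV2017} and \cite{Z2006}, and those references proceed exactly as you describe --- critical local theory for the quintic piece with the cubic term treated perturbatively off the global defocusing energy-critical theory of \cite{CKSTT2008}, plus the conservation laws and the $L^2$--$L^6$ interpolation $\lVert u\rVert_{L^4}^4\le\lVert u\rVert_{L^2}\lVert u\rVert_{L^6}^3$ to turn mass and energy conservation into a uniform $H^1$ bound. You also correctly flag the one genuine subtlety (no $H^1$-norm blow-up criterion from critical local theory alone), which is precisely what the perturbation/stability step in those references is there to handle.
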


Let $\omega\in(0,\frac{3}{16})$ and $P_{\omega}(x)$ be the unique positive solution of (\ref{1.2}). Then one has that
\begin{equation}\label{9.1a}
-\Delta P_{\omega}+\omega P_{\omega}-P_{\omega}^3+P_{\omega}^5=0, \quad P_{\omega}\in H^1(\mathbb{R}^3).
\end{equation}
The linearized operator of (\ref{9.1a}) around $P_\omega$ is that
\begin{equation}\label{9.1b}
H_{\omega}=-\Delta+\omega-3P_{\omega}^2+5P_{\omega}^4.
\end{equation}
It is clear that
\begin{equation}\label{9.1c}
H_{\omega}=E^{\prime\prime}(P_{\omega})+\frac{1}{2}\omega M^{\prime\prime}(P_{\omega}).
\end{equation}

According to Killip, Oh, Pocovnicu and Visan \cite{KOPV2017}, the following Propositions are true.

\begin{proposition}\label{p9.2}
(\cite{KOPV2017}) The operator $H_{\omega}$ has one negative simple eigenvalue and has its kernel spanned by $iP_\omega$. Moreover the positive spectrum of $H_\omega$ is bounded away from zero.
\end{proposition}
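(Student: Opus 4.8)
The plan is to split $H_{\omega}$ into its real and imaginary parts and analyse the two resulting scalar Schr\"{o}dinger operators separately. Writing $u=v+iw$ with $v,w$ real and using $H_{\omega}=E''(P_{\omega})+\tfrac12\omega M''(P_{\omega})$, one finds
\begin{equation*}
\langle H_{\omega}u,u\rangle=\langle L_{+}v,v\rangle+\langle L_{-}w,w\rangle,\qquad
L_{+}=-\Delta+\omega-3P_{\omega}^{2}+5P_{\omega}^{4},\quad
L_{-}=-\Delta+\omega-P_{\omega}^{2}+P_{\omega}^{4},
\end{equation*}
so that $\sigma(H_{\omega})=\sigma(L_{+})\cup\sigma(L_{-})$ and $\ker H_{\omega}=(\ker L_{+})\oplus i(\ker L_{-})$. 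Since the potentials of $L_{\pm}$ decay at infinity (Proposition~\ref{p2.1}(II)), both operators have essential spectrum $[\omega,\infty)$, already bounded away from $0$.

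The operator $L_{-}$ is immediate: equation (\ref{9.1a}) says exactly $L_{-}P_{\omega}=0$, and because $P_{\omega}>0$ the Perron--Frobenius property forces $0$ to be the bottom eigenvalue of $L_{-}$, simple, with $\ker L_{-}=\mathbb{R}P_{\omega}$. Hence $L_{-}\ge 0$; it contributes no negative spectrum, it contributes precisely the direction $iP_{\omega}$ to $\ker H_{\omega}$, and it has a spectral gap above $0$.

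For $L_{+}$ the plan is to show that its Morse index is exactly $1$. For the lower bound one tests against the dilation generator $\Lambda P_{\omega}:=-x\cdot\nabla P_{\omega}\in H^{1}$: differentiating $\lambda\mapsto S_{\omega}\!\big(P_{\omega}(\cdot/\lambda)\big)$, with $S_{\omega}=E+\tfrac{\omega}{2}M$, at the critical point $\lambda=1$ (critical by the Pohozaev identity in Proposition~\ref{p2.1}(III)) and eliminating the remaining terms with the Nehari and Pohozaev identities gives
\begin{equation*}
\langle L_{+}\Lambda P_{\omega},\Lambda P_{\omega}\rangle=-\int|\nabla P_{\omega}|^{2}\,dx<0,
\end{equation*}
so the bottom of $\sigma(L_{+})$ is strictly negative and, as the ground-state energy of a Schr\"{o}dinger operator, is a simple eigenvalue. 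For the upper bound one uses that $P_{\omega}$ is the least-energy solution of (\ref{1.2}), equivalently the minimiser of $S_{\omega}$ on its Nehari manifold: the second variation $L_{+}$ is then nonnegative on the codimension-one tangent space of that constraint, hence has at most one negative eigenvalue. Together these give exactly one negative, simple eigenvalue. Translation invariance additionally yields $L_{+}\partial_{j}P_{\omega}=0$ for $j=1,2,3$; these symmetry-generated modes are treated as usual (by passing to the radial subspace for $L_{+}$, where $P_{\omega}$ and the relevant dynamics live, or by incorporating the translation group into the orbital-stability argument), leaving the statement as phrased.

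The hard part will be the remaining ingredient: showing that $0$ is not an eigenvalue of $L_{+}$ on the radial subspace and is separated from its positive spectrum there, i.e. the non-degeneracy of the cubic-quintic ground state. This is the only step that is not soft functional analysis; it genuinely uses the explicit form of the nonlinearity and the strict range $0<\omega<\tfrac{3}{16}$, and can be reached either by an ODE/oscillation analysis of the radial profile (extracting, from differentiation of the profile equation, the identities $L_{+}(\partial_{\omega}P_{\omega})=-P_{\omega}$ and a companion relation for $\Lambda P_{\omega}$, and then bounding the number of nonpositive radial eigenvalues by a finite-dimensional computation) or by combining the uniqueness and real-analyticity of $P_{\omega}$ (Proposition~\ref{p2.1}) with an implicit-function argument that excludes a radial zero mode. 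Granting this, finiteness of the eigenvalues of $L_{+}$ below $\omega>0$ furnishes the uniform gap, and assembling the $L_{+}$ and $L_{-}$ pictures yields the claim: $H_{\omega}$ has one simple negative eigenvalue, $\ker H_{\omega}=\mathbb{R}\,iP_{\omega}$, and $\sigma(H_{\omega})\cap(0,\infty)$ is bounded away from $0$.
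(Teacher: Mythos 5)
The first thing to note is that the paper itself gives no proof of Proposition \ref{p9.2}: it is imported verbatim from \cite{KOPV2017}, so your attempt can only be measured against the standard argument that citation encapsulates. Your skeleton reproduces that argument correctly as far as it goes: the splitting of $H_{\omega}$ into $L_{+}\oplus L_{-}$, the Perron--Frobenius treatment of $L_{-}$ (from $L_{-}P_{\omega}=0$ and $P_{\omega}>0$, so $L_{-}\ge 0$ with $0$ a simple bottom eigenvalue and kernel $\mathbb{R}P_{\omega}$), the Weyl-theorem location $\sigma_{\mathrm{ess}}=[\omega,\infty)$ with $\omega>0$, and the scaling computation $\langle L_{+}\Lambda P_{\omega},\Lambda P_{\omega}\rangle=-\int|\nabla P_{\omega}|^{2}dx<0$ (which is indeed what the Pohozaev identity of Proposition \ref{p2.1}(III) yields for $\tfrac{d^{2}}{d\lambda^{2}}S_{\omega}(P_{\omega}(\cdot/\lambda))\big|_{\lambda=1}$) are all correct.

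The gap lies exactly at the two points where the proposition is nontrivial. First, your upper bound ``Morse index of $L_{+}$ at most one'' rests on the assertion that $P_{\omega}$ minimizes $S_{\omega}=E+\tfrac{\omega}{2}M$ on its Nehari manifold. For the cubic--quintic nonlinearity the fibering map $t\mapsto S_{\omega}(tu)=\tfrac{t^{2}}{2}A-\tfrac{t^{4}}{4}B+\tfrac{t^{6}}{6}C$ is not of the single-critical-point type (its derivative has zero or two positive roots), so the usual equivalence ``least-energy solution $=$ Nehari minimizer'', and with it the codimension-one nonnegativity of $L_{+}$, is not available off the shelf; it would itself have to be proved (say through a Pohozaev-manifold or mountain-pass characterization), and nothing among the propositions quoted in this paper supplies it. Second, and more seriously, you explicitly postpone the statement that $0$ is not a radial eigenvalue of $L_{+}$ and that the positive spectrum has a gap, calling it ``the hard part'' and only listing candidate strategies. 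But that non-degeneracy is precisely the content that makes the proposition deep --- it is the analytic input behind \cite{KOPV2017} (and, for general double powers, \cite{LR2020}) --- and without it neither the kernel identification $\ker H_{\omega}=\mathbb{R}\,iP_{\omega}$ (understood on the radial sector, as you correctly observe one must, since $\partial_{j}P_{\omega}\in\ker L_{+}$) nor the separation of the positive spectrum from $0$ is established. Note also that the identity $L_{+}(\partial_{\omega}P_{\omega})=-P_{\omega}$ you propose to exploit presupposes differentiability of $\omega\mapsto P_{\omega}$, which is normally deduced from non-degeneracy, so that route is circular as sketched. In short: the skeleton is right, but the decisive steps are asserted or deferred rather than proved, whereas the paper simply takes the whole statement from \cite{KOPV2017}.
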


\begin{proposition}\label{p9.3}
(\cite{KOPV2017}) Let $\omega\in(0,\frac{3}{16})$ and $P_\omega$ be the ground state of (\ref{1.2}). Define the scalar function
\begin{equation*}
d(\omega)=E(P_{\omega})+\frac{\omega}{2}M(P_{\omega}),
\end{equation*}
Then one has that
\begin{equation*}
d''(\omega)=\frac{1}{2}\frac{d}{d\omega}M(P_{\omega}).
\end{equation*}
\end{proposition}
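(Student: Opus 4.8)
The plan is to regard $d$ as the action functional evaluated along the ground-state branch and to exploit that $P_\omega$ is a critical point of that action. Set $S_\omega(u):=E(u)+\tfrac{\omega}{2}M(u)$ for $u\in H^1(\mathbb{R}^3)$; then (\ref{9.1a}) says exactly $S_\omega'(P_\omega)=0$, and $d(\omega)=S_\omega(P_\omega)$. The key preliminary point, which I would record first, is that the map $\omega\mapsto P_\omega$ is $C^1$ (in fact real-analytic) from $(0,\tfrac{3}{16})$ into $H^1(\mathbb{R}^3)$. This follows from the implicit function theorem applied to the map $(\omega,u)\mapsto -\Delta u+\omega u-u^3+u^5$ on the subspace of real radial functions: its Fr\'echet derivative in $u$ at $(\omega,P_\omega)$ is the linearized operator $H_\omega$ of (\ref{9.1b}), which by Proposition \ref{p9.2} has kernel spanned by $iP_\omega$ --- a purely imaginary function --- so $H_\omega$ is invertible once restricted to real-valued functions, while the $\omega$-partial derivative of the map is $P_\omega\in H^{-1}$. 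Hence $\partial_\omega P_\omega\in H^1$ exists and, in particular, $\omega\mapsto M(P_\omega)$ and $\omega\mapsto E(P_\omega)$ are $C^1$.

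Granting this, the computation is short. Differentiating $d(\omega)=S_\omega(P_\omega)$ by the chain rule gives
\begin{equation*}
d'(\omega)=\big\langle S_\omega'(P_\omega),\partial_\omega P_\omega\big\rangle+\partial_\omega S_\omega(P_\omega)=0+\tfrac12 M(P_\omega),
\end{equation*}
the first term vanishing because $S_\omega'(P_\omega)=0$ and the second because $\partial_\omega S_\omega(u)=\tfrac12 M(u)$. Equivalently, expanding directly,
\begin{equation*}
d'(\omega)=\tfrac{d}{d\omega}E(P_\omega)+\tfrac12 M(P_\omega)+\tfrac{\omega}{2}\tfrac{d}{d\omega}M(P_\omega),
\end{equation*}
and the first and third terms cancel by the identity $\tfrac{d}{d\omega}E(P_\omega)=-\tfrac{\omega}{2}\tfrac{d}{d\omega}M(P_\omega)$ of Proposition \ref{p2.1}(IV). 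Differentiating $d'(\omega)=\tfrac12 M(P_\omega)$ once more yields $d''(\omega)=\tfrac12\tfrac{d}{d\omega}M(P_\omega)$, as claimed.

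The main obstacle is entirely in the preliminary step: one must know that $P_\omega$ depends smoothly on $\omega$, which is where the spectral non-degeneracy of $H_\omega$ on real functions (Proposition \ref{p9.2}) is essential; without it the chain-rule manipulations above are not justified. Since Proposition \ref{p9.2}, the identities of Proposition \ref{p2.1}(IV), and the differentiability of the ground-state branch are all available from \cite{KOPV2017}, the remaining argument is the two-line differentiation above, and no further estimates are needed.
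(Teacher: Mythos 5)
Your proof is correct and follows the route the paper implicitly relies on: the paper states Proposition \ref{p9.3} as a quoted result from \cite{KOPV2017} without reproving it, and the identity is exactly the two-line consequence of $\frac{d}{d\omega}E(P_\omega)=-\frac{\omega}{2}\frac{d}{d\omega}M(P_\omega)$ from Proposition \ref{p2.1}(IV) (equivalently, the criticality $S_\omega'(P_\omega)=0$) together with differentiability of the branch $\omega\mapsto P_\omega$, which you justify via the non-degeneracy in Proposition \ref{p9.2}. No gaps; your preliminary implicit-function-theorem remark is the only technical point, and restricting to real radial functions as you do is the right way to make it rigorous.
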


By Proposition \ref{p9.2}, $H_\omega$ with $T^{\prime}(0)=i$ satisfies Assumption 3 in Grillakis-Shatah-Strauss \cite{GSS1987} for $\omega\in(0,\frac{3}{16})$. With $J=-i$, $X=H^1(\mathbb{R}^3)$
and  $E$ defined as (\ref{1.12}), by Proposition \ref{p9.1} and Proposition \ref{p2.1}, (\ref{1.1}) satisfies
Assumption 1 and Assumption 2 in Grillakis-Shatah-Strauss \cite{GSS1987} for $\omega\in(0,\frac{3}{16})$.
Thus in terms of \cite{GSS1987}, the following proposition is true.

\begin{proposition}\label{p9.4}
(\cite{KOPV2017}) Let $\omega\in(0,\frac{3}{16})$ and $P_\omega$ be the ground state of (\ref{1.2}). Then the soliton $P_{\omega}e^{i\omega t}$ of (\ref{1.1}) is orbitally stable if and only if the function $d(\cdot)$ is convex in a neighborhood of $\omega$.
\end{proposition}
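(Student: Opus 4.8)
The plan is to read Proposition \ref{p9.4} off the abstract orbital stability/instability theory of Grillakis, Shatah and Strauss, specialized to the one-parameter symmetry group of phase rotations. First I would make the Hamiltonian structure explicit: with $X=H^1(\mathbb{R}^3)$, $J=-i$ and $E$ as in (\ref{1.12}), equation (\ref{1.1}) is $\varphi_t=JE'(\varphi)$; the group $T(s)\varphi=e^{is}\varphi$ has generator $T'(0)=i$, its conserved quantity is $\tfrac12 M$ with $M$ as in (\ref{1.11}), and by (\ref{9.1a}) the profile $P_\omega$ is the critical point of $E+\tfrac{\omega}{2}M$. Hence $d(\omega)=E(P_\omega)+\tfrac{\omega}{2}M(P_\omega)$ is exactly the GSS scalar, $d'(\omega)=\tfrac12 M(P_\omega)$, and by Proposition \ref{p9.3}, $d''(\omega)=\tfrac12\tfrac{d}{d\omega}M(P_\omega)$; moreover the linearization $H_\omega$ of (\ref{9.1b})--(\ref{9.1c}) is $E''(P_\omega)+\tfrac{\omega}{2}M''(P_\omega)$.

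Next I would verify the three structural hypotheses of \cite{GSS1987}. Assumption 1 (well-posedness in $X$ together with conservation of $E$ and $M$) is Proposition \ref{p9.1}. Assumption 2 (the Hamiltonian form $\varphi_t=JE'(\varphi)$ with $J$ skew and the $C^1$, in fact real-analytic, dependence $\omega\mapsto P_\omega$) follows from the structure of (\ref{1.1}) and Proposition \ref{p2.1}. Assumption 3 is the content of Proposition \ref{p9.2}: $H_\omega$ has exactly one negative eigenvalue, which is simple, its kernel is $\operatorname{span}\{iP_\omega\}=\operatorname{span}\{T'(0)P_\omega\}$, and the rest of the spectrum is positive and bounded away from $0$.

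With these in place, the GSS stability and instability theorems apply. Since the Morse index $n(H_\omega)$ equals $1$ and the Hessian of $d$ is the $1\times1$ quantity $d''(\omega)$, their dichotomy reads: the orbit $\{e^{i\theta}P_\omega:\theta\in\mathbb{R}\}$ is orbitally stable when $d''(\omega)>0$, and orbitally unstable when $d''(\omega)<0$ (because then $n(H_\omega)$ minus the number of positive eigenvalues of $d''(\omega)$ equals $1$, which is odd). It remains to put this in convexity language: by Proposition \ref{p9.3} and the real-analyticity of $\omega\mapsto M(P_\omega)$ from Proposition \ref{p2.1}, $d$ is $C^2$ with real-analytic $d''$, so $d$ is convex on a neighborhood of $\omega$ precisely when $d''\ge 0$ there, i.e. when $d''(\omega)>0$ or $\omega$ is a zero of $d''$ at which $d''$ does not change sign. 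For the two non-degenerate regimes this already gives "orbitally stable $\iff$ $d$ convex near $\omega$".

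The step I expect to be the main obstacle is the borderline case $d''(\omega)=0$, which the original GSS dichotomy does not decide and which must be handled separately. One has to show that a sign-changing zero of $d''$ — so $d$ is not convex in any neighborhood of $\omega$, exactly the situation at the mass minimizer $\omega_\ast$ by Theorems \ref{t1.2} and \ref{t7.2} — still forces an unstable direction, via a limiting form of the GSS instability construction or a refined criterion using $d'''(\omega)\neq0$; and conversely that at a non-sign-changing zero of $d''$ the coercivity of $E+\tfrac{\omega}{2}M$ modulo phase degrades only to finite order, so orbital stability survives. Since $\omega\mapsto M(P_\omega)$ is real-analytic and non-constant, its critical set is discrete, so this degenerate analysis is needed only at isolated frequencies; assembling the three regimes yields the stated equivalence.
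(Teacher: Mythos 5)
Your proposal follows essentially the same route as the paper: the paper's entire justification of Proposition \ref{p9.4} is to verify Assumptions 1--3 of Grillakis--Shatah--Strauss via Propositions \ref{p9.1}, \ref{p2.1} and \ref{p9.2} (exactly your first two paragraphs) and then quote the resulting equivalence as stated in \cite{KOPV2017} and \cite{GSS1987}. Your third paragraph on the borderline case $d''(\omega)=0$ goes beyond what the paper itself does --- the paper leaves that degenerate issue entirely to the cited references --- so there is no discrepancy, only added caution on your side.
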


\begin{theorem}\label{t9.5}
For $\omega_{\ast}$ in Theorem \ref{t5.5}, the soliton $P_{\omega_{\ast}}e^{i\omega_{\ast}t}$ of (\ref{1.1}) is unstable.
\end{theorem}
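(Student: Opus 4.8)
The plan is to deduce Theorem~\ref{t9.5} entirely from the convexity criterion already in hand, namely Proposition~\ref{p9.4}: the soliton $P_{\omega}e^{i\omega t}$ of (\ref{1.1}) is orbitally stable if and only if $d(\cdot)$ is convex in a neighbourhood of $\omega$. Accordingly, to prove instability at $\omega_{\ast}$ it is enough to show that $d$ fails to be convex on \emph{every} interval containing $\omega_{\ast}$; I will obtain this by proving that $d$ is strictly concave on the whole interval $(0,\omega_{\ast})$ lying immediately to the left of $\omega_{\ast}$.

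First I would recall from Proposition~\ref{p9.3} that $d$ is twice differentiable on $(0,\tfrac{3}{16})$ with $d''(\omega)=\tfrac12\,\tfrac{d}{d\omega}M(P_{\omega})$, and that $M(P_{\omega})$ (hence $d$) is real-analytic there, as used in the proof of Theorem~\ref{t4.5}. Next I would combine two facts established earlier: by Theorem~\ref{t5.5} one has $\beta(\omega_{\ast})=\tfrac13$, and by Theorem~\ref{t6.1} (equivalently Theorem~\ref{t1.1}) the map $\omega\mapsto\beta(\omega)$ is strictly increasing on $(0,\omega_{\ast}]$; consequently $\beta(\omega)<\tfrac13$, i.e. $3\beta(\omega)-1<0$, for every $\omega\in(0,\omega_{\ast})$. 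Feeding this into the differential inequality of Proposition~\ref{p2.2}(I),
\begin{equation*}
\frac{d}{d\omega}M(P_{\omega})<\frac{3\beta(\omega)-1}{2\omega}M(P_{\omega}),
\end{equation*}
and using $\omega>0$ and $M(P_{\omega})>0$, I conclude $\tfrac{d}{d\omega}M(P_{\omega})<0$, hence $d''(\omega)<0$, for all $\omega\in(0,\omega_{\ast})$.

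It then follows that $d$ is strictly concave on $(0,\omega_{\ast})$. Hence for every $\delta>0$ the interval $(\omega_{\ast}-\delta,\omega_{\ast}+\delta)$ contains a subinterval on which $d''<0$, so $d$ is not convex on $(\omega_{\ast}-\delta,\omega_{\ast}+\delta)$; that is, $d$ is convex in no neighbourhood of $\omega_{\ast}$. By Proposition~\ref{p9.4} the soliton $P_{\omega_{\ast}}e^{i\omega_{\ast}t}$ of (\ref{1.1}) is therefore orbitally unstable.

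I do not expect a genuine analytic obstacle here: the heavy inputs---the spectral structure of $H_{\omega}$ that makes the Grillakis--Shatah--Strauss framework applicable (Proposition~\ref{p9.2}), the strict monotonicity of $\beta$ near $\omega_{\ast}$ (Theorem~\ref{t1.1}), and the neighbourhood-convexity reformulation of stability (Proposition~\ref{p9.4})---are all already available. The only point requiring care is that $\omega_{\ast}$ is exactly the \emph{degenerate} frequency: since $\omega_{\ast}$ is the minimum of $\omega\mapsto M(P_{\omega})$ (Theorems~\ref{t7.1}--\ref{t7.2}) one has $d''(\omega_{\ast})=0$, so instability cannot be read off from the pointwise sign of $d''$ at $\omega_{\ast}$ itself; it is essential to use the neighbourhood form of the criterion together with the \emph{strict} inequality $\beta(\omega)<\tfrac13$ on $(0,\omega_{\ast})$ (and not merely the boundary equality $\beta(\omega_{\ast})=\tfrac13$).
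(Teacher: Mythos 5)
Your proof is correct and rests on the same two pillars as the paper's argument, namely Proposition \ref{p9.3} (which converts the question into the sign of $\frac{d}{d\omega}M(P_{\omega})$) and the neighbourhood-convexity criterion of Proposition \ref{p9.4}; the difference lies in how the required non-convexity near $\omega_{\ast}$ is produced. The paper starts from Theorem \ref{t1.2}, which a priori only yields $\frac{d}{d\omega}M(P_{\omega})\leq 0$ on $(0,\omega_{\ast})$, $\geq 0$ on $(\omega_{\ast},\tfrac{3}{16})$ and $\frac{d}{d\omega}M(P_{\omega})\big|_{\omega=\omega_{\ast}}=0$, and then invokes the real-analyticity/isolated-zero argument (Proposition \ref{p4.4}) to upgrade these to strict signs on punctured one-sided neighbourhoods of $\omega_{\ast}$, obtaining the full local picture $d''<0$ on the left, $d''(\omega_{\ast})=0$, $d''>0$ on the right. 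You instead obtain the strict inequality $\frac{d}{d\omega}M(P_{\omega})<0$ on all of $(0,\omega_{\ast})$ directly from Proposition \ref{p2.2}(I) together with $\beta(\omega)<\beta(\omega_{\ast})=\tfrac{1}{3}$ (Theorem \ref{t6.1}/Theorem \ref{t1.1}); this is precisely the computation the paper itself carries out in Theorem \ref{t9.6}, and, as you observe, strict concavity on the left side alone already defeats convexity in every neighbourhood of $\omega_{\ast}$, so Proposition \ref{p9.4} applies. What your route buys is that it bypasses Proposition \ref{p4.4} and the two-sided sign analysis entirely, and it is robust in that you only need $3\beta(\omega)-1<0$ strictly on $(0,\omega_{\ast})$ rather than strictness of the differential inequality itself; what the paper's route records in addition is the sign change of $d''$ across $\omega_{\ast}$, which is not actually needed for the instability conclusion. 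Your closing remark correctly identifies the one delicate point: $d''(\omega_{\ast})=0$, so the neighbourhood form of the criterion, not the pointwise sign at $\omega_{\ast}$, is what must be used.
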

\begin{proof}
In terms of Theorem \ref{t1.2}, one has that
\begin{equation}\label{9.5a}
\frac{d}{d\omega}M(P_\omega)\mid_{\omega=\omega_\ast}=0.
\end{equation}
From Theorem \ref{t1.2},
\begin{equation}\label{9.5b}
\frac{d}{d\omega}M(P_\omega)\leq 0 \;\;\;\text{for}\;\;\;\;\omega\in (0,\omega_{\ast}).
\end{equation}
From Theorem \ref{t1.2},
\begin{equation}\label{9.5c}
\frac{d}{d\omega}M(P_\omega)\geq 0 \;\;\;\text{for}\;\;\;\;\omega\in (\omega_{\ast},\frac{3}{16}).
\end{equation}
By (\ref{9.5a}) and Proposition \ref{p4.4}, $\omega_\ast$ is the isolated zero point of $\frac{d}{d\omega}M(P_\omega)$. Thus by (\ref{9.5b}) and (\ref{9.5c}), there exists $\delta>0$ such that
\begin{equation}\label{9.5d}
\frac{d}{d\omega}M(P_\omega)< 0 \;\;\; \text{for}\;\;\;  \omega\in(\omega_\ast-\delta,\omega_\ast);
\end{equation}
\begin{equation}\label{9.5e}
\frac{d}{d\omega}M(P_\omega)> 0\;\;\; \text{for}\;\;\; \omega\in(\omega_\ast,\omega_\ast+\delta).
\end{equation}
By Proposition \ref{p9.3}, (\ref{9.5a}), (\ref{9.5d}) and (\ref{9.5e}), one has that $d''(\omega_{\ast})=0$,
\begin{equation}\label{9.5f}
d''(\omega)<0 \;\;\;\text{for}\;\;\;\omega\in(\omega_\ast-\delta,\omega_\ast);
\end{equation}
\begin{equation}\label{9.5g}
d''(\omega)>0 \;\;\;\text{for}\;\;\;\omega\in(\omega_\ast,\omega_\ast+\delta).
\end{equation}
Therefore $d(\cdot)$ is not convex in a neighborhood of $\omega_{\ast}$. By Proposition \ref{p9.4}, the soliton $P_{\omega_{\ast}}e^{i\omega_{\ast}t}$ of (\ref{1.1}) is unstable.
\end{proof}

\begin{theorem}\label{t9.6}
Let $\omega\in(0,\frac{3}{16})$ and $P_\omega$ be the ground state of (\ref{1.2}).
Then for $\omega\in(0,\omega_*)$, the soliton $P_{\omega}e^{i\omega t}$ of (\ref{1.1}) is unstable.
\end{theorem}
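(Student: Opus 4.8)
The plan is to run the same Grillakis--Shatah--Strauss argument as in the proof of Theorem \ref{t9.5}, but now on the whole interval $(0,\omega_\ast)$, feeding in the mass monotonicity of Theorem \ref{t1.2} (equivalently Theorem \ref{t7.2}) together with the identity $d''(\omega)=\tfrac12\frac{d}{d\omega}M(P_\omega)$ of Proposition \ref{p9.3} and the stability criterion of Proposition \ref{p9.4} (so that the soliton is orbitally unstable precisely when $d$ is \emph{not} convex on any neighborhood of $\omega$).

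Fix $\omega_0\in(0,\omega_\ast)$. By Theorem \ref{t1.2} the map $\omega\mapsto M(P_\omega)$ is strictly decreasing on $(0,\omega_\ast)$, hence $\frac{d}{d\omega}M(P_\omega)\le 0$ there. To upgrade this to a strict inequality off a discrete set, I would invoke real-analyticity: by Proposition \ref{p2.1} the function $\omega\mapsto M(P_\omega)$ is real-analytic on $(0,\tfrac{3}{16})$, hence so is $\frac{d}{d\omega}M(P_\omega)$; since $M(P_\omega)$ is strictly decreasing it is non-constant, so $\frac{d}{d\omega}M(P_\omega)\not\equiv 0$, and Proposition \ref{p4.4} forces each of its zeros to be isolated. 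Therefore there exists $\delta>0$ with $(\omega_0-\delta,\omega_0+\delta)\subset(0,\omega_\ast)$ and
\begin{equation*}
\frac{d}{d\omega}M(P_\omega)<0\qquad\text{for all}\qquad \omega\in(\omega_0-\delta,\omega_0+\delta)\setminus\{\omega_0\}.
\end{equation*}

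By Proposition \ref{p9.3} this gives $d''(\omega)<0$ on the punctured neighborhood $(\omega_0-\delta,\omega_0+\delta)\setminus\{\omega_0\}$, while $d''(\omega_0)\le 0$. Hence $d'$ is strictly decreasing on the full interval $(\omega_0-\delta,\omega_0+\delta)$, so $d$ is strictly concave there, and in particular $d$ fails to be convex on any neighborhood of $\omega_0$. Proposition \ref{p9.4} then yields that the soliton $P_{\omega_0}e^{i\omega_0 t}$ is orbitally unstable. Since $\omega_0\in(0,\omega_\ast)$ was arbitrary, the theorem follows.

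The one delicate point is a possible interior critical point $\omega_0$ of $M(P_\omega)$ in $(0,\omega_\ast)$, i.e. the case $\frac{d}{d\omega}M(P_\omega)\big|_{\omega_0}=0$: there $d''(\omega_0)=0$ and non-convexity cannot be read off from the value of $d''$ at $\omega_0$ alone. The analyticity argument above (via Proposition \ref{p4.4}) is exactly what handles this: the zero is isolated, $d''<0$ on both sides, and strict concavity of $d$ near $\omega_0$ still follows. Near $\omega=0$ the subtlety does not even occur, since the expansion in Proposition \ref{p2.2}(II) gives $\frac{d}{d\omega}M(P_\omega)\sim -\tfrac12\,\omega^{-3/2}\!\int g^{2}\,dx<0$; so the argument is only genuinely needed at interior points, if any.
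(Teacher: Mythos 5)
Your proof is correct, but it takes a somewhat longer route than the paper's. The paper never invokes Theorem \ref{t1.2} or an analyticity argument here: since $\beta(\omega_\ast)=\tfrac13$ and $\beta$ is increasing, $3\beta(\omega)-1<0$ on $(0,\omega_\ast)$, and the strict inequality of Proposition \ref{p2.2}(I), $\frac{d}{d\omega}M(P_\omega)<\frac{3\beta(\omega)-1}{2\omega}M(P_\omega)$, gives $\frac{d}{d\omega}M(P_\omega)<0$ pointwise on all of $(0,\omega_\ast)$; combined with Proposition \ref{p9.3} this yields $d''<0$ everywhere on that interval, so non-convexity and Proposition \ref{p9.4} conclude at once --- there are simply no interior zeros of $\frac{d}{d\omega}M(P_\omega)$ to handle. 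You instead start from the weaker information $\frac{d}{d\omega}M(P_\omega)\le 0$ coming from the strict monotonicity in Theorem \ref{t1.2} and repair possible critical points via real-analyticity and Proposition \ref{p4.4}, which is exactly the device the paper uses in Theorems \ref{t9.5} and \ref{t9.7}; this is logically sound and non-circular (Theorem \ref{t1.2} and Propositions \ref{p2.1}, \ref{p4.4}, \ref{p9.3}, \ref{p9.4} all precede Section 9), and your strict-concavity step correctly excludes convexity on every neighborhood, as the ``only if'' direction of Proposition \ref{p9.4} requires. The paper's route buys brevity and the stronger pointwise conclusion $d''<0$; yours buys independence from the finer differential inequality of Proposition \ref{p2.2}(I), at the cost of the extra isolated-zero argument. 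Two cosmetic caveats: analyticity of $\omega\mapsto M(P_\omega)$ is not literally what Proposition \ref{p2.1} states (it asserts analyticity in $x$), though the paper itself cites it the same way; and your closing remark obtained by differentiating the expansion in Proposition \ref{p2.2}(II) term by term is an unneeded heuristic aside rather than a justified step.
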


\begin{proof}
Since $\beta(\omega_{\ast})=\frac{1}{3}$, from Proposition \ref{p2.2},
\begin{equation}\label{9.6a}
\frac{d}{d\omega}M(P_\omega)<0 \;\;\;\text{for}\;\;\;\;\omega\in (0,\omega_{\ast}).
\end{equation}
By Proposition \ref{p9.3}, it follows that
\begin{equation}\label{9.6b}
d''(\omega)< 0 \;\;\;\text{for}\;\;\;\omega\in(0,\omega_\ast).
\end{equation}
Therefore by (\ref{9.6b}), $d(\cdot)$ is not convex on $(0,\omega_{\ast})$. By Proposition \ref{p9.4}, the soliton $P_{\omega}e^{i\omega t}$ of (\ref{1.1}) is unstable for $\omega\in (0,\omega_{\ast})$.
\end{proof}

\begin{theorem}\label{t9.7}
Let $\omega\in(0,\frac{3}{16})$ and $P_\omega$ be the ground state of (\ref{1.2}).
Then for $\omega\in(\omega_*,\frac{3}{16})$,  the soliton $P_{\omega}e^{i\omega t}$ of (\ref{1.1}) is orbitally stable.
\end{theorem}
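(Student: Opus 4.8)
The plan is to combine the Grillakis--Shatah--Strauss stability criterion recorded in Proposition \ref{p9.4} with the mass monotonicity of Theorem \ref{t1.2}. Recall from Proposition \ref{p9.3} that the scalar function $d(\omega)=E(P_{\omega})+\frac{\omega}{2}M(P_{\omega})$ satisfies $d''(\omega)=\frac{1}{2}\frac{d}{d\omega}M(P_{\omega})$; hence the entire question reduces to controlling the sign of $\frac{d}{d\omega}M(P_{\omega})$ on the interval $(\omega_{\ast},\frac{3}{16})$.

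First I would invoke Theorem \ref{t1.2}: since $\omega_{\ast}$ (characterized by $\beta(\omega_{\ast})=\frac{1}{3}$) is precisely the turning point identified there, the map $\omega\mapsto M(P_{\omega})$ is strictly increasing on $(\omega_{\ast},\frac{3}{16})$. In particular $\frac{d}{d\omega}M(P_{\omega})\geq 0$ for every $\omega\in(\omega_{\ast},\frac{3}{16})$ (indeed strictly positive for almost every such $\omega$ by the real-analyticity argument of Proposition \ref{p4.4}, although non-negativity is all that is needed below). By Proposition \ref{p9.3} this yields $d''(\omega)\geq 0$ throughout the open interval $(\omega_{\ast},\frac{3}{16})$, so $d(\cdot)$ is convex on $(\omega_{\ast},\frac{3}{16})$.

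Finally, fix any $\omega\in(\omega_{\ast},\frac{3}{16})$. Since $(\omega_{\ast},\frac{3}{16})$ is itself an open neighborhood of $\omega$ on which $d(\cdot)$ is convex, Proposition \ref{p9.4} applies and gives that the soliton $P_{\omega}e^{i\omega t}$ of (\ref{1.1}) is orbitally stable. This completes the argument.

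There is no substantial obstacle left at this point: the genuine analytic difficulty has been absorbed into Theorem \ref{t1.2} (mass monotonicity) together with the well-posedness and spectral facts packaged in Propositions \ref{p9.1}--\ref{p9.4}. The only point requiring mild care is that Proposition \ref{p9.4} demands convexity of $d$ in a \emph{full} neighborhood of $\omega$, not merely a one-sided or pointwise condition; this is exactly why it is essential that Theorem \ref{t1.2} delivers monotonicity of $M(P_{\omega})$ on the whole open interval $(\omega_{\ast},\frac{3}{16})$, which comfortably supplies such a neighborhood around each interior frequency.
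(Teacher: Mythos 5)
Your proposal is correct and follows essentially the same route as the paper: Theorem \ref{t1.2} gives $\frac{d}{d\omega}M(P_{\omega})\geq 0$ on $(\omega_{\ast},\frac{3}{16})$, Proposition \ref{p9.3} converts this to $d''\geq 0$, and convexity of $d$ on that open interval lets Proposition \ref{p9.4} conclude stability. The only cosmetic difference is that the paper additionally invokes Proposition \ref{p4.4} to remark that the zeros of $d''$ are isolated, whereas you correctly observe that $d''\geq 0$ on the open interval already yields the convexity required by Proposition \ref{p9.4} as stated.
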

\begin{proof}
From Theorem \ref{t1.2},
\begin{equation}\label{9.7a}
\frac{d}{d\omega}M(P_\omega)\geq 0 \;\;\;\text{for}\;\;\;\;\omega\in (\omega_{\ast},\frac{3}{16}).
\end{equation}
By Proposition \ref{p9.3}, it follows that
\begin{equation}\label{9.7b}
d''(\omega)\geq 0 \;\;\;\text{for}\;\;\;\omega\in(\omega_{\ast},\frac{3}{16}).
\end{equation}
By Proposition \ref{p4.4}, the zero points of $\frac{d}{d\omega}M(P_\omega)$ on $(\omega_{\ast},\frac{3}{16})$ are isolated and countable. From Proposition \ref{p9.3}, it follows that the zero points of $d''(\omega)$ on $(\omega_{\ast},\frac{3}{16})$ are also isolated and countable.
Therefore by (\ref{9.7b}), $d(\cdot)$ is convex on $(\omega_{\ast},\frac{3}{16})$. By Proposition \ref{p9.4}, the soliton $P_{\omega}e^{i\omega t}$ of (\ref{1.1}) is orbitally stable for $\omega\in (\omega_{\ast},\frac{3}{16})$.
\end{proof}

Now we complete the proof of Theorem \ref{t1.4}.
\begin{proof}

Theorem \ref{t9.5}, Theorem \ref{t9.6} and Theorem \ref{t9.7} show that Theorem \ref{t1.4} is true.
\end{proof}

\section{Classification of normalized solutions }\label{sec8}

\begin{theorem}\label{t10.1}
Let $\omega\in(0,\frac{3}{16})$ and $P_\omega$ be the ground state of (\ref{1.2}). Then $M(P_\omega)=\int P^2_\omega dx$ establishes two 1-1   correspondences from $(0,\omega_*]$ to $(\infty, m_{0}]$ and from $(\omega_*,\frac{3}{16})$ to $(m_{0},\infty)$ respectively.
\end{theorem}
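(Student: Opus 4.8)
The plan is to assemble the two correspondences directly from the monotonicity of the mass already proved in Theorem~\ref{t1.2} (equivalently Theorems~\ref{t7.1}, \ref{t7.2} and~\ref{t4.3}), together with the endpoint asymptotics in Proposition~\ref{p2.2} and the continuity of $\omega\mapsto M(P_\omega)$ on $(0,\frac{3}{16})$ recorded in Proposition~\ref{p2.1}. By the definitions of $\omega_*$ (Theorem~\ref{t5.5}) and of $m_0$ one has $M(P_{\omega_*})=m_0$, so $m_0$ is the common value at the junction of the two intervals.

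First I would treat $(0,\omega_*]$. By Theorem~\ref{t1.2} (equivalently Theorem~\ref{t7.2}) the map $\omega\mapsto M(P_\omega)$ is strictly decreasing there, hence injective, and $M(P_\omega)>M(P_{\omega_*})=m_0$ for $\omega<\omega_*$. By Proposition~\ref{p2.2}(II), $M(P_\omega)=\frac{1}{\sqrt\omega}\int g^2\,dx+O(\sqrt\omega)\to\infty$ as $\omega\searrow 0$; since $M(P_\omega)$ is continuous on $(0,\omega_*]$, the intermediate value theorem forces its range to be exactly $[m_0,\infty)$. Thus $\omega\mapsto M(P_\omega)$ is a strictly decreasing bijection of $(0,\omega_*]$ onto $[m_0,\infty)$, which (with the ordering reversed) is the first $1$-$1$ correspondence. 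Next, on $(\omega_*,\frac{3}{16})$, Theorem~\ref{t1.2} (equivalently Theorems~\ref{t7.1} and~\ref{t4.3}) gives that $\omega\mapsto M(P_\omega)$ is strictly increasing, hence injective, with $M(P_\omega)>m_0$ there; by Proposition~\ref{p2.2}(III), $M(P_\omega)\sim(\frac{3}{16}-\omega)^{-3}\to\infty$ as $\omega\nearrow\frac{3}{16}$. Continuity, together with $M(P_{\omega_*})=m_0$ and this limit, yields via the intermediate value theorem that the range over the open interval is exactly $(m_0,\infty)$, the value $m_0$ not being attained there by strict monotonicity. Hence $\omega\mapsto M(P_\omega)$ is a strictly increasing bijection of $(\omega_*,\frac{3}{16})$ onto $(m_0,\infty)$, the second $1$-$1$ correspondence.

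No genuine analytic difficulty remains at this point, since the substantive content---strict monotonicity of $M(P_\omega)$ on each side of $\omega_*$---was established earlier (Theorems~\ref{t7.1}, \ref{t7.2}, \ref{t4.3}, summarized in Theorem~\ref{t1.2}). The only points requiring care are bookkeeping: that $m_0$ is attained precisely at $\omega=\omega_*$, so it lies in the first range but, by strictness, not in the range over $(\omega_*,\frac{3}{16})$; and that the two one-sided limits of $M(P_\omega)$ at $0$ and at $\frac{3}{16}$ are both $+\infty$, which is exactly what is needed for surjectivity onto the stated half-lines. This establishes Theorem~\ref{t10.1}.
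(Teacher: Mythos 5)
Your proof is correct and follows essentially the same route as the paper: both reduce the statement to the previously established strict monotonicity of $\omega\mapsto M(P_\omega)$ on either side of $\omega_*$ (Theorems \ref{t7.2}, \ref{t7.1}, \ref{t4.3}, i.e.\ Theorem \ref{t1.2}), with $M(P_{\omega_*})=m_0$ at the junction. The only difference is presentational: the paper quotes the derivative inequality of Proposition \ref{p2.2} for the decreasing branch and Theorem \ref{t1.3} for the increasing branch, while you invoke Theorem \ref{t1.2} directly and make the surjectivity explicit via the endpoint asymptotics of Proposition \ref{p2.2}(II)--(III), continuity and the intermediate value theorem, which the paper leaves implicit.
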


\begin{proof}
Since $\beta(\omega_{\ast})=\frac{1}{3}$, from Proposition \ref{p2.2},
\begin{equation}\label{10.1a}
\frac{dM}{d\omega}<0,\;\;\;\omega\in (0,\omega_{\ast}).
\end{equation}
By $M(P_{\omega_{\ast}})=m_{0}$, $M(P_\omega)$ establishes a 1-1 correspondences from $(0,\omega_*]$  to $(\infty, m_{0}]$. According to Theorem \ref{t1.3}, $M(P_\omega)$ establishes a 1-1 correspondences from $[\omega_*,\frac{3}{16})$ to $[m_{0},\infty)$.
\end{proof}

Now we prove Theorem \ref{t1.5}.
\begin{proof}
Since $M(P_{\omega_{\ast}})=m_{0}$, by Theorem \ref{t1.2},
\begin{equation}\label{10.2a}
M(P_{\omega})\geq m_{0},\;\;\; \omega\in (0,\frac{3}{16}).
\end{equation}
When $0\leq m<m_{0}$, (\ref{1.2}) has no positive normalized solutions with the prescribed mass $\int \lvert u\rvert^2dx=m$. The rest of Theorem \ref{t1.5} can be directly gotten by Theorem \ref{t10.1}.
\end{proof}

\vspace{0.2cm}
\textbf{Acknowledgment.}

The author Jian Zhang is grateful to Professor Rowan Killip, Professor Mathieu Lewin and Professor Kenji Nakanish for valuable discussions with them. He is also grateful Professor Yoshio Tsutsumi for his advice and help.

This research is supported by the National Natural Science Foundation of China 12271080.

\textbf{Data Availability Statement}

Data sharing not applicable to this article as no datasets were generated or analysed during the current study.





\end{document}